\theoremstyle{plain}
\newtheorem{thm}{Theorem}[section] 
\newtheorem{prop}[thm]{Proposition} 
\newtheorem{lem}[thm]{Lemma} 
\newtheorem{cor}[thm]{Corollary}
\theoremstyle{remark} 
\newtheorem*{rem}{Remark}
\theoremstyle{definition}
\newtheorem*{defin}{Definition}
\numberwithin{equation}{section}
\renewcommand*{\div}{\operatorname{div}}
\newcommand*{\curl}{\operatorname{curl}}
\newcommand*{\supp}{\operatorname{supp}}
\newcommand*{\bydef}{\overset{\rm def}{=}}
\newcommand*{\norm}[1]{\left\Vert #1\right\Vert}
\begin{document}

\title[Damped Strichartz estimates and Euler--Maxwell]{Damped Strichartz estimates and the incompressible Euler--Maxwell system}

% ===========
% = authors =
% ===========
\author{Diogo Ars\'enio}
\author{Haroune Houamed}
\address{New York University Abu Dhabi \\
Abu Dhabi \\
United Arab Emirates} 
\email{\href{mailto:diogo.arsenio@nyu.edu}{diogo.arsenio@nyu.edu}, \href{mailto:haroune.houamed@nyu.edu}{haroune.houamed@nyu.edu}}

\thanks{\emph{Acknowledgements.} The second author would like to thank Taoufik Hmidi for the fruitful discussions on many related subjects.}

% \subjclass{}
\keywords{Damped Strichartz estimates, perfect incompressible two-dimensional fluids, Maxwell's system, plasmas, Yudovich's theory, maximal parabolic estimates.}
\date{\today}

\begin{abstract}
Euler--Maxwell systems describe the dynamics of inviscid plasmas. In this work, we consider an incompressible two-dimensional version of such systems and prove the existence and uniqueness of global weak solutions, uniformly with respect to the speed of light $c\in (c_0,\infty)$, for some threshold value $c_0>0$ depending only on the initial data. In particular, the condition $c>c_0$ ensures that the velocity of the plasma nowhere exceeds the speed of light and allows us to analyze the singular regime $c\to\infty$.

The functional setting for the fluid velocity lies in the framework of Yudovich's solutions of the two-dimensional Euler equations, whereas the analysis of the electromagnetic field hinges upon the refined interactions between the damping and dispersive phenomena in Maxwell's equations in the whole space. This analysis is enabled by the new development of a robust abstract method allowing us to incorporate the damping effect into a variety of existing estimates. The use of this method is illustrated by the derivation of damped Strichartz estimates (including endpoint cases) for several dispersive systems (including the wave and Schr\"odinger equations), as well as damped maximal regularity estimates for the heat equation. The ensuing damped Strichartz estimates supersede previously existing results on the same systems.
\end{abstract}

\maketitle

\tableofcontents

% ================
% = Introduction =
% ================

\section{Introduction}

We are concerned with the existence and uniqueness of solutions to the incompressible Euler--Maxwell system
\begin{equation}\label{EM}
	\begin{cases}
		\begin{aligned}
			\text{\tiny(Euler's equation)}&&&\partial_t u +u \cdot\nabla u = - \nabla p + j \times B, &\div u =0,&
			\\
			\text{\tiny(Amp\`ere's equation)}&&&\frac{1}{c} \partial_t E - \nabla \times B =- j , &\div E = 0,&
			\\
			\text{\tiny(Faraday's equation)}&&&\frac{1}{c} \partial_t B + \nabla \times E  = 0 , &\div B = 0,&
			\\
			\text{\tiny(Ohm's law)}&&&j= \sigma \big( cE + P(u \times B)\big), &\div j = 0,&
		\end{aligned}
	\end{cases}
\end{equation}
for some initial data $(u,E,B)_{|t=0}=(u_0,E_0,B_0)$,
with the two-dimensional normal structure on the vector fields
\begin{equation}\label{structure:2dim}
	u(t,x)=
	\begin{pmatrix}
		u_1(t,x)\\u_2(t,x)\\0
	\end{pmatrix},
	\qquad
	E(t,x)=
	\begin{pmatrix}
		E_1(t,x)\\E_2(t,x)\\0
	\end{pmatrix}
	\qquad\text{and}\qquad
	B(t,x)=
	\begin{pmatrix}
		0\\0\\b(t,x)
	\end{pmatrix},
\end{equation}
where $(t,x)\in [0,\infty)\times \mathbb{R}^2$ and $P= \mathrm{Id}-\Delta^{-1}\nabla\mathrm{div}$ denotes Leray's projector onto divergence-free vector fields. We will later see that the normal structure \eqref{structure:2dim} is propagated by the flow and therefore persistent.

Taking the divergence of Maxwell's system, which is made up of Amp\`ere and Faraday's equations, notice that the divergence-free conditions $\div E=0$ and $\div B=0$ are also propagated by the evolution of the system, provided they hold initially. (In fact, notice that the condition $\div B=0$ is a trivial consequence of the normal structure \eqref{structure:2dim}. Nevertheless, it is physically relevant, since magnetic fields are solenoidal.)

This model describes the evolution of a plasma, i.e., a charged gas or an electrically conducting fluid, subject to the self-induced electromagnetic Lorentz force $j\times B$. Here, the field $u$ denotes the velocity of the fluid, $E$ and $B$ are the electric and magnetic fields, respectively, whereas $j$ denotes the electric current. Moreover, the positive constants $c$ and $\sigma$ represent the speed of light and the electrical conductivity, respectively. We refer to \cite{bis-book, D-book} for details about the physical principles behind the modeling of plasmas.

It is readily seen that any smooth solution $(u,E,B)\in C_c^1\left([0,\infty)\times \mathbb{R}^2\right)$ of \eqref{EM} satisfies the energy inequality
\begin{equation}\label{energy-inequa}
	\norm {u(t)}_{L^2}^2 + \norm {E(t)}_{L^2}^2 + \norm {B(t)}_{L^2}^2
	+\frac{2}{\sigma}\int_0^t \norm {j(\tau)}_{L^2}^2 d\tau \leq \mathcal{E}_0^2,
\end{equation}
for all $t\geq 0$, where we denote
\begin{equation*}
	\mathcal{E}_0 \bydef \norm {(u_0,E_0,B_0)}_{L^2}.
\end{equation*}
(Observe that \eqref{energy-inequa} actually holds with an equality sign for smooth functions, but this will not be used.) This is the only known global a priori estimate for solutions of \eqref{EM} and the ensuing natural bound
\begin{equation*}
	(u,E,B)\in L^\infty\left([0,\infty);L^2(\mathbb{R}^2)\right)
\end{equation*}
is insufficient to guarantee the existence of global weak solutions to \eqref{EM}. At least, no known method has so far been able to build such solutions and the same holds true for the classical two-dimensional incompressible Euler system
\begin{equation}\label{Euler:1}
	\partial_t u +u \cdot\nabla u = - \nabla p, \qquad \div u =0,
\end{equation}
which corresponds to the case $(E,B)=0$. This is due to the fact that the nonlinear terms in \eqref{EM} and \eqref{Euler:1} are, in general, not stable under weak convergence of solutions.

\subsection{Main results}

Our main result on the Euler--Maxwell system \eqref{EM} establishes the global existence and uniqueness of weak solutions, for any initial data in suitable spaces, provided the speed of light $c$ is sufficiently large. Note that this is seemingly the only known global existence result for incompressible Euler--Maxwell systems. It reads as follows.

\begin{thm}\label{thm-EM}
	Let $p$ and $s$ be any real numbers in $(2,\infty)$ and $(\frac 74,2)$, respectively. For any initial data
	\begin{equation*}
		(u_0,E_0,B_0)\in
		\left((H^1\cap\dot W^{1,p})\times H^s \times H^s \right)(\mathbb{R}^2),
	\end{equation*}
	with $\div u_0=\div E_0=\div B_0$ and the two-dimensional normal structure \eqref{structure:2dim}, there is a constant $c_0>0$, such that, for any speed of light $c\in(c_0,\infty)$, there is a global weak solution $(u,E,B)$ to the two-dimensional Euler--Maxwell system \eqref{EM}, with the normal structure \eqref{structure:2dim}, satisfying the energy inequality \eqref{energy-inequa} and enjoying the additional regularity
	\begin{equation}\label{propagation:damping:2}
		\begin{gathered}
			u\in L^\infty(\mathbb{R}^+; H^1\cap \dot W^{1,p}),
			\quad
			(E,B)\in L^\infty(\mathbb{R}^+; H^s),
			\\
			(cE,B)\in L^2(\mathbb{R}^+;\dot H^1),
			\quad
			cE\in L^2(\mathbb{R}^+;\dot H^s),
			\quad
			(E,B)\in L^2(\mathbb{R}^+;\dot W^{1,\infty}).
		\end{gathered}
	\end{equation}
	It is to be emphasized that the bounds in \eqref{propagation:damping:2} are uniform in $c\in(c_0,\infty)$, for any given initial data.
	
	If, furthermore, the initial vorticity $\omega_0\bydef \nabla\times u_0$ belongs to $L^\infty(\mathbb{R}^2)$, then the solution enjoys the global bound
	\begin{equation*}
		\omega\bydef \nabla\times u \in L^\infty(\mathbb{R}^+;L^\infty),
	\end{equation*}
	and it is unique in the space of all solutions $(\bar u, \bar E,\bar B)$ to the Euler--Maxwell system \eqref{EM} satisfying the bounds, locally in time,
	\begin{equation*}
		(\bar u, \bar E,\bar B)\in L^\infty_tL^2_x,
		\qquad \bar u \in L^{2}_tL^\infty_x,
		\qquad \bar j \in L^{2}_{t,x},
	\end{equation*}
	and having the same initial data.
\end{thm}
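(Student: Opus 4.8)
The plan is to construct solutions via a two-tier approximation and compactness scheme, exploiting a decoupling between the fluid part (controlled by Yudovich-type two-dimensional Euler estimates) and the electromagnetic part (controlled by the damped Strichartz and damped parabolic-regularity estimates announced in the abstract). First I would regularize the initial data and replace the system by a family of approximate problems—for instance by mollifying the nonlinearities or adding a vanishing viscosity $\varepsilon\Delta u$—for which global smooth solutions are standard, and then look for a priori bounds uniform in the regularization parameter and, crucially, in $c\in(c_0,\infty)$. The energy inequality \eqref{energy-inequa} gives the baseline $(u,E,B)\in L^\infty_tL^2_x$ together with $j\in L^2_{t,x}$ with constants depending only on $\mathcal E_0$; from Ohm's law $j=\sigma(cE+P(u\times B))$ this already forces $cE\in L^2_{t,x}+L^2_tL^2_x$ modulo the nonlinear term, which is the first manifestation of the damping.

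The heart of the argument is to close a bootstrap on the higher norms appearing in \eqref{propagation:damping:2}. For the electromagnetic field I would write Maxwell's system as a damped wave/Klein--Gordon-type system: eliminating $j$ through Ohm's law turns Amp\`ere's equation into $\tfrac1c\partial_t E-\nabla\times B=-\sigma c E-\sigma P(u\times B)$, i.e.\ a genuinely damped evolution with damping coefficient of size $\sigma c^2$ after rescaling time. Feeding this into the damped Strichartz estimates (established earlier in the paper, including endpoint cases) and the damped maximal regularity estimate for the associated parabolic piece, one controls $(E,B)$ in $L^\infty_tH^s$, $(cE,B)$ in $L^2_t\dot H^1$, $cE$ in $L^2_t\dot H^s$, and—by a Strichartz endpoint combined with Sobolev embedding, using $s>7/4$—the key Lipschitz bound $(E,B)\in L^2_t\dot W^{1,\infty}$, with all constants uniform in $c$ thanks to the $c$-uniformity built into those abstract estimates. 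The forcing terms are the Lorentz-type products $P(u\times B)$ and $u\times E$; these are estimated in the relevant Sobolev norms by $\|u\|_{H^1\cap\dot W^{1,p}}\|(E,B)\|_{H^s}$ using product laws in two dimensions, so the electromagnetic estimates are conditional on the fluid bound. For the fluid, Euler's equation with forcing $j\times B$ is treated in Yudovich's framework: one propagates $\|u\|_{H^1\cap\dot W^{1,p}}$ and, when $\omega_0\in L^\infty$, $\|\omega\|_{L^\infty}$, by a Gr\"onwall argument in which the forcing enters through $\|j\times B\|$ controlled by the electromagnetic norms just obtained. The two sets of estimates are coupled, so I would close them simultaneously: a continuity/bootstrap argument shows that on a maximal interval the combined quantity stays bounded by a constant depending only on the data, and the smallness that is actually needed is not on the data but on $1/c$, which is exactly what the hypothesis $c>c_0$ provides. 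The condition $c>c_0$ is also what guarantees $|u|<c$ pointwise (via the bound on $\|u\|_{L^\infty}$ coming from $\omega\in L^\infty$ and $u\in L^2$), keeping the model physically consistent, though this is not needed for the construction itself.

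With uniform bounds in hand, the passage to the limit in the regularization follows the usual Aubin--Lions compactness route: the bounds in \eqref{propagation:damping:2} plus the equations give equicontinuity in time of $(u,E,B)$ in a negative Sobolev norm, hence strong $L^2_{\loc}$ convergence of a subsequence, which suffices to pass to the limit in the quadratic nonlinearities $u\cdot\nabla u$, $j\times B$ and $u\times B$; the linear Maxwell terms and the energy inequality pass to the limit by weak lower semicontinuity. For the second part, uniqueness, I would argue directly on the difference of two solutions $(\delta u,\delta E,\delta B)$ in the stated regularity class: an $L^2$ energy estimate for the difference, in which the Euler part is handled by the classical Yudovich argument—using $u\in L^2_tL^\infty_x$ and a $\log$-convexity / Osgood inequality—and the Maxwell part by its own (damped) $L^2$ estimate, yields a closed Osgood-type differential inequality forcing $\delta u=\delta E=\delta B=0$; the current term $j\times B$ is absorbed using $j\in L^2_{t,x}$ and the $L^\infty$ control of the background fields. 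The main obstacle I anticipate is making every electromagnetic estimate genuinely uniform in $c$ while still extracting the Lipschitz bound $(E,B)\in L^2_t\dot W^{1,\infty}$ at the regularity threshold $s>7/4$: this is the delicate interplay between damping and dispersion that the abstract method of the paper is designed to handle, and getting the product estimates for the Lorentz forcing to fit inside the exact exponents allowed by the damped Strichartz admissibility (including the endpoint) is where the bookkeeping will be most demanding.
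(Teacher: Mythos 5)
Your overall architecture matches the paper's: approximate with a vanishing-viscosity/mollified scheme, derive $c$-uniform bounds by coupling Yudovich-style vorticity estimates for $u$ with damped Strichartz and damped parabolic estimates for $(E,B)$, close by a nonlinear bootstrap, pass to the limit by Aubin--Lions, and prove weak-strong uniqueness by a Yudovich/Osgood argument on the difference. The uniqueness sketch is essentially what the paper does. However, three ideas that are load-bearing in the paper's proof are absent from your proposal, and without them the bootstrap as you describe it would not close.

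First, the two-dimensional normal structure \eqref{structure:2dim} is not a passive hypothesis but is used in a precise algebraic way in the product estimates. Controlling the source $P(u\times B)$ in $\dot B^{s-1}_{2,n}$ with $s\in(\frac74,2)$ means estimating a paraproduct at regularity above $1$; the classical product law $\|fg\|_{\dot H^s}\lesssim\|f\|_{L^\infty\cap\dot B^1_{2,\infty}}\|g\|_{\dot H^s}$ only holds for $|s|<1$ in two dimensions. The paper's Lemma \ref{paradifferential:1} uses the solenoidal and normal structure to write $\nabla\times(F\times G)=F\times(\nabla\times G)$, which is what pushes the admissible range to $t\in(-1,2)$. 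A generic ``product law in two dimensions'' as you invoke it is not strong enough at $s>7/4$, so the conditional electromagnetic estimate you propose would fail at the very regularity threshold that gives the Lipschitz bound.

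Second, your statement that a ``continuity/bootstrap argument shows that on a maximal interval the combined quantity stays bounded'' does not by itself produce a \emph{global} bound. The nonlinear energy estimate that comes out of the Strichartz/parabolic machinery has a linear term whose coefficient is a power of the accumulated current dissipation $\|j\|_{L^2([t_1,t_2);L^2)}$; this coefficient is $O(1)$ over long time intervals and cannot be absorbed. The paper handles this by partitioning $[0,\infty)$ into finitely many intervals on which the dissipation is small (this is possible because the energy inequality gives $\|j\|_{L^2_{t,x}}\lesssim\mathcal E_0$ globally), running the quadratic absorption argument on each interval, and iterating; the number of intervals depends only on $\mathcal E_0$. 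Third, and relatedly, to avoid a smallness condition on $\mathcal E_0$ itself (rather than only on $1/c$) you need the almost-parabolic estimate of Lemma \ref{prop:heatenergy}, which uses the fact that, up to $O(c^{-1})$ remainders, Faraday's equation combined with Amp\`ere's and Ohm's laws turns the magnetic field into a solution of a convection--diffusion equation whose $L^2_t\dot H^1_x$ energy bound does not see the transport term. Without that, the low-frequency estimate for $B$ in $L^2_t\dot B^2_{2,1}$ carries a coefficient $\|u\|_{L^\infty_tL^2_x}$ that cannot be absorbed unless $\mathcal E_0$ is small, which would weaken the theorem. These three devices are what make the bookkeeping you flag as ``most demanding'' actually close; without them, the proposal as written stalls at the endpoint product estimate and at globality.
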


Theorem \ref{thm-EM} is a simple and more accessible reformulation of the results from Section \ref{section:perfect fluid}, which are stated therein in full detail in the setting of Besov and Chemin--Lerner spaces (see Appendix \ref{besov:1} for a precise definition of these spaces). Indeed, it is readily seen that Theorem \ref{thm-EM} follows directly from the combination of Theorems \ref{main:1}, \ref{main:2}, \ref{main:3} and Corollary \ref{persistence:s-reg} with straightforward embeddings of functional spaces. The respective proofs of these results are also provided in complete detail in Section \ref{section:perfect fluid}.

One should note that the constant $c_0$ in the above statement depends on norms of the initial data. Thus, for any given $c>0$, the condition $c_0<c$ can be interpreted, in a fully equivalent way, as a smallness condition on the initial data. In fact, a careful inspection of \eqref{initial:3} in the statement of Theorem \ref{main:3} readily provides an explicit expression for $c_0$ in terms of the norms of $(u_0,E_0,B_0)$ in $(H^1\cap\dot W^{1,p})\times H^s \times H^s$. More specifically, for any given initial data, one could set, for example, that
\begin{equation*}
	c_0=\max\left\{1,\left(\norm{u_0}_{H^1\cap\dot W^{1,p}}
	+\norm{(E_0,B_0)}_{H^s}
	\right)
	Ce^{C\mathcal{E}_0^5}\right\},
\end{equation*}
for some suitable large constant $C>0$ which only depends on $p$ and $s$, and is independent of the initial data. Then, with this definition of $c_0$, it is straightforward to show that the condition $c>c_0$ implies the validity of \eqref{initial:3}. In particular, for a given speed of light $c$, we observe that the existence of solutions is a consequence of the smallness of the initial data. Finally, we also note that it is not difficult to provide sharper formulas for $c_0$, with increasing complexity.

A detailed scaling analysis of solutions to the Euler--Maxwell system \eqref{EM} is conducted in Section \ref{section:dimensional}, which further clarifies the significance of the initial conditions of our main results and their dependence on the physical constants $c$ and $\sigma$.

\medskip

We have already emphasized that the bounds \eqref{propagation:damping:2} on the solutions of the incompressible Euler--Maxwell system \eqref{EM} are uniform with respect to the speed of light $c>c_0$. This crucial feature allows us to deduce a simple but powerful convergence result in the asymptotic regime $c\to\infty$, which is of particular interest. We refer to \cite{aim15} for a thorough discussion of this regime in the context of incompressible Navier--Stokes--Maxwell systems.

Generally speaking, the physical relevance of the regime $c\to\infty$ in Euler--Maxwell systems stems from the fact that the limiting magnetohydrodynamic systems are suitable to describe the behavior of flows which are influenced by self-induced magnetic fields. This is the case, for instance, of the terrestrial magnetic field, which is sustained by the earth's core through the dynamo effect, or the solar magnetic field, which is responsible for sunspots, or the galactic magnetic field, which plays a role in the formation of stars. We refer to \cite{D-book} for more details on the physical background of magnetohydrodynamic systems.

The next result follows directly from Theorem \ref{thm-EM} and establishes a magnetohydrodynamic system by taking the limit of the Euler--Maxwell system \eqref{EM} in the singular regime $c\to\infty$. Observe that it recovers the classical Yudovich Theorem for the incompressible Euler system \eqref{Euler:1} by setting $B\equiv 0$.

\begin{cor}\label{coro:singular limit}
	For any given initial data $(u_0,E_0,B_0)$ as in Theorem \ref{thm-EM} (for some $p\in (2,\infty)$ and $s\in (\frac 74, 2)$), consider the global solution $(u^c,E^c,B^c)$ constructed therein, for each $c\in (c_0,\infty)$. Then, the set of solutions $\{(u^c,E^c,B^c)\}_{c>c_0}$ is relatively compact in $L^2_{t,x,\mathrm{loc}}$. In particular, for any sequence $\{(u^{c_n},E^{c_n},B^{c_n})\}_{n\in\mathbb{N}}$, with $c_n\to\infty$, there is a convergent subsequence (which we do not distinguish, for simplicity)
	\begin{equation}\label{convergence:1}
		(u^{c_n},E^{c_n},B^{c_n})\stackrel{n\to\infty}{\longrightarrow} (u,0,B),\quad\text{in }L^2_{t,x,\mathrm{loc}},
	\end{equation}
	where $(u,B)=\big((u_1,u_2,0),(0,0,b)\big)$ has the normal structure \eqref{structure:2dim} and is a global weak solution of the system
	\begin{equation}\label{mhd}
		\left\{
		\begin{aligned}
			&\partial_t u +u \cdot\nabla u = -\nabla p , &\div u =0,&
			\\ 
			&\partial_t b - \frac{1}{\sigma}\Delta b + u \cdot\nabla b = 0 , &&
		\end{aligned}
		\right.
	\end{equation}
	with the bounds
	\begin{equation*}
		u\in L^\infty(\mathbb{R}^+;H^1\cap \dot W^{1,p}),
		\quad
		b\in L^\infty(\mathbb{R}^+;H^1)\cap L^2(\mathbb{R}^+;\dot H^1\cap \dot H^2\cap \dot W^{1,\infty}).
	\end{equation*}
	
	If, furthermore, the initial vorticity $\omega_0$ belongs to $L^\infty(\mathbb{R}^2)$, then the solution $(u,b)$ to \eqref{mhd} satisfies the additional bound $\omega\in L^\infty(\mathbb{R}^+;L^\infty)$ and is unique in the space of all solutions $(\bar u,\bar b)$ satisfying the bounds, locally in time,
	\begin{equation*}
		\bar u\in L^\infty_tL^2_x\cap L^{2}_tL^\infty_x,
		\qquad \bar b \in L^\infty_tL^2_x\cap L^2_t\dot H^1_x,
	\end{equation*}
	and having the same initial data. Moreover, one has the convergence
	\begin{equation}\label{convergence:2}
		(u^{c},E^{c},B^{c})\stackrel{c\to\infty}{\longrightarrow} (u,0,B),\quad\text{in }L^2_{t,x,\mathrm{loc}},
	\end{equation}
	without extraction of subsequences.
\end{cor}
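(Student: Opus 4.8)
The plan is to extract compactness from the uniform bounds \eqref{propagation:damping:2} provided by Theorem \ref{thm-EM} and then pass to the limit in the weak formulation of \eqref{EM}. First, I would observe that \eqref{propagation:damping:2} yields, uniformly in $c>c_0$, that $u^c$ is bounded in $L^\infty(\mathbb{R}^+;H^1\cap\dot W^{1,p})$, that $(cE^c,B^c)$ is bounded in $L^2(\mathbb{R}^+;\dot H^1)$, and that $(E^c,B^c)$ is bounded in $L^\infty(\mathbb{R}^+;H^s)$. From Ohm's law $j^c=\sigma(cE^c+P(u^c\times B^c))$ together with the energy inequality \eqref{energy-inequa} one gets $j^c$ bounded in $L^2_{t,x}$, and then $cE^c=\frac{1}{\sigma}j^c-P(u^c\times B^c)$ is bounded in $L^2_{t}L^2_{x,\loc}$; since $cE^c$ is also bounded in $L^2(\mathbb{R}^+;\dot H^1)$, interpolation gives $cE^c\to 0$, i.e. $E^c\to 0$ strongly, consistent with \eqref{convergence:1}. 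For the nonlinear terms I would use Aubin--Lions: Euler's equation gives $\partial_t u^c$ bounded in, say, $L^2_t H^{-N}_{x,\loc}$ (the force $j^c\times B^c$ is a product of an $L^2_{t,x}$ term with an $L^\infty_t H^s$ term, hence controlled in a negative Sobolev space locally), so $\{u^c\}$ is relatively compact in $L^2_{t,x,\loc}$; similarly, from Faraday's equation $\partial_t B^c=-c\,\nabla\times E^c$ and the bound on $cE^c$ in $L^2\dot H^1$ one gets $\partial_t B^c$ bounded in $L^2_t L^2_x$, while $B^c$ is bounded in $L^\infty_t H^s$, so $\{B^c\}$ is relatively compact in $L^2_{t,x,\loc}$ as well. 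This yields, up to a subsequence, $u^{c_n}\to u$ and $B^{c_n}\to b$ in $L^2_{t,x,\loc}$ and $E^{c_n}\to 0$.

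Next I would pass to the limit in the equations. In Amp\`ere's equation, rewrite it as $\nabla\times B^c=j^c-\frac1c\partial_t E^c$; since $\frac1c\partial_t E^c=\frac{1}{c^2}\partial_t(cE^c)\to 0$ in the sense of distributions (as $cE^c$ is bounded in $L^2_{t,x,\loc}$), we obtain $j^c\rightharpoonup \nabla\times B=:j$ weakly, with $j$ divergence-free. Faraday's equation, multiplied by $c$, reads $\partial_t B^c=-c\,\nabla\times E^c=-\nabla\times(cE^c)$; combining with $cE^c=\frac1\sigma j^c-P(u^c\times B^c)$ and using the strong convergence $u^{c_n}\to u$, $B^{c_n}\to b$ in $L^2_{t,x,\loc}$ to pass to the limit in the quadratic term $P(u^c\times B^c)$, and the weak convergence $j^{c_n}\rightharpoonup j$ in the linear term, we get in the limit
\begin{equation*}
	\partial_t b = -\nabla\times\Big(\tfrac1\sigma\nabla\times B - P(u\times B)\Big).
\end{equation*}
With the two-dimensional normal structure \eqref{structure:2dim}, $\nabla\times(\nabla\times B)=-\Delta B$, so the first term becomes $\frac1\sigma\Delta b$; and $\nabla\times P(u\times B)=\nabla\times(u\times B)=u\cdot\nabla b$ (using $\div u=\div B=0$ and that $P$ acts trivially after taking curl in two dimensions), which gives exactly the transport-diffusion equation $\partial_t b-\frac1\sigma\Delta b+u\cdot\nabla b=0$ in \eqref{mhd}. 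For Euler's equation, the Lorentz force $j^c\times B^c$ is a product of a weakly convergent factor $j^{c_n}\rightharpoonup j$ and a strongly convergent factor $B^{c_n}\to b$ in $L^2_{t,x,\loc}$, so $j^{c_n}\times B^{c_n}\rightharpoonup j\times B$; in two dimensions with the normal structure, $j\times B=(\nabla\times B)\times B$ is a gradient (it is $\nabla(\tfrac12 b^2)$ up to sign, since $b\nabla b$ is a gradient), hence it can be absorbed into the pressure and disappears from Euler's equation, yielding $\partial_t u+u\cdot\nabla u=-\nabla p$. The convective term $u^c\cdot\nabla u^c=\div(u^c\otimes u^c)$ passes to the limit by the strong $L^2_{t,x,\loc}$ convergence of $u^{c_n}$. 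The limit bounds on $(u,b)$ follow by weak lower semicontinuity of norms from \eqref{propagation:damping:2}, noting that $b$ inherits the $H^1$, $\dot H^1$, $\dot H^2$, and $\dot W^{1,\infty}$ bounds from the $B^c$ and $cE^c$ estimates (the $\dot H^2$ bound on $b$ comes from $cE^c\in L^2\dot H^s$ combined with $\partial_t B^c=-\nabla\times(cE^c)$ and the equation, or directly via parabolic regularity for the limiting heat-type equation).

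Finally, for the uniqueness statement and the full convergence \eqref{convergence:2}, I would invoke the uniqueness part of Theorem \ref{thm-EM} (or rather its counterpart for \eqref{mhd}): when $\omega_0\in L^\infty$, the limit $\omega=\nabla\times u\in L^\infty_tL^\infty_x$ by weak-$*$ lower semicontinuity, and the pair $(u,b)$ lies in the uniqueness class $\bar u\in L^\infty_tL^2_x\cap L^2_tL^\infty_x$, $\bar b\in L^\infty_tL^2_x\cap L^2_t\dot H^1_x$ (the $L^2_tL^\infty_x$ bound on $u$ coming from the Yudovich-type estimate controlling $\|\nabla u\|_{L^\infty}$, or from $u\in L^\infty_t(H^1\cap\dot W^{1,p})$ with $p>2$ via Sobolev embedding into $L^\infty_x$). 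Since every subsequential limit is the unique solution of \eqref{mhd} with this data, the whole family converges, giving \eqref{convergence:2}. The main obstacle I anticipate is the careful bookkeeping in passing to the limit in the singular terms $\frac1c\partial_t E^c$ and $c\,\nabla\times E^c$ simultaneously: one must exploit that $cE^c$ (not $E^c$ alone) is the well-behaved quantity, bounded in $L^2\dot H^1\cap L^2_{x,\loc}L^2_t$, so that $c\,\nabla\times E^c=\nabla\times(cE^c)$ stays bounded while $\frac1c\partial_t E^c=\frac{1}{c^2}\partial_t(cE^c)$ vanishes --- and to correctly identify, using the two-dimensional normal structure, that the double-curl term produces the Laplacian and that the Lorentz force is a perfect gradient absorbed by the pressure.
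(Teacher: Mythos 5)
Your proposal follows essentially the same route as the paper: extract strong $L^2_{t,x,\mathrm{loc}}$ compactness of $(u^c,B^c)$ via Aubin--Lions from time-derivative bounds, observe $E^c\to 0$, identify $j=\nabla\times B$ in the limit by killing the $\frac1c\partial_tE^c$ term, substitute Ohm's law into Faraday's equation to derive the heat-transport equation for $b$, absorb the Lorentz force into the pressure using $(\nabla\times B)\times B=-\tfrac12\nabla(b^2)$, and close with Yudovich uniqueness plus parabolic energy estimates. The paper performs the substitution of Ohm's law into Faraday's equation \emph{before} passing to the limit, yielding the system \eqref{EMc} in which the singular term is $\frac1{c_n}\partial_tE^{c_n}$ with $E^{c_n}$ merely bounded in $L^\infty_tL^2_x$; you instead argue through $\frac1c\partial_tE^c=\frac1{c^2}\partial_t(cE^c)$ with $cE^c$ bounded in $L^2_{t,x,\mathrm{loc}}$ --- both work. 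Two small imprecisions worth noting: your intermediate identity $\nabla\times(u\times B)=u\cdot\nabla b$ has a sign error (with the normal structure one has $\nabla\times(u\times B)=-(u\cdot\nabla)B$), although you then write the correct limiting equation; and your phrase ``via Sobolev embedding into $L^\infty_x$'' for the bound $u\in L^\infty_{t,x}$ is not quite right since neither $H^1(\mathbb{R}^2)$ nor $\dot W^{1,p}(\mathbb{R}^2)$ alone embeds into $L^\infty$ --- the correct tool is the Gagliardo--Nirenberg interpolation \eqref{convexity:1} used in the paper, which combines $L^2$ and $\dot W^{1,p}$ with $p>2$.
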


\begin{rem}
	Note that \eqref{mhd} is a simple form of magnetohydrodynamic system. Indeed, the equations for $u$ and $b$ are not genuinely coupled, for the incompressible Euler equation does not contain an external magnetic force. This can be interpreted as a consequence of the two-dimensional normal structure \eqref{structure:2dim}. More specifically, whenever the electric current is given by $j=\nabla\times B$, a straightforward calculation exploiting \eqref{structure:2dim} shows that the Lorentz force satisfies that
	\begin{equation*}
		j\times B=(\nabla\times B)\times B=-\frac 12\nabla(b^2),
	\end{equation*}
	which can be absorbed in the pressure gradient. In particular, since $u$ is independent of $b$ in this regime, there can be no Alfv\'en waves (see \cite{D-book} for an introduction to Alfv\'en waves). Therefore, in this case, the limiting magnetohydrodynamic system loses the feature of some important physical effects (such as Alfv\'en waves). This suggests that extending the results of the present article beyond the two-dimensional normal structure \eqref{structure:2dim} is of particular interest and significance.
\end{rem}

\begin{proof}
	We begin by showing the relative compactness of the set of solutions $\{(u^c,E^c,B^c)\}_{c>c_0}$ in $L^2_{t,x}(K)$, for any compact set $K\subset \mathbb{R}^+\times\mathbb{R}^2$. To that end, note that the energy inequality \eqref{energy-inequa} and the global bounds \eqref{propagation:damping:2} on the solutions hold uniformly in $c$. In particular, it is readily seen that $E^c\to 0$ in $L^2_{t,x,\mathrm{loc}}$, as $c\to\infty$. Therefore, we only need to focus on $\{(u^c,B^c)\}_{c>c_0}$.
	
	Now, one can show from \eqref{propagation:damping:2} that $u^c\in L^\infty_{t,x}$ and $B^c\in L^2_tL^\infty_x$. (For instance, using the Gagliardo--Nirenberg convexity inequality \eqref{convexity:1}, which is recalled later on.) It therefore follows directly from \eqref{EMc} that $\partial_t u^c  = P(j^c \times B^c)-P(u^c \cdot\nabla u^c)$ is uniformly bounded in $L^1_{t,\mathrm{loc}}L^2_x$. Similarly, it is readily seen from Faraday's equation $\partial_t B^c=-c\nabla\times E^c$ that $\partial_t B^c$ is uniformly bounded in $L^2_{t,x}$. Then, further combining these controls of $\partial_t u^c$ and $\partial_t B^c$ with the uniform bound $(u^{c_n},B^{c_n})\in L^\infty_tH^1_x$ and the compactness of the embedding $H^1_{\rm loc}\subset L^2_{\rm loc}$, we deduce that $\left\{(u^c,B^c)\right\}_{c>c_0}$ is relatively compact in the topology of $L^2_{t,x,{\rm loc}}$, by a classical compactness result by Aubin and Lions (see \cite{s87} for a thorough discussion of such compactness results and, in particular, Section 9 therein, for convenient results which are easily applicable to our setting).
	
	Next, for any convergent subsequence \eqref{convergence:1}, employing Ohm's law to substitute $c_nE^{c_n}$ into Faraday's equation in \eqref{EM}, observe that we only have to pass to the limit in the system
	\begin{equation}\label{EMc}
		\begin{cases}
			\begin{aligned}
				&\partial_t u^{c_n} +u^{c_n} \cdot\nabla u^{c_n} = - \nabla p^{c_n} + j^{c_n} \times B^{c_n}, &\div u^{c_n} =0,&
				\\
				&\frac{1}{{c_n}} \partial_t E^{c_n} - \nabla \times B^{c_n} =- j^{c_n} , &\div B^{c_n} = 0,&
				\\
				&\partial_t B^{c_n} + \frac 1\sigma \nabla \times j^{c_n} + u^{c_n}\cdot\nabla B^{c_n}= 0. &&
			\end{aligned}
		\end{cases}
	\end{equation}
	Moreover, up to further extraction of subsequences, it is also possible to assume that one has the weak convergence
	\begin{equation*}
		j^{c_n}\rightharpoonup j\quad\text{in }L^2_{t,x}.
	\end{equation*}
	All in all, passing to the limit $n\to\infty$ in \eqref{EMc} in the sense of distributions and exploiting the strong convergence \eqref{convergence:1}, we find that
	\begin{equation*}
		\begin{cases}
			\begin{aligned}
				&\partial_t u +u \cdot\nabla u = - \nabla p + j\times B, &\div u =0,&
				\\
				& \nabla \times B=j , &\div B = 0,&
				\\
				&\partial_t B + \frac 1\sigma \nabla \times j + u\cdot\nabla B= 0. &&
			\end{aligned}
		\end{cases}
	\end{equation*}
	Then, recalling the vector identity $\nabla\times(\nabla\times B)=\nabla(\div B)-\Delta B$ and noticing that $(\nabla\times B)\times B=-\frac 12\nabla(b^2)$, we conclude that $(u,b)$ is a solution of \eqref{mhd}.
	
	Finally, if we further assume the pointwise boundedness of the initial vorticity $\omega_0$, then $\omega^c=\nabla\times u^c$ remains uniformly bounded in $L^\infty_{t,x}$, thereby yielding a similar bound for the limiting system \eqref{mhd}. These bounds then fall in the framework of Yudovich's uniqueness theorem (see \cite[Section 8.2.4]{mb02}, for instance), which guarantees the uniqueness of the solution $u$ to the incompressible two-dimensional Euler system. Alternatively, one can also deduce the uniqueness of $u$ by reproducing the arguments from Section \ref{section:main:2}, below, by setting $(E,B)=0$. As for the uniqueness of $b$, it easily follows from classical energy estimates on the heat equation.
	
	At last, the uniqueness of the limit point $(u,0,B)$ allows us to deduce the validity of \eqref{convergence:2}, which completes the proof of the corollary.
\end{proof}

\subsection{Other models of incompressible plasmas}

The Euler--Maxwell system \eqref{EM} can be seen as the inviscid version of the Navier--Stokes--Maxwell system given by
\begin{equation}\label{NSM0}
	\begin{cases}
		\begin{aligned}
			&\partial_t u +u \cdot\nabla u - \nu \Delta u = - \nabla p + j \times B, &\div u =0,&
			\\
			&\frac{1}{c} \partial_t E - \nabla \times B =- j , &\div E = 0,&
			\\
			&\frac{1}{c} \partial_t B + \nabla \times E  = 0 , &\div B = 0,&
			\\
			&j= \sigma \big( cE + P(u \times B)\big), &\div j = 0,&
		\end{aligned}
	\end{cases}
\end{equation}
where $\nu>0$ denotes the viscosity of the fluid.

The derivation of \eqref{NSM0} has been established rigorously in \cite{as} through the analysis of the viscous incompressible hydrodynamic regimes of Vlasov--Maxwell--Boltzmann systems. In particular, it follows from the results therein that \eqref{NSM0} can be obtained by letting $\delta\to 0$, with $\delta>0$, in the more complete system
\begin{equation}\label{NSM1}
	\begin{cases}
		\begin{aligned}
			&\partial_t u +u \cdot\nabla u - \nu \Delta u = - \nabla p + \delta cnE + j \times B, &\div u =0,&
			\\
			&\frac{1}{c} \partial_t E - \nabla \times B =- j , &\div E = \delta n,&
			\\
			&\frac{1}{c} \partial_t B + \nabla \times E  = 0 , &\div B = 0,&
			\\
			&j-\delta nu= \sigma \big(-\frac c\delta \nabla n + cE + u \times B\big), &&
		\end{aligned}
	\end{cases}
\end{equation}
which takes the Coulomb force $nE$ into account, where $n$ is the electric charge density.

The work performed in \cite{as} addresses the viscous incompressible regimes of Vlasov--Maxwell--Boltzmann systems, only. However, inviscid incompressible regimes can also be achieved as an asymptotic limit of collisional kinetic equations. For instance, the incompressible Euler limit of the Boltzmann equation has been established in \cite{sr03,sr09}. (A general discussion of hydrodynamic regimes of the Boltzmann equation can also be found in \cite{srbook09}.) Similarly, in the vein of the results from \cite{as}, it is possible to derive \eqref{EM} by considering the incompressible Euler regime of Vlasov--Maxwell--Boltzmann systems, at least formally. However, this remains to be done rigorously.

The well-posedness theory established in this article only concerns \eqref{EM} and does not encompass the inviscid version of \eqref{NSM1} (i.e., the corresponding Euler--Maxwell system obtained by setting $\nu=0$ in \eqref{NSM1}). However, we are hopeful that some adaptation of our results can be implemented to show the existence and uniqueness of solutions to \eqref{NSM1}, with $\nu=0$. Nevertheless, for the sake of simplicity, we are going to stick to \eqref{EM}.

It turns out that there is yet another version of incompressible Navier--Stokes--Maxwell systems which is commonly found in the literature. It reads
\begin{equation}\label{NSM2}
	\begin{cases}
		\begin{aligned}
			&\partial_t u +u \cdot\nabla u - \nu \Delta u = - \nabla p + j \times B, &\div u =0,&
			\\
			&\frac{1}{c} \partial_t E - \nabla \times B =- j , &\div B = 0,&
			\\
			&\frac{1}{c} \partial_t B + \nabla \times E  = 0 , &&
			\\
			&j= \sigma \big( cE + u \times B\big), &&
		\end{aligned}
	\end{cases}
\end{equation}
and a corresponding incompressible Euler--Maxwell system is given by setting $\nu=0$. We refer to \cite{ag20,gim,MN} for details on the contruction of global solutions to \eqref{NSM2}, with $\nu>0$.

Unlike \eqref{NSM0} and \eqref{NSM1}, it is to be emphasized that this model is not obtained as an asymptotic regime of Vlasov--Maxwell--Boltzmann systems, as shown in \cite{as}. Furthermore, when compared to \eqref{NSM0} and \eqref{NSM1}, it has the major drawback of not providing a strong control of $\div E$. For this reason, we do not make any claim concerning the extension of our work to the above model. It would, however, be interesting to clarify the well-posedness of the nonviscous version of \eqref{NSM2}.

Finally, we observe that there is also a rich family of compressible Euler--Maxwell systems which are commonly used to model the behavior of plasmas. The study of such systems is challenging and corresponding results tend to focus on the stability of smooth solutions near specific equilibrum states. We refer to the articles \cite{gm14} and \cite{gip16} for foundational results on three-dimensional compressible Euler--Maxwell systems. We note that the results therein do not require any specific vector structure, such as the normal structure \eqref{structure:2dim}. However, they are sensitive to the speed of light $c$ and, therefore, may not provide uniform bounds as $c$ tends to infinity.

\subsection{Strategy of proof}\label{strategy:0}

We lay out now the strategy and the key ideas leading to the proof of Theorem \ref{thm-EM}, which will be implemented later on, in Section \ref{section:perfect fluid}, to establish the more precise Theorems \ref{main:1}, \ref{main:2} and \ref{main:3}.

Observe first that, even if we add a dissipation term $-\Delta u$ to the first equation of \eqref{EM}, thereby yielding the incompressible Navier--Stokes--Maxwell system \eqref{NSM0}, it is still unknown whether or not global weak solutions do exist when the initial data are only square-integrable. This is due to the lack of strong compactness (or regularity) in electromagnetic fields $(E,B)$, combined with the lack of stability of the source term $j\times B$ in weak topologies (see \cite{ag20} for further details). The same difficulty persists in the inviscid version of the same system, which stems from the propagation of singularities in Maxwell's system, as a result of its hyperbolic nature. The construction of solutions in $L^2$ to \eqref{EM} is thus highly challenging---all the more so than in the viscous case.

One should therefore treat this system in some higher-regularity spaces. To this end, inspired by known results on the well-posedness of the two-dimensional Euler system \eqref{Euler:1}, we shall look at the equivalent vorticity formulation of \eqref{EM}, which reads as
\begin{equation}\label{EM-omega:form}
	\begin{cases}
		\begin{aligned}
			&\partial_t \omega +u \cdot\nabla \omega = -j \cdot\nabla B, &\div u &=0,
			\\
			&\frac{1}{c} \partial_t E - \nabla \times B + \sigma c E =- \sigma P(u\times B) , &\div E &= 0,
			\\
			&\frac{1}{c} \partial_t B + \nabla \times E  = 0 , &\div B &= 0,
			\\
			&j= \sigma \big( cE + P(u \times B)\big), &\div j &= 0,
		\end{aligned}
	\end{cases}
\end{equation}
where $\omega \bydef \nabla \times u$ and $u$ can be reconstructed from $\omega$ through the Biot--Savart law
\begin{equation}\label{biot}
	u=-\Delta^{-1}\nabla\times\omega.
\end{equation}
Observe that the normal structure \eqref{structure:2dim} has been used in \eqref{EM-omega:form} to write $\nabla\times(j\times B)=-j\cdot\nabla B$. This is crucial.

Much of our analysis of \eqref{EM-omega:form} will hinge on the dispersive properties of the damped Maxwell system
\begin{equation}\label{Maxwell:system:*}
	\begin{cases}
		\begin{aligned}
			\frac{1}{c} \partial_t E - \nabla \times B + \sigma c E & =- \sigma P( u \times B),
			\\
			\frac{1}{c} \partial_t B + \nabla \times E & = 0,
			\\
			\div u=\div E = \div B& =0.
		\end{aligned}
	\end{cases}
\end{equation}
This will require us to interpret the role of the velocity field $u$ in \eqref{Maxwell:system:*}, in the spatial variable $x$, as that of a coefficient in the algebra $L^\infty_x\cap\dot H^1_x$ (or some weaker variant), thereby allowing us to view \eqref{Maxwell:system:*} as a linear system in $(E,B)$ and produce closed estimates on the electromagnetic field.

To be precise, the treatment of the source term $-\sigma P(u\times B)$ in \eqref{Maxwell:system:*} will necessitate the control of the velocity field $u$ in a suitable algebra acting on $\dot{H}_x^s$, for appropriate values of $s$. In particular, according to the classical paradifferential product law
\begin{equation*}%\label{para-product:1}
	\norm {fg}_{\dot{H}^s(\mathbb{R}^d)} \lesssim \norm f_{L^\infty \cap \dot{B}^\frac{d}{2}_{2,\infty}(\mathbb{R}^d)} \norm g_{\dot{H}^s(\mathbb{R}^d)},
\end{equation*}
which holds for any $s\in (-\frac{d}{2}, \frac{d}{2})$ and $d\geq 1$, it will be natural to seek the control of $u$ (in the space variable) in the weaker algebra $L^\infty \cap \dot{B}^1_{2,\infty}(\mathbb{R}^2)$ (see Appendix \ref{besov:1} for a definition of Besov spaces).

In the context of two-dimensional viscous flows, such a control is expected in view of the strong bounds provided by the energy dissipation inequality.
For example, in \cite[Theorem 1.2]{ag20}, the existence of weak solutions to a two-dimensional incompressible Navier--Stokes--Maxwell was established by proving a uniform control of the velocity field in the algebra $L^\infty_x\cap\dot H^1_x(\mathbb{R}^2)$. More precisely, by building upon the methods from \cite{MN}, it was shown therein (see \cite[Proposition 2.1]{ag20}) that the control of the velocity field in the space $L_t^2(L^\infty_x\cap\dot H^1_x)$ was sufficient to propagate some $\dot H^s$-regularity, with $-1<s<1$, in Maxwell's equations \eqref{Maxwell:system:*}, uniformly as $c\to\infty$.

In the setting of two-dimensional incompressible electrically-conducting ideal fluids (i.e., plasmas), which is the focus of our work, global energy estimates are nowhere near as good as their viscous counterpart and, thus, fail to yield the control of $u$ in a useful algebra. Instead, we need to take Yudovich's approach of propagating the $L^2_x\cap L^p_x$-norm of the vorticity $\omega$, for some given $p>2$, by exploiting the transport equation
\begin{equation}\label{transport:1}
	\partial_t \omega +u \cdot\nabla \omega = -j \cdot\nabla B,
\end{equation}
thereby providing a bound on $u$ in the algebra $L_t^\infty(L^\infty_x\cap\dot H^1_x)$, by classical Sobolev embeddings combined with standard estimates on the Biot--Savart law \eqref{biot}.
We refer to \cite[Section 7.2]{bcd11} for a modern treatment of global existence results for two-dimensional perfect incompressible fluids and the Yudovich Theorem.

In particular, elementary estimates on transport equations, which are performed in detail in Section \ref{section:vorticity}, show that the control of $\omega$ in $L^\infty_tL^p_x$ follows from the control of the initial vorticity $\omega_0$ and the nonlinear source term $j\cdot\nabla B$ in $L^p_x$ and $L^1_tL^p_x$, respectively. Since $j$ is naturally bounded in $L^2_{t,x}$, by virtue of the energy inequality \eqref{energy-inequa}, we conclude that $\nabla B$ should be controlled in $L^2_tL^\infty_x$.

Now, experience shows that such a Lipschitz bound on $B$ cannot easily follow from energy estimates on the wave system \eqref{Maxwell:system:*}.
Indeed, energy estimates on hyperbolic systems are typically performed in $L^2_x$. Therefore, in order to control $\nabla B$ in $L^\infty_x$, an energy estimate on \eqref{Maxwell:system:*} would lead us, in view of classical Sobolev embeddings, to seek a bound of $B$ in $H^{2+\delta}_x$, with a small parameter $\delta>0$. To that end, the source term $-\sigma P(u\times B)$ in \eqref{Maxwell:system:*}, would also need to be controlled in $H^{2+\delta}_x$. However, employing paradifferential calculus to control $u\times B$ would require that $\nabla u$ be bounded in $L^\infty_x\cap \dot H^1_x$, at least. Unfortunately, such uniform bounds on perfect incompressible two-dimensional flows are largely out of reach in our context.
This is where the damped dispersive properties of \eqref{Maxwell:system:*}, on the whole Euclidean plane $\mathbb{R}^2$, come into play.

Maxwell's system \eqref{Maxwell:system:*} can be rewritten as a system of wave equations (more on this later on, see \eqref{maxwell:2:0}). Thus, heuristically, one expects to be able to employ Strichartz estimates for the wave equation to control the electromagnetic field $(E,B)$. In particular, by paying close attention to the admissibility criteria of functional spaces in Strichartz estimates (see \cite[Section 8.3]{bcd11} or \cite{kt98}), one observes that it is possible to control the Lipschitz norm of a solution to a two-dimensional wave equation, provided one can bound $\frac 74$ derivatives of the initial data and the source term in some appropriate functional spaces (in some Besov spaces, for instance) of $L^2$ space-integrability. (For simplicity, we have omitted here the consideration of time integrability in Strichartz estimates and focused solely on space regularity and integrability.) Loosely speaking, such an estimate is better than a Sobolev embedding, which would require the control of over two derivatives in $L^2(\mathbb{R}^2)$ in order to bound a Lipschitz norm.  This should give the reader some intuition concerning the special role played by the regularity parameter $s=\frac 74$ in Theorem \ref{thm-EM}.

Thus, so far, our strategy seems to yield some promising closed estimates. Indeed, on the one hand, the transport equation \eqref{transport:1} gives us a bound on the $L^\infty_t(L^2_x\cap L^p_x)$-norm of the vorticity $\omega$ provided $\nabla B$ is controlled in $L^2_tL^\infty_x$, while, on the other hand, a control of $\nabla B$ in $L^2_tL^\infty_x$ can be achieved through dispersive estimates on the wave system \eqref{Maxwell:system:*} if the velocity field $u$ is sufficiently smooth (at least $L^\infty_t(L^\infty_x\cap\dot H^1_x)$, say).

However, such a roadmap may not lead to global estimates in time. To see this, we need to take a closer look at the temporal norms associated with our strategy. Specifically, it is important to note that the classical Strichartz estimates for the two-dimensional wave equation do not actually give a global control of $\nabla B$ in $L^2_tL^\infty_x$. Instead, they only allow us to control $\nabla B$ in $L^4_tL^\infty_x$ globally, which then leads to a control in $L^2_tL^\infty_x$ locally in time.
This difficulty is solved by complementing our strategy with a careful study of the damping phenomenon in \eqref{Maxwell:system:*} produced by the term $\sigma c E$. To that end, we provide, in Section \ref{section:damping:0}, a robust analysis of the damping effect on general semigroup flows, which is formulated in precise terms in Lemma \ref{damping:1} (the Damping Lemma). We also give applications of the Damping Lemma to parabolic and dispersive equations, in Sections \ref{damped:parabolic:1} and \ref{section:damped:strichartz:1}, respectively.

Concerning Maxwell's system \eqref{Maxwell:system:*}, the ensuing time decay of the electromagnetic field is encapsulated in Corollary \ref{cor:maxwell}. It is shown therein that \eqref{Maxwell:system:*} forms a damped hyperbolic system which is best understood by decomposing the frequencies of the solutions relatively to the magnitude of the speed of light $c>0$.

Indeed, by appropriately combining Amp\`ere and Faraday's equations from \eqref{Maxwell:system:*} and using that $\nabla\times(\nabla\times B)=-\Delta B$, observe that $B$ solves the damped wave equation
\begin{equation}\label{maxwell:2:0}
	\frac 1{c^2}\partial_t^2 B+\sigma \partial_t B-\Delta B =- \sigma \nabla \times( u \times B),
\end{equation}
where the damping term $\sigma \partial_t B$ comes from the term $\sigma cE$ in \eqref{Maxwell:system:*}.

Heuristically, since waves described by \eqref{maxwell:2:0} typically propagate with a characteristic speed $c$, it is then natural to expect a consistent hyperbolic behavior of the solutions of \eqref{Maxwell:system:*} on the range of frequencies larger than a suitable multiple of the speed of light $c$. In particular, Corollary \ref{cor:maxwell} will confirm that solutions to \eqref{maxwell:2:0} enjoy dispersive properties, for those high frequencies, which are analogous to the non-damped case (obtained by setting $\sigma=0$ in \eqref{maxwell:2:0}) with drastically improved long-time integrability.

On the remaining range of frequencies, i.e., on frequencies slower than $c$, the same result will establish that the behavior of solutions to \eqref{Maxwell:system:*} is largely dictated by the heat equation
\begin{equation*}%\label{heat(limit)}
	\sigma \partial_t B-\Delta B =- \sigma \nabla \times( u \times B) ,
\end{equation*}
which is formally achieved in the asymptotic regime $c\to\infty$ from \eqref{maxwell:2:0}.

All in all, the application of the sharp damped dispersive estimates from Section \ref{section:damping:0} to Maxwell's equations \eqref{Maxwell:system:*} will allow us to obtain closed estimates on the incompressible Euler--Maxwell system \eqref{EM} which hold globally and lead to Theorem \ref{thm-EM}. The precise nonlinear analysis of \eqref{EM} is detailed in Section \ref{section:perfect fluid} with complete proofs of our main theorems.

It is difficult to pinpoint the exact source of the breakdown of our proofs for small values of light velocity $c$. However, one can argue that the degeneracy of Maxwell's system in the limit $c\to 0$ results in a loss of the	 damped dispersive properties which are central to our nonlinear analysis. We believe that this provides some evidence that our method cannot be extended to the whole range of $c>0$. Nevertheless, we are hopeful that other techniques may be used to construct solutions in the remaining range of light velocities.

\subsection{Notation}\label{notation:sec}

Allow us to clarify a few notations which will be used repeatedly throughout this article.

First of all, for clarity and convenience, note that all relevant functional spaces of Besov and Chemin--Lerner types are introduced in precise detail in Appendix \ref{besov:1}.

Next, Leray's projector $P:L^2(\mathbb{R}^3;\mathbb{R}^3)\to L^2(\mathbb{R}^3;\mathbb{R}^3)$ onto divergence-free vector fields, which is used in \eqref{EM}, and the corresponding orthogonal projector $P^\perp=\mathrm{Id}-P$ onto conservative fields are given by
\begin{equation*}
	P= \mathrm{Id}-\Delta^{-1}\nabla\mathrm{div},
	\qquad P^\perp=\Delta^{-1}\nabla\mathrm{div}.
\end{equation*}

Finally, when necessary, we will employ the letter $C$ to denote a generic constant, which is allowed to differ from one estimate to another and we will resort to the use of indices to distinguish specific constants. We will also often write $A\lesssim B$ to denote $A\leq CB$, for some positive constant $C$ which only depends on fixed parameters, and $A\sim B$ whenever $A\lesssim B$ and $B\lesssim A$ are simultaneously true.

% ===========
% = Damping =
% ===========

\section{The effect of damping on semigroup flows}\label{section:damping:0}

Here, we analyze the effect of damping on evolution flows, which are generally described by semigroups. More specifically, in Section \ref{section:damping:1}, we begin by establishing a robust and general result---called the Damping Lemma---showing how damping terms act on integral operators. Then, in Sections \ref{section:damped:parabolic:1} and \ref{section:damped:strichartz:1}, this result is applied to the context of damped parabolic and Strichartz estimates, which will be crucial to our analysis of Maxwell's system in Section \ref{section:perfect fluid}. In particular, in Section \ref{section:damped:strichartz:1}, we give complete and sharp formulations of Strichartz estimates for the damped Schr\"odinger, half-wave, wave and Maxwell equations in Euclidean spaces.

\subsection{The damping lemma}\label{section:damping:1}

The result below provides a general and robust principle allowing us to take into account the influence of a damping term $e^{-\alpha t}$, with $\alpha> 0$, on an integral operator.

\begin{lem}[The Damping Lemma]\label{damping:1}
	Let $X$ and $Y$ be Banach spaces and, for each $s,t\in [0,T)$, with $T>0$, let $K(t,s):X\to Y$ be an operator-valued kernel from $X$ to $Y$, such that
	\begin{equation*}
		K(t,s)\in L^1\big([0,T)\times [0,T);\mathcal{L}(X,Y)\big),
	\end{equation*}
	where $\mathcal{L}(X,Y)$ denotes the Banach space of bounded linear operators from $X$ to $Y$.
	Further suppose that there are $0< p_0\leq q_0\leq\infty$, with $q_0\geq 1$, and a constant $A>0$ such that the estimate
	\begin{equation}\label{damping:2}
		\norm{\int_0^T \chi(t,s)K(t,s)f(s)ds}_{L^{q_0}([0,T);Y)}
		\leq A \norm{f}_{L^{p_0}([0,T);X)}
	\end{equation}
	holds, for all $f\in L^{p_0}([0,T);X)$ and any $\chi(t,s)\in L^\infty([0,T)^2;\mathbb{R})$, with $\norm{\chi}_{L^\infty}\leq 1$.
	
	Then, for any $\alpha\geq 0$ and $p_0\leq p\leq q\leq q_0$, with $q\geq 1$, one has the damped estimate
	\begin{equation*}
		\norm{\int_0^T e^{-\alpha |t-s|}\chi(t,s)K(t,s)f(s)ds}_{L^{q}([0,T);Y)}
		\leq C_\beta A\left(\frac{T}{1+\alpha T}\right)^\beta\norm{f}_{L^{p}([0,T);X)},
	\end{equation*}
	for all $f\in L^{p}([0,T);X)$ and any $\chi(t,s)\in L^\infty([0,T)^2;\mathbb{R})$, with $\norm{\chi}_{L^\infty}\leq 1$,
	where $\beta\geq 0$ is defined by
	\begin{equation*}
		\beta=\frac 1q-\frac 1{q_0}+\frac 1{p_0}-\frac 1{p}
	\end{equation*}
	and $C_\beta>0$ only depends on $\beta$.
\end{lem}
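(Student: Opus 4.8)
The plan is to interpolate between two regimes of the damping factor $e^{-\alpha|t-s|}$: the "short-time" regime where $\alpha|t-s|$ is bounded and the exponential is essentially harmless, and the "long-time" regime where the exponential produces genuine decay. First I would observe that the undamped hypothesis \eqref{damping:2}, being stated with an arbitrary bounded multiplier $\chi$, is really a statement about the kernel $|K(t,s)|$ and is therefore stable under multiplication by any factor of sup-norm at most $1$; in particular it continues to hold with $\chi$ replaced by $e^{-\alpha|t-s|}\chi$, giving immediately the bound $A\|f\|_{L^{p_0}\to L^{q_0}}$ for the damped operator. This disposes of the endpoint exponents $(p,q)=(p_0,q_0)$, where $\beta=0$ and $C_\beta A (T/(1+\alpha T))^0 = C_\beta A$, so the claim is just the hypothesis up to the harmless constant.

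The core of the argument is to produce a second, complementary bound that trades integrability exponents for a gain of the factor $T/(1+\alpha T)$. I would split $\int_0^T e^{-\alpha|t-s|}\chi K f\,ds$ and estimate it crudely using only the $L^1$-in-time smallness of the convolution kernel $e^{-\alpha|t-s|}$: one has $\|e^{-\alpha|\cdot|}\|_{L^1(\mathbb{R})}\sim \min\{T,1/\alpha\}\sim T/(1+\alpha T)$. Combining this with Young's convolution inequality in the time variable (applied to the operator norm $\|K(t,s)\|_{\mathcal{L}(X,Y)}$, which by hypothesis lies in $L^1$ of the square, hence in particular gives a bounded map between appropriate mixed-norm spaces after absorbing $\chi$) yields a bound of the damped operator from $L^{p}([0,T);X)$ to $L^{q}([0,T);Y)$ by a constant times $T/(1+\alpha T)$ in the "pure $L^1$-kernel" exponent range — concretely, when $1/q - 1/p$ matches the Young exponent deficit, i.e. exactly when $\beta = 1$ up to the contribution of the gap between $(p,q)$ and $(p_0,q_0)$. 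The honest way to organize this is: write $\beta = (1/q-1/q_0) + (1/p_0 - 1/p)$, note each parenthesis is in $[0,1]$ and their sum $\beta$ is the total "room" available, and realize the general exponent pair $(p,q)$ as a real interpolation (Riesz–Thorin / bilinear interpolation on the kernel, or simply Hölder in time on the output and input) between the hypothesis estimate $L^{p_0}\to L^{q_0}$ (no decay) and the $L^1$-kernel estimate (full decay $T/(1+\alpha T)$). Interpolating with parameter $\theta=\beta$ then gives the gain $(T/(1+\alpha T))^\beta$ and a constant $C_\beta$ depending only on $\beta$.

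In slightly more detail, the two "anchor" estimates I would set up are: (i) the hypothesis, rewritten as a bound $A$ for the damped kernel from $L^{p_0}$ to $L^{q_0}$ (valid for every $\alpha\ge 0$, with no decay extracted); and (ii) a bound of the form $C\,\frac{T}{1+\alpha T}\,A'$ from $L^{\tilde p}$ to $L^{\tilde q}$ for a suitable pair $(\tilde p,\tilde q)$ realized purely from $\|K\|_{L^1}$ and the $L^1$-norm of the exponential — here one uses that $K\in L^1([0,T)^2;\mathcal L(X,Y))$ so that, for instance, the map $f\mapsto \int K f\,ds$ sends $L^\infty_t X$ to $L^1_t Y$ and $L^1_t X$ to... — and then Hölder in the time variable on both ends interpolates $(p,q)$ against $(p_0,q_0)$ and $(\tilde p,\tilde q)$. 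The bookkeeping is to check that the exponent $\beta$ defined in the statement is precisely the convex-combination weight forced by these Hölder/interpolation constraints, and that all intermediate exponents stay $\ge 1$ where needed (this is where the standing hypotheses $q_0\ge 1$, $q\ge 1$, $p_0\le p\le q\le q_0$ are used).

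The main obstacle I anticipate is the second anchor estimate and the interpolation step: one must be careful that the undamped hypothesis is only assumed between the single pair $(p_0,q_0)$, so one cannot directly interpolate it against itself — the decay must come entirely from the $L^1$ size of $e^{-\alpha|t-s|}$, and the mechanism for converting that scalar $L^1$ gain into an operator bound between the desired mixed-norm spaces (without any extra hypothesis on $K$ beyond $K\in L^1$) is the delicate point. The clean resolution is to interpolate the (multiplier-stable) hypothesis estimate with a trivial "put all exponents at their extreme" estimate where Young's inequality for the time convolution does all the work, and to verify that the resulting interpolation exponent is exactly $\beta$. I expect the constant $C_\beta$ to arise from the interpolation constant and the elementary inequality $\min\{T,1/\alpha\}\le 2\,T/(1+\alpha T)$, both of which depend only on $\beta$ (equivalently, on $\theta$).
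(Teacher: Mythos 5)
There is a genuine gap in your proposal, and you actually put your finger on it yourself but then wave it away: the second anchor estimate does not exist. The only quantitative information the hypothesis supplies about $K$ is the bound $A$ in \eqref{damping:2} for the single pair $(p_0,q_0)$; the assumption $K\in L^1([0,T)^2;\mathcal L(X,Y))$ is purely qualitative (it only makes the integrals meaningful) and, crucially, the claimed conclusion involves the constant $A$, not $\|K\|_{L^1}$. Young's inequality for the time convolution would require either a pointwise bound $\|K(t,s)\|_{\mathcal L}\le M$ or a convolution structure $K(t,s)=K(t-s)$, neither of which is available. So the purported anchor ``from $L^{\tilde p}$ to $L^{\tilde q}$ by $C\,T/(1+\alpha T)\cdot A'$'' has no identifiable $A'$, and interpolating the hypothesis against such an estimate would in any case yield a constant $A^{1-\theta}(A')^{\theta}$ rather than $C_\beta A$. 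The absorption of $e^{-\alpha|t-s|}$ into $\chi$ in your first anchor is correct and does dispose of $\beta=0$, but it extracts no decay, and you cannot interpolate that one estimate against itself.

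The mechanism the paper actually uses is entirely different and avoids interpolation: decompose $\{t\neq s\}$ dyadically into annuli $\{2^j\le |t-s|<2^{j+1}\}$, bound $e^{-\alpha|t-s|}$ from above by $e^{-\alpha 2^j}$ on the $j$-th piece, and cut the $t$-axis into intervals of length $2^j$. On each such interval, H\"older in $t$ converts $L^{q}$ to $L^{q_0}$ at cost $2^{j(1/q-1/q_0)}$, the hypothesis is then applied verbatim (since restriction of $\chi$ to the relevant rectangle is again admissible) with the $s$-support contained in a bounded union of length-$2^j$ intervals, and H\"older in $s$ converts $L^{p_0}$ back to $L^p$ at cost $2^{j(1/p_0-1/p)}$. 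The gain $2^{j\beta}$ at each scale is thus produced purely by H\"older on intervals of length $2^j$, the global constant stays $A$, and the elementary estimate $\sum_{2^j<T}e^{-\alpha 2^j}2^{j\beta}\lesssim_\beta\big(T/(1+\alpha T)\big)^\beta$ finishes the proof. This is the step your sketch does not supply a substitute for.
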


\begin{proof}
	For $\alpha=0$, the result follows straightforwardly from H\"older's inequality on the domain $[0,T)$, for all integrability parameters merely satisfying $0<q\leq q_0\leq\infty$ and $0<p_0\leq p\leq\infty$. We assume now that $\alpha>0$ and $0<p_0\leq p\leq q\leq q_0\leq\infty$, with $q\geq 1$.
	
	For convenience of notation, we extend the definition of the kernel $K$ and the functions $\chi$ and $f$ to all real values of $t$ and $s$ by setting them equal to zero whenever $t$ or $s$ fall outside of the interval $[0,T)$.
	
	We begin with the use of a partition
	\begin{equation*}
		\mathds{1}_{\{t\neq s\}}=\sum_{j\in\mathbb{Z}}\mathds{1}_{\{2^j\leq |t-s|< 2^{j+1}\}}
	\end{equation*}
	to deduce that
	\begin{equation}\label{damping:3}
		\norm{\int_0^T e^{-\alpha |t-s|}\chi(t,s)K(t,s)f(s)ds}_{L^{q}(\mathbb{R};Y)}
		\leq \sum_{\substack{j\in\mathbb{Z}\\2^j< T}}e^{-\alpha 2^j}
		\norm{\int_0^T \chi_j(t,s)K(t,s)f(s)ds}_{L^{q}(\mathbb{R};Y)},
	\end{equation}
	where we have denoted
	\begin{equation*}
		\chi_j(t,s)=\mathds{1}_{\{2^j\leq |t-s|< 2^{j+1}\}}e^{-\alpha (|t-s|-2^j)}\chi(t,s).
	\end{equation*}
	Observe that $\norm{\chi_j}_{L^\infty}\leq 1$.
	
	Then, we further decompose the domain of $t$ into the disjoint union
	\begin{equation*}
		\bigcup_{k\in\mathbb{Z}} \big\{2^jk\leq t<2^j(k+1)\big\}
	\end{equation*}
	to write that
	\begin{equation}\label{damping:4}
		\begin{aligned}
			\norm{\int_0^T \chi_j(t,s)K(t,s)f(s)ds}_{L^{q}(\mathbb{R};Y)}
			&=\norm{\norm{\int_0^T \chi_jKf(s)ds}_{L^{q}\big(\big[2^jk,2^j(k+1)\big);Y\big)}}_{\ell^q(k\in\mathbb{Z})}
			\\
			&\leq 2^{j\left(\frac 1q-\frac 1{q_0}\right)}
			\norm{\norm{\int_0^T \chi_jKf(s)ds}_{L^{q_0}\big(\big[2^jk,2^j(k+1)\big);Y\big)}}_{\ell^q(k\in\mathbb{Z})},
		\end{aligned}
	\end{equation}
	where we employed H\"older's inequality.
	
	Now, notice that
	\begin{equation*}
		2^j(k-2)<t-|t-s|\leq s\leq t+|t-s|<2^j(k+3),
	\end{equation*}
	whenever $2^j\leq |t-s|< 2^{j+1}$ and $2^jk\leq t<2^j(k+1)$. In particular, using \eqref{damping:2}, it follows that
	\begin{equation}\label{damping:5}
		\begin{aligned}
			\norm{\int_0^T \chi_jKf(s)ds}_{L^{q_0}\big(\big[2^jk,2^j(k+1)\big);Y\big)}
			&\leq
			\norm{\int_0^T \chi_jKf(s)\mathds{1}_{\{2^j(k-2)<s<2^j(k+3)\}}ds}_{L^{q_0}(\mathbb{R};Y)}
			\\
			&\leq
			A\sum_{n=-2}^2\norm{f}_{L^{p_0}\big(\big[2^j(k+n),2^j(k+1+n)\big);X\big)}
			\\
			&\leq A2^{j\left(\frac 1{p_0}-\frac 1{p}\right)}
			\sum_{n=-2}^2\norm{f}_{L^{p}\big(\big[2^j(k+n),2^j(k+1+n)\big);X\big)},
		\end{aligned}
	\end{equation}
	where we applied H\"older's inequality, again.
	
	All in all, combining \eqref{damping:3}, \eqref{damping:4} with \eqref{damping:5}, and recalling that $\ell^p\subset\ell^q$, because $p\leq q$, we infer that
	\begin{equation}\label{damping:6}
		\begin{aligned}
			\norm{\int_0^T e^{-\alpha |t-s|}\chi(t,s)K(t,s)f(s)ds}_{L^{q}(\mathbb{R};Y)}
			\hspace{-30mm}&
			\\
			&\leq 5A\sum_{\substack{j\in\mathbb{Z}\\2^j< T}}e^{-\alpha 2^j}
			2^{j\left(\frac 1q-\frac 1{q_0}+\frac 1{p_0}-\frac 1{p}\right)}
			\norm{\norm{f}_{L^{p}\big(\big[2^jk,2^j(k+1)\big);X\big)}}_{\ell^q(k\in\mathbb{Z})}
			\\
			&\leq 5A\norm{f}_{L^{p}(\mathbb{R};X)}\sum_{\substack{j\in\mathbb{Z}\\2^j< T}}e^{-\alpha 2^j}
			2^{j\left(\frac 1q-\frac 1{q_0}+\frac 1{p_0}-\frac 1{p}\right)}.
		\end{aligned}
	\end{equation}
	There only remains to evaluate the constant resulting from the above sum in $j\in\mathbb{Z}$.
	If $p=p_0$ and $q=q_0$, the lemma trivially holds and there is nothing to prove. Thus, we may assume that $\beta>0$, thereby ensuring that the sum converges.
	
	Now, observing that the function $e^{-x}(1+x)^{1+\beta}$ reaches its maximum on $[0,\infty)$ at $x=\beta$, we obtain that
	\begin{equation}\label{damping:7}
		\begin{aligned}
			e^{\beta}(1+\beta)^{-(1+\beta)}\sum_{\substack{j\in\mathbb{Z}\\2^j< T}}e^{-\alpha 2^j}2^{j\beta}
			&\leq
			\sum_{\substack{j\in\mathbb{Z}\\2^j< T}}\frac{2^{j\beta}}{(1+\alpha 2^j)^{1+\beta}}
			\leq
			\sum_{\substack{j\in\mathbb{Z}\\2^j< T}}\int_{j-1}^j\frac{2^{(1+u)\beta}}{(1+\alpha 2^u)^{1+\beta}} du
			\\
			&\leq
			\frac{2^\beta}{\log 2} \int_{0}^{T}\frac{x^{\beta-1}}{(1+\alpha x)^{1+\beta}} dx
			=\frac{2^\beta}{\beta\log 2}\left(\frac{T}{1+\alpha T}\right)^\beta.
		\end{aligned}
	\end{equation}
	Therefore, incorporating \eqref{damping:7} into the estimate \eqref{damping:6} concludes the proof of the lemma.
\end{proof}

\subsection{Damped parabolic estimates}\label{section:damped:parabolic:1}

Let us consider the general solution $w(t,x)$ of a damped heat equation on the Euclidean space $\mathbb{R}^d$, for any dimension $d\geq 1$,
\begin{equation}\label{heat:1}
	\left\{
	\begin{aligned}
		\partial_t w+\alpha w-\Delta w&=f,
		\\
		w_{|t=0}&=w_0,
	\end{aligned}
	\right.
\end{equation}
where $(t,x)\in[0,T)\times\mathbb{R}^d$, with $T>0$ ($T=\infty$ may also be considered), the damping constant satisfies $\alpha\geq 0$, the right-hand side $f(t,x)$ is a source term and $w_0(x)$ is an initial datum.

Such equations naturally appear in dissipative physical systems. For instance, the heat equation \eqref{heat:1} provides the linear structure of the damped incompressible Navier--Stokes equations, which arise from hydrodynamic regimes of inelastic particle systems.

Using standard semigroup notation, the solution $w(t,x)$ can be represented as
\begin{equation}\label{heat:3}
	w(t)=e^{-t(\alpha-\Delta)}w_0+\int_0^te^{-(t-s)(\alpha-\Delta)}f(s)ds.
\end{equation}
We are now going explore the jungle of parabolic smoothing estimates in Besov spaces for \eqref{heat:3}, by first reviewing the available results for the case $\alpha=0$ and, then, extending these results to the setting $\alpha>0$.

When $f\equiv 0$ and $\alpha=0$, direct parabolic estimates on the semigroup $e^{t\Delta}$ yield the following result.
\begin{prop}\label{damped:parabolic:1}
	Let $\sigma\in\mathbb{R}$, $p\in[1,\infty]$ and $q\in[1,\infty]$. If $\alpha=0$, $w_0$ belongs to $\dot B^\sigma_{p,q}$ and $f\equiv 0$, then the solution of the heat equation \eqref{heat:1} satisfies
	\begin{equation*}
		\|e^{t\Delta} w_0\|_{L^\infty\big([0,\infty);\dot B_{p,q}^{\sigma}\big)}
		\lesssim
		\|w_0\|_{\dot B_{p,q}^{\sigma}}.
	\end{equation*}
	Furthermore, if $q<\infty$, one also has the estimate
	\begin{equation*}
		\|e^{t\Delta} w_0\|_{L^q\big([0,\infty);\dot B_{p,1}^{\sigma+\frac 2q}\big)}
		\lesssim
		\|w_0\|_{\dot B_{p,q}^{\sigma}}.
	\end{equation*}
\end{prop}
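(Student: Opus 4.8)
\textbf{Proof plan for Proposition \ref{damped:parabolic:1}.} The plan is to reduce everything to the standard Littlewood--Paley characterization of Besov norms together with the explicit low/high-frequency behavior of the heat kernel. Write $w_0=\sum_{j\in\mathbb{Z}}\dot\Delta_j w_0$ in a homogeneous dyadic decomposition, so that $e^{t\Delta}\dot\Delta_j w_0$ has frequency support in an annulus of size $\sim 2^j$. The key kernel estimate is the Bernstein-type bound $\|e^{t\Delta}\dot\Delta_j g\|_{L^p}\lesssim e^{-c\, 2^{2j}t}\|\dot\Delta_j g\|_{L^p}$ for some universal $c>0$, valid for all $p\in[1,\infty]$; this follows from the fact that the kernel of $e^{t\Delta}\dot\Delta_j$ has $L^1$-norm $\lesssim e^{-c2^{2j}t}$ (rescale to $t2^{2j}$ and use that the localized Gaussian is a fixed Schwartz function) combined with Young's convolution inequality.

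For the first estimate, apply this bound directly: $2^{j\sigma}\|e^{t\Delta}\dot\Delta_j w_0\|_{L^p}\lesssim e^{-c2^{2j}t}\,2^{j\sigma}\|\dot\Delta_j w_0\|_{L^p}\leq 2^{j\sigma}\|\dot\Delta_j w_0\|_{L^p}$, and then take the $\ell^q(j)$-norm followed by the supremum over $t\geq 0$, which gives $\|e^{t\Delta}w_0\|_{L^\infty_t\dot B^\sigma_{p,q}}\lesssim\|w_0\|_{\dot B^\sigma_{p,q}}$. For the second estimate, raise the same pointwise-in-$t$ bound to the power $q$, integrate in $t$ over $[0,\infty)$ using $\int_0^\infty e^{-cq2^{2j}t}\,dt=\frac{1}{cq}2^{-2j}$, which produces exactly the gain $2^{-2j}$, i.e. a shift of $\frac 2q$ derivatives: $\big\|2^{j(\sigma+\frac 2q)}e^{t\Delta}\dot\Delta_j w_0\big\|_{L^q_t L^p_x}\lesssim 2^{j\sigma}\|\dot\Delta_j w_0\|_{L^p}$. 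It then remains to pass from the $\ell^q$-sum of these quantities to the stated $\dot B^{\sigma+\frac 2q}_{p,1}$ norm; here one uses that $q<\infty$ together with the Minkowski/Hölder inequality $\|\cdot\|_{\ell^1_j L^q_t}\leq\|\cdot\|_{L^q_t\ell^1_j}$ is the wrong direction, so instead one first takes $L^q_t$ inside and bounds $\big\|\,\|2^{j(\sigma+2/q)}e^{t\Delta}\dot\Delta_jw_0\|_{L^p_x}\big\|_{\ell^1_j L^q_t}$, and this is where the summability in $j$ must be extracted carefully.

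The one genuine subtlety — the main obstacle — is justifying the final summation in $j$: after integrating in time one has a bound on $\|2^{j(\sigma+2/q)}e^{t\Delta}\dot\Delta_j w_0\|_{L^q_tL^p_x}$ by $c_j:=2^{j\sigma}\|\dot\Delta_j w_0\|_{L^p}$ with $(c_j)\in\ell^q$, but the target space $\dot B^{\sigma+2/q}_{p,1}$ asks for an $\ell^1$-sum in $j$ of $L^q_tL^p_x$-norms, and $\ell^q\not\subset\ell^1$. The resolution is the standard parabolic-gain trick: one does \emph{not} estimate each frequency block by its worst-case sup, but rather keeps the time-decay factor $e^{-c2^{2j}t}$ attached, writes $\|e^{t\Delta}w_0\|_{\dot B^{\sigma+2/q}_{p,1}}=\sum_j 2^{j(\sigma+2/q)}\|e^{t\Delta}\dot\Delta_j w_0\|_{L^p_x}\lesssim\sum_j 2^{j(\sigma+2/q)}e^{-c2^{2j}t}\,2^{-j\sigma}c_j\cdot 2^{j\sigma}$ — more precisely $\lesssim\big(\sum_j 2^{2jq/(q-1)\cdot(1/q)}\cdots\big)$ — and applies Hölder in $j$ between the decaying factor and $(c_j)$, or equivalently one invokes the well-known embedding/interpolation lemma (see e.g. \cite[Chapter 2]{bcd11}) that for $q<\infty$ the heat flow maps $\dot B^\sigma_{p,q}$ into $L^q_t\dot B^{\sigma+2/q}_{p,1}$; the $\ell^1$ rather than $\ell^q$ summability is bought by the fact that the $L^q_t$-norms of distinct dyadic blocks are effectively supported on disjoint time scales $t\sim 2^{-2j}$, so the $\ell^q$ orthogonality in $t$ upgrades to $\ell^1$ summability in $j$ after integration. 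Once this point is handled the two displayed estimates follow; everything else is the routine Littlewood--Paley bookkeeping sketched above.
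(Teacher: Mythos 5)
Your first estimate is correct and matches the paper's argument. For the second, you correctly identify the real obstacle --- that after integrating in time block-by-block you only get an $\ell^q$-summable quantity while the target $\dot B^{\sigma+2/q}_{p,1}$ norm demands an $\ell^1$-sum --- and you correctly reject the block-by-block route in favor of applying H\"older's inequality in $j$ at fixed $t$ and integrating afterward. But your resolution has a genuine gap: the naive H\"older split fails, and the fix you gesture at is not supplied.

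Concretely, starting from $\norm{e^{t\Delta}w_0}_{\dot B^{\sigma+2/q}_{p,1}}\lesssim\sum_j 2^{2j/q}e^{-ct2^{2j}}c_j$ with $c_j\bydef 2^{j\sigma}\norm{\Delta_jw_0}_{L^p}$, the obvious H\"older split --- put all of $2^{2j/q}$ and decay on the $\ell^{q'}$-factor --- gives a first factor $\bigl(\sum_j 2^{2jq'/q}e^{-cq't2^{2j}}\bigr)^{1/q'}\sim t^{-1/q}$ and a second factor bounded by $\norm{w_0}_{\dot B^\sigma_{p,q}}$, and then $\int_0^\infty t^{-1}\,dt$ diverges at $t=0$, so the $L^q_t$-norm is infinite. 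That is exactly what happens if you push your half-written H\"older computation to completion. What you are missing is the auxiliary parameter: fix $\lambda>0$ with $\lambda(q-1)<1$ and split the summand as $\bigl((t2^{2j})^{\lambda/q'}e^{-ct2^{2j}/q'}\bigr)\cdot\bigl(t^{-1/q}(t2^{2j})^{(1-\lambda(q-1))/q}e^{-ct2^{2j}/q}c_j\bigr)$, a factorization you can check reproduces $2^{2j/q}e^{-ct2^{2j}}c_j$ because the powers of $t$ cancel and the powers of $2^{2j}$ add to $2j/q$. Now the $\ell^{q'}$-factor, $\bigl(\sum_j(t2^{2j})^\lambda e^{-ct2^{2j}}\bigr)^{1/q'}$, is uniformly bounded in $t$ --- the $(t2^{2j})^\lambda$ weight cures the divergence at $t\to0$ --- while the $\ell^q$-factor, raised to the $q$ and integrated in $t$, yields $\sum_jc_j^q\int_0^\infty s^{-\lambda(q-1)}e^{-cs}\,ds$, which converges precisely under $\lambda(q-1)<1$. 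This rebalanced H\"older is the substance of the proof; without it the argument fails, and no choice of parameter $\beta$ in your ``$2^{2jq/(q-1)\cdot(1/q)}$'' placeholder alone can save it unless some of the $t$-decay is moved onto the $\ell^q$-factor.

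Two secondary points. First, invoking ``the well-known embedding lemma that the heat flow maps $\dot B^\sigma_{p,q}$ into $L^q_t\dot B^{\sigma+2/q}_{p,1}$'' is circular --- that lemma \emph{is} the statement being proved; in fact \cite[Theorem 2.34]{bcd11} gives the weaker target $L^q_tL^p_x$, which is strictly less information because of the $\ell^1$-summation in the inner Besov norm. Second, the heuristic that ``the $\ell^q$-orthogonality in $t$ upgrades to $\ell^1$-summability in $j$'' is false as stated: the functions $t\mapsto e^{-c2^{2j}t}$ are not concentrated on disjoint time scales --- all of them are essentially equal to $1$ for $t\ll 2^{-2j}$, so the low-frequency blocks overlap completely near $t=0$. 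It is precisely that overlap which makes the naive split diverge and forces the $\lambda$-weight.
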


\begin{rem}
	The above result somewhat reinforces the estimate
	\begin{equation*}
		\|e^{t\Delta} w_0\|_{L^q\big([0,\infty);L^p \big)}
		\lesssim
		\|w_0\|_{\dot B_{p,q}^{-\frac 2q}},
	\end{equation*}
	for any $1\leq p,q\leq\infty$, which is commonly found in the literature (see \cite[Theorem 2.34]{bcd11}, for instance).
\end{rem}

\begin{rem}
	Note that taking $p=q=2$ in the above proposition yields the estimate
	\begin{equation*}
		\|e^{t\Delta} w_0\|_{L^2\big([0,\infty);\dot B_{2,1}^{\sigma+1}\big)}
		\lesssim
		\|w_0\|_{\dot H^\sigma},
	\end{equation*}
	where we used that $\dot H^\sigma=\dot B^\sigma_{2,2}$ (see the appendix for a precise definition of all relevant homogeneous spaces).
\end{rem}

\begin{rem}
	Throughout this section, we will routinely use the basic estimate
	\begin{equation}\label{heat:2}
		\|e^{t\Delta}\Delta_ku\|_{L^p}\leq Ce^{-C_*t2^{2k}}\|\Delta_k u\|_{L^p},
	\end{equation}
	for any $t>0$, $p\in[1,\infty]$ and any dyadic block $\Delta_k$, with $k\in\mathbb{Z}$, where $C$ and $C_*$ are positive independent constants.
	We refer to \cite[Lemma 2.4]{bcd11} for a justification of \eqref{heat:2}.
\end{rem}

\begin{proof}
	The first part of the statement is a straightforward consequence of the definition of the homogeneous Besov norm. More precisely, using \eqref{heat:2}, we obtain
	\begin{equation*}
		\begin{aligned}
			\|e^{t\Delta} w_0\|_{\dot B_{p,q}^{\sigma}}
			&=
			\left(\sum_{k\in\mathbb{Z}}\left(2^{k\sigma}\|e^{t\Delta} \Delta_k w_0\|_{L^p}\right)^q\right)^\frac 1q
			\\
			&\lesssim
			\left(\sum_{k\in\mathbb{Z}}e^{-C_*t2^{2k}}\left(2^{k\sigma}\|\Delta_k w_0\|_{L^p}\right)^q\right)^\frac 1q
			\lesssim \|w_0\|_{\dot B_{p,q}^{\sigma}},
		\end{aligned}
	\end{equation*}
	which, upon taking the supremum in $t>0$, concludes the justification of the first estimate.
	
	The second part of the statement is more subtle. Indeed, assuming now that $q<\infty$ and using \eqref{heat:2}, we find that
	\begin{equation*}
		\begin{aligned}
			\|e^{t\Delta} w_0\|_{\dot B_{p,1}^{\sigma+\frac 2q}}
			&=
			\sum_{k\in\mathbb{Z}}2^{k(\sigma+\frac 2q)}\|e^{t\Delta} \Delta_k w_0\|_{L^p}
			\\
			&\lesssim
			\sum_{k\in\mathbb{Z}}e^{-C_*t2^{2k}}2^{k(\sigma+\frac 2q)}\|\Delta_k w_0\|_{L^p}.
		\end{aligned}
	\end{equation*}
	Next, further employing H\"older's inequality and taking a fixed positive value $\lambda>0$ such that $(q-1)\lambda<1$, we infer that
	\begin{equation}\label{parabolic:1}
		\begin{aligned}
			\|e^{t\Delta} w_0\|_{\dot B_{p,1}^{\sigma+\frac 2q}}
			&\lesssim
			\left(\sum_{k\in\mathbb{Z}}\big(t2^{2k}\big)^\lambda e^{-C_*t2^{2k}}\right)^\frac{q-1}{q}
			\\
			&\quad\times\left(\sum_{k\in\mathbb{Z}}\frac 1te^{-C_*t2^{2k}}\big(t2^{2k}\big)^{1-\lambda(q-1)}
			\left(2^{k\sigma}\|\Delta_k w_0\|_{L^p}\right)^q\right)^\frac 1q.
		\end{aligned}
	\end{equation}
	
	Now, for any positive $t$, considering the unique $j\in\mathbb{Z}$ such that $2^{2j}\leq t<2^{2(j+1)}$, we find, since $\lambda>0$, that
	\begin{equation}\label{kernel:1}
		\begin{aligned}
			\sup_{t>0}\sum_{k\in\mathbb{Z}}\big(t2^{2k}\big)^\lambda e^{-C_*t2^{2k}}
			&\leq
			2^{2\lambda}\sup_{j\in\mathbb{Z}}\sum_{k\in\mathbb{Z}}\big(2^{2(j+k)}\big)^\lambda e^{-C_*2^{2(j+k)}}
			\\
			&=
			2^{2\lambda}\sum_{k\in\mathbb{Z}}\big(2^{2k}\big)^\lambda e^{-C_*2^{2k}}
			<\infty,
		\end{aligned}
	\end{equation}
	whereas, since $\lambda(q-1)<1$, we evaluate that
	\begin{equation*}
		\int_0^\infty e^{-C_*t2^{2k}}\big(t2^{2k}\big)^{1-\lambda(q-1)} \frac {dt}t
		=\int_0^\infty e^{-C_*t}t^{-\lambda(q-1)} dt<\infty.
	\end{equation*}
	Therefore, integrating \eqref{parabolic:1} in time, we finally arrive at the estimate
	\begin{equation*}
		\begin{aligned}
			\|e^{t\Delta} w_0\|_{L^q_t\dot B_{p,1}^{\sigma+\frac 2q}}
			&\lesssim
			\sup_{t>0}\left(\sum_{k\in\mathbb{Z}}\big(t2^{2k}\big)^\lambda e^{-C_*t2^{2k}}\right)^\frac{q-1}{q}
			\\
			&\quad\times\left(\sum_{k\in\mathbb{Z}}\int_0^\infty e^{-C_*t2^{2k}}\big(t2^{2k}\big)^{1-\lambda(q-1)}\frac{dt}t
			\left(2^{k\sigma}\|\Delta_k w_0\|_{L^p}\right)^q\right)^\frac 1q
			\\
			&\lesssim \|w_0\|_{\dot B_{p,q}^{\sigma}},
		\end{aligned}
	\end{equation*}
	which concludes the proof of the proposition.
\end{proof}

In view of the preceding result, the effect of the damping term $e^{-\alpha t}$ on the initial data can be taken into account through a straightforward application of H\"older's inequality, thereby providing the following corollary.

\begin{cor}
	Let $\sigma\in\mathbb{R}$, $p\in[1,\infty]$ and $q\in[1,\infty]$. If $\alpha\geq 0$, $w_0$ belongs to $\dot B^\sigma_{p,q}$ and $f\equiv 0$, then the solution of the heat equation \eqref{heat:1} satisfies
	\begin{equation*}
		\|e^{-t(\alpha-\Delta)} w_0\|_{L^m\big([0,T);\dot B_{p,q}^{\sigma}\big)}
		\lesssim \left(\frac T{1+\alpha T}\right)^\frac 1m
		\|w_0\|_{\dot B_{p,q}^{\sigma}},
	\end{equation*}
	for every $0<m\leq\infty$.
	Furthermore, if $0<m\leq q<\infty$, one also has the estimate
	\begin{equation*}
		\|e^{-t(\alpha-\Delta)} w_0\|_{L^m\big([0,T);\dot B_{p,1}^{\sigma+\frac 2q}\big)}
		\lesssim \left(\frac T{1+\alpha T}\right)^{\frac 1m-\frac 1q}
		\|w_0\|_{\dot B_{p,q}^{\sigma}}.
	\end{equation*}
\end{cor}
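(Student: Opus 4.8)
The plan is to derive this corollary directly from Proposition~\ref{damped:parabolic:1} by incorporating the damping factor $e^{-\alpha t}$ via the Damping Lemma (Lemma~\ref{damping:1}), exactly as was done in the discussion preceding the statement for the $f\equiv 0$ part, but now keeping careful track of the relevant integrability exponents. The key observation is that the map $w_0\mapsto e^{-t(\alpha-\Delta)}w_0 = e^{-\alpha t}e^{t\Delta}w_0$ is itself an integral operator of the degenerate type $K(t,s)f(s)\,ds$ with a kernel supported on $s=0$ — more precisely, I would apply the Damping Lemma to the constant-in-$s$ situation, treating $e^{t\Delta}w_0$ as the output of the undamped estimate and the factor $e^{-\alpha t}=e^{-\alpha|t-0|}$ as the damping. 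Concretely, for the first estimate I take $q_0=\infty$, $p_0=\infty$ (reflecting the $L^\infty_t$ bound of Proposition~\ref{damped:parabolic:1} with $A\sim\|w_0\|_{\dot B^\sigma_{p,q}}$), and then Lemma~\ref{damping:1} with $p=q=m$ gives $\beta = \frac1m - 0 + 0 - \frac1m = 0$? — which is degenerate, so one cannot directly feed the endpoint into the lemma; instead I would simply apply H\"older in time to the pointwise-in-$t$ bound $\|e^{-\alpha t}e^{t\Delta}w_0\|_{\dot B^\sigma_{p,q}}\le e^{-\alpha t}\|w_0\|_{\dot B^\sigma_{p,q}}$, using $\|e^{-\alpha t}\|_{L^m([0,T))} = \bigl(\int_0^T e^{-m\alpha t}\,dt\bigr)^{1/m}\sim\bigl(\frac{T}{1+\alpha T}\bigr)^{1/m}$, where the last equivalence is checked by splitting into the cases $\alpha T\le1$ and $\alpha T>1$.

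For the second estimate, I would start from the undamped bound $\|e^{t\Delta}w_0\|_{L^q([0,\infty);\dot B^{\sigma+2/q}_{p,1})}\lesssim\|w_0\|_{\dot B^\sigma_{p,q}}$ supplied by Proposition~\ref{damped:parabolic:1}, and then insert the damping factor $e^{-\alpha t}$. Here I want the conclusion in $L^m_t$ with $0<m\le q$, so the natural tool is H\"older's inequality in time with exponents $\frac1m = \frac1q + \frac1r$, giving $r$ defined by $\frac1r = \frac1m - \frac1q$; then $\|e^{-\alpha t}g(t)\|_{L^m_t}\le\|e^{-\alpha t}\|_{L^r_t}\|g\|_{L^q_t}$ with $\|e^{-\alpha t}\|_{L^r([0,T))}\sim\bigl(\frac{T}{1+\alpha T}\bigr)^{1/r} = \bigl(\frac{T}{1+\alpha T}\bigr)^{\frac1m-\frac1q}$, which is exactly the claimed factor. (When $m=q$ one interprets $r=\infty$ and the factor is $1$, consistent with the undamped statement.) Alternatively, one could invoke Lemma~\ref{damping:1} with $q_0=q$, $p_0=p=q$ (so $A\sim\|w_0\|_{\dot B^\sigma_{p,q}}$ after identifying the semigroup as a kernel acting on a function concentrated at $s=0$) and output exponent $m$, which yields $\beta = \frac1m-\frac1q$ and the same power of $\frac{T}{1+\alpha T}$; but the direct H\"older argument is cleaner here since there is no genuine $s$-integration.

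The only mild subtlety — and the step I would be most careful about — is the elementary but slightly fiddly two-sided bound $\int_0^T e^{-\gamma\alpha t}\,dt\sim_\gamma \bigl(\frac{T}{1+\alpha T}\bigr)$ uniformly in $\alpha\ge0$ and $T>0$, for a fixed exponent $\gamma>0$: when $\alpha T\le 1$ the integral is comparable to $T$ (since the exponential stays bounded below by $e^{-\gamma}$), and when $\alpha T>1$ the integral is comparable to $\frac1{\gamma\alpha}$, which matches $\frac{T}{1+\alpha T}\sim\frac1\alpha$; the constants depend only on $\gamma$, hence only on $m$, $q$, and are absorbed into the implicit constant. Everything else is a routine combination of the pointwise semigroup decay (via \eqref{heat:2}), the already-established undamped estimates of Proposition~\ref{damped:parabolic:1}, and H\"older's inequality in the time variable, so no genuine obstacle is expected.
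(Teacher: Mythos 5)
Your final argument is exactly the paper's proof: apply H\"older's inequality in time, using $\|e^{-\alpha t}\|_{L^m([0,T))}$ for the first estimate and $\|e^{-\alpha t}\|_{L^{(\frac1m-\frac1q)^{-1}}([0,T))}$ for the second, against the undamped $L^\infty_t$ and $L^q_t$ bounds of Proposition~\ref{damped:parabolic:1}, together with the elementary equivalence $\|e^{-\alpha t}\|_{L^r([0,T))}\sim\bigl(\tfrac{T}{1+\alpha T}\bigr)^{1/r}$. You were also right to reject the Damping Lemma route here, since the semigroup acting on initial data has no genuine $s$-integration for that lemma to exploit.
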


\begin{proof}
	A direct use of H\"older's inequality followed by an application of Proposition \ref{damped:parabolic:1} yields that
	\begin{equation*}
		\|e^{-t(\alpha-\Delta)} w_0\|_{L^m\big([0,T);\dot B_{p,q}^{\sigma}\big)}
		\leq
		\|e^{-t\alpha}\|_{L^m([0,T))}
		\|e^{t\Delta} w_0\|_{L^\infty\big([0,T);\dot B_{p,q}^{\sigma}\big)}
		\lesssim \left(\frac T{1+\alpha T}\right)^\frac 1m
		\|w_0\|_{\dot B_{p,q}^{\sigma}},
	\end{equation*}
	for all $0<m\leq\infty$, and
	\begin{equation*}
		\begin{aligned}
			\|e^{-t(\alpha-\Delta)} w_0\|_{L^m\big([0,T);\dot B_{p,1}^{\sigma+\frac 2q}\big)}
			& \leq
			\|e^{-t\alpha}\|_{L^{(\frac 1m-\frac 1q)^{-1}}([0,T))}
			\|e^{t\Delta} w_0\|_{L^q\big([0,T);\dot B_{p,1}^{\sigma+\frac 2q}\big)}
			\\
			& \lesssim \left(\frac T{1+\alpha T}\right)^{\frac 1m-\frac 1q}
			\|w_0\|_{\dot B_{p,q}^{\sigma}},
		\end{aligned}
	\end{equation*}
	for all $0<m\leq q<\infty$,
	which completes the proof.
\end{proof}

Parabolic estimates are more involved when one includes a nonzero source term $f$. The coming results contain a wide range of smoothing estimates for the inhomogeneous heat equation. In preparation of these results, in order to reach a broader range of applicability, we are now going to introduce symbols
\begin{equation*}
	a(t,s,\xi)\in L^\infty([0,T)\times [0,T)\times\mathbb{R}^d)
\end{equation*}
which act as multipliers on the Fourier variable $\xi\in\mathbb{R}^d$ and are dependent on the time variables $t,s\in [0,T)$, thereby leading to time-dependent Fourier multipliers $a(t,s,D)$.

\begin{defin}
	For a given $1\leq p\leq \infty$, we say that $a(t,s,D)$ is \emph{bounded} if there is a constant $C_{a}>0$, independent of $t$ and $s$, such that
	\begin{equation}\label{multiplier:1}
		\|a(t,s,D)f\|_{\dot B^0_{p,\infty}}\leq C_{a}\|f\|_{\dot B^0_{p,\infty}},
	\end{equation}
	for every $f\in\dot B^0_{p,\infty}(\mathbb{R}^d)$ and almost every $(t,s)\in [0,T)^2$. That is, the multiplier $a(t,s,D)$ is bounded if it is bounded over the Besov space $\dot B^0_{p,\infty}(\mathbb{R}^d)$, uniformly in $t$ and $s$. The \emph{norm} of $a(t,s,D)$, which we denote by
	\begin{equation*}
		\norm{a(t,s,D)}_{M_p},
	\end{equation*}
	is defined as the smallest possible constant $C_a>0$ that fits in \eqref{multiplier:1}.
\end{defin}

\begin{rem}
	Equivalently, it is readily seen that \eqref{multiplier:1} holds if and only if there is a constant $C_a'>0$, independent of $t$ and $s$, such that
	\begin{equation}\label{multiplier:2}
		\|a(t,s,D)\Delta_k f\|_{L^p}\leq C_a'\|f\|_{L^p},
	\end{equation}
	for every $k\in\mathbb{Z}$, $f\in L^p(\mathbb{R}^d)$ and almost every $(t,s)\in [0,T)^2$.
\end{rem}

\begin{rem}
	Observe that \eqref{multiplier:1} and \eqref{multiplier:2} hold if and only if one has
	\begin{equation*}
		\|a(t,s,D)f\|_{\dot B^\sigma_{p,q}}\leq C_{a,\sigma,q}\|f\|_{\dot B^\sigma_{p,q}},
	\end{equation*}
	with $C_{a,\sigma,q}>0$, for all $\sigma\in\mathbb{R}$, $q\in [1,\infty]$ and every $f\in\dot B^\sigma_{p,q}(\mathbb{R}^d)$.
\end{rem}

Since the space of Fourier multipliers over $L^2(\mathbb{R}^d)$ is isomorphic to $L^\infty(\mathbb{R}^d)$, it is readily seen, when $p=2$, that proving \eqref{multiplier:1} and \eqref{multiplier:2} is equivalent to establishing a bound
\begin{equation*}
	a(t,s,\xi)\in L^\infty\big([0,T)\times[0,T)\times\mathbb{R}^d\big).
\end{equation*}
More generally, when $p\neq 2$, in order to ensure that \eqref{multiplier:1} or \eqref{multiplier:2} hold, it is sufficient to require that
\begin{equation*}
	\mathcal{F}^{-1}\big[a(t,s,\xi)\varphi(2^{-k}\xi)\big]\in L^1(\mathbb{R}^d),
\end{equation*}
uniformly in $t$, $s$ and $k$, where $\varphi(2^{-k}\xi)$ is a smooth compactly supported cutoff function used to define a Littlewood--Paley dyadic decomposition (see Appendix \ref{besov:1}). Therefore, in view of the straightforward classical estimate
\begin{equation*}
	\begin{aligned}
		\left\|\mathcal{F}^{-1}\big[a(t,s,\xi)\varphi(2^{-k}\xi)\big](x)\right\|_{L^1_x}
		& \lesssim
		2^{-k\frac d2}\left\|\left(1+2^k|x|\right)^N\mathcal{F}^{-1}\big[a(t,s,\xi)\varphi(2^{-k}\xi)\big](x)\right\|_{L^2_x}
		\\
		& \lesssim 2^{-k\frac d2}\sum_{\substack{\alpha\in\mathbb{N}^d \\ |\alpha|\leq N}}
		\left\|2^{k|\alpha|}\partial_\xi^\alpha\big[a(t,s,\xi)\varphi(2^{-k}\xi)\big]\right\|_{L^2_\xi}
		\\
		& \lesssim 2^{-k\frac d2}\sum_{\substack{\alpha,\beta\in\mathbb{N}^d \\ |\alpha|+|\beta|\leq N}}
		\left\|2^{k|\alpha|}\partial_\xi^\alpha a(t,s,\xi)(\partial^\beta \varphi)(2^{-k}\xi)\right\|_{L^2_\xi},
	\end{aligned}
\end{equation*}
where we have used Plancherel's theorem and $N$ is any integer larger than $\frac d2$, we see that \eqref{multiplier:1} and \eqref{multiplier:2} both hold as soon as $a(t,s,\xi)$ is sufficiently differentiable in $\xi$ (except possibly at the origin $\xi=0$) and satisfies the estimate
\begin{equation}\label{multiplier:3}
	\left\||\xi|^{|\alpha|}\partial_\xi^\alpha a(t,s,\xi)\right\|_{L^\infty_{t,s,\xi}}
	<\infty,
\end{equation}
for every multi-index $\alpha\in\mathbb{N}^d$ with $|\alpha|\leq \left[\frac d2\right]+1$.
Observe that the above criterion establishes the boundedness of $a(t,s,D)$ over $\dot B^0_{p,\infty}(\mathbb{R}^d)$, uniformly in $t$ and $s$, for all values of $1\leq p\leq \infty$, including the endpoints.
Later on, we will be making use of \eqref{multiplier:3} to show the boundedness of multipliers.

We return now to the smoothing estimates for the heat equation with a nontrivial source term $f$. The next result provides a large array of such estimates in the classical case $\alpha=0$.

\begin{prop}\label{heat:4}
	Let $\sigma\in\mathbb{R}$, $1<r<m<\infty$ and $p\in [1,\infty]$. If $f$ belongs to $L^r\big([0,T);\dot B_{p,\infty}^{\sigma+\frac 2r}\big)$ and $w_0=0$, then the solution of the heat equation \eqref{heat:1}, with $\alpha=0$, satisfies
	\begin{equation*}
		\left\|\int_0^te^{(t-s)\Delta}a(t,s,D)f(s)ds \right\|_{ L^m \big([0,T),\dot B^{\sigma+2+\frac 2m}_{p,1} \big)}
		\lesssim
		\norm{a}_{M_p} \|f \|_{ L^r\big([0,T),\dot B^{\sigma+\frac 2r}_{p,\infty}\big)},
	\end{equation*}
	for any Fourier multiplier $a(t,s,D)$.
\end{prop}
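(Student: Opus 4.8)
The plan is to reduce this inhomogeneous parabolic estimate to a statement amenable to a Schur-type kernel bound after a dyadic frequency decomposition, exactly in the spirit of the proof of Proposition \ref{damped:parabolic:1}. First I would apply the Littlewood--Paley operator $\Delta_k$ to the Duhamel integral, commute it with the semigroup, and use the decay estimate \eqref{heat:2} together with the multiplier bound \eqref{multiplier:2} (in the form $\|a(t,s,D)\Delta_k g\|_{L^p}\lesssim \|a\|_{M_p}\|\Delta_k g\|_{L^p}$, which is legitimate since $\Delta_k$ is an $L^p$-bounded Fourier multiplier and $a(t,s,D)$ commutes with it up to the usual support enlargement) to obtain, for each $k\in\mathbb{Z}$,
\begin{equation*}
	2^{k(\sigma+2+\frac 2m)}\left\|\Delta_k\int_0^t e^{(t-s)\Delta}a(t,s,D)f(s)\,ds\right\|_{L^p}
	\lesssim
	\|a\|_{M_p}\int_0^t 2^{k(2+\frac 2m)}e^{-C_*(t-s)2^{2k}}\,2^{k\sigma}\|\Delta_k f(s)\|_{L^p}\,ds.
\end{equation*}
Writing $c_k(s):=2^{(\sigma+\frac 2r)k}\|\Delta_k f(s)\|_{L^p}$, so that $\|c_k(\cdot)\|_{\ell^\infty_k L^r_s}\lesssim \|f\|_{L^r_t\dot B^{\sigma+2/r}_{p,\infty}}$, the right-hand side becomes $\|a\|_{M_p}\int_0^t K_k(t-s)c_k(s)\,ds$ with the scalar kernel $K_k(\tau)=2^{k(2+\frac 2m-\frac 2r)}e^{-C_*\tau 2^{2k}}$ for $\tau>0$. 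The goal is then the bound $\big\|\sum_k \int_0^t K_k(t-s)c_k(s)\,ds\big\|_{L^m_t}\lesssim \|a\|_{M_p}\,\|(c_k)\|_{\ell^\infty_k L^r_s}$, where the outer $\ell^1$-in-$k$ summation comes from the target Besov space $\dot B^{\cdot}_{p,1}$.

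The core of the argument is a one-dimensional convolution inequality. For each fixed $k$, Young's inequality on $[0,T)$ with exponents satisfying $1+\frac 1m=\frac 1r+\frac 1\rho$, i.e. $\rho=(1-\frac 1r+\frac 1m)^{-1}\in(1,\infty)$ since $1<r<m<\infty$, gives
\begin{equation*}
	\left\|\int_0^t K_k(t-s)c_k(s)\,ds\right\|_{L^m_t([0,T))}
	\leq \|K_k\|_{L^\rho(0,\infty)}\,\|c_k\|_{L^r_s([0,T))}.
\end{equation*}
A direct computation of $\|K_k\|_{L^\rho}$: substituting $\tau\mapsto \tau 2^{-2k}$ one finds $\|K_k\|_{L^\rho(0,\infty)}=2^{k(2+\frac 2m-\frac 2r)}\cdot 2^{-2k/\rho}\,\big(\int_0^\infty e^{-C_*\rho\tau}d\tau\big)^{1/\rho}$, and the exponent of $2^k$ is $2+\frac 2m-\frac 2r-\frac 2\rho = 2+\frac 2m-\frac 2r-2(1-\frac 1r+\frac 1m)=0$. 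Hence $\|K_k\|_{L^\rho}\lesssim 1$ \emph{uniformly in $k$}—this scale invariance is the whole point, and it is dictated precisely by the $+2+\frac 2m$ versus $+\frac 2r$ gain in the statement. Summing the resulting bound $\|\int_0^t K_k(t-s)c_k(s)ds\|_{L^m_t}\lesssim \|c_k\|_{L^r_s}$ over $k\in\mathbb{Z}$ produces an $\ell^1_k$ of $L^r_s L^p_x$ norms, which is a stronger quantity than $\|f\|_{L^r_t\dot B^{\sigma+2/r}_{p,\infty}}=\|\,\|c_k\|_{\ell^\infty_k}\,\|_{L^r_s}$; to recover the $\ell^\infty$-based Besov norm on the right one interpolates, exactly as in \eqref{parabolic:1}: one splits a power of $(t-s)2^{2k}$ off the kernel and uses Hölder in $k$ with a small auxiliary exponent $\lambda$ (chosen so that the $k$-sum of $(\cdot)^\lambda e^{-C_*(\cdot)}$ converges, cf. \eqref{kernel:1}) to turn the bare $\ell^1_k$ summation into an $\ell^\infty_k$-controlled one at the cost of an extra convergent geometric factor, then integrates in time using $\rho<\infty$ just as in the proof of Proposition \ref{damped:parabolic:1}.

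The main obstacle is not any single estimate but the bookkeeping of exponents: one must verify that the same choice of auxiliary parameter $\lambda>0$ simultaneously makes the $k$-series convergent, keeps the $t$-integral finite (this needs $\lambda(r-1)<1$ type conditions, tied to $1<r$), and is compatible with the Young exponent $\rho$; the strict inequalities $1<r<m<\infty$ are exactly what guarantees such a $\lambda$ exists. A secondary point is the legitimacy of moving $a(t,s,D)$ past $\Delta_k$ and past $e^{(t-s)\Delta}$: since all three are Fourier multipliers they commute at the symbol level, and \eqref{multiplier:2} is stated in precisely the $L^p$-per-dyadic-block form needed, so this is routine. I would also note at the end that the condition $q<\infty$-type restriction does not appear here because the source-term estimate, unlike the free estimate in Proposition \ref{damped:parabolic:1}, never needs to sum a geometric series without an extra integrating factor—the time integration in Duhamel supplies it automatically. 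Finally, the case $T=\infty$ is included with no change, since all kernel norms were computed on $(0,\infty)$.
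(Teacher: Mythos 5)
Your identification of the per‑block kernel $K_k(\tau)=2^{k(2+\frac 2m-\frac 2r)}e^{-C_*\tau 2^{2k}}$ is correct, and so is the computation that $\|K_k\|_{L^\rho(0,\infty)}$ with $\frac 1\rho=1-\frac 1r+\frac 1m$ is \emph{independent of $k$}. But that scale invariance is precisely what kills the plan you sketch next. Applying Young's inequality per $k$ and then summing gives $\sum_k\|c_k\|_{L^r_s}$, and since $\|K_k\|_{L^\rho}$ carries no geometric decay in $k$, there is nothing to downgrade that $\ell^1_k$ sum to the $\ell^\infty_k$ quantity $\|\,\|c_k\|_{\ell^\infty_k}\,\|_{L^r_s}$. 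The ``Hölder in $k$ with an auxiliary power $\lambda$'' device from the proof of Proposition~\ref{damped:parabolic:1} does not transfer here: in that proof there is no time convolution, the input Besov index $q$ matches the temporal Lebesgue exponent exactly, and the trick converts $\ell^q_k\to L^q_t\ell^1_k$ — it is not a mechanism for converting $\ell^\infty_k\to L^m_t\ell^1_k$. If you try to run it on the Duhamel term, the $s$-integral sits between the Hölder step and the $L^m_t$ norm, and Minkowski's inequality for $\ell^m$-valued integrals points in the wrong direction, so you cannot pull the sum in $k$ inside. Concretely, the best one can extract from the structure you set up, after summing $K_k$ over $k$ via the Schur bound~\eqref{kernel:1} and replacing $c_k(s)$ by $g(s)=\sup_k c_k(s)$, is the one-dimensional fractional integration inequality
\begin{equation*}
	\left\|\int_0^t (t-s)^{-\theta}g(s)\,ds\right\|_{L^m_t}\lesssim\|g\|_{L^r_t},\qquad \theta=1+\tfrac 1m-\tfrac 1r,
\end{equation*}
and this is exactly the Hardy--Littlewood--Sobolev inequality at the point where Young fails (the kernel $\tau^{-\theta}$ lies in $L^{\rho,\infty}$ but not in $L^\rho$). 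So HLS is not an optional convenience here; it is the irreducible core of the estimate, which is also why the paper's remark after Proposition~\ref{heat:4} flags $r=m$ (i.e.\ $\theta=1$) as inaccessible by this route.

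The paper's own proof runs exactly along the lines you abandon too early: localize in frequency, use~\eqref{heat:2} and~\eqref{multiplier:2}, sum over $k$ first using~\eqref{kernel:1} to obtain the scalar kernel $|t-s|^{-\theta}$ acting on $\|f(s)\|_{\dot B^\sigma_{p,\infty}}$, and then apply Hardy--Littlewood--Sobolev in the time variable. Your proposal would be repaired by replacing the Young-plus-interpolation step with this single HLS application; as written, the gap between ``Young per $k$ gives $\ell^1_k$'' and ``interpolation recovers $\ell^\infty_k$'' is not filled, and I do not see how to fill it without invoking HLS or an equivalent (such as weak-type Young with the kernel placed in $L^{\rho,\infty}$).
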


\begin{rem}
	We refer to \cite[Lemma 2]{a19} for a complete justification of the preceding proposition in the case $a(t,s,D)=\mathrm{Id}$.
	A straightforward adaptation of this proof readily extends the result to nontrivial multipliers $a(t,s,D)$.
\end{rem}

\begin{rem}
	The endpoint case $r=m$ above corresponds formally to a maximal gain of two derivatives on the solution of the heat equation. However, the method of proof of this result relies on the Hardy--Littlewood--Sobolev inequality, which typically falls short for endpoint settings. It is therefore not possible to extend the proof of \cite[Lemma 2]{a19} to the case $r=m$.
\end{rem}

The next result generalizes Proposition \ref{heat:4} to incorporate the action of a damping term.

\begin{prop}\label{damped_heat:1}
	Let $\sigma\in\mathbb{R}$, $p\in [1,\infty]$ and
	\begin{equation*}
		1\leq r\leq m\leq\infty,
		\quad
		0<\theta< 1+\frac 1m-\frac 1r\leq 1,
	\end{equation*}
	or
	\begin{equation*}
		1<r<m<\infty,
		\quad
		0<\theta= 1+\frac 1m-\frac 1r<1.
	\end{equation*}
	Then, for any $\alpha\geq 0$, one has the estimate
	\begin{equation}\label{damped_heat:3}
		\begin{aligned}
			\left\|\int_0^te^{-(t-s)(\alpha-\Delta)}a(t,s,D)f(s)ds \right\|_{ L^m \big([0,T),\dot B^{\sigma+2\theta}_{p,1} \big)}
			\hspace{-20mm}&
			\\
			&\lesssim
			\left(\frac{T}{1+\alpha T}\right)^{1+\frac 1m-\frac 1r-\theta}
			\norm{a}_{M_p}\|f \|_{ L^r\big([0,T),\dot B^{\sigma}_{p,\infty}\big)},
		\end{aligned}
	\end{equation}
	for any $f$ in $L^r\big([0,T);\dot B_{p,\infty}^{\sigma}\big)$ and any Fourier multiplier $a(t,s,D)$.
\end{prop}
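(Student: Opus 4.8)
The plan is to reduce the damped inhomogeneous estimate to the undamped one (Proposition \ref{heat:4}) and then absorb the damping factor $e^{-\alpha(t-s)}$ via the Damping Lemma (Lemma \ref{damping:1}), treating the time-dependent Fourier multiplier $a(t,s,D)$ and the heat semigroup together as the operator-valued kernel. First I would set
\begin{equation*}
	K(t,s)=\mathds{1}_{\{s<t\}}\,e^{(t-s)\Delta}a(t,s,D),
\end{equation*}
viewed as an operator on each dyadic block, i.e. from $X=L^p(\mathbb{R}^d)$ to $Y=L^p(\mathbb{R}^d)$ after localizing in frequency to $\Delta_k$; the factor $e^{-\alpha(t-s)}=e^{-\alpha|t-s|}$ on $\{s<t\}$ plays the role of the damping term $e^{-\alpha|t-s|}$ in Lemma \ref{damping:1}. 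The undamped bound from Proposition \ref{heat:4} provides exactly hypothesis \eqref{damping:2} of the Damping Lemma with $p_0=r$, $q_0=m$ (in the strict-inequality regime $1<r<m<\infty$), the constant $A\lesssim\norm{a}_{M_p}$, and a gain of $\sigma+2+\frac 2m$ derivatives. The Damping Lemma then trades part of that derivative gain against a power of $T/(1+\alpha T)$: with the Lemma's exponent $\beta=\frac 1q-\frac 1{q_0}+\frac 1{p_0}-\frac 1p$, choosing the output integrability $q=m$ and input integrability $p=r$ would give $\beta=0$ (no damping gain), so instead I would reinterpret the derivative count. The cleanest bookkeeping is: Proposition \ref{heat:4} at regularity level $\sigma'+\frac 2{r}\mapsto\sigma'+2+\frac 2m$, and the desired target $\sigma+2\theta$ with input $\sigma$; matching $\sigma'=\sigma-\frac 2r$ forces the target derivative $\sigma-\frac 2r+2+\frac 2m=\sigma+2(1+\frac 1m-\frac 1r)$, which exceeds $\sigma+2\theta$ by $2(1+\frac 1m-\frac 1r-\theta)\ge 0$. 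That surplus of $2(1+\frac1m-\frac1r-\theta)$ derivatives is precisely what the damping converts, via Bernstein's inequality $2^{-2k\gamma}$ applied blockwise together with the integral $\sum_j e^{-\alpha 2^j}2^{2j\gamma}\lesssim(T/(1+\alpha T))^{\gamma}$ (the same elementary kernel estimate \eqref{damping:7} used in the Damping Lemma), into the claimed factor $(T/(1+\alpha T))^{1+\frac 1m-\frac 1r-\theta}$.

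Concretely, the order of steps: (i) localize \eqref{damped_heat:3} to a dyadic block $\Delta_k$ and write the left side as $\sum_k 2^{k(\sigma+2\theta)}\|\int_0^t e^{-(t-s)(\alpha-\Delta)}a(t,s,D)\Delta_k f(s)\,ds\|_{L^m_t L^p_x}$, reducing to a per-block inequality by the definition of $\dot B^{\sigma+2\theta}_{p,1}$; (ii) on the $k$-th block, factor $e^{-(t-s)(\alpha-\Delta)}=e^{-\alpha(t-s)}e^{(t-s)\Delta}$ and use $\|e^{(t-s)\Delta}\Delta_k g\|_{L^p}\le Ce^{-C_*(t-s)2^{2k}}\|\Delta_k g\|_{L^p}$ from \eqref{heat:2}; (iii) apply the Damping Lemma with kernel $K(t,s)$ on $\{s<t\}$, damping constant $\alpha$, and the undamped bound coming from Proposition \ref{heat:4} (restricted to block $k$, which costs a factor $2^{-k(2+\frac 2m-\frac 2r)}$ to descend from the natural gain $\sigma+2+\frac2m$ at input regularity $\sigma+\frac2r$ down to bare $L^p$-to-$L^p$ with the prescribed $L^r$–$L^m$ integrability), so that \eqref{damping:2} holds with $A\lesssim\norm{a}_{M_p}2^{-k(2+\frac2m-\frac2r)}$; (iv) read off from Lemma \ref{damping:1} the factor $C_\beta(T/(1+\alpha T))^\beta$ with $\beta=0$ for the integrability pair $(r,m)$, and instead move the surplus $2(1+\frac1m-\frac1r-\theta)$ powers of $2^k$ out of $A$ and sum them against $e^{-\alpha 2^j}$ as in \eqref{damping:7}; (v) reassemble the $\ell^1(k)$ sum using H\"older/Bernstein to recover $\|f\|_{L^r_t\dot B^\sigma_{p,\infty}}$ on the right. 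The boundary regime $1\le r\le m\le\infty$ with $\theta<1+\frac1m-\frac1r$ is handled identically but now with a genuinely positive $\beta$-type exponent and room to spare, so one may even avoid the sharp endpoint form of Proposition \ref{heat:4} (which excludes $r=m$); this is why the statement splits into two cases.

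The main obstacle will be step (iii)–(iv): correctly tracking how the two-parameter gain in Proposition \ref{heat:4} (two derivatives of smoothing) interacts with the Damping Lemma's one-parameter trade-off between integrability and decay, so that the surplus derivatives and the damping decay are matched up in the right proportion. In particular, one must verify that the combined kernel $e^{-\alpha(t-s)}e^{(t-s)\Delta}a(t,s,D)\mathds{1}_{\{s<t\}}$ genuinely lies in $L^1([0,T)^2;\mathcal L(X,Y))$ after frequency localization (so that Lemma \ref{damping:1} applies verbatim), and that the cutoff $\chi(t,s)=\mathds{1}_{\{s<t\}}$ is an admissible $\|\cdot\|_{L^\infty}\le 1$ weight—both are routine but need stating. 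A secondary delicate point is the endpoint exponent $\theta=1+\frac1m-\frac1r$ with $1<r<m<\infty$: there the damping gain degenerates to $(T/(1+\alpha T))^0=1$, so the estimate reduces exactly to Proposition \ref{heat:4} after trivially inserting $e^{-\alpha(t-s)}\le 1$, and no new argument is needed—one simply observes that the strict inequalities $1<r<m<\infty$ are inherited precisely because that proposition requires them. Everything else is H\"older's inequality on the time interval, Bernstein's inequality on dyadic blocks, and the geometric-sum estimate already carried out in \eqref{damping:7}.
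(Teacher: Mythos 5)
The key move you are missing is the auxiliary-parameter trick from the paper's first proof. You try to apply the Damping Lemma with $p_0=r$, $q_0=m$, correctly observe that this yields $\beta=0$ (no damping gain), and then attempt to salvage the argument by "trading" surplus derivatives against the damping. But the surplus derivatives live on the frequency side (powers of $2^k$, from the heat-kernel decay $e^{-C_*(t-s)2^{2k}}$), while the Damping Lemma's exponent $\beta$ and the sum in \eqref{damping:7} live on the time side (dyadic time-scales $2^j$): there is no mechanism to "sum $2^k$ against $e^{-\alpha 2^j}$". What the paper actually does (in the case $1<r\leq m<\infty$) is choose auxiliary integrability parameters $1<r_0<r\leq m<m_0<\infty$ with $\theta=1+\frac{1}{m_0}-\frac{1}{r_0}$, apply Proposition \ref{heat:4} with $(r_0,m_0)$ — which then lands exactly on regularity $\sigma+2\theta$ with no surplus derivatives to dispose of — and only then invoke the Damping Lemma to descend from $(r_0,m_0)$ to $(r,m)$, giving $\beta=\frac{1}{m}-\frac{1}{m_0}+\frac{1}{r_0}-\frac{1}{r}=1+\frac{1}{m}-\frac{1}{r}-\theta>0$. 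The trade-off between extra derivative gain and damping happens entirely through the choice of $(r_0,m_0)$, not by manipulating frequency powers.

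There is a second, quieter gap: the per-block strategy you sketch in steps (i)--(v), if cleaned up, naturally produces $\|f\|_{\widetilde L^r_t\dot B^\sigma_{p,1}}$ on the right-hand side (an $\ell^1$-sum over $k$ of per-block $L^r_tL^p_x$-norms), which is a strictly stronger hypothesis than the stated $\|f\|_{L^r_t\dot B^\sigma_{p,\infty}}$. You claim to "reassemble the $\ell^1(k)$ sum using H\"older/Bernstein to recover $\|f\|_{L^r_t\dot B^\sigma_{p,\infty}}$", but that interchange is not available — one cannot pass from an $\ell^1_k(L^r_t)$-norm to an $L^r_t(\ell^\infty_k)$-norm by H\"older or Bernstein. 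The fix, and the content of the paper's second proof, is to sum over $k$ \emph{inside} the time integral before integrating in $t$, using the pointwise kernel bound
\begin{equation*}
	\sum_{k\in\mathbb{Z}} 2^{2k\theta}e^{-C_*(t-s)2^{2k}}\lesssim |t-s|^{-\theta},
\end{equation*}
which turns the left side into $\int_0^T|t-s|^{-\theta}e^{-\alpha(t-s)}\|f(s)\|_{\dot B^\sigma_{p,\infty}}ds$; one then closes with Young's convolution inequality (for $\theta<1+\frac 1m-\frac 1r$, where the factor $(T/(1+\alpha T))^{1+\frac1m-\frac1r-\theta}$ appears from the $L^{(1+\frac1m-\frac1r)^{-1}}$-norm of $|t|^{-\theta}e^{-\alpha t}$) or Hardy--Littlewood--Sobolev (for the endpoint $\theta=1+\frac1m-\frac1r$). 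This kernel-level summation is also what rescues the boundary cases $r=1$ and $m=\infty$, which neither Proposition \ref{heat:4} nor your Damping-Lemma-based route can reach. Your reduction of the endpoint $\theta=1+\frac1m-\frac1r$ (with $1<r<m<\infty$) to Proposition \ref{heat:4} by inserting $e^{-\alpha(t-s)}\leq 1$ is correct and matches the paper.
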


\begin{rem}
	We emphasize that any implicit constant involved in the estimate of Proposition \ref{damped_heat:1} is independent of $T$ and $\alpha$. Moreover, it is permitted to set $T=\infty$ and $\alpha>0$ therein, in order to deduce a global estimate.
\end{rem}

\begin{rem}
	Observe that, choosing any $1\leq r\leq m\leq\infty$, $1\leq p,q\leq\infty$ and $\sigma\in\mathbb{R}$, one has the simple estimate
	\begin{equation*}
		\begin{aligned}
			\left\|\int_0^te^{-(t-s)(\alpha-\Delta)}a(t,s,D)f(s)ds \right\|_{ L^m \big([0,T),\dot B^{\sigma}_{p,q} \big)}
			&\lesssim \norm{a}_{M_p}
			\left\|\int_0^te^{-\alpha(t-s)}\norm{f(s)}_{\dot B^{\sigma}_{p,q}}ds \right\|_{ L^m ([0,T))}
			\\
			&\lesssim
			\left(\frac{T}{1+\alpha T}\right)^{1+\frac 1m-\frac 1r}
			\norm{a}_{M_p}\|f \|_{ L^r\big([0,T),\dot B^{\sigma}_{p,q}\big)},
		\end{aligned}
	\end{equation*}
	for all $\alpha\geq 0$,
	which corresponds to the case $\theta=0$ in the previous proposition.
\end{rem}

\begin{proof}[Proof in the case $1<r\leq m<\infty$.]
	First of all, notice that the case
	\begin{equation*}
		1<r<m<\infty,
		\quad
		0<\theta= 1+\frac 1m-\frac 1r<1
	\end{equation*}
	follows from a direct application of Proposition \ref{heat:4}, by absorbing the damping term $e^{-\alpha (t-s)}$ into the multiplier $a(t,s,D)$.
	
	In order to treat the remaining case
	\begin{equation*}
		1< r\leq m<\infty,
		\quad
		0<\theta< 1+\frac 1m-\frac 1r\leq 1,
	\end{equation*}
	we introduce auxiliary parameters
	\begin{equation*}
		1<r_0< r\leq m< m_0<\infty
	\end{equation*}
	such that
	\begin{equation*}
		\theta= 1+\frac 1{m_0}-\frac 1{r_0}.
	\end{equation*}
	In particular, in view of Proposition \ref{heat:4}, we have that
	\begin{equation*}
		\left\|\int_0^te^{(t-s)\Delta}a(t,s,D)f(s)ds \right\|_{ L^{m_0} \big([0,T),\dot B^{\sigma+2\theta}_{p,1} \big)}
		\lesssim
		\norm{a}_{M_p} \|f \|_{ L^{r_0}\big([0,T),\dot B^{\sigma}_{p,\infty}\big)}.
	\end{equation*}
	Then, an application of the Damping Lemma (Lemma \ref{damping:1}) implies, for any $\alpha\geq 0$, that \eqref{damped_heat:3} holds true, thereby concluding the proof.
\end{proof}

For the sake of completeness, since the preceding proof fails to treat the cases $r=1$ and $m=\infty$, we provide now an alternative justification of Proposition \ref{damped_heat:1}, based on the proof of Lemma 2 from \cite{a19}, which works in full generality.

\begin{proof}[General proof.]
	Following \cite{a19}, we begin by using \eqref{heat:2} and \eqref{multiplier:2}
	to deduce the existence of an independent constant $C_*>0$ such that
	\begin{equation*}
		\left\|\Delta_k \int_0^te^{-(t-s)(\alpha-\Delta)}a(t,s,D)f(s)ds\right\|_{L^p} \lesssim \int_0^t e^{-(t-s)(\alpha+C_*2^{2k})}\|\Delta_k f(s)\|_{L^p} ds .
	\end{equation*}
	For simplicity, we omit the norm $\norm{a}_{M_p}$ which we absorb in the implicit constants.
	It then follows that
	\begin{equation*}
		\begin{aligned}
			\left\|\int_0^te^{-(t-s)(\alpha-\Delta)}a(t,s,D)f(s)ds\right\|_{\dot B^{\sigma+2\theta}_{p,1} }
			& \lesssim
			\int_0^t \sum_{k\in\mathbb{Z}}e^{-(t-s)(\alpha+C_*2^{2k})}2^{k(\sigma+2\theta) }\|\Delta_k f(s)\|_{L^p} ds
			\\
			& \lesssim
			\int_0^T |t-s|^{-\theta} e^{-\alpha(t-s)}
			\|f(s)\|_{\dot B^{\sigma}_{p,\infty}} ds,
		\end{aligned}
	\end{equation*}
	where we have used \eqref{kernel:1}, with the assumption that $\theta>0$, to deduce that
	\begin{equation*}
		\sum_{k\in\mathbb{Z}}2^{2k\theta} e^{-C_*(t-s)2^{2k}}\lesssim |t-s|^{-\theta}.
	\end{equation*}
	
	Next, if $\theta=1+\frac 1m-\frac 1r$, by virtue of the Hardy--Littlewood--Sobolev inequality, which holds because $0<\theta<1$ and $1<m,r<\infty$, we infer that
	\begin{equation*}
		\left\|\int_0^te^{-(t-s)(\alpha-\Delta)}a(t,s,D)f(s)ds\right\|_{L^m \dot B^{\sigma+2\theta}_{p,1} }
		\lesssim
		\left\|\int_0^T |t-s|^{-\theta}
		\|f(s)\|_{\dot B^{\sigma}_{p,\infty}} ds\right\|_{L^m}
		\lesssim \|f\|_{L^r\dot B^{\sigma}_{p,\infty}}.
	\end{equation*}
	Similarly, if $0<\theta<1+\frac 1m-\frac 1r\leq 1$, we deduce from Young's convolution inequality that
	\begin{equation*}
		\begin{aligned}
			\left\|\int_0^te^{-(t-s)(\alpha-\Delta)}a(t,s,D)f(s)ds\right\|_{L^m \dot B^{\sigma+2\theta}_{p,1} }
			& \lesssim
			\left\|\int_0^T |t-s|^{-\theta} e^{-\alpha(t-s)}
			\|f(s)\|_{\dot B^{\sigma}_{p,\infty}} ds\right\|_{L^m}
			\\
			& \lesssim
			\left(\int_0^T (t^{-\theta} e^{-\alpha t})^{(1+\frac 1m-\frac 1r)^{-1}} dt\right)^{1+\frac 1m-\frac 1r}
			\|f\|_{L^r\dot B^{\sigma}_{p,\infty}}
			\\
			& \lesssim
			\left(\frac{T}{1+\alpha T}\right)^{1+\frac 1m-\frac 1r-\theta}
			\|f\|_{L^r\dot B^{\sigma}_{p,\infty}},
		\end{aligned}
	\end{equation*}
	which concludes the proof of the proposition.
\end{proof}

The shortcomings of Propositions \ref{heat:4} and \ref{damped_heat:1} in the case $r=m$, with $\theta=1$, naturally bring the question of the maximal regularity of the Laplacian in Banach spaces.

For a given Banach space $X$ such that the Laplacian operator $\Delta$ is defined on a dense subspace of $X$, we say that the Laplacian (or another elliptic operator) has maximal $L^p$-regularity on $[0,T)$, for some $1<p<\infty$, if the solution \eqref{heat:3} of the heat equation (without damping, i.e., $\alpha=0$) for a null initial data, i.e., $w_0=0$, is differentiable almost everywhere in $t$, takes values almost everywhere in the domain of $\Delta$ and satisfies the estimate
\begin{equation*}
	\|\partial_tw\|_{L^p([0,T);X)}+\|\Delta w\|_{L^p([0,T);X)}
	\leq C_p \|f\|_{L^p([0,T);X)},
\end{equation*}
for any source term $f\in L^p([0,T);X)$. We refer to \cite{kw04} for an introduction to the theory of maximal $L^p$-regularity for parabolic equations.

The next important result, extracted from \cite{ag20}, establishes the maximal regularity of the Laplacian in all homogeneous Besov spaces $\dot B^{\sigma}_{p,q}$.
In particular, this result provides the basis which will allow us (in Section \ref{section:low:freq}, for instance) to obtain stronger estimates, with sharp gains of parabolic regularity, by avoiding the use of Chemin--Lerner spaces.

\begin{prop}[\cite{ag20}]\label{heat:5}
	Let $\sigma\in\mathbb{R}$, $p,q\in [1,\infty]$ and $r\in (1,\infty)$. If $f$ belongs to $L^r\big([0,T);\dot B_{p,q}^{\sigma}\big)$ and $w_0=0$, then the solution of the heat equation \eqref{heat:1}, with $\alpha\geq 0$, satisfies
	\begin{equation*}
		\left\|\int_0^te^{-(t-s)(\alpha-\Delta)}a(t,s,D)f(s)ds \right\|_{ L^r \big([0,T),\dot B^{\sigma+2}_{p,q} \big)}
		\lesssim
		\norm{a}_{M_p}\|f \|_{ L^r\big([0,T),\dot B^{\sigma}_{p,q}\big)},
	\end{equation*}
	for any Fourier multiplier $a(t,s,D)$.
	The result remains valid if $r=q=1$ or $r=q=\infty$.
\end{prop}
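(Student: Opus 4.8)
The plan is to reduce the statement to the known scalar case $a(t,s,D)=\mathrm{Id}$ and then upgrade to general multipliers and to the endpoints $r=1,\infty$. For the multiplier-free version, I would invoke the result of \cite{ag20} (cited in the excerpt as the source of Proposition \ref{heat:5}), which establishes $L^r$-maximal regularity of $-\Delta$ in every homogeneous Besov space. Concretely, the starting point is the Duhamel representation \eqref{heat:3} with $w_0=0$, together with the dyadic smoothing bound \eqref{heat:2} and the multiplier bound \eqref{multiplier:2}, which together give
\begin{equation*}
	\left\|\Delta_k\int_0^t e^{-(t-s)(\alpha-\Delta)}a(t,s,D)f(s)\,ds\right\|_{L^p}
	\lesssim \norm{a}_{M_p}\int_0^t e^{-(t-s)(\alpha+C_*2^{2k})}\|\Delta_k f(s)\|_{L^p}\,ds .
\end{equation*}
Since the damping factor $e^{-\alpha(t-s)}$ is pointwise bounded by $1$, it can be absorbed and one may as well take $\alpha=0$: that is, it suffices to prove the estimate for the undamped heat semigroup, and then the damped version follows for free (this is exactly the ``$\theta=0$ with damping'' mechanism already used for Propositions \ref{damped:parabolic:1} and \ref{damped_heat:1}).

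Next, for each fixed dyadic block one has a scalar convolution-type inequality whose kernel is $2^{2k}e^{-C_*t2^{2k}}$, which has $L^1(\mathbb{R}^+)$ norm uniformly bounded in $k$. The key point — and the main obstacle — is that one cannot simply apply Young's inequality block by block and then sum in $k$: summing $\dot B^{\sigma}_{p,q}\to\dot B^{\sigma+2}_{p,q}$ with a full gain of two derivatives at the \emph{same} integrability exponent $r=m$ is precisely the borderline case where Young (or Hardy--Littlewood--Sobolev) fails, as emphasized in the remark after Proposition \ref{heat:4}. This is why the proof must genuinely use the vector-valued ($X$-valued) maximal $L^p$-regularity theory for $-\Delta$: one takes $X=\dot B^\sigma_{p,q}$ (or $\ell^q_\sigma(L^p)$ after Littlewood--Paley decomposition), checks that $\Delta$ is densely defined and that the heat semigroup $e^{t\Delta}$ is bounded analytic on $X$ with appropriate $R$-boundedness (or that $X$ has a suitable Fourier-multiplier property, e.g. it is a UMD-type or retract-of-UMD space in the relevant range), and then quotes the operator-valued Mikhlin/Weis multiplier theorem. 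For the $\dot B^0_{p,\infty}$ endpoint and for $r=q=1$ or $r=q=\infty$, the UMD route fails, so one instead argues directly on the kernel level: the map $g\mapsto \int_0^t \sum_k 2^{2k}e^{-(t-s)2^{2k}} g_k(s)\,ds$ is handled by a Schur-type test exploiting $\sum_k (t-s)2^{2k}e^{-C_*(t-s)2^{2k}}\lesssim 1$ uniformly (a variant of \eqref{kernel:1} with $\lambda=1$), which gives exactly the $\ell^1$- or $\ell^\infty$-summability needed without interpolation.

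Having established the estimate for $a=\mathrm{Id}$, the extension to general Fourier multipliers $a(t,s,D)$ is then essentially cost-free: since the bound \eqref{multiplier:2} is uniform in $(t,s)$ and commutes with the projections $\Delta_k$, one inserts $\|a(t,s,D)\Delta_k f(s)\|_{L^p}\leq C_a'\|\Delta_k f(s)\|_{L^p}$ inside the time integral before summing, which reproduces all of the above arguments with an extra factor of $\norm{a}_{M_p}$. Finally I would reconcile with the stated damped form by noting, as above, that replacing $e^{(t-s)\Delta}$ by $e^{-(t-s)(\alpha-\Delta)}$ only introduces the harmless factor $e^{-\alpha(t-s)}\le 1$, so the constant in Proposition \ref{heat:5} is indeed independent of both $T$ and $\alpha\ge 0$ and one may take $T=\infty$. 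The only genuinely delicate ingredient is verifying the abstract hypotheses (analyticity plus $R$-boundedness, or the Schur test at the endpoints) that let one cross the $r=m$, $\theta=1$ borderline; everything else is bookkeeping.
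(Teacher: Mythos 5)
Your reduction steps are correct and match the paper: absorbing $e^{-\alpha(t-s)}$ into the bounded multiplier (reducing to $\alpha=0$), commuting $a(t,s,D)$ through $\Delta_k$ at a cost of $\|a\|_{M_p}$, and observing that block-by-block Young's inequality fails at the $\theta=1$, $r=m$ borderline. But your main argument contains a genuine gap in exactly the cases the proposition is most useful for.

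The UMD route via the Weis operator-valued multiplier theorem requires $X=\dot B^\sigma_{p,q}$ (equivalently $\ell^q_\sigma(L^p)$) to be UMD, which fails unless \emph{both} $p$ and $q$ lie in $(1,\infty)$. Yet the statement asserts the estimate for every $p,q\in[1,\infty]$ with $r\in(1,\infty)$. The nontrivial content is precisely $q\in\{1,\infty\}$ (or $p\in\{1,\infty\}$) with $r\in(1,\infty)$: for $q=r$ one can close the estimate trivially by block-by-block Young and Fubini (the Chemin--Lerner space $\widetilde L^r\dot B^{\sigma}_{p,q}$ coincides with $L^r\dot B^{\sigma}_{p,q}$ when $q=r$), and those are also exactly the only cases your Schur-test patch would address. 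But for $q\neq r$ with $q\in\{1,\infty\}$, the $\ell^q$ and $L^r_t$ norms do not commute, and the kernel bound $\sum_k (t-s)2^{2k}e^{-C_*(t-s)2^{2k}}\lesssim 1$ does not rescue you: after pulling the sup (or sum) in $k$ inside the time integral, you are left with $\sup_k 2^{2k}e^{-C_*(t-s)2^{2k}}\sim (t-s)^{-1}$, which is not integrable, so the naive Schur/Young route degenerates. Your proposal therefore does not establish, e.g., the case $q=\infty$, $r=2$, which is used throughout the paper.

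The paper's actual argument is quite different and does not pass through any abstract multiplier theorem. It handles $1\le q\le r<\infty$ by duality in the $L^{r/q}$ pairing: testing the $q$-th power of the Besov norm against a nonnegative $g\in L^{(r/q)'}$, applying Jensen's inequality to the normalized kernel $e^{-C_*(t-s)2^{2k}}$, interchanging the order of integration, and recognizing the one-variable maximal operator $Mg(s)=\sup_{\rho>0}\rho\int_0^T e^{-\rho|s-t|}|g(t)|\,dt$, which is bounded on $L^c$ for all $c\in(1,\infty]$. This is elementary and covers every $q\in[1,\infty]$. The complementary range $1<r\le q\le\infty$ then follows by an $L^{r'}\dot B^{-\sigma-2}_{p',q'}$ duality argument applied to the adjoint operator $\int_t^T e^{(s-t)\Delta}\overline{a(s,t,D)}g(s)\,ds$, and the endpoint cases $r=q\in\{1,\infty\}$ are dispatched by Young/Fubini as above. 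Taken together this yields the full claimed range without invoking $R$-boundedness, UMD, or any retract argument. You should replace the Weis-theorem step by this maximal-function/duality argument, or else restrict your claim to $1<p,q<\infty$ and separately handle the endpoints you actually need.
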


\begin{rem}
	Again, it is to be emphasized that any implicit constant involved in the above estimate is independent of $T$ and $\alpha$. In particular, one can set $T=\infty$ therein.
\end{rem}

\begin{rem}
	We refer to \cite[Proposition 3.1]{ag20} or \cite[Lemma 3]{a19} for a proof of Proposition \ref{heat:5} in the case $a(t,s,D)=\mathrm{Id}$ and $\alpha=0$. The original proof from \cite{ag20} deals first with the case $q=1$ and then relies on an interpolation argument. The proof from \cite{a19}, however, offers a self-contained approach which avoids interpolation altogether.
\end{rem}

\begin{rem}
	In fact, the original statements of Proposition 3.1 in \cite{ag20} and Lemma 3 in \cite{a19} only cover the range of parameters $1\leq q\leq r<\infty$. Nevertheless, it is readily seen that the corresponding proofs can be used \emph{mutatis mutandis} to show identical bounds on the adjoint operator, which is defined by
	\begin{equation*}
		\int_t^Te^{-(s-t)(\alpha-\Delta)}\overline{a(s,t,D)}g(s)ds.
	\end{equation*}
	It then follows from a standard duality argument that these results hold for values $1< r\leq q\leq \infty$, as well.
\end{rem}

For the sake of completeness and clarity, we provide now a full justification of Proposition \ref{heat:5}, based on a straightforward adaptation of the proof of \cite[Lemma 3]{a19}.

\begin{proof}
	First, noticing that the damping term $e^{-\alpha(t-s)}$ can be absorbed into the bounded multiplier $a(t,s,D)$, we assume, without loss of generality, that $\alpha=0$. Then, we follow the proof of Lemma 3 from \cite{a19}.
	
	We start by considering that $1\leq q\leq r<\infty$.
	By duality, it is enough to prove that, if $g$ is a nonnegative function in $L^{b'}([0,T))$ with $b=\frac rq\geq 1$ and $\frac 1b+\frac 1{b'} = 1$, then
	\begin{equation*}
		\int_0^T g(t) \left\|\int_0^te^{(t-s)\Delta}a(t,s,D)f(s)ds\right\|_{\dot B^{\sigma+2}_{p,q}}^q dt
		\lesssim \norm{a}_{M_p}^q
		\|f \|_{L^r([0,T),\dot B^{\sigma}_{p,q})}^q
		\|g\|_{L^{b'}([0,T))}.
	\end{equation*}
	To this end, using \eqref{heat:2} and \eqref{multiplier:2}, we deduce the existence of a constant $C_*>0$ such that
	\begin{equation*}
		\begin{aligned}
			\int_0^T g(t) \left\|\int_0^te^{(t-s)\Delta}a(t,s,D)f(s)ds\right\|_{\dot B^{\sigma+2}_{p,q}}^q dt
			\hspace{-40mm}&
			\\
			& = \sum_{k \in {\mathbb Z}}
			\int_0^T g(t) \left\|\int_0^te^{(t-s)\Delta}a(t,s,D)\Delta_kf(s)ds\right\|_{L^p}^q2^{k(\sigma+2)q} dt
			\\
			& \lesssim \norm{a}_{M_p}^q \sum_{k \in {\mathbb Z}}
			\int_0^T g(t) \left(\int_0^te^{-C_*(t-s)2^{2k}}\|\Delta_kf(s)\|_{L^p}ds\right)^q2^{k(\sigma+2)q} dt
			\\
			& \lesssim \norm{a}_{M_p}^q \sum_{k \in {\mathbb Z}}
			\int_0^T g(t) \int_0^te^{-C_*(t-s)2^{2k}}\|\Delta_kf(s)\|_{L^p}^qds\ 2^{k(\sigma q+2)} dt.
		\end{aligned}
	\end{equation*}
	For simplicity, we omit the norm $\norm{a}_{M_p}$ in the remaining estimates.
	
	Next, we define a maximal operator by
		\begin{equation*}
			Mg(s)=\sup_{\rho>0} \rho\int_0^T e^{-\rho|s-t|}|g(t)| dt.
		\end{equation*}
	Classical results from harmonic analysis (see \cite[Theorems 2.1.6 and 2.1.10]{g14}) establish that $M$ is bounded over $L^c\left([0,T)\right)$, for any $1<c\leq\infty$. One can then write that
	\begin{equation*}
		\begin{aligned}
			\int_0^T g(t) \left\|\int_0^te^{(t-s)\Delta}a(t,s,D)f(s)ds\right\|_{\dot B^{\sigma+2}_{p,q}}^q dt
			\hspace{-40mm}&
			\\
			& \lesssim \sum_{k \in {\mathbb Z}}
			\int_0^T \left[2^{2k}\int_s^T g(t) e^{-C_*(t-s)2^{2k}}dt\right] \|\Delta_kf(s)\|_{L^p}^q2^{k\sigma q}ds
			\\
			&\lesssim \sum_{k \in {\mathbb Z}}
			\int_0^T Mg(s) \|\Delta_kf(s)\|_{L^p}^q 2^{k\sigma q}ds
			=\int_0^T Mg(s) \|f(s)\|_{\dot B^\sigma_{p,q}}^qds.
		\end{aligned}
	\end{equation*}
	Therefore, by the boundedness properties of $Mg$ and H\"older's inequality, we conclude that
	\begin{equation*}
		\begin{aligned}
			\int_0^T g(t) \left\|\int_0^te^{(t-s)\Delta}a(t,s,D)f(s)ds\right\|_{\dot B^{\sigma+2}_{p,q}}^q dt
			& \lesssim \|Mg\|_{L^{b'}}
			\|f\|_{L^r\dot B^{\sigma}_{p,q}}^q \\
			& \lesssim \|g\|_{L^{b'}}
			\|f\|_{L^r\dot B^{\sigma}_{p,q}}^q,
		\end{aligned}
	\end{equation*}
	which completes the proof of the proposition in the case $1\leq q\leq r<\infty$.
	
	Now, observe that the exact same proof applies to the adjoint operator
	\begin{equation*}
		\int_t^Te^{(s-t)\Delta}\overline{a(s,t,D)}f(s)ds,
	\end{equation*}
	thereby leading to the estimate
	\begin{equation*}
		\left\|\int_t^Te^{(s-t)\Delta}\overline{a(s,t,D)}f(s)ds \right\|_{ L^r \big([0,T),\dot B^{\sigma+2}_{p,q} \big)}
		\lesssim
		\|f \|_{ L^r\big([0,T),\dot B^{\sigma}_{p,q}\big)},
	\end{equation*}
	whenever $1\leq q\leq r<\infty$. Then, a standard duality argument establishes that
	\begin{equation*}
		\left\|\int_0^te^{(t-s)\Delta}a(t,s,D)f(s)ds \right\|_{ L^{r'} \big([0,T),\dot B^{-\sigma}_{p',q'} \big)}
		\lesssim
		\|f \|_{ L^{r'}\big([0,T),\dot B^{-(\sigma+2)}_{p',q'}\big)},
	\end{equation*}
	for parameter values in the range $1<r'\leq q'\leq \infty$. Therefore, replacing $p$, $q$, $r$ and $\sigma$ by $p'$, $q'$, $r'$ and $-(\sigma+2)$, respectively, shows the proposition in the case $1<r\leq q\leq \infty$, which concludes the proof.
\end{proof}

\subsection{Damped Strichartz estimates}\label{section:damped:strichartz:1}

We focus now on the interaction between damping and dispersion. More precisely, we are going to explore how the Damping Lemma (Lemma \ref{damping:1}) applies to Strichartz estimates. To that end, we first recall the general result on Strichartz estimates for abstract semigroups from \cite{kt98}. We also refer to \cite[Chapter 8]{bcd11} for a comprehensive exposition of Strichartz estimates.

\begin{prop}[\cite{kt98}]\label{strichartz:1}
	Let $H$ be a Hilbert space and $(X,dx)$ be a measure space. For each $t\in [0,T)$, with $T>0$, let $U(t):H\to L^2(X)$ be an operator such that
	\begin{equation*}
		U(t)\in L^\infty\big([0,T);\mathcal{L}(H,L^2(X))\big)
	\end{equation*}
	and, for some  $\sigma>0$,
	\begin{equation*}
		\norm{U(t)U(s)^*g}_{L^\infty(X)}
		\lesssim \frac 1{|t-s|^\sigma}\norm{g}_{L^1(X)},
	\end{equation*}
	for all $t,s\in[0,T)$, with $t\neq s$, and all $g\in L^1(X)\cap L^2(X)$.
	
	Then, the estimate
	\begin{equation*}
		\norm{U(t)f}_{L^q_tL^r_x}\lesssim\norm{f}_H
	\end{equation*}
	and its dual version
	\begin{equation*}
		\norm{\int_0^TU(t)^*g(t)dt}_H\lesssim\norm{g}_{L^{q'}_tL^{r'}_x}
	\end{equation*}
	hold for any exponent pair $(q,r)\in [2,\infty]^2$ which is admissible in the sense that
	\begin{equation*}
		\frac 1q+\frac\sigma r=\frac\sigma 2
		\quad\text{and}\quad
		(q,r,\sigma)\neq (2,\infty,1).
	\end{equation*}
	Furthermore, if $(\tilde q,\tilde r)\in [2,\infty]^2$ is also an admissible exponent pair, then the estimate
	\begin{equation*}
		\norm{\int_0^T\chi(t,s)U(t)U(s)^* g(s)ds}_{L^q_tL^r_x}\lesssim\norm{\chi}_{L^\infty}\norm{g}_{L^{\tilde q'}_tL^{\tilde r'}_x}
	\end{equation*}
	holds for any $\chi(t,s)\in L^\infty([0,T)^2;\mathbb{R})$.
\end{prop}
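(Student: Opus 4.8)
The plan is to follow the abstract argument of Keel and Tao, which funnels the three displayed bounds into a single \emph{retarded} estimate. Writing $Tf=U(t)f$, the estimate $\norm{U(t)f}_{L^q_tL^r_x}\lesssim\norm f_H$, its stated dual (namely $\norm{T^*g}_H\lesssim\norm g_{L^{q'}_tL^{r'}_x}$ with $T^*g=\int_0^TU(t)^*g\,dt$), and the boundedness of $TT^*g=\int_0^TU(t)U(s)^*g(s)\,ds$ from $L^{q'}_tL^{r'}_x$ to $L^q_tL^r_x$ are all equivalent through the $TT^*$ identity $\norm T^2=\norm{T^*}^2=\norm{TT^*}$; and the last of these is precisely the retarded estimate taken with $\chi\equiv 1$ and $(\tilde q,\tilde r)=(q,r)$. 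Hence it suffices to prove the retarded estimate, and, dualizing it in the $(t,x)$-variables, the task becomes to bound the bilinear form $\mathcal B(f,g)=\int_0^T\!\int_0^T\chi(t,s)\langle U(t)U(s)^*g(s),f(t)\rangle_{L^2_x}\,ds\,dt$ by $C\norm\chi_{L^\infty}\norm f_{L^{q'}_tL^{r'}_x}\norm g_{L^{\tilde q'}_tL^{\tilde r'}_x}$; we normalize $\norm\chi_{L^\infty}\le 1$.

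The analytic inputs are the hypothesis $\norm{U(t)}_{\mathcal L(H,L^2_x)}\lesssim 1$ --- which, together with its adjoint, gives $\norm{U(t)U(s)^*}_{\mathcal L(L^2_x)}\lesssim 1$ --- and the dispersive bound $\norm{U(t)U(s)^*g}_{L^\infty_x}\lesssim|t-s|^{-\sigma}\norm g_{L^1_x}$; the Riesz--Thorin theorem interpolates these into $\norm{U(t)U(s)^*g}_{L^a_x}\lesssim|t-s|^{-\sigma(1-\frac2a)}\norm g_{L^{a'}_x}$ for every $a\in[2,\infty]$. I would then decompose $\mathcal B=\sum_j\mathcal B_j$ along dyadic shells $2^j\le|t-s|<2^{j+1}$ and, on each shell, combine Hölder's inequality in $x$, the interpolated dispersive estimate, and Young's (Schur's) inequality in the time variables to obtain bilinear block estimates $|\mathcal B_j(f,g)|\lesssim 2^{-j\beta}\norm f_{L^{q'}_tL^{r'}_x}\norm g_{L^{\tilde q'}_tL^{\tilde r'}_x}$, valid for the wider family of \emph{acceptable} exponent pairs (those with $\frac1q+\frac\sigma r\le\frac\sigma2$, and their partners), where the power $\beta=\beta(q,r,\tilde q,\tilde r)$ takes both signs as the pairs move around the acceptable region and vanishes on the admissible line.

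For a non-endpoint admissible pair $(q,r)$ (and likewise $(\tilde q,\tilde r)$) one then chooses acceptable pairs surrounding it so that, after bilinearly interpolating the corresponding block estimates, $|\mathcal B_j(f,g)|\lesssim\min(2^{j\beta_+},2^{-j\beta_-})\norm f_{L^{q'}_tL^{r'}_x}\norm g_{L^{\tilde q'}_tL^{\tilde r'}_x}$ with $\beta_\pm>0$; since $\sum_j\min(2^{j\beta_+},2^{-j\beta_-})<\infty$, summation over $j$ closes the estimate in this case, which is a routine geometric argument.

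The real difficulty is the endpoint $q=\tilde q=2$ (present whenever $\sigma>1$; the case $\sigma=1$ has $r=\infty$ and is the one excluded from the statement): there the admissible pair is extremal, no straddling acceptable pairs are available, the block estimates carry no power of $2^j$, and the naive sum $\sum_j|\mathcal B_j|$ diverges logarithmically. This is exactly where Keel and Tao's refinement enters: one further decomposes $f(t)$ and $g(s)$ atomically according to the dyadic size of $\norm{f(t)}_{L^{r'}_x}$ and $\norm{g(s)}_{L^{\tilde r'}_x}$, so that each atom is supported on a time set of controlled measure; feeding those measure bounds into the sharper, $L^2$-based block estimates recovers a genuine power gain in $j$ for all pairs of dyadic scales (shell $\times$ height) except a bounded ``diagonal'' band of them, whose total contribution is then controlled by Cauchy--Schwarz. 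Carrying out this bookkeeping --- rather than re-deriving the dispersive or interpolation inputs --- is where the real work lies; once the retarded estimate is established, the linear estimate and its dual follow from the reduction step.
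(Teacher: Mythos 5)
The paper does not actually prove this proposition---it attributes it to Keel--Tao~\cite{kt98} and, in the accompanying remark, directs the reader to \cite[Section~8.2]{bcd11} for the version with general $\chi$; so there is no in-house proof against which to weigh your sketch. What you have written is a faithful outline of exactly that argument (the $TT^*$ reduction to the bilinear/retarded estimate, Riesz--Thorin interpolation of the dispersive bound, dyadic decomposition in $|t-s|$ with block estimates, bilinear interpolation and geometric summation away from the endpoint, and the Keel--Tao atomic decomposition at $q=\tilde q=2$), which is precisely the proof the paper relies on.
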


\begin{rem}
	In fact, the statement of the result from \cite{kt98} only considers the function $\chi(t,s)=\mathds{1}_{\{s\leq t\}}$. However, a straightforward alteration of the proof from \cite{kt98} easily shows that the result actually holds for all $\chi(t,s)\in L^\infty([0,T)^2;\mathbb{R})$. A detailed proof valid for all $\chi(t,s)$ can also be found in Section 8.2 of \cite{bcd11}.
\end{rem}

By combining the Damping Lemma with the preceding proposition, we obtain the damped Strichartz estimates, which are stated in precise terms in the next result.

\begin{prop}[Damped Strichartz Estimates]\label{strichartz:2}
	Let $H$ be a Hilbert space and $(X,dx)$ be a measure space. For each $t\in [0,T)$, with $T>0$, let $U(t):H\to L^2(X)$ be an operator such that
	\begin{equation*}
		U(t)\in L^\infty\big([0,T);\mathcal{L}(H,L^2(X))\big)
	\end{equation*}
	and, for some  $\sigma>0$,
	\begin{equation*}
		\norm{U(t)U(s)^*g}_{L^\infty(X)}
		\lesssim \frac 1{|t-s|^\sigma}\norm{g}_{L^1(X)},
	\end{equation*}
	for all $t,s\in[0,T)$, with $t\neq s$, and all $g\in L^1(X)\cap L^2(X)$.
	
	Then, for any $\alpha\geq 0$, the estimate
	\begin{equation*}
		\norm{e^{-\alpha t}U(t)f}_{L^q_tL^r_x}\lesssim \left(\frac T{1+\alpha T}\right)^{\frac 1q+\frac\sigma r-\frac \sigma 2} \norm{f}_H
	\end{equation*}
	and its dual version
	\begin{equation*}
		\norm{\int_0^Te^{-\alpha t}U(t)^*g(t)dt}_H\lesssim \left(\frac T{1+\alpha T}\right)^{\frac 1q+\frac\sigma r-\frac \sigma 2} \norm{g}_{L^{q'}_tL^{r'}_x}
	\end{equation*}
	hold for any exponent pair $(q,r)\in [1,\infty]\times[2,\infty]$ which is admissible in the sense that
	\begin{equation*}
		\frac 1q+\frac\sigma r\geq \frac\sigma 2,
		\quad
		\frac 12+\frac\sigma r\geq \frac\sigma 2
		\quad\text{and}\quad
		(r,\sigma)\neq (\infty,1).
	\end{equation*}
	Furthermore, if $(\tilde q,\tilde r)\in [1,\infty]\times[2,\infty]$ is also an admissible exponent pair such that
	\begin{equation*}
		\frac 1q+\frac 1{\tilde q}\leq 1,
	\end{equation*}
	then the estimate
	\begin{equation*}
		\norm{\int_0^Te^{-\alpha |t-s|}\chi(t,s)U(t)U(s)^* g(s)ds}_{L^{q}_tL^r_x}\lesssim
		\left(\frac T{1+\alpha T}\right)^{\frac 1q+\frac 1{\tilde q}+\sigma\left(\frac 1r+\frac 1{\tilde r}\right)-\sigma}
		\norm{\chi}_{L^\infty}\norm{g}_{L^{\tilde q'}_tL^{\tilde r'}_x},
	\end{equation*}
	holds for any $\chi(t,s)\in L^\infty([0,T)^2;\mathbb{R})$.
\end{prop}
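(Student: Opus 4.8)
The plan is to derive each of the three damped estimates by applying the Damping Lemma (Lemma \ref{damping:1}) to the corresponding undamped Strichartz estimates from Proposition \ref{strichartz:1}, after recasting everything in the form of an integral operator with a kernel $K(t,s)$ to which Lemma \ref{damping:1} applies. The first two estimates (the homogeneous bound on $e^{-\alpha t}U(t)f$ and its dual) are dual to each other, so it suffices to prove one of them; I would treat, say, the dual inhomogeneous-to-$H$ estimate, or equivalently observe that both follow once the bilinear ($\chi$) estimate is established together with a $TT^*$ argument. Actually the cleanest route is: first prove the bilinear estimate, then recover the two homogeneous estimates from it by the standard $TT^*$ trick (writing $\|e^{-\alpha t}U(t)f\|_{L^q_tL^r_x}^2 = \langle \int e^{-\alpha s}U(t)U(s)^* e^{-\alpha(\cdot)}(\cdot), \cdot\rangle$-type manipulations, using $e^{-\alpha t}e^{-\alpha s} \le e^{-\alpha|t-s|}$), so let me organize the proof around the bilinear estimate as the core.

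For the bilinear estimate, I would set $X = L^{\tilde r'}_x$ as the domain Banach space, $Y = L^r_x$ as the target, and $K(t,s) = U(t)U(s)^*$. The hypothesis \eqref{damping:2} of Lemma \ref{damping:1} is exactly the last display of Proposition \ref{strichartz:1}: for admissible pairs $(q,r)$ and $(\tilde q,\tilde r)$ the map $f \mapsto \int_0^T \chi(t,s) U(t)U(s)^* f(s)\,ds$ is bounded from $L^{\tilde q'}([0,T);L^{\tilde r'}_x)$ to $L^q([0,T);L^r_x)$ uniformly over $\|\chi\|_{L^\infty}\le 1$. So I take $p_0 = \tilde q'$ and $q_0 = q$ in the notation of Lemma \ref{damping:1}. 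The constraint $q_0 \ge p_0$ there becomes $q \ge \tilde q'$, i.e. $\tfrac1q \le 1 - \tfrac1{\tilde q}$, which is precisely the hypothesis $\tfrac1q + \tfrac1{\tilde q}\le 1$; and $q_0 \ge 1$ holds since $q\ge 1$. Then Lemma \ref{damping:1} applied with $p = p_0 = \tilde q'$ and (new) output exponent $q = q_0$ — i.e. taking $\beta = 0$ — would only give back the undamped estimate, so instead I apply it with the \emph{same} $p_0,q_0$ but observe we want the damped version at the original endpoints; the correct application is to take $p = \tilde q'$ and $q = q$ unchanged, giving $\beta = \tfrac1q - \tfrac1{q_0} + \tfrac1{p_0} - \tfrac1p = 0$, which is trivial. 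The resolution is that the gain exponent must come from using Proposition \ref{strichartz:1} at a \emph{different} admissible pair: I should pick auxiliary admissible exponents $(q_0, r)$ and $(\tilde q_0, \tilde r)$ with $q_0$ larger (and $\tilde q_0$ smaller, so $\tilde q_0'$ larger) than the target ones, apply the undamped bilinear estimate at those auxiliary pairs, and then invoke Lemma \ref{damping:1} with $p = \tilde q'$, $p_0 = \tilde q_0'$, $q = q$, $q_0 = q_0$, producing $\beta = \tfrac1q - \tfrac1{q_0} + \tfrac1{\tilde q_0'} - \tfrac1{\tilde q'} = \tfrac1q + \tfrac1{\tilde q} + \sigma(\tfrac1r + \tfrac1{\tilde r}) - \sigma$ after using the admissibility relations $\tfrac1{q_0} = \sigma(\tfrac12 - \tfrac1r)$ and $\tfrac1{\tilde q_0} = \sigma(\tfrac12 - \tfrac1{\tilde r})$. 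This matches the claimed exponent. One must check such auxiliary pairs exist and remain admissible in the sense of Proposition \ref{strichartz:1} (no endpoint obstruction $(q,r,\sigma)=(2,\infty,1)$ is hit), which is where the hypotheses $\tfrac1q + \tfrac{\sigma}{r}\ge \tfrac\sigma2$, $\tfrac12 + \tfrac\sigma r \ge \tfrac\sigma2$ and $(r,\sigma)\neq(\infty,1)$ enter: they guarantee that the \emph{Keel--Tao} admissible pair with the same spatial exponent $r$ (namely $q_{KT}$ with $\tfrac1{q_{KT}} = \sigma(\tfrac12-\tfrac1r)$) satisfies $q_{KT}\ge q$ (so we can afford the larger auxiliary time exponent) and likewise on the tilde side, and that we avoid the forbidden endpoint.

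The homogeneous estimates then follow by the usual argument: $\|e^{-\alpha t}U(t)f\|_{L^q_tL^r_x}$ is, by duality with $L^{q'}_tL^{r'}_x$, controlled by testing against $g$, and $\int \langle e^{-\alpha t}U(t)f, g(t)\rangle\,dt = \langle f, \int e^{-\alpha t}U(t)^* g(t)\,dt\rangle_H$; squaring, $\|\int e^{-\alpha t}U(t)^*g\,dt\|_H^2 = \int\!\!\int e^{-\alpha t}e^{-\alpha s}\langle U(s)U(t)^* g(t), g(s)\rangle\,dt\,ds \le \int\!\!\int e^{-\alpha|t-s|}|\langle U(s)U(t)^*g(t),g(s)\rangle|\,dt\,ds$, to which the bilinear estimate applies with $\tilde q = q$, $\tilde r = r$, $\chi \equiv 1$ (legitimate since then $\tfrac1q+\tfrac1{\tilde q} = \tfrac2q\le 1$ precisely when $q\ge 2$, and for $1\le q<2$ one argues directly — actually the hypothesis set $(q,r)\in[1,\infty]\times[2,\infty]$ with $\tfrac1q+\tfrac\sigma r\ge\tfrac\sigma2$ should be handled, for $q<2$, by noting $\tfrac12+\tfrac\sigma r\ge\tfrac\sigma2$ forces $(q,r)=(q,2)$-type behavior and one interpolates against $L^\infty_tL^2_x$ via the uniform bound $U(t)\in\mathcal L(H,L^2(X))$). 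Collecting the exponents: with $\tilde q = q$, $\tilde r = r$ the bilinear gain is $\tfrac2q + 2\sigma/r - \sigma = 2(\tfrac1q+\tfrac\sigma r - \tfrac\sigma2)$, and taking the square root gives the stated $(\tfrac T{1+\alpha T})^{\frac1q+\frac\sigma r - \frac\sigma2}$ for the homogeneous estimate; the dual homogeneous estimate is literally its adjoint. I expect the main obstacle to be the bookkeeping of \emph{which} auxiliary admissible Keel--Tao pairs to use and verifying they exist without hitting the forbidden endpoint $(2,\infty,1)$ — in particular tracking the three admissibility inequalities in the statement and showing they are exactly what is needed for the auxiliary pairs to be Keel--Tao-admissible with $q_0\ge q$ and $\tilde q_0' \ge \tilde q'$, as well as handling the sub-$L^2_t$ range $q\in[1,2)$ which is outside the classical Strichartz framework and must be reached by interpolating the $\chi$-estimate against the trivial $L^2$ energy bound before applying the Damping Lemma.
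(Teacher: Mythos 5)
Your treatment of the bilinear ($\chi$) estimate is essentially the paper's proof: introduce the Keel--Tao time exponents $q_0,\tilde q_0$ associated with $r,\tilde r$ via $\tfrac1{q_0}=\sigma(\tfrac12-\tfrac1r)$, invoke the undamped bilinear Strichartz estimate of Proposition \ref{strichartz:1} at those pairs, and then apply the Damping Lemma with $K(t,s)=U(t)U(s)^*$, $p_0=\tilde q_0'$, $q_0=q_0$, $p=\tilde q'$, $q=q$. Your verification that the chain $p_0\le p\le q\le q_0$ translates exactly into the three admissibility inequalities plus $\tfrac1q+\tfrac1{\tilde q}\le1$, and that $\beta$ comes out to $\tfrac1q+\tfrac1{\tilde q}+\sigma(\tfrac1r+\tfrac1{\tilde r})-\sigma$, is correct.

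Where you diverge from the paper is the homogeneous estimates, and there is a real gap. The paper proves $\norm{e^{-\alpha t}U(t)f}_{L^q_tL^r_x}\lesssim\big(\tfrac{T}{1+\alpha T}\big)^{\frac1q-\frac1{q_0}}\norm f_H$ in one line, by writing $\tfrac1q=(\tfrac1q-\tfrac1{q_0})+\tfrac1{q_0}$ and applying H\"older in $t$: $\norm{e^{-\alpha t}U(t)f}_{L^q_t L^r_x}\le\norm{e^{-\alpha t}}_{L^{(\frac1q-\frac1{q_0})^{-1}}_t}\norm{U(t)f}_{L^{q_0}_tL^r_x}$, then uses the undamped estimate at the Keel--Tao pair $(q_0,r)$. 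This covers the entire range $q\in[1,q_0]$ uniformly and never goes through the bilinear estimate. Your route, recovering the homogeneous bound from the bilinear one by $TT^*$ with $\tilde q=q$, $\tilde r=r$, $\chi\equiv1$, requires $\tfrac2q\le1$, i.e.\ $q\ge2$, and thus cannot reach the sub-$L^2_t$ range $q\in[1,2)$ that the statement explicitly allows (via $\tfrac1q+\tfrac\sigma r\ge\tfrac\sigma2$). Your proposed repair — that $\tfrac12+\tfrac\sigma r\ge\tfrac\sigma2$ ``forces $(q,r)=(q,2)$-type behavior'' and one interpolates against $L^\infty_tL^2_x$ — does not hold up: that inequality is a constraint on $(r,\sigma)$ alone (it just says $q_0\ge2$), it certainly does not force $r=2$, and interpolating the $L^\infty_tL^2_x$ energy bound (which carries no $T/(1+\alpha T)$ factor) against the $q\ge2$ damped estimates does not produce the sharp damping exponent for $r>2$, $q<2$. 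The fix is exactly the paper's move: a direct H\"older in time between $L^q_t$ and $L^{q_0}_t$, which makes the $TT^*$ detour unnecessary and closes the $q<2$ case cleanly.
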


\begin{proof}
	First of all, observe that all hypotheses of Proposition \ref{strichartz:1} are satisfied by $U(t)$. Then, introducing the parameter $q_0\in [2,\infty]$ by setting
	\begin{equation}\label{admissible:1}
		\frac 1{q_0}=\sigma\left(\frac 1 2-\frac 1r\right),
	\end{equation}
	we see that the exponent pair $(q_0,r)\in [2,\infty]^2$ is admissible for Proposition \ref{strichartz:1}. Therefore, it follows from Proposition \ref{strichartz:1}, with an application of H\"older's inequality, that
	\begin{equation*}
		\norm{e^{-\alpha t}U(t)f}_{L^q_tL^r_x}\leq \norm{e^{-\alpha t}}_{L^{(\frac 1q-\frac 1{q_0})^{-1}}([0,T))}\norm{U(t)f}_{L^{q_0}_tL^r_x}
		\lesssim \left(\frac T{1+\alpha T}\right)^{\frac 1q-\frac 1{q_0}} \norm{f}_H,
	\end{equation*}
	which establishes the first estimate of the proposition. The second estimate then ensues from a dual reformulation of the first estimate.
	
	There only remains to justify the validity of the third estimate. To that end, employing \eqref{admissible:1}, we introduce auxiliary parameters $q_0,\tilde q_0\in [2,\infty]$ so that the exponent pairs $(q_0,r),(\tilde q_0,\tilde r)\in [2,\infty]^2$ are admissible for Proposition \ref{strichartz:1}. In particular, it follows that
	\begin{equation*}
		\norm{\int_0^T\chi(t,s)U(t)U(s)^* g(s)ds}_{L^{q_0}_tL^r_x}\lesssim\norm{\chi}_{L^\infty}\norm{g}_{L^{\tilde q_0'}_tL^{\tilde r'}_x},
	\end{equation*}
	for any $\chi(t,s)\in L^\infty([0,T)^2;\mathbb{R})$. Therefore, noticing that $\tilde q_0'\leq \tilde q'\leq q\leq q_0$, we conclude from an application of Lemma \ref{damping:1} that
	\begin{equation*}
		\norm{\int_0^Te^{-\alpha |t-s|}\chi(t,s)U(t)U(s)^* g(s)ds}_{L^{q}_tL^r_x}\lesssim \left(\frac T{1+\alpha T}\right)^{\frac 1q-\frac 1{q_0}+\frac 1{\tilde q_0'}-\frac 1{\tilde q'}}
		\norm{\chi}_{L^\infty}\norm{g}_{L^{\tilde q'}_tL^{\tilde r'}_x},
	\end{equation*}
	which completes the proof.
\end{proof}

\begin{rem}
	We do not make any claim of optimality of Proposition \ref{strichartz:2}. It would be interesting, though, to test the sharpness of the admissibility criteria for the exponent pairs $(q,r)$ and $(\tilde q,\tilde r)$ in connection with the sensitivity in $T$ and $\alpha$ of the estimates.
\end{rem}

We proceed now to specific formulations of the damped Strichartz estimates for the Schr\"odinger and wave equations, as well as for Maxwell's system.

\begin{cor}[Damped Schr\"odinger equation]
	Let $d\geq 1$ and consider a solution $u(t,x)$ of the damped Schr\"odinger equation
	\begin{equation*}
		\left\{
			\begin{aligned}
				(\partial_t +\alpha - i\Delta) u(t,x)&= F(t,x)
				\\
				u(0,x)&=f(x)
			\end{aligned}
		\right.
	\end{equation*}
	with $\alpha\geq 0$, $t\in [0,T)$ and $x\in\mathbb{R}^d$.
	
	For any exponent pairs $(q,r),(\tilde q,\tilde r)\in [1,\infty]\times[2,\infty]$ which are admissible in the sense that
	\begin{equation*}
		\frac 2q+\frac d r\geq \frac d 2,
		\quad
		1+\frac d r\geq \frac d 2
		\quad\text{and}\quad
		(r,d)\neq (\infty,2)
	\end{equation*}
	and similarly for $(\tilde q,\tilde r)$, and such that
	\begin{equation*}
		\frac 1q+\frac 1{\tilde q}\leq 1,
	\end{equation*}
	one has the estimate
	\begin{equation*}
		\norm{u}_{L^q_tL^r_x}
		\lesssim \left(\frac T{1+\alpha T}\right)^{\frac 1q+\frac d2\left(\frac 1r-\frac 1 2\right)}\norm{f}_{L^2_x}
		+\left(\frac T{1+\alpha T}\right)^{\frac 1q+\frac 1{\tilde q}+\frac d2\left(\frac 1r+\frac 1{\tilde r}-1\right)}
		\norm{F}_{L^{\tilde q'}_tL^{\tilde r'}_x}.
	\end{equation*}
\end{cor}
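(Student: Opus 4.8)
The plan is to obtain the corollary as a direct specialization of the abstract Damped Strichartz Estimates (Proposition \ref{strichartz:2}) to the Schr\"odinger propagator $U(t)=e^{it\Delta}$ on the Hilbert space $H=L^2(\mathbb{R}^d)$, with $X=\mathbb{R}^d$. First I would verify the two hypotheses of Proposition \ref{strichartz:2}: since $e^{it\Delta}$ is unitary on $L^2$, one has $U(t)\in L^\infty\big([0,T);\mathcal{L}(L^2,L^2)\big)$ with uniform norm $1$, and $U(s)^*=e^{-is\Delta}$, so that $U(t)U(s)^*=e^{i(t-s)\Delta}$. The classical dispersive estimate $\norm{e^{i\tau\Delta}g}_{L^\infty}\lesssim|\tau|^{-d/2}\norm{g}_{L^1}$ then shows that the dispersion hypothesis holds with decay exponent $\sigma=d/2$.

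Next I would split the solution via Duhamel's formula for the damped equation,
\begin{equation*}
	u(t)=e^{-\alpha t}e^{it\Delta}f+\int_0^te^{-\alpha(t-s)}e^{i(t-s)\Delta}F(s)\,ds,
\end{equation*}
and estimate the two pieces separately. For the homogeneous term, the first estimate of Proposition \ref{strichartz:2} with $\sigma=d/2$ gives
\begin{equation*}
	\norm{e^{-\alpha t}e^{it\Delta}f}_{L^q_tL^r_x}\lesssim\left(\frac{T}{1+\alpha T}\right)^{\frac 1q+\frac\sigma r-\frac\sigma2}\norm{f}_{L^2}=\left(\frac{T}{1+\alpha T}\right)^{\frac 1q+\frac d2\left(\frac 1r-\frac 12\right)}\norm{f}_{L^2},
\end{equation*}
valid whenever $\frac 1q+\frac\sigma r\geq\frac\sigma2$, $\frac12+\frac\sigma r\geq\frac\sigma2$ and $(r,\sigma)\neq(\infty,1)$, which with $\sigma=d/2$ become precisely $\frac2q+\frac dr\geq\frac d2$, $1+\frac dr\geq\frac d2$ and $(r,d)\neq(\infty,2)$. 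For the inhomogeneous term, I would take $\chi(t,s)=\mathds{1}_{\{s\leq t\}}$, which satisfies $\norm{\chi}_{L^\infty}\leq1$, and note that on its support $e^{-\alpha(t-s)}=e^{-\alpha|t-s|}$; the third (retarded) estimate of Proposition \ref{strichartz:2}, valid under the same admissibility on $(\tilde q,\tilde r)$ together with $\frac 1q+\frac 1{\tilde q}\leq1$, then yields a bound by
\begin{equation*}
	\left(\frac{T}{1+\alpha T}\right)^{\frac1q+\frac1{\tilde q}+\sigma\left(\frac1r+\frac1{\tilde r}\right)-\sigma}\norm{F}_{L^{\tilde q'}_tL^{\tilde r'}_x},
\end{equation*}
and $\sigma\big(\frac1r+\frac1{\tilde r}\big)-\sigma=\frac d2\big(\frac1r+\frac1{\tilde r}-1\big)$. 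Adding the two contributions gives the claimed inequality.

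Since Proposition \ref{strichartz:2} already absorbs all the genuine analysis (the Keel--Tao endpoint machinery combined with the Damping Lemma), what remains is essentially bookkeeping: checking that the abstract admissibility conditions translate verbatim into the stated Schr\"odinger conditions under $\sigma=d/2$, that the two damping exponents simplify as above, and that the choice $\chi=\mathds{1}_{\{s\leq t\}}$ is legitimate. The only point I would treat with care — and the closest thing to an obstacle — is the endpoint exclusion $(r,\sigma)\neq(\infty,1)$, which under $\sigma=d/2$ becomes $(r,d)\neq(\infty,2)$, i.e. the forbidden two-dimensional $L^\infty_x$ endpoint, exactly as in the classical theory. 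No further difficulty is anticipated.
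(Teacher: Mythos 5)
Your proof is correct and follows essentially the same route as the paper: verify that $U(t)=e^{it\Delta}$ satisfies the hypotheses of Proposition \ref{strichartz:2} with $\sigma=d/2$, write the solution via Duhamel's formula, and apply the first and third estimates of that proposition (with $\chi=\mathds{1}_{\{s\leq t\}}$) to the homogeneous and inhomogeneous pieces respectively. The additional bookkeeping you spell out—translating the abstract admissibility conditions and simplifying the damping exponents—is exactly what the paper leaves implicit in its closing line.
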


\begin{proof}
	The solution $u(t,x)$ can be expressed by Duhamel's representation formula as
	\begin{equation*}
		\begin{aligned}
			u(t)&=e^{-\alpha t}U(t)f+\int_0^te^{-\alpha(t-s)}U(t)U(s)^*F(s)ds
			\\
			&=e^{-\alpha t}U(t)f+\int_0^te^{-\alpha(t-s)}U(t-s)F(s)ds,
		\end{aligned}
	\end{equation*}
	where
	\begin{equation*}
		U(t)=e^{it\Delta}
		\quad\text{and}\quad
		U(t)^*=e^{-it\Delta}.
	\end{equation*}
	In particular, one has the explicit formula (see Section 8.1.2 in \cite{bcd11})
	\begin{equation*}
		U(t)f(x)=\frac 1{(4\pi it)^{\frac d2}}\int_{\mathbb{R}^d}e^{-\frac{|x-y|^2}{4it}}f(y)dy,
	\end{equation*}
	which readily implies that $U(t)$ satisfies all hypotheses of Proposition \ref{strichartz:2} with $H=L^2(\mathbb{R}^d)$, $X=\mathbb{R}^d$ and $\sigma=\frac d2$.
	Therefore, we conclude that the corollary follows from a direct application of the damped Strichartz estimates of Proposition \ref{strichartz:2}.
\end{proof}

\begin{cor}[Damped half wave equation]\label{cor:halfwave}
	Let $d\geq 2$ and consider a solution $u(t,x)$ of the damped half wave equation
	\begin{equation*}
		\left\{
			\begin{aligned}
				(\partial_t +\alpha \mp i|D|) u(t,x)&= F(t,x)
				\\
				u(0,x)&=f(x)
			\end{aligned}
		\right.
	\end{equation*}
	with $\alpha\geq 0$, $t\in [0,T)$ and $x\in\mathbb{R}^d$.
	
	For any exponent pairs $(q,r),(\tilde q,\tilde r)\in [1,\infty]\times[2,\infty]$ which are admissible in the sense that
	\begin{equation*}
		\frac 2q+\frac {d-1} r\geq \frac {d-1} 2,
		\quad
		1+\frac {d-1} r\geq \frac {d-1} 2
		\quad\text{and}\quad
		(r,d)\neq (\infty,3)
	\end{equation*}
	and similarly for $(\tilde q,\tilde r)$, and such that
	\begin{equation*}
		\frac 1q+\frac 1{\tilde q}\leq 1,
	\end{equation*}
	one has the estimate
	\begin{equation*}
		\begin{aligned}
			2^{-j\frac {d+1}2\left(\frac 12-\frac 1r\right)}\norm{\Delta_j u}_{L^q_tL^r_x}
			&\lesssim
			\left(\frac {T}{1+\alpha T}\right)^{\frac 1q+\frac {d-1}2\left(\frac 1r-\frac 1 2\right)}\norm{\Delta_j f}_{L^2_x}
			\\
			&\quad+\left(\frac {T}{1+\alpha T}\right)^{\frac 1q+\frac 1{\tilde q}+\frac {d-1}2\left(\frac 1r+\frac 1{\tilde r}-1\right)}
			2^{j\frac{d+1}2\left(\frac 12-\frac 1{\tilde r}\right)}\norm{\Delta_j F}_{L^{\tilde q'}_tL^{\tilde r'}_x},
		\end{aligned}
	\end{equation*}
	for all $j\in\mathbb{Z}$.
\end{cor}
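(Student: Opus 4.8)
The plan is to apply the abstract damped Strichartz estimate of Proposition \ref{strichartz:2} frequency block by frequency block, after extracting the dispersive decay for the half-wave propagator at a fixed dyadic scale. First I would write the Duhamel representation for $u$, namely
\begin{equation*}
	u(t)=e^{-\alpha t}e^{\pm it|D|}f+\int_0^te^{-\alpha(t-s)}e^{\pm i(t-s)|D|}F(s)ds,
\end{equation*}
localize it in frequency by applying $\Delta_j$ (which commutes with $e^{\pm it|D|}$ and $e^{-\alpha t}$), and set $U_j(t)=e^{\pm it|D|}\Delta_j$, viewed as an operator from $H=L^2(\mathbb{R}^d)$ to $L^2(X)$ with $X=\mathbb{R}^d$. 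The uniform bound $U_j(t)\in L^\infty_t\mathcal{L}(H,L^2)$ is immediate from Plancherel. The key input is the fixed-frequency dispersive estimate: the stationary phase analysis of the oscillatory integral with phase $x\cdot\xi\pm t|\xi|$ against a symbol supported in $|\xi|\sim 2^j$ gives
\begin{equation*}
	\norm{U_j(t)U_j(s)^*g}_{L^\infty_x}\lesssim 2^{j\frac{d+1}2}\,\frac{1}{(2^j|t-s|)^{\frac{d-1}2}}\norm{g}_{L^1_x},
\end{equation*}
which, by rescaling $t\mapsto 2^jt$, is exactly the hypothesis of Proposition \ref{strichartz:2} with $\sigma=\frac{d-1}2$, a prefactor $2^{j\frac{d+1}2}$, and the time variable dilated by $2^j$.

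Next I would feed this into Proposition \ref{strichartz:2}. The admissibility conditions $\frac 2q+\frac{d-1}r\geq\frac{d-1}2$, $1+\frac{d-1}r\geq\frac{d-1}2$ and $(r,d)\neq(\infty,3)$ are precisely the conditions $\frac 1q+\frac\sigma r\geq\frac\sigma 2$, $\frac 12+\frac\sigma r\geq\frac\sigma 2$ and $(r,\sigma)\neq(\infty,1)$ translated to $\sigma=\frac{d-1}2$, so the hypotheses match verbatim; similarly for $(\tilde q,\tilde r)$, and $\frac 1q+\frac 1{\tilde q}\leq 1$ is assumed. Applying the homogeneous estimate to the initial-data term and the inhomogeneous (kernel) estimate to the Duhamel term, and carefully tracking how the $2^j$-dilation of the time variable interacts with the $L^q_t$ and $L^{\tilde q'}_t$ norms and with the factor $\left(\frac{T}{1+\alpha T}\right)^{\cdots}$, one obtains, for each $j$,
\begin{equation*}
	\norm{U_j(t)f}_{\text{damped }L^q_tL^r_x}\lesssim 2^{j\frac{d+1}2\left(\frac 12-\frac 1r\right)}\left(\frac{T}{1+\alpha T}\right)^{\frac 1q+\frac{d-1}2\left(\frac 1r-\frac 12\right)}\norm{\Delta_j f}_{L^2_x}
\end{equation*}
plus the analogous term for $F$ with the exponent $\frac 1q+\frac 1{\tilde q}+\frac{d-1}2\left(\frac 1r+\frac 1{\tilde r}-1\right)$ and the frequency weight $2^{j\frac{d+1}2\left(\frac 12-\frac 1{\tilde r}\right)}$ on $\norm{\Delta_j F}_{L^{\tilde q'}_tL^{\tilde r'}_x}$. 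Rearranging the powers of $2^j$ to the left-hand side gives exactly the stated inequality.

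The main obstacle I anticipate is bookkeeping rather than conceptual: correctly normalizing the rescaled propagator so that the hypotheses of Proposition \ref{strichartz:2} are met with a clean $\sigma=\frac{d-1}2$, and then tracking how the $2^j$ time-dilation redistributes between (i) the $L^1\to L^\infty$ decay rate, (ii) the powers of $2^j$ coming out of the homogeneous and dual Strichartz norms, and (iii) the time-length factor $\frac{T}{1+\alpha T}$ (which under $t\mapsto 2^jt$ and $\alpha\mapsto 2^{-j}\alpha$ is scale-invariant, so it passes through unchanged). A secondary technical point is the justification of the fixed-frequency dispersive bound for $e^{\pm it|D|}$ via non-stationary/stationary phase with uniform constants in $j$ — but this is classical (see, e.g., \cite[Section 8.4]{bcd11}) and can be invoked. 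Once those normalizations are pinned down, the corollary is a direct specialization of Proposition \ref{strichartz:2} applied blockwise.
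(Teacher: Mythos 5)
Your approach matches the paper's: localize to a frequency block, use the classical dispersive estimate for $e^{\pm it|D|}$ to put yourself in the framework of Proposition~\ref{strichartz:2}, and then rescale time/space by $2^j$ to pass from a fixed block to an arbitrary one. The paper does the rescaling on the solution (it proves the $j=0$ estimate, then defines $u_j(t,x)=u(2^{-j}t,2^{-j}x)$, $f_j$, $F_j$ and applies the $j=0$ estimate to them), whereas you propose to rescale inside the dispersive bound; these are the same computation.

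There is, however, a concrete error in your bookkeeping. You assert in the last paragraph that the time-length factor $\frac{T}{1+\alpha T}$ is invariant under $t\mapsto 2^jt$, $\alpha\mapsto 2^{-j}\alpha$, and hence ``passes through unchanged.'' That is false: under this rescaling the interval becomes $[0,2^jT)$ with damping $2^{-j}\alpha$, so the factor becomes
\begin{equation*}
	\frac{2^jT}{1+(2^{-j}\alpha)(2^jT)}=\frac{2^jT}{1+\alpha T}=2^j\,\frac{T}{1+\alpha T},
\end{equation*}
i.e.\ it picks up a factor $2^j$. Raised to the power $\theta=\frac 1q+\frac{d-1}{2}\big(\frac 1r-\frac 12\big)$ (respectively $\tilde\theta=\frac 1q+\frac 1{\tilde q}+\frac{d-1}{2}\big(\frac 1r+\frac 1{\tilde r}-1\big)$ for the Duhamel term), this contributes an extra $2^{j\theta}$, and that contribution is exactly what combines with the scaling of $\norm{\Delta_0 u_j}_{L^q_tL^r_x}$, $\norm{\Delta_0 f_j}_{L^2}$ and $\norm{\Delta_0 F_j}_{L^{\tilde q'}_tL^{\tilde r'}_x}$ to produce the correct weights $2^{j\frac{d+1}{2}(\frac 12-\frac 1r)}$ and $2^{j\frac{d+1}{2}(\frac 12-\frac 1{\tilde r})}$ in the statement. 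If you literally drop the $2^{j\theta}$ as you suggest, the exponent you get on the $f$-side is $d(\frac 12-\frac 1r)-\frac 1q$ instead of $\frac{d+1}{2}(\frac 12-\frac 1r)$, and these only agree when $\frac 1q+\frac{d-1}{2r}=\frac{d-1}{4}$, i.e.\ on the sharp admissibility line. So your ``carefully track the factor'' plan is right, but the parenthetical claiming invariance contradicts it and would give the wrong answer off the sharp line; you need to keep the $2^j$ coming out of $\left(\frac{2^jT}{1+\alpha T}\right)^\theta$ in the ledger.
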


\begin{rem}
	If $\tilde q'\leq p\leq q$, then, further multiplying the preceding estimate by $2^{j\sigma}$, for some $\sigma\in\mathbb{R}$, and then summing over $j\in\mathbb{Z}$ in the $\ell^p$-norm leads to
	\begin{equation*}
		\begin{aligned}
			\norm{u}_{L^q_t\dot B^{\sigma -\frac {d+1}2\left(\frac 12-\frac 1r\right)}_{r,p,x}}
			&\lesssim
			\left(\frac {T}{1+\alpha T}\right)^{\frac 1q+\frac {d-1}2\left(\frac 1r-\frac 1 2\right)}\norm{f}_{\dot B^{\sigma}_{2,p,x}}
			\\
			&\quad+\left(\frac {T}{1+\alpha T}\right)^{\frac 1q+\frac 1{\tilde q}+\frac {d-1}2\left(\frac 1r+\frac 1{\tilde r}-1\right)}
			\norm{F}_{L^{\tilde q'}_t\dot B^{\sigma +\frac {d+1}2\left(\frac 12-\frac 1{\tilde r}\right)}_{\tilde r',p,x}}.
		\end{aligned}
	\end{equation*}
\end{rem}

\begin{proof}
	The solution $u(t,x)$ can be expressed by Duhamel's representation formula as
	\begin{equation*}
		u(t)=e^{-\alpha t}e^{\pm it|D|}f+\int_0^te^{-\alpha(t-s)}e^{\pm i(t-s)|D|}F(s)ds.
	\end{equation*}
	For each $j\in\mathbb{Z}$, we introduce now the flow
	\begin{equation*}
		U_j(t)f(x)\bydef e^{\pm it|D|}\psi\left(\frac{D}{2^j}\right)f(x),
	\end{equation*}
	where $\psi(\xi)$ is a smooth compactly supported function such that $0\notin\operatorname{supp}\psi$ and $\psi\equiv 1$ on $\{\frac 12\leq|\xi|\leq 2\}$. In particular, if $\Delta_j$ is the Littlewood--Paley frequency cutoff operator, defined in Appendix \ref{besov:1}, which localizes frequencies to $\{2^{j-1}\leq|\xi|\leq 2^{j+1}\}$, one has the representation
	\begin{equation*}
		\begin{aligned}
			\Delta_j u(t)&=e^{-\alpha t}U_j(t)\Delta_jf+\int_0^te^{-\alpha(t-s)}U_j(t-s)\Delta_jF(s)ds
			\\
			&=e^{-\alpha t}U_j(t)\Delta_jf+\int_0^te^{-\alpha(t-s)}U_{j}(t)U_j(s)^*\Delta_jF(s)ds.
		\end{aligned}
	\end{equation*}
	We are now going to apply Proposition \ref{strichartz:2} to the operator $U_0(t)$ in order to control $\Delta_0 u(t)$.
	
	Classical results, based on the Stationary Phase Method, establish that
	\begin{equation*}
		\norm{U_{0}(t)U_{0}(s)^*f}_{L^\infty(\mathbb{R}^d)}
		\lesssim \frac 1{|t-s|^\frac{d-1}{2}}\norm{f}_{L^1(\mathbb{R}^d)},
	\end{equation*}
	for all $t\neq s$ and $f\in L^1(\mathbb{R}^d)$. (Proposition 8.15 from \cite{bcd11} contains a precise justification of the preceding dispersive estimate and we further refer to Section 8.1.3 from \cite{bcd11} for more details on the Stationary Phase Method.)
	It therefore follows that $U_0(t)$ satisfies all hypotheses of Proposition \ref{strichartz:2} with $H=L^2(\mathbb{R}^d)$, $X=\mathbb{R}^d$ and $\sigma=\frac {d-1}2$.
	Hence, we conclude that
	\begin{equation}\label{strichartz:3}
		\norm{\Delta_0 u}_{L^q_tL^r_x}
		\lesssim \left(\frac T{1+\alpha T}\right)^{\frac 1q+\frac {d-1}2\left(\frac 1r-\frac 1 2\right)}\norm{\Delta_0 f}_{L^2_x}
		+\left(\frac T{1+\alpha T}\right)^{\frac 1q+\frac 1{\tilde q}+\frac {d-1}2\left(\frac 1r+\frac 1{\tilde r}-1\right)}
		\norm{\Delta_0 F}_{L^{\tilde q'}_tL^{\tilde r'}_x},
	\end{equation}
	for all admissible exponent pairs.
	
	In order to recover an estimate for all components $\Delta_j u$, where $j\in\mathbb{Z}$, we conduct a simple scaling argument by introducing
	\begin{equation*}
		u_j(t,x)\bydef u\left(\frac t{2^j},\frac x{2^j}\right),
		\quad
		F_j(t,x)\bydef \frac 1{2^j}F\left(\frac t{2^j},\frac x{2^j}\right),
		\quad
		f_j(x)\bydef f \left(\frac x{2^j}\right).
	\end{equation*}
	Noticing that
	\begin{equation*}
		\Delta_0 u_j(t,x)=(\Delta_j u)\left(\frac t{2^j},\frac x{2^j}\right)
		\quad
		\Delta_0 F_j(t,x)= \frac 1{2^j}(\Delta_j F)\left(\frac t{2^j},\frac x{2^j}\right),
		\quad
		\Delta_0 f_j(x)= (\Delta_j f) \left(\frac x{2^j}\right)
	\end{equation*}
	and that $u_j(t,x)$ solves
	\begin{equation*}
		\left\{
			\begin{aligned}
				\left(\partial_t +2^{-j}\alpha \mp i|D|\right) u_j(t,x)&= F_j(t,x)
				\\
				u_j(0,x)&=f_j(x)
			\end{aligned}
		\right.
	\end{equation*}
	on $[0,2^j T)$, we obtain, applying \eqref{strichartz:3} to $u_j$, that
	\begin{equation*}
		\begin{aligned}
			2^{j\left(\frac 1q+\frac dr\right)}\norm{\Delta_j u}_{L^q_tL^r_x}
			&=\norm{\Delta_0 u_j}_{L^q_tL^r_x}
			\\
			&\lesssim \left(\frac {2^jT}{1+\alpha T}\right)^{\frac 1q+\frac {d-1}2\left(\frac 1r-\frac 1 2\right)}\norm{\Delta_0f_j}_{L^2_x}
			\\
			&\quad+\left(\frac {2^jT}{1+\alpha T}\right)^{\frac 1q+\frac 1{\tilde q}+\frac {d-1}2\left(\frac 1r+\frac 1{\tilde r}-1\right)}
			\norm{\Delta_0F_j}_{L^{\tilde q'}_tL^{\tilde r'}_x}
			\\
			&= \left(\frac {2^jT}{1+\alpha T}\right)^{\frac 1q+\frac {d-1}2\left(\frac 1r-\frac 1 2\right)}2^{j\frac d2}\norm{\Delta_j f}_{L^2_x}
			\\
			&\quad+\left(\frac {2^jT}{1+\alpha T}\right)^{\frac 1q+\frac 1{\tilde q}+\frac {d-1}2\left(\frac 1r+\frac 1{\tilde r}-1\right)}
			2^{j\left(-\frac 1{\tilde q}+d\left(1-\frac 1{\tilde r}\right)\right)}\norm{\Delta_j F}_{L^{\tilde q'}_tL^{\tilde r'}_x}.
		\end{aligned}
	\end{equation*}
	Finally, reorganizing the terms above, we deduce that
	\begin{equation*}
		\begin{aligned}
			\norm{\Delta_j u}_{L^q_tL^r_x}
			&\lesssim
			\left(\frac {T}{1+\alpha T}\right)^{\frac 1q+\frac {d-1}2\left(\frac 1r-\frac 1 2\right)}
			2^{j\frac {d+1}2\left(\frac 12-\frac 1r\right)}\norm{\Delta_jf}_{L^2_x}
			\\
			&\quad+\left(\frac {T}{1+\alpha T}\right)^{\frac 1q+\frac 1{\tilde q}+\frac {d-1}2\left(\frac 1r+\frac 1{\tilde r}-1\right)}
			2^{j\frac{d+1}2\left(1-\frac 1r-\frac 1{\tilde r}\right)}\norm{\Delta_jF}_{L^{\tilde q'}_tL^{\tilde r'}_x},
		\end{aligned}
	\end{equation*}
	which concludes the proof.
\end{proof}

\begin{cor}[Damped wave equation]\label{cor:wave}
	Let $d\geq 2$ and consider a solution $u(t,x)$ of the damped wave equation
	\begin{equation*}
		\left\{
			\begin{aligned}
				(\partial_t^2 +\alpha\partial_t -\Delta) u(t,x)&= F(t,x)
				\\
				u(0,x)&=f(x)
				\\
				\partial_t u(0,x)&=g(x)
			\end{aligned}
		\right.
	\end{equation*}
	with $\alpha\geq 0$, $t\in [0,T)$ and $x\in\mathbb{R}^d$.
	
	For any exponent pairs $(q,r),(\tilde q,\tilde r)\in [1,\infty]\times[2,\infty]$ which are admissible in the sense that
	\begin{equation*}
		\frac 2q+\frac {d-1} r\geq \frac {d-1} 2,
		\quad
		1+\frac {d-1} r\geq \frac {d-1} 2
		\quad\text{and}\quad
		(r,d)\neq (\infty,3)
	\end{equation*}
	and similarly for $(\tilde q,\tilde r)$, and such that
	\begin{equation*}
		\frac 1q+\frac 1{\tilde q}\leq 1,
	\end{equation*}
	one has the high-frequency estimate
	\begin{equation*}
		\begin{aligned}
			2^{-j\frac {d+1}2\left(\frac 12-\frac 1r\right)}\norm{\Delta_j (\partial_t u,\nabla u)}_{L^q_tL^r_x}
			&\lesssim
			\left(\frac{T}{1+\alpha T}\right)^{\frac 1q+\frac {d-1}2\left(\frac 1r-\frac 1 2\right)}
			\norm{\Delta_j (g,\nabla f)}_{L^2_x}
			\\
			&\quad+\left(\frac{T}{1+\alpha T}\right)^{\frac 1q+\frac 1{\tilde q}+\frac {d-1}2\left(\frac 1r+\frac 1{\tilde r}-1\right)}
			2^{j\frac{d+1}2\left(\frac 12-\frac 1{\tilde r}\right)}\norm{\Delta_j F}_{L_t^{\tilde q'}L^{\tilde r'}_x},
		\end{aligned}
	\end{equation*}
	for all $j\in\mathbb{Z}$ with $2^j\geq \alpha$, and the low-frequency estimates
	\begin{equation*}
		\begin{aligned}
			2^{-jd\left(\frac 12-\frac 1r\right)}\norm{\Delta_j \partial_t u}_{L^q_tL^r_x}
			&\lesssim
			\left(\frac{T}{1+\alpha T}\right)^{\frac 1q}\norm{\Delta_j g}_{L^2_x}
			+\frac 1\alpha\left(\frac{\alpha 2^{2j}T}{\alpha+ 2^{2j}T}\right)^\frac 1q2^{j\left(1-\frac 2q\right)}\norm{\Delta_j \nabla f}_{L^2_x}
			\\
			&\quad+\left(\frac{T}{1+\alpha T}\right)^{\frac 1q+\frac 1{\tilde q}}
			2^{jd\left(\frac 12-\frac 1{\tilde r}\right)}\norm{\Delta_j F}_{L_t^{\tilde q'}L^{\tilde r'}_x}
		\end{aligned}
	\end{equation*}
	and
	\begin{equation*}
		\begin{aligned}
			2^{-j\left(d(\frac 12-\frac 1r)-\frac 2q\right)}\norm{\Delta_j \nabla u}_{L^q_tL^r_x}
			&\lesssim
			\frac 1\alpha\left(\frac{\alpha 2^{2j}T}{\alpha+ 2^{2j}T}\right)^\frac 1q2^{j}
			\norm{\Delta_j g}_{L^2_x}
			+\left(\frac{\alpha 2^{2j}T}{\alpha+ 2^{2j}T}\right)^\frac 1q\norm{\Delta_j \nabla f}_{L^2_x}
			\\
			&\quad+\frac 1\alpha\left(\frac{\alpha 2^{2j}T}{\alpha+ 2^{2j}T}\right)^{\frac 1q+\frac 1{\tilde q}}
			2^{j\left(1+d(\frac 12-\frac 1{\tilde r})-\frac 2{\tilde q}\right)}\norm{\Delta_j F}_{L_t^{\tilde q'}L^{\tilde r'}_x},
		\end{aligned}
	\end{equation*}
	for all $j\in\mathbb{Z}$ with $2^j\leq \alpha$.
\end{cor}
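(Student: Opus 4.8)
\emph{Strategy.} The plan is to pass to the spatial Fourier transform, where the equation becomes the scalar ODE $\widehat u'' + \alpha\widehat u' + |\xi|^2\widehat u = \widehat F$, and then split the frequency space according to the sign of the discriminant $\alpha^2 - 4|\xi|^2$ of the characteristic polynomial $\lambda^2 + \alpha\lambda + |\xi|^2$, whose roots are
\[
	\lambda_\pm(\xi) = \frac{-\alpha \pm \sqrt{\alpha^2 - 4|\xi|^2}}{2}.
\]
On the frequencies $2^j \gtrsim \alpha$, the two roots are complex conjugate, $\lambda_\pm = -\tfrac\alpha2 \pm i\,\omega(\xi)$ with $\omega(\xi) = \sqrt{|\xi|^2 - \alpha^2/4}$, so the corresponding evolution is a damped oscillation $e^{-\alpha t/2}\,e^{\pm it\omega(D)}$ and the situation is dispersive; on the frequencies $2^j \lesssim \alpha$ the roots are real and well separated, a \emph{fast} mode $\lambda_+ \sim -\alpha$ and a \emph{slow} mode $\lambda_- \sim -|\xi|^2/\alpha$, so the evolution behaves like a superposition of two heat semigroups and the situation is parabolic. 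Just as in the proof of Corollary~\ref{cor:halfwave}, I would first reduce to the unit block $\Delta_0$ by the scaling $u_j(t,x) = u(2^{-j}t, 2^{-j}x)$, which turns $\partial_t^2 + \alpha\partial_t - \Delta$ into $\partial_t^2 + (2^{-j}\alpha)\partial_t - \Delta$ on $[0, 2^jT)$; the two regimes $2^j \geq \alpha$ and $2^j \leq \alpha$ then become bounds $2^{-j}\alpha \leq 1$ and $2^{-j}\alpha \geq 1$ on the rescaled damping, and the task is to prove estimates on $\supp\psi$ that are uniform in this rescaled damping.

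\emph{High frequencies ($2^j \geq \alpha$).} On $\supp\psi$, Duhamel's formula for the rescaled solution reads
\[
	\widehat{\Delta_0 u}(t) = e^{-\frac{\alpha t}{2}}\Bigl(\cos(t\omega)\,\widehat{\Delta_0 f} + \tfrac{\sin(t\omega)}{\omega}\bigl(\widehat{\Delta_0 g} + \tfrac\alpha2 \widehat{\Delta_0 f}\bigr)\Bigr) + \int_0^t e^{-\frac{\alpha(t-s)}{2}}\,\tfrac{\sin((t-s)\omega)}{\omega}\,\widehat{\Delta_0 F}(s)\,ds,
\]
with $\omega = \omega(\xi)$. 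Differentiating in $t$ and multiplying by $i\xi$, one sees that $\Delta_0(\partial_t u, \nabla u)$ is a finite linear combination, with Fourier multipliers supported on $\supp\psi$ whose $M_p$-norms are bounded uniformly in $\alpha \in [0,1]$ (by the Mikhlin-type criterion \eqref{multiplier:3}), of $e^{-\alpha t/2}e^{\pm it\omega(D)}\psi(D)$ applied to $\Delta_0(g, \nabla f)$ plus the analogous Duhamel integral. The remaining point is to check that the operator $U_0^\alpha(t) \bydef e^{\pm it\omega(D)}\psi(D)$ satisfies the dispersive bound $\norm{U_0^\alpha(t)U_0^\alpha(s)^* h}_{L^\infty} \lesssim |t-s|^{-(d-1)/2}\norm{h}_{L^1}$ \emph{uniformly in} $\alpha \in [0,1]$: this follows from the same Stationary Phase argument that underlies Corollary~\ref{cor:halfwave}, since on the unit annulus $\omega(\xi)$ is a smooth, uniformly nondegenerate perturbation of $|\xi|$ whose level sets keep a curvature bounded below --- with the caveat that on the thin transitional annulus where $\omega(\xi)$ degenerates (which is present only when the rescaled $\alpha$ is comparable to $1$) there is no dispersion, but there $e^{-\alpha t/2}$ is a genuine exponential damping and that piece is absorbed into the parabolic treatment below. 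On the complementary piece Proposition~\ref{strichartz:2} applies with $\sigma = \tfrac{d-1}{2}$ and damping rate $\alpha/2$; since $\tfrac{T}{1+(\alpha/2)T} \sim \tfrac{T}{1+\alpha T}$, peeling off the bounded multipliers and undoing the scaling exactly as in Corollary~\ref{cor:halfwave} yields the stated high-frequency estimate, the hypothesis $\tfrac1q + \tfrac1{\tilde q} \leq 1$ being what lets us use the inhomogeneous (third) estimate of Proposition~\ref{strichartz:2}.

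\emph{Low frequencies ($2^j \leq \alpha$).} Here the roots are real, $\lambda_+ \sim -\alpha$ and (after unscaling) $\lambda_- \sim -2^{2j}/\alpha$, and there is no oscillation. The plan is to first use Bernstein's inequality to trade each norm $L^r_x$, $r \geq 2$, for $L^2_x$ at the cost $2^{jd(\frac12 - \frac1r)}$, and then note that on $L^2_x$ the flow is, by Plancherel, multiplication in $\xi$ by the explicit symbols governing $u$, $\partial_t u$, $\nabla u$ and the Duhamel kernel, namely
\[
	\frac{\lambda_+ e^{\lambda_- t} - \lambda_- e^{\lambda_+ t}}{\lambda_+ - \lambda_-}, \qquad \frac{e^{\lambda_+ t} - e^{\lambda_- t}}{\lambda_+ - \lambda_-}, \qquad i\xi\,\frac{e^{\lambda_+ t} - e^{\lambda_- t}}{\lambda_+ - \lambda_-}
\]
and their $t$-derivatives; on $\supp\psi$ one has $\lambda_+ - \lambda_- \sim \alpha$, so each of these is dominated by a sum of $e^{-ct\,2^{2j}/\alpha}$ and $e^{-ct\alpha}$ times explicit powers of $2^j$ and $\alpha$. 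The time norms then reduce to the elementary estimate $\norm{e^{-\mu t}}_{L^q([0,T))} \sim \bigl(\tfrac{T}{1+\mu T}\bigr)^{1/q}$ for $\mu \in \{2^{2j}/\alpha,\, \alpha\}$ (together with its variant carrying a factor $\mu$, which accounts for the $\partial_t$ and $\nabla$ that bring down $\lambda_\pm$ and $\xi$), and to Young's convolution inequality for the Duhamel term; matching the outcome against the factors $\tfrac{T}{1+\alpha T}$ and $\tfrac{\alpha 2^{2j}T}{\alpha + 2^{2j}T}$ appearing in the statement, and checking that the strongly damped fast mode $e^{\lambda_+ t}$ never produces a term worse than those displayed, yields the two low-frequency estimates. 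This is nothing but the $L^2$-based parabolic analysis of Section~\ref{section:damped:parabolic:1} applied to the slow mode.

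\emph{Main obstacle.} I expect two points to require care. In the parabolic regime the difficulty is purely combinatorial: the two modes live at the vastly different scales $2^{2j}/\alpha$ and $\alpha$, the operators $\partial_t$ and $\nabla$ redistribute these scales differently among the three right-hand-side terms, and one must keep track of the exact powers of $2^j$, $\alpha$ and $T$ (and of the extra factors produced by Bernstein's inequality and by $\lambda_+ - \lambda_- \sim \alpha$) to see that the fast mode is always subdominant. In the dispersive regime the only genuinely non-formal step is the uniformity in $\alpha \in [0,1]$ (after scaling) of the Stationary Phase dispersive estimate for the perturbed phase $\omega(\xi) = \sqrt{|\xi|^2 - \alpha^2/4}$, together with the handling of the transitional annulus where $\omega$ degenerates: one has to verify that, on the unit annulus, $\omega$ stays smooth and its level sets stay uniformly curved --- which holds precisely because the constraint $\alpha \leq 2^j \sim |\xi|$ keeps the perturbation subcritical --- and to split off the degenerate shell and treat it by the strong damping.
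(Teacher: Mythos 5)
Your strategy is the same as the paper's in all essentials: reduce to the unit block $\Delta_0$ by parabolic scaling, diagonalize the first-order system with eigenvalues $\lambda_\pm(\xi) = -\tfrac{\alpha}{2}\pm\sqrt{\tfrac{\alpha^2}{4}-|\xi|^2}$, use the uniformly-perturbed half-wave dispersion $e^{-\alpha t/2}e^{\pm it\omega(D)}$ together with Proposition~\ref{strichartz:2} when the rescaled damping is small, and use explicit $L^2$ symbol bounds plus Young's inequality when the rescaled damping is large. The one substantive difference is presentation (scalar Duhamel formula versus the $(e,b)=(\partial_t u, i|D|u)$ system), which does not change the analysis.

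However, there is a genuine gap in the transitional regime $2^j\sim\alpha$, i.e.\ rescaled $\alpha'=2^{-j}\alpha$ near $1$, which neither half of your argument as written covers. You split at $\alpha'=1$ and assign $\alpha'\le 1$ to the dispersive picture and $\alpha'\ge 1$ to the parabolic picture, but both pictures fail near the boundary. On the dispersive side, for $\alpha'$ close to $1$ the modified phase $\omega(\xi)=\sqrt{|\xi|^2-\alpha'^2/4}$ vanishes at the inner edge $|\xi|=1/2$ of the unit annulus; there the dispersive bound of the form \eqref{dispersion:1} is not uniform in $\alpha'$, and the multipliers involving $1/\omega$ in your Duhamel formula are not bounded in the sense of \eqref{multiplier:3}. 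You acknowledge the degenerate shell and say it is ``absorbed into the parabolic treatment below,'' but this is not borne out: on the parabolic side, for $1\le\alpha'<4$ the discriminant $\alpha'^2-4|\xi|^2$ changes sign on the unit annulus, so the roots are not ``real'' as you state, and the separation $\lambda_+-\lambda_-\sim\alpha$ fails outright (indeed $\lambda_+-\lambda_-$ vanishes where $|\xi|=\alpha'/2$). As a result there is a range of $\alpha'$ where neither the dispersive argument nor your parabolic argument produces the required uniform bound.

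The paper resolves this precisely by inserting a third, ``degenerate'' case $\tfrac12<\alpha'<5$, where no dispersion and no eigenvalue separation are used; instead it invokes the inequality $\bigl|\tfrac{e^{t\lambda_+}-e^{t\lambda_-}}{\lambda_+-\lambda_-}\bigr|\le te^{t\max\mathrm{Re}\lambda_\pm}\le te^{-t/20}$, which is smooth across the sign change of the discriminant, together with the observation that when $\alpha'\sim 1$ one may simply replace $\tfrac{T}{1+\alpha' T}$ by $\tfrac{T}{1+T}$ and any admissible power thereof controls any other. Your proposal needs this (or an equivalent) intermediate case to be complete; as written, the claim ``on $\supp\psi$ one has $\lambda_+-\lambda_-\sim\alpha$'' is false for $\alpha'$ between $1$ and roughly $4$, and the uniform Mikhlin bound you invoke does not hold across the degenerate shell.
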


\begin{rem}
	Summing the preceding inequalities in $j$ easily leads to damped Strichartz estimates in Besov spaces with summability in $\ell^p$, for any $\tilde q'\leq p\leq q$. However, due to the dichotomy of the statement of Corollary \ref{cor:wave} into high and low frequencies, the resulting estimates cannot be stated with homogeneous Besov spaces in a unified format, which is natural because the damped wave equation does not enjoy any scaling invariance. Observe, though, that the high- and low-frequency estimates match in the borderline case $\alpha=2^j$, with $r=\tilde r=2$.
\end{rem}

\begin{rem}
	Corollary \ref{cor:wave} is optimal in the loose sense that it provides a similar result as Corollary \ref{cor:halfwave} for high frequencies. Moreover, it recovers the optimal Strichartz estimates for the classical wave equation in the limit $\alpha\to 0$. Corollary \ref{cor:wave} also displays optimality in its control of low frequencies. Indeed, let us consider a solution $u_c(t,x)$, for each $c>0$, of the damped wave equation
	\begin{equation*}
		\left\{
			\begin{aligned}
				(c^{-2}\partial_t^2 +\alpha\partial_t -\Delta) u_c(t,x)&= F(t,x)
				\\
				u_c(0,x)&=f(x)
				\\
				\partial_t u_c(0,x)&=g(x)
			\end{aligned}
		\right.
	\end{equation*}
	on $t\in [0,T)$. In particular, it follows that $\tilde u_c(t,x)\bydef u_c(c^{-1} t,x)$ solves
	\begin{equation*}
		\left\{
			\begin{aligned}
				(\partial_t^2 +c\alpha\partial_t -\Delta) \tilde u_c(t,x)&= F(c^{-1} t,x)
				\\
				\tilde u_c(0,x)&=f(x)
				\\
				\partial_t \tilde u_c(0,x)&=c^{-1} g(x)
			\end{aligned}
		\right.
	\end{equation*}
	on $t\in [0,cT)$. Therefore, applying Corollary \ref{cor:wave} to $\tilde u_c$, with $r=\tilde r=2$, yields the low-frequency estimates
	\begin{equation*}
		\begin{aligned}
			\norm{\Delta_j \partial_t u_c}_{L^q_tL^2_x}
			&\lesssim
			\frac 1{c^2}\left(\frac{T}{1+c^2\alpha T}\right)^{\frac 1q}\norm{\Delta_j g}_{L^2_x}
			+\frac 1{\alpha}\left(\frac{\alpha 2^{2j}T}{\alpha+ 2^{2j}T}\right)^\frac 1q2^{j\left(1-\frac 2q\right)}\norm{\Delta_j \nabla f}_{L^2_x}
			\\
			&\quad+c^2\left(\frac{T}{1+c^2\alpha T}\right)^{\frac 1q+\frac 1{\tilde q}}
			\norm{\Delta_j F}_{L_t^{\tilde q'}L^{2}_x}
		\end{aligned}
	\end{equation*}
	and
	\begin{equation*}
		\begin{aligned}
			2^{j\frac 2q}\norm{\Delta_j \nabla u_c}_{L^q_tL^2_x}
			&\lesssim
			\frac 1{c^2\alpha}\left(\frac{\alpha 2^{2j}T}{\alpha+ 2^{2j}T}\right)^\frac 1q2^{j}
			\norm{\Delta_j g}_{L^2_x}
			+\left(\frac{\alpha 2^{2j}T}{\alpha+ 2^{2j}T}\right)^\frac 1q\norm{\Delta_j \nabla f}_{L^2_x}
			\\
			&\quad+\frac 1{\alpha}\left(\frac{\alpha 2^{2j}T}{\alpha+ 2^{2j}T}\right)^{\frac 1q+\frac 1{\tilde q}}
			2^{j\left(1-\frac 2{\tilde q}\right)}\norm{\Delta_j F}_{L_t^{\tilde q'}L^{2}_x},
		\end{aligned}
	\end{equation*}
	for all $j\in\mathbb{Z}$ with $2^j\leq c\alpha$. Finally, letting $c$ tend to infinity and denoting the limit of $u_c$ (in the sense of distributions) by $u$, we obtain the estimates
	\begin{equation*}
		\norm{\Delta_j \partial_t u}_{L^q_tL^2_x}
		\lesssim
		\frac 1{\alpha}\left(\frac{\alpha 2^{2j}T}{\alpha+ 2^{2j}T}\right)^\frac 1q
		2^{j2\left(1-\frac 1q\right)}\norm{\Delta_j f}_{L^2_x}
		+\frac 1\alpha\norm{\Delta_j F}_{L_t^{q}L^{2}_x}
	\end{equation*}
	and
	\begin{equation*}
		2^{j\frac 2q}\norm{\Delta_j u}_{L^q_tL^2_x}
		\lesssim
		\left(\frac{\alpha 2^{2j}T}{\alpha+ 2^{2j}T}\right)^\frac 1q\norm{\Delta_j f}_{L^2_x}
		+\frac 1{\alpha}\left(\frac{\alpha 2^{2j}T}{\alpha+ 2^{2j}T}\right)^{\frac 1q+\frac 1{\tilde q}}
		2^{-j\frac 2{\tilde q}}\norm{\Delta_j F}_{L_t^{\tilde q'}L^{2}_x},
	\end{equation*}
	for all $j\in\mathbb{Z}$ and every $q,\tilde q\in[1,\infty]$ such that $\frac 1q+\frac 1{\tilde q}\leq 1$, which are optimal parabolic estimates for the heat equation
	\begin{equation*}
		(\alpha\partial_t-\Delta)u=F
	\end{equation*}
	with initial data $u(0,x)=f(x)$.
\end{rem}

\begin{rem}
	Other attempts at establishing Strichartz estimates for the damped wave equation can be found in \cite{i19} and \cite{iw19}. However, the results obtained therein are suboptimal. Indeed, Corollary \ref{cor:wave} supersedes the results from \cite{i19} and \cite{iw19} in both the breadth of the range of integrability parameters that it handles, and the sharpness of regularity gains that it produces.
\end{rem}

\begin{proof}
	We begin by introducing
	\begin{equation*}
		e(t,x)\bydef \partial_t u(t,x)
		\quad\text{and}\quad
		b(t,x)\bydef i|D|u(t,x).
	\end{equation*}
	It then follows that $(e,b)$ is a solution of the system
	\begin{equation*}%\label{wave:maxwell}
		\left\{
		\begin{aligned}
			&\partial_t e-i|D|b +\alpha e = F,
			\\
			&\partial_t b-i|D|e =0,
		\end{aligned}
		\right.
	\end{equation*}
	with initial data $\big(e(0,x),b(0,x)\big)=\big(g(x),i|D|f(x)\big)$. This system is reminiscent of Maxwell's equations, which are studied in Section \ref{section:perfect fluid}, and it can be recast as
	\begin{equation*}
		\partial_t
		\begin{pmatrix}
			e\\b
		\end{pmatrix}
		=
		\mathcal{L}
		\begin{pmatrix}
			e\\b
		\end{pmatrix}
		+
		\begin{pmatrix}
			F\\0
		\end{pmatrix},
	\end{equation*}
	where
	\begin{equation*}
		\mathcal{L}=\mathcal{L}(D)\bydef
		\begin{pmatrix}
			-\alpha & i|D|
			\\
			i|D| & 0
		\end{pmatrix}.
	\end{equation*}
	
	Now, a straightforward computation shows that $\mathcal{L}(\xi)$, where $\xi\in\mathbb{R}^d$, has the eigenvalues
	\begin{equation}\label{eigenvalues:def}
		\lambda_\pm(\xi)=-\frac\alpha 2\pm\sqrt{\frac{\alpha^2}4-|\xi|^2}
	\end{equation}
	in the complex field $\mathbb{C}$.
	Moreover, exploiting the trivial identities $\lambda_++\lambda_-=-\alpha$ and $\lambda_+\lambda_-=|\xi|^2$, one can readily verify that
	\begin{equation*}
		\begin{pmatrix}
			e\\b
		\end{pmatrix}
		=P_+
		\begin{pmatrix}
			e\\b
		\end{pmatrix}
		+P_-
		\begin{pmatrix}
			e\\b
		\end{pmatrix}
	\end{equation*}
	provides an eigenvector decomposition, where
	\begin{equation*}
		\begin{aligned}
			P_+
			\begin{pmatrix}
				e\\b
			\end{pmatrix}
			&=
			\frac 1{\lambda_+-\lambda_-}
			\begin{pmatrix}
				\lambda_+ e+i|D|b\\i|D|e-\lambda_-b
			\end{pmatrix}
			\\
			P_-
			\begin{pmatrix}
				e\\b
			\end{pmatrix}
			&=
			\frac 1{\lambda_--\lambda_+}
			\begin{pmatrix}
				\lambda_- e+i|D|b\\i|D|e-\lambda_+b
			\end{pmatrix}
		\end{aligned}
	\end{equation*}
	are the projections onto the eigenspaces associated with $\lambda_+(D)$ and $\lambda_-(D)$ (when these eigenvalues are distinct, i.e., when $|\xi|\neq\frac\alpha 2$), respectively.
	In particular, this decomposition allows us to deduce the representation formula
	\begin{equation}\label{representation:1}
		\begin{aligned}
			\begin{pmatrix}
				e\\b
			\end{pmatrix}(t)
			&=P_+
			\begin{pmatrix}
				e\\b
			\end{pmatrix}(t)
			+P_-
			\begin{pmatrix}
				e\\b
			\end{pmatrix}(t)
			\\
			&=
			\begin{pmatrix}
				\frac {e^{t\lambda_+}\lambda_+-e^{t\lambda_-}\lambda_-}{\lambda_+-\lambda_-} g-\frac {e^{t\lambda_+}-e^{t\lambda_-}}{\lambda_+-\lambda_-} |D|^2f
				\\
				\frac {e^{t\lambda_+}-e^{t\lambda_-}}{\lambda_+-\lambda_-}i|D|g-\frac {e^{t\lambda_+}\lambda_--e^{t\lambda_-}\lambda_+}{\lambda_+-\lambda_-}i|D|f
			\end{pmatrix}
			+\int_0^t
			\begin{pmatrix}
				\frac {e^{(t-s)\lambda_+}\lambda_+-e^{(t-s)\lambda_-}\lambda_-}{\lambda_+-\lambda_-} F\\\frac {e^{(t-s)\lambda_+}-e^{(t-s)\lambda_-}}{\lambda_+-\lambda_-}i|D|F
			\end{pmatrix}
			ds.
		\end{aligned}
	\end{equation}
	
	We want now to use \eqref{representation:1} to deduce an estimate on $\Delta_0 e$ and $\Delta_0 b$, which, by a scaling argument in the spirit of the proof of Corollary \ref{cor:halfwave}, will then result in a control on each dyadic component $\Delta_j e$ and $\Delta_j b$, with $j\in\mathbb{Z}$. Note, however, that the eigenvalues $\lambda_\pm(\xi)$ are of fundamentally different nature depending on the relative size of frequencies $\{\frac 12\leq|\xi|\leq 2\}$ with respect to $\alpha\geq 0$, which leads us to consider several cases.
	
	More specifically, on the one hand, when $\lambda_\pm\in\mathbb{R}$ (i.e., when $|\xi|\leq\frac\alpha 2$), we are going to employ the elementary controls
	\begin{equation}\label{eigenvalues:real}
		\begin{gathered}
			-\alpha\leq\lambda_-\leq-\frac\alpha 2\leq -\frac{2|\xi|^2}\alpha\leq\lambda_+\leq -\frac{|\xi|^2}\alpha,
			\\
			0\leq \frac {e^{t\lambda_+}-e^{t\lambda_-}}{\lambda_+-\lambda_-}
			=\frac {\int_{\lambda_-}^{\lambda_+}te^{ts}ds}{\lambda_+-\lambda_-}
			\leq te^{t\lambda_+}\leq te^{-t\frac{|\xi|^2}{\alpha}},
		\end{gathered}
	\end{equation}
	while, on the other hand, when $\lambda_\pm\in\mathbb{C}\setminus\mathbb{R}$ (i.e., when $|\xi|\geq\frac\alpha 2$), we are going to use the properties
	\begin{equation}\label{eigenvalues:complex}
		\begin{gathered}
			\left|\lambda_\pm\right|=|\xi|,\qquad\left|e^{-t\lambda_\pm}\right|=e^{-\frac\alpha 2 t},
			\\
			\left|\frac {e^{t\lambda_+}-e^{t\lambda_-}}{\lambda_+-\lambda_-}\right|
			=e^{-\frac\alpha 2t}\frac{\left|\sin\left(t\sqrt{|\xi|^2-\frac{\alpha^2}4}\right)\right|}{\sqrt{|\xi|^2-\frac{\alpha^2}4}}
			\leq te^{-\frac\alpha 2t}.
		\end{gathered}
	\end{equation}
	Considering that the dyadic operator $\Delta_0$ localizes frequencies to $\{\frac 12\leq|\xi|\leq 2\}$, we will then distinguish three cases:
	\begin{itemize}
		\item The complex case, where $0\leq \alpha\leq \frac 12$, so that $\lambda_\pm$ and $(\lambda_+-\lambda_-)^{-1}$ are complex and smooth on $\{\frac 12\leq|\xi|\leq 2\}$.
		\item The degenerate case, where $\frac 12<\alpha<5$ and the eigenvalues may be equal.
		\item The real case, where $\alpha \geq 5$, which implies that the eigenvalues are real and the damping phenomenon dominates the behavior of solutions on $\{\frac 12\leq|\xi|\leq 2\}$.
	\end{itemize}
	
	\paragraph{\bf The complex case} We begin by considering the range $0\leq \alpha\leq \frac 12$. In this setting, it is readily seen that the functions $\lambda_+(\xi)$, $\lambda_-(\xi)$ and $(\lambda_+(\xi)-\lambda_-(\xi))^{-1}$, as well as any number of their derivatives, are uniformly bounded on $\{\frac 13<|\xi|<3\}$, uniformly in $\alpha\in [0,\frac 12]$. In particular, by virtue of the criterion \eqref{multiplier:3} for the boundedness of multipliers, further introducing a smooth cutoff function $\psi(\xi)$ compactly supported inside $\{\frac 13<|\xi|<3\}$ and such that $\psi\equiv 1$ on $\{\frac 12\leq|\xi|\leq 2\}$, it follows that $\lambda_+(\xi)\psi(\xi)$, $\lambda_-(\xi)\psi(\xi)$ and $(\lambda_+(\xi)-\lambda_-(\xi))^{-1}\psi(\xi)$ are the symbols of bounded Fourier multipliers over any homogeneous Besov space. Therefore, we deduce from \eqref{representation:1} that
	\begin{equation}\label{strichartz:4}
		\begin{aligned}
			\norm{\Delta_0 (e, b)}_{L^q_tL^r_x}
			&\lesssim
			\sum_{\pm}\norm{e^{t\lambda_\pm}\Delta_0(f,g)}_{L^q_tL^r_x}
			+\norm{\int_0^te^{(t-s)\lambda_\pm}\Delta_0Fds}_{L^q_tL^r_x}
			\\
			&=\sum_{\pm}\norm{e^{-\frac\alpha 2t}e^{\pm it\delta(D)}\Delta_0(f,g)}_{L^q_tL^r_x}
			+\norm{\int_0^te^{-\frac\alpha 2(t-s)}e^{\pm i(t-s)\delta(D)}\Delta_0Fds}_{L^q_tL^r_x},
		\end{aligned}
	\end{equation}
	where we introduced the notation
	\begin{equation*}
		\delta(\xi)\bydef\sqrt{|\xi|^2-\frac{\alpha^2}4},
	\end{equation*}
	for convenience.
	
	Now, classically, the Stationary Phase Method can be used (see \cite[Proposition 8.15]{bcd11}, for instance) to show that
	\begin{equation*}
		\left|\int_{\mathbb{R}^d}e^{ix\cdot\xi}e^{\pm it|\xi|}\psi(\xi)d\xi\right|\leq \frac{C_\psi}{t^{\frac{d-1}2}},
	\end{equation*}
	for all $t>0$, where the constant $C_\psi>0$ is independent of $t$ and $x$, and $\psi$ is any smooth compactly supported function whose support does not contain the origin.
	A similar estimate holds true, uniformly in $\alpha\in[0,\frac 12]$, if one replaces $|\xi|$ by $\delta(\xi)$, and if the support of $\psi$ is disjoint from the closed ball $\{|\xi|\leq\frac\alpha 2\}$. More precisely, we claim that
	\begin{equation}\label{dispersion:1}
		\left|\int_{\mathbb{R}^d}e^{ix\cdot\xi}e^{\pm it\delta(\xi)}\psi(\xi)d\xi\right|\leq \frac{C_\psi}{t^{\frac{d-1}2}},
	\end{equation}
	whenever $\operatorname{supp}\psi\subset\{|\xi|>\frac 14\geq \frac\alpha 2\}$, where $C_\psi>0$ is independent of $t>0$, $x\in\mathbb{R}^d$ and $\alpha\in[0,\frac 12]$. For the sake of completeness, we provide a justification of \eqref{dispersion:1} in Appendix \ref{dispersion:2}.
	
	Therefore, introducing the flow
	\begin{equation*}
		U(t)f(x)\bydef e^{\pm it\delta(D)}\psi(D)f(x),
	\end{equation*}
	for some fixed compactly supported cutoff $\psi(\xi)$ such that $\operatorname{supp}\psi\subset\{|\xi|>\frac 14\geq \frac\alpha 2\}$ and $\psi\equiv 1$ on $\{\frac 12\leq\xi\leq 2\}$, we see, in view of \eqref{dispersion:1}, that $U(t)$ satisfies all hypotheses of Proposition \ref{strichartz:2} with $H=L^2(\mathbb{R}^d)$, $X=\mathbb{R}^d$ and $\sigma=\frac {d-1}2$.
	Hence, we conclude from \eqref{strichartz:4} that
	\begin{equation}\label{strichartz:5}
		\begin{aligned}
			\norm{\Delta_0 (e,b)}_{L^q_tL^r_x}
			&\lesssim \left(\frac T{1+\alpha T}\right)^{\frac 1q+\frac {d-1}2\left(\frac 1r-\frac 1 2\right)}\norm{\Delta_0 (f,g)}_{L^2_x}
			\\
			&\quad+\left(\frac T{1+\alpha T}\right)^{\frac 1q+\frac 1{\tilde q}+\frac {d-1}2\left(\frac 1r+\frac 1{\tilde r}-1\right)}
			\norm{\Delta_0 F}_{L^{\tilde q'}_tL^{\tilde r'}_x},
		\end{aligned}
	\end{equation}
	for all admissible exponent pairs, when $0\leq \alpha\leq \frac 12$.
	
	\paragraph{\bf The degenerate case} We are now looking at the range $\frac 12<\alpha<5$. This case is easily settled by the use of \eqref{eigenvalues:real} and \eqref{eigenvalues:complex}, which allows us to deduce, whenever $\frac 12\leq|\xi|\leq 2$, that
	\begin{equation*}
		\begin{aligned}
			\norm{\frac {e^{t\lambda_+}-e^{t\lambda_-}}{\lambda_+-\lambda_-}}_{L^c_t}&\leq \norm{te^{-\frac t{20}}}_{L^c_t}
			\lesssim \left(\frac{T^2}{1+T^2}\right)^\frac 1c\leq 2^\frac 1c\left(\frac T{1+T}\right)^\frac 2c
			\lesssim\left(\frac{T}{1+ T}\right)^\frac 1c,
			\\
			\norm{\frac {e^{t\lambda_+}\lambda_+-e^{t\lambda_-}\lambda_-}{\lambda_+-\lambda_-}}_{L^c_t}
			&\leq \norm{e^{t\lambda_+}}_{L^c_t}
			+\norm{\lambda_-\frac {e^{t\lambda_+}-e^{t\lambda_-}}{\lambda_+-\lambda_-}}_{L^c_t}
			\lesssim\left(\frac{T}{1+ T}\right)^\frac 1c,
			\\
			\norm{\frac {e^{t\lambda_+}\lambda_--e^{t\lambda_-}\lambda_+}{\lambda_+-\lambda_-}}_{L^c_t}
			&\leq \norm{e^{t\lambda_+}}_{L^c_t}
			+\norm{\lambda_+\frac {e^{t\lambda_+}-e^{t\lambda_-}}{\lambda_+-\lambda_-}}_{L^c_t}
			\lesssim\left(\frac{T}{1+ T}\right)^\frac 1c,
		\end{aligned}
	\end{equation*}
	for any $1\leq c\leq\infty$. Indeed, incorporating these controls into \eqref{representation:1} and recalling that the space of Fourier multipliers on $L^2(\mathbb{R}^d)$ is isomorphic to $L^\infty(\mathbb{R}^d)$ leads to
	\begin{equation}\label{strichartz:6}
		\begin{aligned}
			\norm{\Delta_0 (e,b)}_{L^q_tL^r_x}
			&\lesssim \norm{\Delta_0 (e,b)}_{L^q_tL^2_x}
			\\
			&\lesssim \left(\frac T{1+ T}\right)^{\frac 1q}\norm{\Delta_0 (f,g)}_{L^2_x}
			+\left(\frac T{1+ T}\right)^{\frac 1q+\frac 1{\tilde q}}
			\norm{\Delta_0 F}_{L^{\tilde q'}_tL^{2}_x}
			\\
			&\lesssim \left(\frac T{1+T}\right)^{\frac 1q+\frac {d-1}2\left(\frac 1r-\frac 1 2\right)}\norm{\Delta_0 (f,g)}_{L^2_x}
			\\
			&\quad+\left(\frac T{1+T}\right)^{\frac 1q+\frac 1{\tilde q}+\frac {d-1}2\left(\frac 1r+\frac 1{\tilde r}-1\right)}
			\norm{\Delta_0 F}_{L^{\tilde q'}_tL^{\tilde r'}_x},
		\end{aligned}
	\end{equation}
	for all admissible exponent pairs, when $\frac 12<\alpha<5$.
	
	\paragraph{\bf The real case} In the remaining case, we assume that $\alpha\geq 5$. In particular, when $\frac 12\leq|\xi|\leq 2$, it holds that
	\begin{equation*}
		\lambda_+-\lambda_-=\sqrt{\alpha^2-4|\xi|^2}\geq \sqrt{\alpha^2-16}\geq \frac{3\alpha}5.
	\end{equation*}
	Furthermore, employing \eqref{eigenvalues:real}, one finds that
	\begin{equation*}
		\norm{e^{t\lambda_+}}_{L^c_t}\leq \norm{e^{-\frac t{4\alpha}}}_{L^c_t}\lesssim\left(\frac{\alpha T}{\alpha+T}\right)^\frac 1c
		\quad\text{and}\quad
		\norm{e^{t\lambda_-}}_{L^c_t}\leq \norm{e^{-\frac {\alpha t}{2}}}_{L^c_t} \lesssim \left(\frac{T}{1+\alpha T}\right)^\frac 1c,
	\end{equation*}
	for any $1\leq c\leq\infty$. Therefore, we deduce from \eqref{representation:1} and \eqref{eigenvalues:real} that
	\begin{equation}\label{strichartz:7}
		\begin{aligned}
			\norm{\Delta_0 e}_{L^q_tL^r_x}&\lesssim \norm{\Delta_0 e}_{L^q_tL^2_x}
			\\
			&\lesssim
			\left(\frac{\alpha^{1-2q} T}{\alpha+ T}+\frac{T}{1+\alpha T}\right)^\frac 1q
			\norm{\Delta_0 g}_{L^2_x}
			+\left(\frac{\alpha^{1-q} T}{\alpha+ T}+\frac{\alpha^{-q}T}{1+\alpha T}\right)^\frac 1q
			\norm{\Delta_0 f}_{L^2_x}
			\\
			&\quad+
			\left(\frac{\alpha^{1-2\left(\frac 1q+\frac 1{\tilde q}\right)^{-1}} T}{\alpha+ T}+\frac{T}{1+\alpha T}\right)^{\frac 1q+\frac 1{\tilde q}}
			\norm{\Delta_0 F}_{L_t^{\tilde q'}L^{2}_x}
			\\
			&\lesssim
			\left(\frac{T}{1+\alpha T}\right)^{\frac 1q}\norm{\Delta_0 g}_{L^2_x}
			+\left(\frac{\alpha^{1-q} T}{\alpha+ T}\right)^\frac 1q\norm{\Delta_0 f}_{L^2_x}
			\\
			&\quad+\left(\frac{T}{1+\alpha T}\right)^{\frac 1q+\frac 1{\tilde q}}
			\norm{\Delta_0 F}_{L_t^{\tilde q'}L^{\tilde r'}_x}
		\end{aligned}
	\end{equation}
	and
	\begin{equation}\label{strichartz:8}
		\begin{aligned}
			\norm{\Delta_0 b}_{L^q_tL^r_x}&\lesssim \norm{\Delta_0 b}_{L^q_tL^2_x}
			\\
			&\lesssim
			\left(\frac{\alpha^{1-q} T}{\alpha+ T}+\frac{\alpha^{-q}T}{1+\alpha T}\right)^\frac 1q
			\norm{\Delta_0 g}_{L^2_x}
			+\left(\frac{\alpha T}{\alpha+ T}+\frac{\alpha^{-2q}T}{1+\alpha T}\right)^\frac 1q\norm{\Delta_0 f}_{L^2_x}
			\\
			&\quad+
			\left(\frac{\alpha^{1-\left(\frac 1q+\frac 1{\tilde q}\right)^{-1}} T}{\alpha+ T}+\frac{\alpha^{-\left(\frac 1q+\frac 1{\tilde q}\right)^{-1}}T}{1+\alpha T}\right)^{\frac 1q+\frac 1{\tilde q}}
			\norm{\Delta_0 F}_{L_t^{\tilde q'}L^{2}_x}
			\\
			&\lesssim
			\left(\frac{\alpha^{1-q} T}{\alpha+ T}\right)^\frac 1q\norm{\Delta_0 g}_{L^2_x}
			+\left(\frac{\alpha T}{\alpha+ T}\right)^\frac 1q\norm{\Delta_0 f}_{L^2_x}
			\\
			&\quad+\left(\frac{\alpha^{1-\left(\frac 1q+\frac 1{\tilde q}\right)^{-1}} T}{\alpha+ T}\right)^{\frac 1q+\frac 1{\tilde q}}
			\norm{\Delta_0 F}_{L_t^{\tilde q'}L^{\tilde r'}_x},
		\end{aligned}
	\end{equation}
	for all admissible exponent pairs, whenever $\alpha\geq 5$.
	
	\paragraph{\bf Scaling argument and conclusion of proof} We are now in a position to conclude the justification of the corollary.
	In order to deduce an estimate on $\Delta_j(e,b)$, for all $j\in\mathbb{Z}$, from \eqref{strichartz:5}, \eqref{strichartz:6}, \eqref{strichartz:7} and \eqref{strichartz:8}, we conduct now a scaling analysis in the spirit of the proof of Corollary \ref{cor:halfwave}. To that end, we introduce
	\begin{equation*}
		\begin{aligned}
			(e_j,b_j)(t,x)&\bydef (e,b)\left(\frac t{2^j},\frac x{2^j}\right),
			&
			F_j(t,x)&\bydef \frac 1{2^j}F\left(\frac t{2^j},\frac x{2^j}\right),
			\\
			f_j(x)&\bydef 2^jf \left(\frac x{2^j}\right),
			&
			g_j(x)&\bydef g \left(\frac x{2^j}\right),
		\end{aligned}
	\end{equation*}
	and observe that $(e_j,b_j)$ solves
	\begin{equation*}
		\left\{
		\begin{aligned}
			&\partial_t e_j-i|D|b_j +2^{-j}\alpha e_j = F_j,
			\\
			&\partial_t b_j-i|D|e_j =0,
		\end{aligned}
		\right.
	\end{equation*}
	on $[0,2^jT)$, with initial data $\big(e_j(0,x),b_j(0,x)\big)=\big(g_j(x),i|D|f_j(x)\big)$.
	
	Then, noticing that
	\begin{equation*}
		\begin{aligned}
			\norm{\Delta_j (e,b)}_{L^q_tL^r_x}&=2^{-j\left(\frac 1q+\frac dr\right)}\norm{\Delta_0 (e_j,b_j)}_{L^q_tL^r_x}
			\\
			\norm{\Delta_j g}_{L^2_x}&=2^{-j\frac d2}\norm{\Delta_0 g_j}_{L^2_x}
			\\
			\norm{\Delta_j f}_{L^2_x}&=2^{-j\left(1+\frac d2\right)}\norm{\Delta_0 f_j}_{L^2_x}
			\\
			\norm{\Delta_j F}_{L^{\tilde q'}_tL^{\tilde r'}_x}&=2^{j\left(\frac 1{\tilde q}-d(1-\frac 1{\tilde r})\right)}
			\norm{\Delta_0 F_j}_{L^{\tilde q'}_tL^{\tilde r'}_x}
		\end{aligned}
	\end{equation*}
	and applying \eqref{strichartz:5} and \eqref{strichartz:6} to $(e_j,b_j)$ yields the estimate
	\begin{equation*}
		\begin{aligned}
			\norm{\Delta_j (e,b)}_{L^q_tL^r_x}
			&\lesssim
			\left(\frac{T}{1+\alpha T}\right)^{\frac 1q+\frac {d-1}2\left(\frac 1r-\frac 1 2\right)}
			2^{j\frac {d+1}2\left(\frac 12-\frac 1r\right)}
			\norm{\Delta_j (g,\nabla f)}_{L^2_x}
			\\
			&\quad+\left(\frac{T}{1+\alpha T}\right)^{\frac 1q+\frac 1{\tilde q}+\frac {d-1}2\left(\frac 1r+\frac 1{\tilde r}-1\right)}
			2^{j\frac{d+1}2\left(1-\frac 1r-\frac 1{\tilde r}\right)}\norm{\Delta_j F}_{L_t^{\tilde q'}L^{\tilde r'}_x},
		\end{aligned}
	\end{equation*}
	whenever $2^j\geq \alpha$.
	
	Similarly, if $2^j\leq\alpha$, then, applying \eqref{strichartz:6}, \eqref{strichartz:7} and \eqref{strichartz:8} to $(e_j,b_j)$ leads to the controls
	\begin{equation*}
		\begin{aligned}
			\norm{\Delta_j e}_{L^q_tL^r_x}
			&\lesssim
			\left(\frac{T}{1+\alpha T}\right)^{\frac 1q}2^{jd\left(\frac 12-\frac 1r\right)}\norm{\Delta_j g}_{L^2_x}
			+\frac 1\alpha\left(\frac{\alpha T}{\alpha+ 2^{2j}T}\right)^\frac 1q2^{j\left(1+d(\frac 12-\frac 1r)\right)}\norm{\Delta_j \nabla f}_{L^2_x}
			\\
			&\quad+\left(\frac{T}{1+\alpha T}\right)^{\frac 1q+\frac 1{\tilde q}}
			2^{jd\left(1-\frac 1r-\frac 1{\tilde r}\right)}\norm{\Delta_j F}_{L_t^{\tilde q'}L^{\tilde r'}_x}
		\end{aligned}
	\end{equation*}
	and
	\begin{equation*}
		\begin{aligned}
			\norm{\Delta_j b}_{L^q_tL^r_x}
			&\lesssim
			\frac 1\alpha\left(\frac{\alpha T}{\alpha+ 2^{2j}T}\right)^\frac 1q2^{j\left(1+d(\frac 12-\frac 1r)\right)}\norm{\Delta_j g}_{L^2_x}
			+\left(\frac{\alpha T}{\alpha+ 2^{2j}T}\right)^\frac 1q2^{jd\left(\frac 12-\frac 1r\right)}\norm{\Delta_j \nabla f}_{L^2_x}
			\\
			&\quad+\frac 1\alpha\left(\frac{\alpha T}{\alpha+ 2^{2j}T}\right)^{\frac 1q+\frac 1{\tilde q}}
			2^{j\left(1+d(1-\frac 1r-\frac 1{\tilde r})\right)}\norm{\Delta_j F}_{L_t^{\tilde q'}L^{\tilde r'}_x},
		\end{aligned}
	\end{equation*}
	which concludes the proof of the corollary.
\end{proof}

\begin{cor}[Damped Maxwell equations]\label{cor:maxwell}
	Let $d=2$ or $d=3$ and consider a solution $(E,B)(t,x):[0,T)\times\mathbb{R}^d\to\mathbb{R}^6$ of the damped Maxwell system
	\begin{equation*}%\label{damped:Maxwell:system}
		\begin{cases}
			\begin{aligned}
				\frac{1}{c} \partial_t E - \nabla \times B + \sigma c E & = G,
				\\
				\frac{1}{c} \partial_t B + \nabla \times E & = 0,
				\\
				\div B & =0,
			\end{aligned}
		\end{cases}
	\end{equation*}
	for some initial data $(E,B)(0,x)=(E_0,B_0)(x)$, where $\sigma\geq 0$ and $c>0$.
	
	For any exponent pairs $(q,r),(\tilde q,\tilde r)\in [1,\infty]\times[2,\infty]$ which are admissible in the sense that
	\begin{equation*}
		\frac 2q+\frac {d-1} r\geq \frac {d-1} 2,
		\quad
		1+\frac {d-1} r\geq \frac {d-1} 2
		\quad\text{and}\quad
		(r,d)\neq (\infty,3)
	\end{equation*}
	and similarly for $(\tilde q,\tilde r)$, and such that
	\begin{equation*}
		\frac 1q+\frac 1{\tilde q}\leq 1,
	\end{equation*}
	one has the high-frequency estimate
	\begin{equation*}
		\begin{aligned}
			2^{-j\frac {d+1}2\left(\frac 12-\frac 1r\right)}\norm{\Delta_j (PE,B)}_{L^q_t([0,T);L^r_x)}
			\hspace{-50mm}&
			\\
			&\lesssim
			c^{\frac {d-1}2\left(\frac 12-\frac 1r\right)-\frac 2q}
			\left(\frac{c^2T}{1+\sigma c^2T}\right)^{\frac 1q+\frac {d-1}2\left(\frac 1r-\frac 1 2\right)}
			\norm{\Delta_j (PE_0,B_0)}_{L^2_x}
			\\
			&\quad+c^{1+\frac {d-1}2\left(1-\frac 1r-\frac 1{\tilde r}\right)-\frac 2q-\frac 2{\tilde q}}
			\left(\frac{c^2T}{1+\sigma c^2T}\right)^{\frac 1q+\frac 1{\tilde q}+\frac {d-1}2\left(\frac 1r+\frac 1{\tilde r}-1\right)}
			2^{j\frac{d+1}2\left(\frac 12-\frac 1{\tilde r}\right)}\norm{\Delta_j PG}_{L_t^{\tilde q'}([0,T);L^{\tilde r'}_x)},
		\end{aligned}
	\end{equation*}
	for all $j\in\mathbb{Z}$ with $2^j\geq \sigma c$, and the low-frequency estimates
	\begin{equation*}
		\begin{aligned}
			2^{-jd\left(\frac 12-\frac 1r\right)}\norm{\Delta_j PE}_{L^q_t([0,T);L^r_x)}
			&\lesssim
			\left(\frac{T}{1+\sigma c^2 T}\right)^{\frac 1q}\norm{\Delta_j PE_0}_{L^2_x}
			\\
			&\quad +\frac 1{\sigma c}\left(\frac{\sigma 2^{2j}T}{\sigma + 2^{2j}T}\right)^\frac 1q2^{j\left(1-\frac 2q\right)}\norm{\Delta_j B_0}_{L^2_x}
			\\
			&\quad+c\left(\frac{T}{1+\sigma c^2 T}\right)^{\frac 1q+\frac 1{\tilde q}}
			2^{jd\left(\frac 12-\frac 1{\tilde r}\right)}\norm{\Delta_j PG}_{L_t^{\tilde q'}([0,T);L^{\tilde r'}_x)}
		\end{aligned}
	\end{equation*}
	and
	\begin{equation*}
		\begin{aligned}
			2^{-j\left(d(\frac 12-\frac 1r)-\frac 2q\right)}\norm{\Delta_j B}_{L^q_t([0,T);L^r_x)}
			&\lesssim
			\frac 1{\sigma c}\left(\frac{\sigma 2^{2j}T}{\sigma + 2^{2j}T}\right)^\frac 1q2^{j}
			\norm{\Delta_j PE_0}_{L^2_x}
			+\left(\frac{\sigma 2^{2j}T}{\sigma + 2^{2j}T}\right)^\frac 1q\norm{\Delta_j B_0}_{L^2_x}
			\\
			&\quad+\frac 1{\sigma}\left(\frac{\sigma 2^{2j}T}{\sigma+ 2^{2j}T}\right)^{\frac 1q+\frac 1{\tilde q}}
			2^{j\left(1+d(\frac 12-\frac 1{\tilde r})-\frac 2{\tilde q}\right)}\norm{\Delta_j PG}_{L_t^{\tilde q'}([0,T);L^{\tilde r'}_x)},
		\end{aligned}
	\end{equation*}
	for all $j\in\mathbb{Z}$ with $2^j\leq \sigma c$.
\end{cor}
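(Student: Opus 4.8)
The plan is to reduce the damped Maxwell system to the damped wave equation of Corollary~\ref{cor:wave} via a projection, a second-order reformulation, and a rescaling of time.

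First I would apply Leray's projector $P$ to the system. Since $\nabla\times B$ is automatically divergence free, the conservative part $P^\perp E$ satisfies the elementary damped equation $\frac1c\partial_t P^\perp E+\sigma c\, P^\perp E=P^\perp G$ and decouples, so it plays no role in the estimates. It thus suffices to control $(PE,B)$, which solves
\[
\frac1c\partial_t PE-\nabla\times B+\sigma c\,PE=PG,\qquad \frac1c\partial_t B+\nabla\times PE=0,
\]
both fields being divergence free. Differentiating Faraday's law in time, substituting Ampère's law, and using that $\nabla\times(\nabla\times B)=-\Delta B$ on divergence-free fields (valid for $d=2$ and $d=3$), one sees that $B$ solves the damped wave equation
\[
\frac1{c^2}\partial_t^2 B-\Delta B+\sigma\partial_t B=-\nabla\times PG,
\]
with initial data $B(0)=B_0$ and $\partial_t B(0)=-c\,\nabla\times PE_0$, the latter being read off from Faraday's law at $t=0$. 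Crucially, Faraday's law also gives $\partial_t B=-c\,\nabla\times PE$, so a Strichartz bound on the pair $(\partial_t B,\nabla B)$ controls $PE$ and $B$ at once.

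Next I would rescale time by setting $\widehat B(t,x)\bydef B(t/c,x)$, which solves $\partial_t^2\widehat B-\Delta\widehat B+\sigma c\,\partial_t\widehat B=\widehat F$ on $[0,cT)$, with $\widehat F(t,x)\bydef-(\nabla\times PG)(t/c,x)$, $\widehat B(0)=B_0$ and $\partial_t\widehat B(0)=-\nabla\times PE_0$. This is exactly the setting of Corollary~\ref{cor:wave} with damping parameter $\alpha=\sigma c$ and horizon $cT$; since the spatial frequency is untouched by the time rescaling, the high/low-frequency dichotomy $2^j\geq\alpha$ versus $2^j\leq\alpha$ of that corollary becomes $2^j\geq\sigma c$ versus $2^j\leq\sigma c$ here. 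Applying Corollary~\ref{cor:wave} then yields control of $\|\Delta_j(\partial_t\widehat B,\nabla\widehat B)\|_{L^q_tL^r_x}$ in both regimes, hence of $\Delta_j(PE,B)$.

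Finally, I would convert back to the original variables. The time rescaling introduces factors $c^{1/q}$ and $c^{1/\tilde q'}$ on the relevant space-time Lebesgue norms and turns $cT$ into the combinations $\frac{c^2T}{1+\sigma c^2T}$ and $\frac{\sigma 2^{2j}T}{\sigma+2^{2j}T}$ appearing in the statement. Moreover, the curl restricted to divergence-free vector fields equals $|D|$ composed with a zero-order Fourier multiplier whose symbol is smooth away from the origin and homogeneous of degree $0$, hence bounded on every $\dot B^0_{r,\infty}$ by the criterion~\eqref{multiplier:3}; therefore $\|\Delta_j(\nabla\times v)\|_{L^r}\sim 2^j\|\Delta_j v\|_{L^r}$ for divergence-free $v$, which lets me trade the curls in $\widehat F$, $\partial_t\widehat B(0)$ and $\partial_t\widehat B$ for factors $2^j$ times the matching norms of $PG$, $PE_0$ and $PE$. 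Collecting all the powers of $c$ and $2^j$ reproduces the three displayed estimates. I expect the main obstacle to be purely bookkeeping: tracking these powers consistently across every data and source term, especially in the low-frequency regime where the estimates of Corollary~\ref{cor:wave} are most involved; once the reduction to the damped wave equation is set up, the rest is routine.
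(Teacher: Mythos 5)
Your proposal is correct, and it takes a genuinely different path from the paper's. The paper introduces an electromagnetic vector potential $A$ with $B(t,x)=\nabla\times A(ct,x)$ and $PE(t,x)=-(\partial_tA)(ct,x)$, fixes a \emph{damped} Lorenz gauge $\div A=\partial_t\varphi+\sigma c\varphi$, and shows that $A$ solves the damped wave equation, so Corollary~\ref{cor:wave} applies with $u=PA$: the wave data $(\partial_t u,\nabla u)$ and $(g,\nabla f)$ then line up with $(PE,B)$ and $(PE_0,B_0)$ directly, with no extra derivative to track. You instead apply $P$ directly, discard $P^\perp E$, derive the second-order equation for $B$ by cross-differentiating Amp\`ere and Faraday, and take $u=\widehat B$ as the wave variable. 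Because your $u$ is one derivative higher than the paper's ($\widehat B=\nabla\times PA$, up to the rescaling), every term in your application of Corollary~\ref{cor:wave} acquires a uniform extra factor $2^j$ — $\partial_t\widehat B\sim 2^jPE$, $\nabla\widehat B\sim 2^jB$, $g=-\nabla\times PE_0\sim 2^jPE_0$, $\nabla f=\nabla B_0\sim 2^jB_0$, $\widehat F=-\nabla\times PG\sim 2^jPG$ — and these cancel identically across all three estimates (I checked the powers of $c$ and $2^j$ in the high-frequency estimate and in both low-frequency estimates; they all work out, using $\frac1{\tilde q'}=1-\frac1{\tilde q}$). The net effect is that you avoid any discussion of gauge fixing (and hence the existence step for the damped Lorenz gauge, which the paper justifies by solving yet another damped wave equation for the gauge transformation), at the modest cost of one uniform $2^j$ cancellation, justified by the Mikhlin-type criterion~\eqref{multiplier:3} for the symbol of $|\xi|^{-1}(\xi\times\cdot)$ on divergence-free fields, which you correctly invoke. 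Both reductions lean on the same Corollary~\ref{cor:wave}; yours is arguably more elementary, the paper's lines up the bookkeeping more transparently.
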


\begin{rem}
	Corollary \ref{cor:maxwell} only provides estimates of the magnetic field $B$ and the divergence-free part of the electric field $PE$. Notice, though, that the divergent component $P^\perp E$ can also be estimated directly from Maxwell's system. Indeed, applying the projector $P^\perp$ to Amp\`ere's equation yields
	\begin{equation*}
		\partial_t P^\perp E + \sigma c^2 P^\perp E = cP^\perp G,
	\end{equation*}
	which leads to the representation formula
	\begin{equation*}
		P^\perp E(t)=e^{-\sigma c^2t}P^\perp E_0+c\int_0^te^{-\sigma c^2(t-s)}P^\perp G(s)ds.
	\end{equation*}
	A direct estimate then easily gives that
	\begin{equation*}
		\begin{aligned}
			2^{-jd\left(\frac 12-\frac 1r\right)}\norm{\Delta_j P^\perp E}_{L^q_tL^r_x}
			&\lesssim
			\norm{\Delta_j P^\perp E}_{L^q_tL^2_x}
			\\
			&\lesssim
			\left(\frac{T}{1+\sigma c^2 T}\right)^{\frac 1q}\norm{\Delta_j P^\perp E_0}_{L^2_x}
			+c\left(\frac{T}{1+\sigma c^2 T}\right)^{\frac 1q+\frac 1{\tilde q}}
			\norm{\Delta_j P^\perp G}_{L_t^{\tilde q'}L^{2}_x}
			\\
			&\lesssim
			\left(\frac{T}{1+\sigma c^2 T}\right)^{\frac 1q}\norm{\Delta_j P^\perp E_0}_{L^2_x}
			\\
			&\quad+c\left(\frac{T}{1+\sigma c^2 T}\right)^{\frac 1q+\frac 1{\tilde q}}
			2^{jd\left(\frac 12-\frac 1{\tilde r}\right)}\norm{\Delta_j P^\perp G}_{L_t^{\tilde q'}L^{\tilde r'}_x},
		\end{aligned}
	\end{equation*}
	for any $q,\tilde q\in [1,\infty]$ and $r,\tilde r\in [2,\infty]$, with $\frac 1q+\frac 1{\tilde q}\leq 1$.
\end{rem}

\begin{proof}
	Since $B(t,x)$ is a solenoidal field, we begin by introducing a vector potential $A(t,x)$, with $t\in [0,cT)$ and $x\in\mathbb{R}^d$, such that
	\begin{equation}\label{gauge:2}
		B(t,x)=\nabla\times A(ct, x).
	\end{equation}
	Faraday's equation $c^{-1}\partial_t B + \nabla \times E = 0$ then implies that $(\partial_tA)(ct,x)+E(t,x)$ must be curl-free, whereby there exists a scalar potential $\varphi(t,x)$, with $t\in [0,cT)$ and $x\in\mathbb{R}^d$, such that
	\begin{equation}\label{gauge:3}
		E(t,x)=\nabla\varphi(ct,x)-(\partial_tA)(ct,x).
	\end{equation}
	Observe that $A$ and $\varphi$ are not uniquely determined. Indeed, for any scalar valued potential $\psi(t,x)$, it is possible to apply the transformations
	\begin{equation*}\label{transformation:1}
		\begin{aligned}
			A(t,x)&\mapsto A(t,x)+\nabla\psi(t,x)
			\\
			\varphi(t,x)&\mapsto\varphi(t,x)+\partial_t\psi(t,x)
		\end{aligned}
	\end{equation*}
	to produce new potentials representing the same electromagnetic field $(E,B)$. Any particular choice of $A$ and $\varphi$ is called a gauge.
	
	Different choices of gauge lead to different insights into Maxwell's equations. It is therefore important to carefully select the properties fixing the gauge. A standard example of gauge fixing is the Coulomb gauge, which merely requires that $A$ be solenoidal, i.e., $\div A=0$. The Lorenz gauge, which imposes the condition
	\begin{equation*}
		\div A(t,x)=\partial_t\varphi(t,x)
	\end{equation*}
	is another classical example with the property that it produces decoupled wave equations on $A$ and $\varphi$ when there is no damping, i.e., $\sigma=0$.
	
	Here, we introduce a damped Lorenz gauge by selecting potentials $A$ and $\varphi$ solving
	\begin{equation}\label{gauge:1}
		\div A(t,x)=\partial_t\varphi(t,x)+\sigma c\varphi(t,x).
	\end{equation}
	Observe that is is always possible to find a damped Lorenz gauge. Indeed, starting from any other gauge $(A,\varphi)$, one can apply the transformations \eqref{transformation:1} with any solution $\psi(t,x)$ of the damped wave equation
	\begin{equation*}
		\partial_t^2\psi+\sigma c\partial_t\psi-\Delta\psi=\div A-\partial_t\varphi-\sigma c\varphi,
	\end{equation*}
	thereby producing new potentials satisfying \eqref{gauge:1}.
	
	Now, by inserting \eqref{gauge:2} and \eqref{gauge:3} into Amp\`ere's equation and then employing \eqref{gauge:1}, a straightforward calculation shows that the damped Lorenz gauge is a solution of the  damped wave system
	\begin{equation}\label{gauge:4}
		(\partial_t^2 +\sigma c\partial_t -\Delta) A(t,x)= -G(c^{-1}t,x)
	\end{equation}
	on $t\in [0,cT)$.
	
	Therefore, applying the Strichartz estimates for damped wave equations from Corollary \ref{cor:wave} to this system, we find, concerning high frequencies, that
	\begin{equation*}
		\begin{aligned}
			c^{\frac 1q}2^{-j\frac {d+1}2\left(\frac 12-\frac 1r\right)}\norm{\Delta_j (PE,B)}_{L^q_t([0,T);L^r_x)}
			\hspace{-30mm}&
			\\
			&\lesssim
			2^{-j\frac {d+1}2\left(\frac 12-\frac 1r\right)}\norm{\Delta_j (\partial_t PA,\nabla PA)}_{L^q_t([0,cT);L^r_x)}
			\\
			&\lesssim
			\left(\frac{cT}{1+\sigma c^2T}\right)^{\frac 1q+\frac {d-1}2\left(\frac 1r-\frac 1 2\right)}
			\norm{\Delta_j (PE_0,B_0)}_{L^2_x}
			\\
			&\quad+c^{1-\frac 1{\tilde q}}\left(\frac{cT}{1+\sigma c^2T}\right)^{\frac 1q+\frac 1{\tilde q}+\frac {d-1}2\left(\frac 1r+\frac 1{\tilde r}-1\right)}
			2^{j\frac{d+1}2\left(\frac 12-\frac 1{\tilde r}\right)}\norm{\Delta_j PG}_{L_t^{\tilde q'}([0,T);L^{\tilde r'}_x)},
		\end{aligned}
	\end{equation*}
	for all $j\in\mathbb{Z}$ with $2^j\geq \sigma c$.
	
	As for low frequencies, i.e., when $j\in\mathbb{Z}$ with $2^j\leq \sigma c$, we obtain similarly, from Corollary \ref{cor:wave}, that
	\begin{equation*}
		\begin{aligned}
			c^{\frac 1q}2^{-jd\left(\frac 12-\frac 1r\right)}\norm{\Delta_j PE}_{L^q_t([0,T);L^r_x)}
			\hspace{-10mm}&
			\\
			&=2^{-jd\left(\frac 12-\frac 1r\right)}\norm{\Delta_j \partial_t PA}_{L^q_t([0,cT);L^r_x)}
			\\
			&\lesssim
			\left(\frac{cT}{1+\sigma c^2 T}\right)^{\frac 1q}\norm{\Delta_j PE_0}_{L^2_x}
			+\frac 1{\sigma c}\left(\frac{\sigma c 2^{2j}T}{\sigma + 2^{2j}T}\right)^\frac 1q2^{j\left(1-\frac 2q\right)}\norm{\Delta_j B_0}_{L^2_x}
			\\
			&\quad+c^{1-\frac 1{\tilde q}}\left(\frac{cT}{1+\sigma c^2 T}\right)^{\frac 1q+\frac 1{\tilde q}}
			2^{jd\left(\frac 12-\frac 1{\tilde r}\right)}\norm{\Delta_j PG}_{L_t^{\tilde q'}([0,T);L^{\tilde r'}_x)}
		\end{aligned}
	\end{equation*}
	and
	\begin{equation*}
		\begin{aligned}
			c^{\frac 1q}2^{-j\left(d(\frac 12-\frac 1r)-\frac 2q\right)}\norm{\Delta_j B}_{L^q_t([0,T);L^r_x)}
			\hspace{-15mm}&
			\\
			&\lesssim
			2^{-j\left(d(\frac 12-\frac 1r)-\frac 2q\right)}\norm{\Delta_j \nabla PA}_{L^q_t([0,cT);L^r_x)}
			\\
			&\lesssim
			\frac 1{\sigma c}\left(\frac{\sigma c 2^{2j}T}{\sigma + 2^{2j}T}\right)^\frac 1q2^{j}
			\norm{\Delta_j PE_0}_{L^2_x}
			+\left(\frac{\sigma c 2^{2j}T}{\sigma + 2^{2j}T}\right)^\frac 1q\norm{\Delta_j B_0}_{L^2_x}
			\\
			&\quad+c^{1-\frac 1{\tilde q}}\frac 1{\sigma c}\left(\frac{\sigma c 2^{2j}T}{\sigma+ 2^{2j}T}\right)^{\frac 1q+\frac 1{\tilde q}}
			2^{j\left(1+d(\frac 12-\frac 1{\tilde r})-\frac 2{\tilde q}\right)}\norm{\Delta_j PG}_{L_t^{\tilde q'}([0,T);L^{\tilde r'}_x)},
		\end{aligned}
	\end{equation*}
	which concludes the proof of the corollary.
\end{proof}

The global low-frequency estimates from Corollaries \ref{cor:wave} and \ref{cor:maxwell} can be refined by considering the maximal regularity of the heat equation (without damping) discussed in Section \ref{section:damped:parabolic:1}. The next two results provide such low-frequency parabolic estimates for the wave equation and Maxwell's system, respectively.

\begin{prop}\label{cor:parabolic:wave}
	Let $d\geq 2$ and consider a solution $u(t,x)$ of the damped wave equation
	\begin{equation*}
		\left\{
			\begin{aligned}
				(\partial_t^2 +\alpha\partial_t -\Delta) u(t,x)&= F(t,x)
				\\
				u(0,x)&=f(x)
				\\
				\partial_t u(0,x)&=g(x)
			\end{aligned}
		\right.
	\end{equation*}
	with $\alpha> 0$, $t\in [0,T)$ and $x\in\mathbb{R}^d$.
	
	For any $\chi\in C^\infty_c(\mathbb{R}^d)$ and $\sigma\in\mathbb{R}$, one has the low-frequency estimates
	\begin{equation*}
		\begin{aligned}
			\norm{\chi(\alpha^{-1}D)\partial_t u}_{L^m_t([0,T);\dot B^{\sigma+\frac 2m}_{2,q})}&\lesssim
			\alpha^{-\frac 1m}\norm{g}_{\dot B^{\sigma+\frac 2m}_{2,q}}+\alpha^{\frac 1m-1}\norm{f}_{\dot B^{\sigma+2}_{2,m}}
			\\
			&\quad+\alpha^{-(1+\frac 1m-\frac 1r)}\norm{F}_{L_t^r([0,T);\dot B_{2,q}^{\sigma+\frac 2m})},
		\end{aligned}
	\end{equation*}
	for any $1<r\leq m<\infty$ and $1\leq q\leq\infty$, as well as
	\begin{equation*}
		\norm{\chi(\alpha^{-1}D)\nabla u}_{L^m_t([0,T);\dot B^{\sigma+\frac 2m}_{2,1})}\lesssim
		\alpha^{\frac 1m-1}\norm{g}_{\dot B^{\sigma+1}_{2,m}}+\alpha^{\frac 1m}\norm{f}_{\dot B^{\sigma+1}_{2,m}}
		+\alpha^{\frac 1m-\frac 1r}\norm{F}_{L_t^r([0,T);\dot B_{2,\infty}^{\sigma-1+\frac 2r})},
	\end{equation*}
	for any $1<r<m<\infty$, and
	\begin{equation*}
		\norm{\chi(\alpha^{-1}D)\nabla u}_{L^m_t([0,T);\dot B^{\sigma+\frac 2m}_{2,q})}\lesssim
		\alpha^{\frac 1m-1}\norm{g}_{\dot B^{\sigma+1}_{2,m}}+\alpha^{\frac 1m}\norm{f}_{\dot B^{\sigma+1}_{2,m}}
		+\norm{F}_{L_t^m([0,T);\dot B_{2,q}^{\sigma-1+\frac 2m})},
	\end{equation*}
	for any $1<m<\infty$ and $1\leq q\leq\infty$.
\end{prop}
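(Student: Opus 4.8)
The plan is to set $e:=\partial_t u$ and $b:=i|D|u$, so that $\nabla u$ and $b$ have comparable homogeneous Besov norms, and to work from the representation formula \eqref{representation:1} for $(e,b)$ in terms of the eigenvalues $\lambda_\pm$ of \eqref{eigenvalues:def}, estimating the contributions of $g$, of $f$, and of the source $F$ separately after the frequency localization $\chi(\alpha^{-1}D)$. First I would split $\chi=\chi_{\mathrm{lo}}+\chi_{\mathrm{an}}$, with $\chi_{\mathrm{lo}}\in C_c^\infty$ supported in $\{|\xi|<\tfrac14\}$ and $\chi_{\mathrm{an}}\in C_c^\infty$ supported in an annulus away from the origin. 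For the annular piece, $\chi_{\mathrm{an}}(\alpha^{-1}D)$ localizes to $\{|\xi|\sim\alpha\}$, where $\operatorname{Re}\lambda_\pm\le-c\alpha$, $|\lambda_\pm|\lesssim\alpha$, and $\bigl|\tfrac{e^{t\lambda_+}-e^{t\lambda_-}}{\lambda_+-\lambda_-}\bigr|=\bigl|\int_0^1te^{t(\tau\lambda_++(1-\tau)\lambda_-)}d\tau\bigr|\le te^{-c\alpha t}$; since all Besov norms of functions localized at frequency $\sim\alpha$ are comparable up to powers of $\alpha$ and $L^2$-Fourier multipliers are exactly $L^\infty$, plugging these bounds into \eqref{representation:1} and using $\|t^je^{-c\alpha t}\|_{L^m_t}\sim\alpha^{-j-1/m}$ together with Young's inequality for the Duhamel term yields the three estimates for $\chi_{\mathrm{an}}(\alpha^{-1}D)u$ by a routine computation.

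The substance is the low-frequency piece. Fixing $\tilde\chi\in C_c^\infty(\{|\xi|<\tfrac14\})$ with $\tilde\chi\equiv1$ on $\operatorname{supp}\chi_{\mathrm{lo}}$ and writing $\lambda_+=-\tfrac{|\xi|^2}\alpha\mu(\xi/\alpha)$, $\lambda_+-\lambda_-=\alpha\,\nu(\xi/\alpha)$ with $\mu,\nu$ smooth and $\mu\ge1$, $\nu\ge\tfrac{\sqrt3}2$ on that set (by \eqref{eigenvalues:real}), I would record two structural facts. First, any symbol of the form $\Phi(\xi/\alpha)$ with $\Phi\in C_c^\infty(\{|\xi|<\tfrac14\})$ --- in particular $\tfrac1\nu\tilde\chi$, $\tfrac{\lambda_-}{\lambda_+-\lambda_-}\tilde\chi$, $\tfrac\mu\nu\tilde\chi$, and also $e^{t\lambda_-}\tilde\chi$ uniformly in $t\ge0$ (using $\lambda_-\le-\tfrac\alpha2$ and $\tfrac{|\xi|^2}\alpha\mu\lesssim\alpha$ there) --- is a bounded Fourier multiplier over every $\dot B^s_{2,q}$, uniformly in $\alpha>0$, by criterion \eqref{multiplier:3}. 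Second, the operators $S_\alpha(t):=e^{t\lambda_+}\tilde\chi(\alpha^{-1}D)$ form a semigroup of Fourier multipliers satisfying, because $\mu\ge1$ and by the cutoff, $\|S_\alpha(t)\Delta_kg\|_{L^2}\le Ce^{-C_*t2^{2k}/\alpha}\|\Delta_kg\|_{L^2}$ for all $t\ge0$, $k\in\mathbb{Z}$ --- precisely the bound \eqref{heat:2} with the heat semigroup $e^{t\Delta}$ replaced by $e^{(t/\alpha)\Delta}$. Using the identities $\tfrac{e^{t\lambda_+}\lambda_+-e^{t\lambda_-}\lambda_-}{\lambda_+-\lambda_-}=\tfrac{\lambda_+}{\lambda_+-\lambda_-}e^{t\lambda_+}-\tfrac{\lambda_-}{\lambda_+-\lambda_-}e^{t\lambda_-}$ and $\tfrac{e^{t\lambda_+}\lambda_--e^{t\lambda_-}\lambda_+}{\lambda_+-\lambda_-}=\lambda_-\tfrac{e^{t\lambda_+}-e^{t\lambda_-}}{\lambda_+-\lambda_-}-e^{t\lambda_-}$, together with $\lambda_+\lambda_-=|\xi|^2$ (which lets one rewrite the combinations carrying a $|D|^2$ likewise), every term of \eqref{representation:1} restricted by $\chi_{\mathrm{lo}}(\alpha^{-1}D)$ splits into a \emph{parabolic part} of the form $a(D)S_\alpha(t)h$ (for the data) or $\int_0^ta(t,s,D)S_\alpha(t-s)F(s)\,ds$ (for the source), with $a$ one of the uniformly bounded multipliers above possibly composed with $|D|$ or $|D|^2$, plus a \emph{fast part} carrying the factor $e^{t\lambda_-}\tilde\chi(\alpha^{-1}D)$, hence the pointwise bound $e^{-\alpha t/2}$.

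For the fast parts I would use this exponential decay and Young's inequality directly; block by block at scales $2^k\lesssim\alpha$ the outcome turns out to be smaller than the claimed right-hand side by a factor $(2^k/\alpha)^\gamma$ with $\gamma>0$, which both closes the per-block estimate and makes the dyadic sum geometric --- so any mismatch between the summability indices $q$ and $m$ is harmless. For the parabolic parts I would invoke the estimates of Section~\ref{section:damped:parabolic:1} with $e^{t\Delta}$ replaced by $S_\alpha$ through the substitution $\tau=t/\alpha$; since their proofs use $e^{t\Delta}$ only through \eqref{heat:2}, which $S_\alpha$ satisfies after $t\mapsto t/\alpha$, one obtains the rescaled estimates
\begin{align*}
	\|S_\alpha(t)h\|_{L^m_t\dot B^{\rho+\frac2m}_{2,1}}&\lesssim\alpha^{\frac1m}\|h\|_{\dot B^\rho_{2,m}},\\
	\Bigl\|\int_0^tS_\alpha(t-s)a(t,s,D)F(s)\,ds\Bigr\|_{L^m_t\dot B^{\rho+2}_{2,q}}&\lesssim\alpha\,\|a\|_{M_2}\|F\|_{L^m_t\dot B^\rho_{2,q}},\\
	\Bigl\|\int_0^tS_\alpha(t-s)a(t,s,D)F(s)\,ds\Bigr\|_{L^m_t\dot B^{\rho+2\theta}_{2,1}}&\lesssim\alpha^{1+\frac1m-\frac1r-\theta}\|a\|_{M_2}\|F\|_{L^r_t\dot B^\rho_{2,\infty}},
\end{align*}
for $1<m<\infty$, $1\le q\le\infty$, $1\le r\le m$ and $0<\theta\le1+\tfrac1m-\tfrac1r$, the second being the rescaled maximal-regularity bound of Proposition~\ref{heat:5} and the third that of Propositions~\ref{heat:4}--\ref{damped_heat:1}. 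Inserting these (with $a$ a uniformly bounded multiplier above), recalling $\|\,|D|^jh\|_{\dot B^\rho_{2,q}}=\|h\|_{\dot B^{\rho+j}_{2,q}}$, and matching the powers of $\alpha$ and the Besov indices with the three asserted estimates completes the low-frequency analysis.

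I expect the genuine difficulty to be bookkeeping rather than conceptual: tracking the powers of $\alpha$ through the rescaling $\tau=t/\alpha$, verifying criterion \eqref{multiplier:3} for each of the $\alpha$-rescaled multipliers, and checking case by case that the fast parts and the $q$-versus-$m$ mismatch are always absorbed by the geometric low-frequency sum. The one place where a non-elementary input is essential is the endpoint $r=m$ in the third inequality of the proposition, which forces the use of the maximal-regularity estimate (Proposition~\ref{heat:5}) through its rescaled form above; the first two inequalities of the proposition carry an extra $\alpha^{-1}$ (indeed $\alpha^{-2}$ for the $\partial_t u$ source term, coming from the factor $\tfrac{\lambda_+}{\lambda_+-\lambda_-}=-\tfrac{|\xi|^2}{\alpha^2}\tfrac\mu\nu$), which leaves enough room to handle the source term with $r\le m$ by the softer Duhamel estimate, or even by Young's inequality block by block.
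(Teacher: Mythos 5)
Your route is essentially the paper's: introduce $(e,b)=(\partial_t u,i|D|u)$, expand via the representation formula \eqref{representation:1}, split the low-frequency cutoff into a piece well below $\alpha$ and an annulus of size $\alpha$, then on the low piece factor a parabolic semigroup out of the $\lambda_+$-contribution, treat the remaining multipliers by criterion \eqref{multiplier:3}, and handle the $\lambda_-$-contribution by elementary exponential decay in $t$. The only structural deviation from the paper is that you take $S_\alpha(t)=e^{t\lambda_+}\tilde\chi(\alpha^{-1}D)$ itself as the semigroup, whereas the paper factors out $e^{\frac A\alpha t\Delta}$ with an explicit auxiliary $A\in(0,\tfrac12)$ and absorbs the residue into multipliers $m_1,m_2^\pm,m_3$ of \eqref{multipliers:1}; your variant is slightly cleaner (no $A$), at the cost of having to re-justify that Propositions \ref{damped:parabolic:1}, \ref{heat:4}, \ref{heat:5} go through for $S_\alpha$ — which, as you note, they do since their proofs only exploit the dyadic bound \eqref{heat:2}.

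However, your third rescaled estimate is wrong as stated. With $S_\alpha$ behaving like $e^{(t/\alpha)\Delta}$, the substitution $\tau=t/\alpha$ produces a factor $\alpha$ from the Duhamel Jacobian, $\alpha^{1/m}$ from the output $L^m_t$-norm, and $\alpha^{-1/r}$ from $\|F(\alpha\cdot)\|_{L^r_\tau}$, so the constant coming out of Proposition \ref{heat:4} (which fixes the endpoint gain $\theta=1+\tfrac1m-\tfrac1r$) is
\begin{equation*}
\alpha^{\,1+\frac1m-\frac1r},
\end{equation*}
not $\alpha^{1+\frac1m-\frac1r-\theta}=\alpha^0$; the two differ by the full factor $\alpha^{1+1/m-1/r}$, which for $r<m$ is not innocuous. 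This is exactly the factor that, multiplied against $\|m_1\|_{M_2}\lesssim\alpha^{-1}$ and fed through $|D|F$, produces the $\alpha^{\frac1m-\frac1r}$ in the middle inequality of the proposition; your formula would give $\alpha^{-1}$ there, which is strictly smaller for $\alpha\ge1$ and hence a false bound. (Note also that for $\theta<1+\tfrac1m-\tfrac1r$ there is no $T$-uniform constant at all for the \emph{undamped} rescaled heat flow — one would pick up $T^{1+\frac1m-\frac1r-\theta}$ — so the endpoint $\theta=1+\tfrac1m-\tfrac1r$ must be used, forcing the Hardy–Littlewood–Sobolev input of Proposition \ref{heat:4}; the Young's-inequality alternative you mention for the $\partial_t u$ source cannot reach this endpoint either, since at $\theta=1/a$ the dyadic sum $\sum_k 2^{k(2\theta-2/a)}$ becomes $\sum_k 1$.) Finally, the assertion that the $e^{t\lambda_-}$ fast parts come with a strict gain $(2^k/\alpha)^\gamma$ is not accurate for every piece — for instance the fast part of the $g$-contribution to $e$ reproduces the claimed right-hand side exactly, with no room to spare — though this is harmless because those pieces already live in the right Besov space with the right power of $\alpha$. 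Correcting the power of $\alpha$ in your third display makes the bookkeeping close and recovers the proposition.
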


\begin{proof}
	Following the proof of Corollary \ref{cor:wave}, we consider
	\begin{equation*}
		e(t,x)\bydef \partial_t u(t,x)
		\quad\text{and}\quad
		b(t,x)\bydef i|D|u(t,x).
	\end{equation*}
	In particular, one has the representation formula \eqref{representation:1}, which, for any given choice of $0<A<\frac 12$, can be recast as
	\begin{equation}\label{representation:2}
		\begin{aligned}
			e(t)
			&=e^{\frac A\alpha t\Delta}m_2(t,D)g
			-e^{\frac A\alpha t\Delta}m_1(t,D) |D|^2f
			+\int_0^te^{\frac A\alpha (t-s)\Delta}m_2(t-s,D)F ds
			\\
			&=\left(e^{\frac A\alpha t\Delta}|D|^2m_2^+(t,D)-e^{-At\alpha}m_2^-(t,D) \right)g
			-e^{\frac A\alpha t\Delta}m_1(t,D) |D|^2f
			\\
			&\quad+\int_0^t\left(e^{\frac A\alpha (t-s)\Delta}
			|D|^2m_2^+(t-s,D)-e^{-A (t-s)\alpha}
			m_2^-(t-s,D)\right)F ds,
			\\
			b(t)
			&=e^{\frac A\alpha t\Delta}
			m_1(t,D)i|D|g
			-e^{\frac A\alpha t\Delta}m_3(t,D)i|D|f
			+\int_0^te^{\frac A\alpha (t-s)\Delta}m_1(t-s,D)i|D|Fds,
		\end{aligned}
	\end{equation}
	with the time-dependent Fourier multipliers
	\begin{equation}\label{multipliers:1}
		\begin{aligned}
			m_1(t,\xi)&=\frac {e^{t\lambda_+}-e^{t\lambda_-}}{\lambda_+-\lambda_-} e^{A t\frac{|\xi|^2}\alpha},
			&
			m_2(t,\xi)&=\frac {e^{t\lambda_+}\lambda_+-e^{t\lambda_-}\lambda_-}{\lambda_+-\lambda_-} e^{A t\frac{|\xi|^2}\alpha},
			\\
			m_2^+(t,\xi)&=\frac {e^{t\lambda_+}\lambda_+}{|\xi|^2(\lambda_+-\lambda_-)} e^{A t\frac{|\xi|^2}\alpha},
			&
			m_2^-(t,\xi)&=\frac {e^{t\lambda_-}\lambda_-}{\lambda_+-\lambda_-} e^{A t\alpha},
			\\
			m_3(t,\xi)&=\frac {e^{t\lambda_+}\lambda_--e^{t\lambda_-}\lambda_+}{\lambda_+-\lambda_-} e^{A t\frac{|\xi|^2}\alpha},&&
		\end{aligned}
	\end{equation}
	where the eigenvalues $\lambda_+(\xi)$ and $\lambda_-(\xi)$ are defined in \eqref{eigenvalues:def}.
	
	Then, making use of the elementary controls \eqref{eigenvalues:real}, one can show that
	\begin{equation*}
		\begin{aligned}
			\norm{m_1\mathds{1}_{\{|\xi|\leq \frac\alpha 4\}}}_{L^\infty_{t,\xi}}
			&\lesssim \alpha^{-1},
			&
			\norm{m_2^+\mathds{1}_{\{|\xi|\leq \frac \alpha 4\}}}_{L^\infty_{t,\xi}}
			&\lesssim \alpha^{-2},
			\\
			\norm{m_2^-\mathds{1}_{\{|\xi|\leq \frac \alpha 4\}}}_{L^\infty_{t,\xi}}
			&\lesssim 1,
			&
			\norm{m_3\mathds{1}_{\{|\xi|\leq \alpha R\}}}_{L^\infty_{t,\xi}}
			&\lesssim 1.
		\end{aligned}
	\end{equation*}
	In particular, since the space of Fourier multipliers over $L^2(\mathbb{R}^d)$ is isomorphic to $L^\infty(\mathbb{R}^d)$, we conclude that $\mathds{1}_{\{|D|\leq \frac\alpha 4\}}m_1$, $\mathds{1}_{\{|D|\leq \frac\alpha 4\}}m_2^\pm$ and $\mathds{1}_{\{|D|\leq \frac\alpha 4\}}m_3$ are bounded in the sense that they satisfy \eqref{multiplier:1} for $p=2$.

	Therefore, applying Propositions \ref{damped:parabolic:1} and \ref{heat:4} to the representation formulas \eqref{representation:2} by suitably scaling time by $\frac A\alpha$, we obtain the estimates
	\begin{equation}\label{low:1}
		\begin{aligned}
			\norm{\mathds{1}_{\{|D|\leq \frac\alpha 4\}}e}_{L^m_t\dot B^{\sigma+\frac 2m}_{2,q}}
			&\lesssim
			\alpha^{\frac 1m-2}\norm{\mathds{1}_{\{|D|\leq \frac\alpha 4\}}g}_{\dot B^{\sigma+2}_{2,m}}
			+\alpha^{-\frac 1m}\norm{g}_{\dot B^{\sigma+\frac 2m}_{2,q}}
			+\alpha^{\frac 1m-1}\norm{f}_{\dot B^{\sigma+2}_{2,m}}
			\\
			&\quad+\alpha^{\frac 1m-\frac 1r-1}\norm{\mathds{1}_{\{|D|\leq \frac\alpha 4\}}F}_{L_t^r\dot B_{2,\infty}^{\sigma+\frac 2r}}
			+\alpha^{-(1+\frac 1m-\frac 1r)}\norm{F}_{L_t^r\dot B_{2,q}^{\sigma+\frac 2m}}
			\\
			&\lesssim
			\alpha^{-\frac 1m}\norm{g}_{\dot B^{\sigma+\frac 2m}_{2,q}}
			+\alpha^{\frac 1m-1}\norm{f}_{\dot B^{\sigma+2}_{2,m}}+\alpha^{-(1+\frac 1m-\frac 1r)}\norm{F}_{L_t^r\dot B_{2,q}^{\sigma+\frac 2m}}
		\end{aligned}
	\end{equation}
	and
	\begin{equation}\label{low:4}
		\norm{\mathds{1}_{\{|D|\leq \frac \alpha 4\}}b}_{L^m_t\dot B^{\sigma+\frac 2m}_{2,1}}\lesssim
		\alpha^{\frac 1m-1}\norm{g}_{\dot B^{\sigma+1}_{2,m}}+\alpha^{\frac 1m}\norm{f}_{\dot B^{\sigma+1}_{2,m}}
		+\alpha^{\frac 1m-\frac 1r}\norm{F}_{L_t^r\dot B_{2,\infty}^{\sigma-1+\frac 2r}},
	\end{equation}
	for any $1<r<m<\infty$ and $1\leq q\leq\infty$.

	If, instead of Proposition \ref{heat:4}, one uses Proposition \ref{heat:5}, then one arrives at the estimates
	\begin{equation}\label{low:2}
		\norm{\mathds{1}_{\{|D|\leq \frac\alpha 4\}}e}_{L^m_t\dot B^{\sigma+\frac 2m}_{2,q}}\lesssim
		\alpha^{-\frac 1m}\norm{g}_{\dot B^{\sigma+\frac 2m}_{2,q}}+\alpha^{\frac 1m-1}\norm{f}_{\dot B^{\sigma+2}_{2,m}}
		+\alpha^{-1}\norm{F}_{L_t^m\dot B_{2,q}^{\sigma+\frac 2m}}
	\end{equation}
	and
	\begin{equation}\label{low:5}
		\norm{\mathds{1}_{\{|D|\leq \frac\alpha 4\}}b}_{L^m_t\dot B^{\sigma+\frac 2m}_{2,q}}\lesssim
		\alpha^{\frac 1m-1}\norm{g}_{\dot B^{\sigma+1}_{2,m}}+\alpha^{\frac 1m}\norm{f}_{\dot B^{\sigma+1}_{2,m}}
		+\norm{F}_{L_t^m\dot B_{2,q}^{\sigma-1+\frac 2m}},
	\end{equation}
	for any $1<m<\infty$ and $1\leq q\leq\infty$.
	
	In order to handle frequencies lying in the range $\{\frac\alpha 4<|\xi|\leq\alpha R\}$, for any choice of parameter $R>1$ with $2AR^2<1$, we employ \eqref{eigenvalues:real} and \eqref{eigenvalues:complex} to deduce that the mutlipliers in \eqref{representation:2} satisfy that
	\begin{equation*}
		\begin{aligned}
			\norm{m_1\mathds{1}_{\{\frac\alpha 4<|\xi|\leq\alpha R\}}}_{L^\infty_{t,\xi}}
			&\lesssim \alpha^{-1},
			\\
			\norm{m_2\mathds{1}_{\{\frac\alpha 4<|\xi|\leq\alpha R\}}}_{L^\infty_{t,\xi}}
			&\leq \norm{\lambda_- m_1\mathds{1}_{\{\frac\alpha 4<|\xi|\leq\alpha R\}}}_{L^\infty_{t,\xi}}
			+\norm{e^{t(\lambda_++A\frac{|\xi|^2}\alpha)}\mathds{1}_{\{\frac\alpha 4<|\xi|\leq\alpha R\}}}_{L^\infty_{t,\xi}}
			\lesssim 1,
			\\
			\norm{m_3\mathds{1}_{\{\frac\alpha 4<|\xi|\leq\alpha R\}}}_{L^\infty_{t,\xi}}
			&\leq \norm{\lambda_+ m_1\mathds{1}_{\{\frac\alpha 4<|\xi|\leq\alpha R\}}}_{L^\infty_{t,\xi}}
			+\norm{e^{t(\lambda_++A\frac{|\xi|^2}\alpha)}\mathds{1}_{\{\frac\alpha 4<|\xi|\leq\alpha R\}}}_{L^\infty_{t,\xi}}
			\lesssim 1.
		\end{aligned}
	\end{equation*}
	Therefore, as previously, by the boundedness of multipliers and the fact that $\norm{e^{-a t}}_{L^p_t([0,\infty))}=a^{-\frac 1p}$, for any $a>0$ and $1\leq p\leq\infty$ (no need to use Propositions \ref{damped:parabolic:1}, \ref{heat:4} or \ref{heat:5}, here), we conclude from \eqref{representation:2} that
	\begin{equation}\label{low:3}
		\begin{aligned}
			\norm{\mathds{1}_{\{\frac\alpha 4<|D|\leq \alpha R\}}e}_{L^m_t\dot B^{\sigma+\frac 2m}_{2,q}}
			&\lesssim
			\alpha^{-\frac 1m}\norm{g}_{\dot B^{\sigma+\frac 2m}_{2,q}}
			+\alpha^{-\frac 1m-1}\norm{\mathds{1}_{\{|D|\leq \alpha R\}}f}_{\dot B^{\sigma+2+\frac 2m}_{2,q}}
			\\
			&\quad +\alpha^{-(1+\frac 1m-\frac 1r)}\norm{F}_{L_t^r\dot B_{2,q}^{\sigma+\frac 2m}}
			\\
			&\lesssim
			\alpha^{-\frac 1m}\norm{g}_{\dot B^{\sigma+\frac 2m}_{2,q}}+\alpha^{\frac 1m-1}\norm{f}_{\dot B^{\sigma+2}_{2,m}}
			+\alpha^{-(1+\frac 1m-\frac 1r)}\norm{F}_{L_t^r\dot B_{2,q}^{\sigma+\frac 2m}}
		\end{aligned}
	\end{equation}
	and
	\begin{equation}\label{low:6}
		\begin{aligned}
			\norm{\mathds{1}_{\{\frac\alpha 4<|D|\leq \alpha R\}}b}_{L^m_t\dot B^{\sigma+\frac 2m}_{2,q}}
			&\lesssim
			\alpha^{-\frac 1m-1}\norm{\mathds{1}_{\{|D|\leq \alpha R\}}g}_{\dot B^{\sigma+1+\frac 2m}_{2,q}}
			+\alpha^{-\frac 1m}\norm{\mathds{1}_{\{|D|\leq \alpha R\}}f}_{\dot B^{\sigma+1+\frac 2m}_{2,q}}
			\\
			&\quad+\alpha^{-2-\frac 1m+\frac 1r}\norm{\mathds{1}_{\{|D|\leq \alpha R\}}F}_{L_t^r\dot B_{2,q}^{\sigma+1+\frac 2m}}
			\\
			&\lesssim
			\alpha^{\frac 1m-1}\norm{g}_{\dot B^{\sigma+1}_{2,m}}
			+\alpha^{\frac 1m}\norm{f}_{\dot B^{\sigma+1}_{2,m}}
			+\alpha^{\frac 1m-\frac 1r}\norm{F}_{L_t^r\dot B_{2,\infty}^{\sigma-1+\frac 2r}},
		\end{aligned}
	\end{equation}
	for any $1\leq r\leq m<\infty$ and $1\leq q\leq \infty$.
	
	All in all, combining \eqref{low:1}, \eqref{low:2} and \eqref{low:3}, we obtain that
	\begin{equation*}
		\norm{\mathds{1}_{\{|D|\leq \alpha R\}}e}_{L^m_t\dot B^{\sigma+\frac 2m}_{2,q}}
		\lesssim
		\alpha^{-\frac 1m}\norm{g}_{\dot B^{\sigma+\frac 2m}_{2,q}}
		+\alpha^{\frac 1m-1}\norm{f}_{\dot B^{\sigma+2}_{2,m}}+\alpha^{-(1+\frac 1m-\frac 1r)}\norm{F}_{L_t^r\dot B_{2,q}^{\sigma+\frac 2m}},
	\end{equation*}
	for any $1<r\leq m<\infty$ and $1\leq q\leq\infty$. Similarly, combining \eqref{low:4}, \eqref{low:5} and \eqref{low:6}, we deduce that
	\begin{equation*}
		\norm{\mathds{1}_{\{|D|\leq \alpha R\}}b}_{L^m_t\dot B^{\sigma+\frac 2m}_{2,1}}\lesssim
		\alpha^{\frac 1m-1}\norm{g}_{\dot B^{\sigma+1}_{2,m}}+\alpha^{\frac 1m}\norm{f}_{\dot B^{\sigma+1}_{2,m}}
		+\alpha^{\frac 1m-\frac 1r}\norm{F}_{L_t^r\dot B_{2,\infty}^{\sigma-1+\frac 2r}},
	\end{equation*}
	for any $1<r<m<\infty$, and
	\begin{equation*}
		\norm{\mathds{1}_{\{|D|\leq \alpha R\}}b}_{L^m_t\dot B^{\sigma+\frac 2m}_{2,q}}\lesssim
		\alpha^{\frac 1m-1}\norm{g}_{\dot B^{\sigma+1}_{2,m}}+\alpha^{\frac 1m}\norm{f}_{\dot B^{\sigma+1}_{2,m}}
		+\norm{F}_{L_t^m\dot B_{2,q}^{\sigma-1+\frac 2m}},
	\end{equation*}
	for any $1<m<\infty$ and $1\leq q\leq\infty$.
	
	Finally, selecting $R$ sufficiently large so that $\operatorname{supp}\chi\subset\{|\xi|\leq R\}$ leads to the desired estimates on $\chi(\alpha^{-1}D)(e,b)$, thereby concluding the proof.
\end{proof}

\begin{rem}
	The preceding proof raises the question---is it possible to extend the statement of Proposition \ref{cor:parabolic:wave} from the $L^2$-setting (in space integrability) to a general $L^p$-setting, with $p\neq 2$? Such an extension would require dealing with the boundedness of the multipliers defined in \eqref{multipliers:1} over $L^p$. This is related to the boundedness of the Bochner--Riesz multiplier $(1-|\xi|^2)_+^\frac 12$, which is notoriously challenging and remains unsettled in general dimensions. We will therefore not be going in further detail on this subject.
\end{rem}

\begin{cor}\label{cor:parabolic:maxwell}
	Let $d=2$ or $d=3$ and consider a solution $(E,B)(t,x):[0,T)\times\mathbb{R}^d\to\mathbb{R}^6$ of the damped Maxwell system
	\begin{equation*}
		\begin{cases}
			\begin{aligned}
				\frac{1}{c} \partial_t E - \nabla \times B + \sigma c E & = G,
				\\
				\frac{1}{c} \partial_t B + \nabla \times E & = 0,
				\\
				\div B & =0,
			\end{aligned}
		\end{cases}
	\end{equation*}
	for some initial data $(E,B)(0,x)=(E_0,B_0)(x)$, where $\sigma> 0$ and $c>0$.
	
	For any $\chi\in C^\infty_c(\mathbb{R}^d)$ and $s\in\mathbb{R}$, one has the low-frequency estimates
	\begin{equation*}
		\begin{aligned}
			\norm{\chi(c^{-1}D)PE}_{L^m_t([0,T);\dot B^{s+\frac 2m}_{2,q})}
			&\lesssim
			c^{-\frac 2m}\norm{PE_0}_{\dot B^{s+\frac 2m}_{2,q}}+c^{-1}\norm{B_0}_{\dot B^{s+1}_{2,m}}
			\\
			&\quad+c^{-1+\frac 2r-\frac 2m}\norm{PG}_{L_t^r([0,T);\dot B_{2,q}^{s+\frac 2m})},
		\end{aligned}
	\end{equation*}
	for any $1<r\leq m<\infty$ and $1\leq q\leq \infty$, as well as
	\begin{equation*}
		\norm{\chi(c^{-1}D)B}_{L^m_t([0,T);\dot B^{s+\frac 2m}_{2,1})}
		\lesssim
		c^{-1}\norm{PE_0}_{\dot B^{s+1}_{2,m}}+\norm{B_0}_{\dot B^{s}_{2,m}}
		+\norm{PG}_{L_t^r([0,T);\dot B_{2,\infty}^{s-1+\frac 2r})},
	\end{equation*}
	for any $1<r<m<\infty$, and
	\begin{equation*}
		\norm{\chi(c^{-1}D)B}_{L^m_t([0,T);\dot B^{s+\frac 2m}_{2,q})}
		\lesssim
		c^{-1}\norm{PE_0}_{\dot B^{s+1}_{2,m}}+\norm{B_0}_{\dot B^{s}_{2,m}}
		+\norm{PG}_{L_t^m([0,T);\dot B_{2,q}^{s-1+\frac 2m})},
	\end{equation*}
	for any $1<m<\infty$ and $1\leq q\leq\infty$.
\end{cor}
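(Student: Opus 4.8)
The plan is to deduce Corollary~\ref{cor:parabolic:maxwell} from the parabolic estimates for the damped wave equation of Proposition~\ref{cor:parabolic:wave}, by recycling the damped Lorenz gauge already constructed in the proof of Corollary~\ref{cor:maxwell}. Concretely, I would start from potentials $A(\tau,x)$ and $\varphi(\tau,x)$ on $\tau\in[0,cT)$ satisfying \eqref{gauge:2}, \eqref{gauge:3} and the damped Lorenz condition \eqref{gauge:1}, so that $A$ solves the damped wave system \eqref{gauge:4}, i.e. $(\partial_\tau^2+\sigma c\,\partial_\tau-\Delta)A=-G(c^{-1}\tau,\cdot)$ on $[0,cT)$. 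Applying Leray's projector $P$, and using that it commutes with $\partial_\tau$ and $\Delta$, that $P\nabla\varphi=0$, and that the curl of a gradient vanishes, the identities \eqref{gauge:2}--\eqref{gauge:3} yield $B(t,\cdot)=(\nabla\times PA)(ct,\cdot)$ and $PE(t,\cdot)=-(\partial_\tau PA)(ct,\cdot)$, where $PA$ is divergence-free, solves $(\partial_\tau^2+\sigma c\,\partial_\tau-\Delta)(PA)=-PG(c^{-1}\tau,\cdot)$ on $[0,cT)$ componentwise, and has data $(\partial_\tau PA)(0,\cdot)=-PE_0$ and $\nabla\times PA(0,\cdot)=B_0$.

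Next I would apply Proposition~\ref{cor:parabolic:wave} componentwise to $u=PA$ on $[0,cT)$, with damping $\alpha=\sigma c>0$ (admissible since $\sigma,c>0$), initial data $f=PA(0,\cdot)$, $g=-PE_0$, source $F=-PG(c^{-1}\tau,\cdot)$, and cutoff $\psi:=\chi(\sigma\,\cdot)\in C^\infty_c(\mathbb{R}^d)$; this last choice is exactly what makes the frequency localization fit, since $\chi(c^{-1}D)=\psi\big((\sigma c)^{-1}D\big)$ coincides with the operator $\chi(\alpha^{-1}D)$ appearing in the proposition. The three estimates of Proposition~\ref{cor:parabolic:wave}---for $\partial_\tau u$ and the two for $\nabla u$---then produce the bound for $PE$ and the two bounds for $B$, respectively, once the left-hand sides are rewritten via $PE(t,\cdot)=-(\partial_\tau PA)(ct,\cdot)$ and $\|\nabla\times PA\|_{\dot B^s_{2,q}}\leq\||D|PA\|_{\dot B^s_{2,q}}=\|\nabla PA\|_{\dot B^s_{2,q}}$, and the data are rewritten via $(\partial_\tau PA)(0,\cdot)=-PE_0$ together with the identity $\|V\|_{\dot B^{s+1}_{2,m}}=\|\nabla\times V\|_{\dot B^{s}_{2,m}}$, valid for divergence-free $V$ (a consequence of Plancherel and $-\Delta V=\nabla\times(\nabla\times V)$, applied to each dyadic block), which turns $\|PA(0,\cdot)\|_{\dot B^{s+1}_{2,m}}$ into $\|B_0\|_{\dot B^{s}_{2,m}}$ (and likewise at regularity $s+2$).

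Finally, one carries out the elementary time rescaling $\tau=ct$, using $\|h(c\,\cdot,\cdot)\|_{L^k_t([0,T);X)}=c^{-1/k}\|h\|_{L^k_\tau([0,cT);X)}$ and $\|G(c^{-1}\,\cdot,\cdot)\|_{L^r_\tau([0,cT);X)}=c^{1/r}\|G\|_{L^r_t([0,T);X)}$, divides the resulting inequalities through by the factor $c^{1/m}$ produced on the left-hand side, and absorbs the (fixed) powers of $\sigma$ into the implicit constants. I expect the main---and essentially the only---obstacle to be the bookkeeping: one must verify that the powers of $c$ coming from this rescaling combine with those coming from $\alpha=\sigma c$ to produce exactly the exponents $c^{-2/m}$, $c^{-1}$ and $c^{-1+2/r-2/m}$ in the estimate for $PE$ (and $c^{-1}$, $1$ in the two estimates for $B$), and that the Besov regularity indices shift as stated. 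Beyond this, there is no genuine analytic difficulty not already handled in Proposition~\ref{cor:parabolic:wave} and in the gauge reduction of Corollary~\ref{cor:maxwell}.
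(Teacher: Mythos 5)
Your proposal is correct and follows exactly the same route as the paper: fix the damped Lorenz gauge, observe that $PA$ solves the damped wave equation with $\alpha=\sigma c$ and data $(f,g)=(PA(0,\cdot),-PE_0)$, apply Proposition~\ref{cor:parabolic:wave}, and undo the time rescaling $\tau=ct$. The paper's own proof is quite terse at this point; your write-up usefully spells out the rescaled cutoff $\psi=\chi(\sigma\cdot)$ and the identity $\norm{V}_{\dot B^{s+1}_{2,m}}=\norm{\nabla\times V}_{\dot B^{s}_{2,m}}$ for divergence-free $V$, but these are exactly the steps the paper is implicitly invoking, so there is nothing genuinely different here.
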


\begin{proof}
	We follow the steps of the proof of Corollary \ref{cor:maxwell}, which involves first fixing a damped Lorenz gauge satisfying \eqref{gauge:2}, \eqref{gauge:3} and \eqref{gauge:1}. Then, applying Proposition \ref{cor:parabolic:wave} instead of Corollary \ref{cor:wave} to the damped wave system \eqref{gauge:4}, we find, for any $\chi\in C^\infty_c(\mathbb{R}^d)$ and $s\in\mathbb{R}$, that
	\begin{equation*}
		\begin{aligned}
			\norm{\chi(c^{-1}D)PE}_{L^m_t([0,T);\dot B^{s+\frac 2m}_{2,q})}
			&=c^{-\frac 1m}\norm{\chi(c^{-1}D)\partial_t PA}_{L^m_t([0,cT);\dot B^{s+\frac 2m}_{2,q})}
			\\
			&\lesssim
			c^{-\frac 2m}\norm{PE_0}_{\dot B^{s+\frac 2m}_{2,q}}+c^{-1}\norm{B_0}_{\dot B^{s+1}_{2,m}}
			\\
			&\quad+c^{-1+\frac 2r-\frac 2m}\norm{PG}_{L_t^r([0,T);\dot B_{2,q}^{s+\frac 2m})},
		\end{aligned}
	\end{equation*}
	for any $1<r\leq m<\infty$ and $1\leq q\leq \infty$. We also obtain that
	\begin{equation*}
		\begin{aligned}
			\norm{\chi(c^{-1}D)B}_{L^m_t([0,T);\dot B^{s+\frac 2m}_{2,1})}
			&\lesssim c^{-\frac 1m}\norm{\chi(c^{-1}D)\nabla PA}_{L^m_t([0,cT);\dot B^{s+\frac 2m}_{2,1})}
			\\
			&\lesssim
			c^{-1}\norm{PE_0}_{\dot B^{s+1}_{2,m}}+\norm{B_0}_{\dot B^{s}_{2,m}}
			+\norm{PG}_{L_t^r([0,T);\dot B_{2,\infty}^{s-1+\frac 2r})},
		\end{aligned}
	\end{equation*}
	for any $1<r<m<\infty$, whereas the limiting case $1<r=m<\infty$ yields that
	\begin{equation*}
		\begin{aligned}
			\norm{\chi(c^{-1}D)B}_{L^m_t([0,T);\dot B^{s+\frac 2m}_{2,q})}
			&\lesssim c^{-\frac 1m}\norm{\chi(c^{-1}D)\nabla PA}_{L^m_t([0,cT);\dot B^{s+\frac 2m}_{2,q})}
			\\
			&\lesssim
			c^{-1}\norm{PE_0}_{\dot B^{s+1}_{2,m}}+\norm{B_0}_{\dot B^{s}_{2,m}}
			+\norm{PG}_{L_t^m([0,T);\dot B_{2,q}^{s-1+\frac 2m})},
		\end{aligned}
	\end{equation*}
	for any $1\leq q\leq\infty$, which concludes the proof.
\end{proof}

% ===========
% = Maxwell =
% ===========

\section{Perfect incompressible two-dimensional plasmas}\label{section:perfect fluid}

We are now going to apply the damped Strichartz estimates for Maxwell's system, established in the preceding section, to the analysis of the two-dimensional incompressible Euler--Maxwell system \eqref{EM}. The main goal of this section is to establish Theorems \ref{main:1}, \ref{main:2} and \ref{main:3}, below.

In order to conveniently state the results, recall first that we denote the initial energy by
\begin{equation*}
	\mathcal{E}_0 \bydef \norm {(u_0,E_0,B_0)}_{L^2}.
\end{equation*}
For ease of notation, we also introduce a natural frequency-decomposition of Besov and Chemin--Lerner spaces with respect to the speed of light $c>0$. More precisely, we define the Besov semi-norms
\begin{equation*}
	\left\|f\right\|_{\dot B^{s}_{p,q,<}}\bydef
	\left(
	\sum_{\substack{k\in\mathbb{Z}\\ 2^k< \sigma c}} 2^{ksq}
	\left\|\Delta_{k}f\right\|_{L^p}^q\right)^\frac{1}{q}
	\quad\text{and}\quad
	\left\|f\right\|_{\dot B^{s}_{p,q,>}}\bydef
	\left(
	\sum_{\substack{k\in\mathbb{Z}\\ 2^k\geq \sigma c}} 2^{ksq}
	\left\|\Delta_{k}f\right\|_{L^p}^q\right)^\frac{1}{q},
\end{equation*}
as well as the Chemin--Lerner semi-norms
\begin{equation*}
	\left\|f\right\|_{\widetilde L^r_t\dot B^{s}_{p,q,<}}\bydef
	\left(
	\sum_{\substack{k\in\mathbb{Z}\\ 2^k< \sigma c}} 2^{ksq}
	\left\|\Delta_{k}f\right\|_{L^r_tL^p_x}^q\right)^\frac{1}{q}
	\quad\text{and}\quad
	\left\|f\right\|_{\widetilde L^r_t\dot B^{s}_{p,q,>}}\bydef
	\left(
	\sum_{\substack{k\in\mathbb{Z}\\ 2^k\geq \sigma c}} 2^{ksq}
	\left\|\Delta_{k}f\right\|_{L^r_tL^p_x}^q\right)^\frac{1}{q},
\end{equation*}
for any $s\in\mathbb{R}$ and $0<p,q,r\leq\infty$ (with obvious modifications if $q$ is infinite), where the constant $\sigma>0$ is the electrical conductivity used in the original Euler--Maxwell system \eqref{EM}.

\begin{thm}\label{main:1}
	Let $p$ and $\varepsilon$ be any real numbers in $(2,\infty)$ and $(0,1)$, respectively. There is a constant $C_*>0$ such that, if the initial data $(u_0,E_0,B_0)$, with $\div u_0=\div E_0=\div B_0$, has the two-dimensional normal structure \eqref{structure:2dim} and belongs to $\left((H^1\cap\dot W^{1,p})\times\big(H^1\cap \dot B^\frac 74_{2,1}\big)^2\right)(\mathbb{R}^2)$ with
	\begin{equation}\label{initial:1}
		\left(\mathcal{E}_0+\norm{u_0}_{\dot H^1\cap\dot W^{1,p}}
		+\norm{(E_0,B_0)}_{\dot H^1}
		+c^{-\frac 34}\norm{(E_0,B_0)}_{\dot B^\frac 74_{2,1}}\right)
		C_*e^{C_*\mathcal{E}_0^{4+\varepsilon}}< c,
	\end{equation}
	where $c>0$ is the speed of light,
	then there is a global weak solution $(u,E,B)\in L^\infty(\mathbb{R}^+;L^2)$ to the two-dimensional Euler--Maxwell system \eqref{EM}, with the normal structure \eqref{structure:2dim}, satisfying the energy inequality \eqref{energy-inequa} and enjoying the additional regularity
	\begin{equation}\label{propagation:damping}
		\begin{gathered}
			u\in L^\infty(\mathbb{R}^+;\dot H^1\cap \dot W^{1,p}),
			\quad
			(E,B)\in L^\infty(\mathbb{R}^+;\dot H^1),
			\quad
			c^{-\frac 34}(E,B)\in \widetilde L^\infty(\mathbb{R}^+;\dot B^\frac 74_{2,1}),
			\\
			(cE,B)\in L^2(\mathbb{R}^+;\dot H^1),
			\quad
			B\in L^2(\mathbb{R}^+;\dot B^2_{2,1,<}),
			\\
			(E,B)\in \widetilde L^2(\mathbb{R}^+;\dot B^1_{\infty,1,>}),
			\quad
			c^\frac 14 E\in \widetilde L^2(\mathbb{R}^+;\dot B^\frac 74_{2,1}),
			\quad
			c^\frac 14 B\in \widetilde L^2(\mathbb{R}^+;\dot B^\frac 74_{2,1,>}).
		\end{gathered}
	\end{equation}
	It is to be emphasized that the bounds in \eqref{propagation:damping} are uniform in any set of initial data such that the left-hand side of \eqref{initial:1} remains bounded.
\end{thm}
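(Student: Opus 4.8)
The plan is to set up a closed nonlinear estimate via a bootstrap argument, combining the transport estimate for the vorticity with the damped dispersive estimates for Maxwell's system from Corollary \ref{cor:maxwell} (and Corollary \ref{cor:parabolic:maxwell} for the low frequencies). First I would define, on a time interval $[0,T)$, a norm $\Phi(T)$ collecting all the quantities appearing in \eqref{propagation:damping}: the $\widetilde L^\infty_t$-norms of $u$ in $\dot H^1\cap\dot W^{1,p}$ and of $c^{-3/4}(E,B)$ in $\dot B^{7/4}_{2,1}$, together with the $\widetilde L^2_t$-norms of $(cE,B)$ in $\dot H^1$, of $B$ in $\dot B^2_{2,1,<}$, of $(E,B)$ in $\dot B^1_{\infty,1,>}$, and of $c^{1/4}E$, $c^{1/4}B$ in the appropriate $\dot B^{7/4}_{2,1}$ spaces. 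The energy inequality \eqref{energy-inequa} provides the a priori bound $\mathcal{E}_0$ on $(u,E,B)$ in $L^\infty_tL^2_x$ and on $j$ in $L^2_{t,x}$, which will be used freely. The normal structure \eqref{structure:2dim} is propagated by the flow, so one may work directly with the vorticity formulation \eqref{EM-omega:form}.

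The core of the argument is to bound each piece of $\Phi$ in terms of $\mathcal{E}_0$, the initial data norms in \eqref{initial:1}, and $\Phi$ itself, in such a way that the nonlinear feedback comes with a factor of $c^{-1}$ (up to powers of $\mathcal{E}_0$). For the electromagnetic field, I would apply Corollary \ref{cor:maxwell} with $\sigma$ the conductivity and the source term $G=-\sigma P(u\times B)$; the high-frequency estimates give the $\dot B^1_{\infty,1,>}$ and $c^{1/4}\dot B^{7/4}_{2,1,>}$ controls, with the explicit powers of $c$ in the statement of Corollary \ref{cor:maxwell} producing the crucial smallness, while the low-frequency estimates from Corollary \ref{cor:parabolic:maxwell} yield the parabolic gain $B\in L^2_t\dot B^2_{2,1,<}$. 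The source $P(u\times B)$ is estimated in the relevant Besov spaces using the paradifferential product law recalled in Subsection \ref{strategy:0}: $\|u\times B\|_{\dot H^s}\lesssim \|u\|_{L^\infty\cap\dot B^1_{2,\infty}}\|B\|_{\dot H^s}$ for $s\in(-1,1)$, together with the embedding $\dot H^1\hookrightarrow\dot B^1_{2,\infty}$ in dimension two and the control of $u$ in $L^\infty$ via the Gagliardo--Nirenberg inequality from $\dot H^1\cap\dot W^{1,p}$. The Lipschitz bound $\nabla B\in L^2_tL^\infty_x$ then follows by combining the high-frequency piece $\dot B^1_{\infty,1,>}$ with the low-frequency piece coming from $\dot B^2_{2,1,<}\hookrightarrow\dot B^1_{\infty,1,<}$ in $\mathbb{R}^2$. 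For the vorticity, the transport equation \eqref{transport:1} with source $-j\cdot\nabla B$ gives, by standard $L^p$ estimates on transport equations (Section \ref{section:vorticity}), the control of $\omega$ in $L^\infty_t(L^2_x\cap L^p_x)$ in terms of $\|\omega_0\|_{L^2\cap L^p}$ and $\|j\cdot\nabla B\|_{L^1_tL^p_x}\lesssim \|j\|_{L^2_{t,x}}\|\nabla B\|_{L^2_tL^\infty_x}\lesssim \mathcal{E}_0\,\Phi(T)$; the Biot--Savart law \eqref{biot} with classical Calderón--Zygmund and Sobolev estimates then converts this into the bound on $u$ in $\dot H^1\cap\dot W^{1,p}$, and hence in $L^\infty_x\cap\dot B^1_{2,\infty}$, closing the loop.

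The bootstrap is closed as follows: one obtains an inequality of the schematic form $\Phi(T)\le C_0\,\mathcal{I}_0 + c^{-1}P(\mathcal{E}_0)\big(\Phi(T)+\Phi(T)^N\big)$, where $\mathcal{I}_0$ is the left-hand side of \eqref{initial:1} without the exponential factor, $P$ is a polynomial, and $N$ is a fixed integer; under the smallness assumption \eqref{initial:1} this forces $\Phi(T)$ to stay in a bounded region for all $T$, uniformly in $c$, and a standard continuity argument upgrades the local solution to a global one. Existence of the approximating local solutions themselves is by a routine Friedrichs / mollification scheme (or viscosity approximation), passing to the limit using the uniform bounds and the Aubin--Lions compactness already invoked in the proof of Corollary \ref{coro:singular limit}; the stability of the nonlinear terms $j\times B$ and $j\cdot\nabla B$ under this passage to the limit is guaranteed precisely by the strong compactness coming from the regularity \eqref{propagation:damping}. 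I expect the main obstacle to be the careful bookkeeping of the powers of $c$ across the high- and low-frequency regimes: one must verify that every nonlinear contribution — in particular the feedback through $G=-\sigma P(u\times B)$ into the Maxwell estimates, and through $j\cdot\nabla B$ into the transport estimate — genuinely carries a net negative power of $c$ after the admissibility exponents in Corollaries \ref{cor:maxwell} and \ref{cor:parabolic:maxwell} are accounted for, so that the smallness in \eqref{initial:1} is exactly what is needed and the $\mathcal{E}_0^{4+\varepsilon}$ in the exponent emerges from iterating the Yudovich-type transport estimate (whose constant depends exponentially on $\|\nabla B\|_{L^1_tL^\infty_x}$, itself bounded by $\mathcal{E}_0$-dependent quantities).
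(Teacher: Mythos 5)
Your high-level roadmap—Yudovich-type transport estimates for $\omega$, damped Strichartz estimates for Maxwell, paradifferential control of $P(u\times B)$, closed by a bootstrap—does track the paper's structure. But the schematic inequality you close with, $\Phi(T)\le C_0\mathcal{I}_0 + c^{-1}P(\mathcal{E}_0)\big(\Phi+\Phi^N\big)$, is not what the estimates yield, and this is a fatal flaw. When the vorticity bounds from Lemma~\ref{vorticity:0} and the Gagliardo--Nirenberg interpolation from Lemma~\ref{vorticity:1} are fed back into the Maxwell estimates, there appears a term of the form $\mathcal{J}(t_1,t_2)^{1/(p-1)}\mathcal{H}(t_1,t_2)$ that is \emph{linear} in the energy functional and carries \emph{no} power of $c^{-1}$ (see~\eqref{proof:6} and~\eqref{nonlinear:energy}): its smallness is governed only by the electric-current dissipation $\mathcal{J}(t_1,t_2)=\norm{j}_{L^2_{t,x}([t_1,t_2))}$, which is controlled by $\mathcal{E}_0$, not by $c^{-1}$. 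A single global bootstrap therefore cannot close no matter how large $c$ is. The paper gets around this with a time-partition argument: one partitions $\mathbb{R}^+$ into pieces $[t_i,t_{i+1})$ on which $C_*\mathcal{J}(t_i,t_{i+1})^{1/(p-1)}=\tfrac12$, absorbs the linear term on each piece, solves a local quadratic-in-$\mathcal{H}$ inequality by a continuity argument, and iterates. The number of pieces is $n\lesssim\mathcal{E}_0^{4+\varepsilon}$ (using $\sum_i\mathcal{J}(t_i,t_{i+1})^2\le\tfrac\sigma2\mathcal{E}_0^2$ from~\eqref{energy-inequa} and sending $p\to2$, $\alpha\to1$), and iterating the constant over $n$ pieces is precisely what produces $e^{C_*\mathcal{E}_0^{4+\varepsilon}}$ in~\eqref{initial:1}. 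Your alternative explanation—that the exponential comes from a Gronwall factor $e^{\norm{\nabla B}_{L^1_tL^\infty}}$ in the transport estimate—does not apply here: Lemma~\ref{vorticity:0} is additive, and the Gronwall-type exponential you have in mind enters only the \emph{uniqueness} argument of Theorem~\ref{main:2}, not the existence proof.

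Two further points would also derail the plan as stated. First, you invoke the product law $\norm{u\times B}_{\dot H^s}\lesssim\norm{u}_{L^\infty\cap\dot B^1_{2,\infty}}\norm{B}_{\dot H^s}$ for $s\in(-1,1)$, but the source term must be estimated in $\dot B^{7/4}_{2,1}$, i.e., at $s=\tfrac74>1$. The extension to $s\in[1,2)$ is exactly what the normal structure~\eqref{structure:2dim} together with the Leray projector buys: Lemma~\ref{paradifferential:1} uses $\nabla\times(F\times G)=F\times(\nabla\times G)$ to remove the derivative falling on $F$, extending~\eqref{para-product:2} to $t\in(-1,2)$. Without this observation the scheme does not reach $s=\tfrac74$. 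Second, the almost-parabolic energy estimate of Lemma~\ref{prop:heatenergy} (built on~\eqref{crucial:calculation}) is indispensable and absent from your sketch: a direct low-frequency estimate via Corollary~\ref{cor:parabolic:maxwell} with $s=2$ carries a coefficient $\norm{u}_{L^\infty_tL^2_x}\sim\mathcal{E}_0$, which would force a smallness condition on $\mathcal{E}_0$ uniformly in $c$—the paper explicitly points this out in Section~\ref{section:main:1}. Lemma~\ref{prop:heatenergy} gives $\norm{B}_{L^2_t\dot H^1}\lesssim\mathcal{E}_0+c^{-1}(\cdots)$, which is the correct splitting needed for~\eqref{proof:3} and~\eqref{proof:8}.
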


\begin{rem}
	For any fixed initial data $(u_0,E_0,B_0)$ satisfying the requirements of Theorem \ref{main:1}, it is possible to improve the uniform controls \eqref{propagation:damping} by showing that the bound
	\begin{equation*}
		(E,B)\in \widetilde L^\infty(\mathbb{R}^+;\dot B^\frac 74_{2,1})
	\end{equation*}
	holds uniformly as $c\to\infty$. This is clarified in Section \ref{fixed:data:0}, below.
\end{rem}

\begin{rem}
	Note that we do not make any claim concerning the uniqueness of solutions produced by Theorem \ref{main:1}. However, Theorem \ref{main:2} below strengthens the statement of Theorem \ref{main:1} by achieving such uniqueness, provided the initial vorticity is bounded pointwise.
\end{rem}

\begin{rem}
	Employing the bounds \eqref{propagation:damping}, one can easily show that $(\nabla E,\nabla B)\in L^2(\mathbb{R}^+;L^\infty)$. Indeed, making use of straighforward embeddings in Besov and Chemin--Lerner spaces, we deduce that
	\begin{equation*}
		\begin{aligned}
			\norm{\nabla E}_{L^2(\mathbb{R}^+;L^\infty)}&\leq \norm{\nabla E}_{\widetilde L^2(\mathbb{R}^+;\dot B^0_{\infty,1})}
			\lesssim \norm{E}_{\widetilde L^2(\mathbb{R}^+;\dot B^1_{\infty,1,<})}+\norm{E}_{\widetilde L^2(\mathbb{R}^+;\dot B^1_{\infty,1,>})}
			\\
			&\lesssim \norm{cE}_{\widetilde L^2(\mathbb{R}^+;\dot B^0_{\infty,\infty,<})}+\norm{E}_{\widetilde L^2(\mathbb{R}^+;\dot B^1_{\infty,1,>})}
			\\
			&\lesssim \norm{cE}_{\widetilde L^2(\mathbb{R}^+;\dot B^1_{2,\infty,<})}+\norm{E}_{\widetilde L^2(\mathbb{R}^+;\dot B^1_{\infty,1,>})}
		\end{aligned}
	\end{equation*}
	and
	\begin{equation}\label{infinity:bound:1}
		\begin{aligned}
			\norm{\nabla B}_{L^2(\mathbb{R}^+;L^\infty)}&\leq \norm{\nabla B}_{L^2(\mathbb{R}^+;\dot B^0_{\infty,1})}
			\lesssim \norm{B}_{L^2(\mathbb{R}^+;\dot B^1_{\infty,1,<})}+\norm{B}_{\widetilde L^2(\mathbb{R}^+;\dot B^1_{\infty,1,>})}
			\\
			&\lesssim \norm{B}_{L^2(\mathbb{R}^+;\dot B^2_{2,1,<})}+\norm{B}_{\widetilde L^2(\mathbb{R}^+;\dot B^1_{\infty,1,>})}.
		\end{aligned}
	\end{equation}
\end{rem}

\begin{rem}
	The initial condition \eqref{initial:1} can be interpreted as a mere strengthening of the property that the velocity of the fluid cannot exceed the speed of light, i.e.,
	\begin{equation*}
		\|u\|_{L^\infty_{t,x}}\leq c.
	\end{equation*}
	On purely physical grounds, this condition seems therefore quite reasonable and is not very restrictive.
\end{rem}

The previous result only covers the case $\omega_0\in L^2\cap L^p$ in the range of parameters $p\in(2,\infty)$. The next result strengthens Theorem \ref{main:1} by assuming that $\omega_0\in L^2\cap L^\infty$.

\begin{thm}\label{main:2}
	If, in addition to all hypotheses of Theorem \ref{main:1}, for some given $p\in (2,\infty)$, one also assumes that $\omega_0\in L^\infty$ (but not necessarily $\nabla u_0\in L^\infty$), then the solution produced by Theorem \ref{main:1} satisfies the additional bound $\omega\in L^\infty(\mathbb{R}^+;L^\infty)$ and is unique in the space of all solutions $(\bar u, \bar E,\bar B)$ to the Euler--Maxwell system \eqref{EM} satisfying the bounds, locally in time,
	\begin{equation*}
		(\bar u, \bar E,\bar B)\in L^\infty_tL^2_x,
		\qquad \bar u \in L^{2}_tL^\infty_x,
		\qquad \bar j \in L^{2}_{t,x},
	\end{equation*}
	and having the same initial data.
\end{thm}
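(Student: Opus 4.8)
\emph{Overall plan.} The plan is to prove the two assertions of Theorem~\ref{main:2} in turn: first the propagation of the pointwise bound on the vorticity, and then the uniqueness, which is a Yudovich-type stability estimate coupled with the electromagnetic part.

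\emph{Propagation of $\omega\in L^\infty$.} I would work with the vorticity formulation and the transport equation~\eqref{transport:1}, namely $\partial_t\omega+u\cdot\nabla\omega=-j\cdot\nabla B$, satisfied by the solution $(u,E,B)$ produced by Theorem~\ref{main:1}. Since $\div u=0$, the elementary transport estimate gives, for a.e.\ $t\geq 0$,
\begin{equation*}
	\norm{\omega(t)}_{L^\infty}\leq\norm{\omega_0}_{L^\infty}+\int_0^t\norm{j\cdot\nabla B(\tau)}_{L^\infty}\,d\tau ,
\end{equation*}
so it is enough to check that $j\cdot\nabla B\in L^1_\loc(\mathbb{R}^+;L^\infty)$. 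This follows from the regularity~\eqref{propagation:damping}: the Gagliardo--Nirenberg inequality~\eqref{convexity:1} together with standard embeddings gives $u\in L^\infty_{t,x}$, $cE\in L^2_tL^\infty_x$ and $(E,B)\in L^2_t\dot W^{1,\infty}$ (cf.\ \eqref{infinity:bound:1}); hence $j=\sigma\big(cE+P(u\times B)\big)\in L^2_tL^\infty_x$ (the boundedness of $P^\perp$ on homogeneous Besov spaces handling $P(u\times B)$), and therefore $j\cdot\nabla B\in L^1_tL^\infty_x$, with a bound that stays finite globally since $\norm{j}_{L^2_{t,x}}$ and $\norm{\nabla B}_{L^2_tL^\infty_x}$ are globally finite. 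At the level of the approximation scheme used to build the solution, this bound is merely propagated and survives the weak-$*$ limit. This yields $\omega\in L^\infty(\mathbb{R}^+;L^\infty)$.

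\emph{Uniqueness: the difference estimate.} Let $(u,E,B)$ be the solution above---which satisfies $u\in L^\infty_{t,x}$, $\omega\in L^\infty_{t,x}$ and $j\in L^2_tL^\infty_x$---and let $(\bar u,\bar E,\bar B)$ be any solution of~\eqref{EM} on $[0,T)$ with $(\bar u,\bar E,\bar B)\in L^\infty_tL^2_x$, $\bar u\in L^2_tL^\infty_x$, $\bar j\in L^2_{t,x}$ and the same data. Write $\delta u=u-\bar u$, $\delta E=E-\bar E$, $\delta B=B-\bar B$, $\delta j=j-\bar j$, so that $\delta u\in L^\infty_tL^2_x\cap L^2_tL^\infty_x$, $\delta j\in L^2_{t,x}$ and $(\delta u,\delta E,\delta B)|_{t=0}=0$. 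Subtracting the two copies of~\eqref{EM}, writing the Euler nonlinearity as $\bar u\cdot\nabla\delta u+\delta u\cdot\nabla u$ so that only the gradient of the \emph{regular} solution appears, testing the momentum equation with $\delta u$, Amp\`ere's difference equation with $c\,\delta E$ and Faraday's with $c\,\delta B$, and substituting $\delta j=\sigma c\,\delta E+\sigma P(\delta u\times B+\bar u\times\delta B)$ from Ohm's law, one reaches---after the curl terms cancel by skew-adjointness and the damping contribution $\sigma c^2\norm{\delta E}_{L^2}^2$ cancels against the term coming from Ohm's law---the identity
\begin{equation*}
	\frac12\frac{d}{dt}g(t)+\frac1\sigma\norm{\delta j(t)}_{L^2}^2=-\int(\delta u\cdot\nabla u)\cdot\delta u\,dx+\int(\bar j\times\delta B)\cdot\delta u\,dx+\int(\bar u\times\delta B)\cdot\delta j\,dx ,
\end{equation*}
with $g(t)=\norm{\delta u(t)}_{L^2}^2+\norm{\delta E(t)}_{L^2}^2+\norm{\delta B(t)}_{L^2}^2$. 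To close it, I would split $\bar j=j-\delta j$ and $\bar u=u-\delta u$ in the last two integrals: the pieces carrying $j$ or $u$ are dominated by $\big(\norm{j(t)}_{L^\infty_x}+\norm{u(t)}_{L^\infty_x}^2+1\big)g(t)$ with an $L^1_\loc$ weight, while the pieces carrying $\delta j$ or $\delta u$ are absorbed into $\tfrac1{2\sigma}\norm{\delta j}_{L^2}^2$ and into $\norm{\delta u(t)}_{L^\infty_x}^2\,g(t)$ (with $\norm{\delta u(\cdot)}_{L^\infty_x}^2\in L^1_\loc$, since $u\in L^\infty_{t,x}$ and $\bar u\in L^2_tL^\infty_x$). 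For the remaining Euler term one invokes the Calder\'on--Zygmund bound $\norm{\nabla u(t)}_{L^p}\lesssim p\,\norm{\omega(t)}_{L^2\cap L^\infty}\lesssim p$ (uniform in $t\in[0,T)$, by the first part), H\"older's inequality, and the interpolation $\norm{\delta u}_{L^{2p'}}\leq\norm{\delta u}_{L^2}^{1-1/p}\norm{\delta u}_{L^\infty}^{1/p}$, to obtain $\big|\int(\delta u\cdot\nabla u)\cdot\delta u\,dx\big|\lesssim p\,g(t)^{1-1/p}\norm{\delta u(t)}_{L^\infty}^{2/p}$. Dropping the dissipation, putting $h=g\exp(-2\int_0^t\phi)$ with the resulting $L^1_\loc$ weight $\phi$, and writing $M=\norm{\delta u}_{L^\infty_x}\in L^2_t$, one gets $h'\lesssim p\,h^{1-1/p}M^{2/p}$, hence $(h^{1/p})'\lesssim M^{2/p}$ and, since $h(0)=0$, $h(t)\leq\frac{(Ct)^{p}}{t}\,\norm{M}_{L^2(0,t)}^2$ by H\"older, with $C$ independent of $t$ and $p$; letting $p\to\infty$ forces $g\equiv0$ on any interval of length $<1/C$, and iterating over $[0,T)$ yields $(u,E,B)=(\bar u,\bar E,\bar B)$.

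\emph{Main obstacle.} The crux is that these two mechanisms must coexist: the two-dimensional Euler nonlinearity cannot be controlled through $\norm{\nabla u}_{L^\infty}$---which is deliberately unavailable, since $\nabla u_0\notin L^\infty$ is permitted---so one is forced into the Yudovich--Osgood scheme above, while at the same time the electromagnetic coupling produces genuinely cubic terms such as $\int(\delta j\times\delta B)\cdot\delta u$ that are \emph{not} of the form $(\text{weight})\times g$; reconciling the two hinges on the decomposition $\bar j=j-\delta j$, $\bar u=u-\delta u$ and on the joint use of the $\delta j$-dissipation and of the bound $\delta u\in L^2_tL^\infty_x$ (itself available only because $u\in L^\infty_{t,x}$). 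A further, more routine point is that, $(\bar u,\bar E,\bar B)$ being only a weak solution in the stated class, the formal difference energy identity has to be justified by a regularization argument in the spirit of DiPerna--Lions, exploiting the additional regularity of the reference solution.
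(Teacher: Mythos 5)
Your overall strategy---$L^\infty$ vorticity bound via the transport equation, then a Yudovich-type weak-strong stability estimate---is the one the paper follows, but there is a genuine gap in the first step. You claim that \eqref{propagation:damping} and ``standard embeddings'' yield $cE\in L^2_tL^\infty_x$ globally, and hence $j\in L^2_tL^\infty_x$ via Ohm's law. This does not follow: \eqref{propagation:damping} gives $cE\in L^2_t\dot H^1_x$ and $E\in\widetilde L^2_t\dot B^1_{\infty,1,>}$, which control $\nabla E$ in $L^2_tL^\infty_x$ as in \eqref{infinity:bound:1}, but not $cE$ itself, since $\dot H^1(\mathbb{R}^2)\not\subset L^\infty(\mathbb{R}^2)$ and there is no $L^2_t$-in-time, $\ell^1$-summable low-frequency bound on $E$ among the controls \eqref{propagation:damping}. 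The paper sidesteps this by estimating the low-frequency part of $j$ directly, via the low-frequency inequality $\|j\|_{\dot B^0_{\infty,1,<}}\lesssim\|j\|_{\dot B^1_{2,1,<}}\lesssim c\|j\|_{L^2_x}$ together with the energy bound $j\in L^2_{t,x}$, and only expands $j$ by Ohm's law at high frequencies; moreover, placing $P(u\times B)$ in $\dot B^1_{2,1}\subset L^\infty$ requires the \emph{ad hoc} paradifferential Lemma~\ref{paradifferential:2}, not the ``boundedness of $P^\perp$'' you invoke (Leray's projector is not bounded on $L^\infty$). Your plan can be repaired by extracting the low-frequency bound on $cE$ from the energy bound on $j$ and Ohm's law $cE=\sigma^{-1}j-P(u\times B)$, but it is not a bare embedding consequence of \eqref{propagation:damping}.

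Your uniqueness argument is correct in substance and, after the substitutions $\bar j=j-\delta j$ and $\bar u=u-\delta u$, produces the same Euler term $p\,g^{1-1/p}\|\delta u\|_{L^\infty}^{2/p}$ and the same Yudovich/Osgood iteration in $p$ that the paper uses. Note, though, that the paper differentiates the cross-energy $\int(u_1\cdot u_2+E_1\cdot E_2+B_1\cdot B_2)\,dx$ and combines it with each solution's own energy inequality; this only ever tests one solution against the other, so it is justified directly in the stated weak-solution class and dispenses with the DiPerna--Lions-type regularization your direct subtraction-and-test approach requires. Also, the two cubic-in-differences terms $-\int(\delta j\times\delta B)\cdot\delta u$ and $-\int(\delta u\times\delta B)\cdot\delta j$ produced by your split cancel identically by the cyclic invariance of the scalar triple product, so the extra weight $\|\delta u\|_{L^\infty_x}^2$ you introduce is unnecessary; it is harmless since $\delta u\in L^2_tL^\infty_x$, but the cleaner identity coincides with the paper's.
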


We address now the propagation of regularity in the Euler--Maxwell system \eqref{EM} with the following result.

\begin{thm}\label{main:3}
	Consider parameters $p\in (2,\infty)$, $\varepsilon\in (0,1)$, $s\in(\frac 74,2)$ and $n\in [1,\infty]$. There is a constant $C_{**}>0$ such that, if the initial data $(u_0,E_0,B_0)$, with $\div u_0=\div E_0=\div B_0$, has the two-dimensional normal structure \eqref{structure:2dim} and belongs to $\left((H^1\cap\dot W^{1,p})\times\big(H^1\cap \dot B^s_{2,n}\big)^2\right)(\mathbb{R}^2)$ with
	\begin{equation}\label{initial:3}
		\left(\mathcal{E}_0+\norm{u_0}_{\dot H^1\cap\dot W^{1,p}}
		+\norm{(E_0,B_0)}_{\dot H^1}
		+c^{1-s}\norm{(E_0,B_0)}_{\dot B^s_{2,n}}\right)
		C_{**}e^{C_{**}\mathcal{E}_0^{4+\varepsilon}}< c,
	\end{equation}
	where $c>0$ is the speed of light,
	then there is a global weak solution $(u,E,B)\in L^\infty(\mathbb{R}^+;L^2)$ to the two-dimensional Euler--Maxwell system \eqref{EM}, with the normal structure \eqref{structure:2dim}, satisfying the energy inequality \eqref{energy-inequa} and enjoying the additional regularity
	\begin{equation}\label{propagation:damping:3}
		\begin{gathered}
			u\in L^\infty(\mathbb{R}^+;\dot H^1\cap \dot W^{1,p}),
			\quad
			(E,B)\in L^\infty(\mathbb{R}^+;\dot H^1),
			\quad
			c^{1-s}(E,B)\in \widetilde L^\infty(\mathbb{R}^+;\dot B^s_{2,n}),
			\\
			(cE,B)\in L^2(\mathbb{R}^+;\dot H^1),
			\quad
			B\in L^2(\mathbb{R}^+;\dot B^2_{2,1,<}),
			\\
			c^{\frac 74-s}(E,B)\in \widetilde L^2(\mathbb{R}^+;\dot B^{s-\frac 34}_{\infty,n,>}),
			\quad
			c^{2-s} E\in \widetilde L^2(\mathbb{R}^+;\dot B^s_{2,n}),
			\quad
			c^{2-s} B\in \widetilde L^2(\mathbb{R}^+;\dot B^s_{2,n,>}).
		\end{gathered}
	\end{equation}
	It is to be emphasized that the bounds in \eqref{propagation:damping:3} are uniform in any set of initial data such that the left-hand side of \eqref{initial:3} remains bounded.
\end{thm}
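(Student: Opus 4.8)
The plan is to prove the bounds \eqref{propagation:damping:3} first as a priori estimates for smooth solutions and then to pass to the limit in the Friedrichs approximation scheme underlying Theorem \ref{main:1}; the whole argument is a variant of the one sketched in Section \ref{strategy:0}, carried out at the regularity level $s$ rather than $\frac 74$. One first checks that the hypotheses of Theorem \ref{main:3} are stronger than those of Theorem \ref{main:1}: splitting a dyadic sum at the threshold $2^k=\sigma c$ and summing the two resulting geometric series, one obtains a constant $C_\sigma$, depending only on $\sigma$ and $s$, such that
\begin{equation*}
	c^{-\frac 34}\norm{(E_0,B_0)}_{\dot B^{\frac 74}_{2,1}}
	\leq C_\sigma\bigl(\norm{(E_0,B_0)}_{\dot H^1}+c^{1-s}\norm{(E_0,B_0)}_{\dot B^s_{2,n}}\bigr),
\end{equation*}
so that, with $C_{**}$ chosen large enough in terms of $C_*$ and $C_\sigma$, condition \eqref{initial:3} implies condition \eqref{initial:1}. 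Consequently the global weak solution of Theorem \ref{main:1}, together with all the bounds \eqref{propagation:damping}, is available; in particular, by the energy inequality and Sobolev embeddings, $u\in L^\infty(\mathbb{R}^+;L^\infty\cap\dot B^1_{2,\infty})$ and $\nabla B\in L^2(\mathbb{R}^+;L^\infty)$ hold uniformly in the left-hand side of \eqref{initial:3}. It remains only to upgrade the regularity of $(E,B)$ from level $\frac 74$ to level $s$.

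The core of the argument is a bootstrap on the quantity
\begin{equation*}
	\mathcal X_s(T)\bydef
	c^{1-s}\norm{(E,B)}_{\widetilde L^\infty_T\dot B^s_{2,n}}
	+c^{\frac 74-s}\norm{(E,B)}_{\widetilde L^2_T\dot B^{s-\frac 34}_{\infty,n,>}}
	+c^{2-s}\norm{E}_{\widetilde L^2_T\dot B^s_{2,n}}
	+c^{2-s}\norm{B}_{\widetilde L^2_T\dot B^s_{2,n,>}}.
\end{equation*}
On the high-frequency block $\{2^j\geq\sigma c\}$ one invokes the damped dispersive estimates of Corollary \ref{cor:maxwell}, and on the low-frequency block $\{2^j<\sigma c\}$ the damped parabolic estimates of Corollary \ref{cor:parabolic:maxwell}, in both cases applied to Maxwell's system with source $G=-\sigma P(u\times B)$; summing the dyadic outputs in the $\ell^n$-norm, for suitably chosen admissible exponent pairs $(q,r)$ and $(\tilde q,\tilde r)$, reconstructs the four terms of $\mathcal X_s$ with exactly the powers of $c$ prescribed in \eqref{propagation:damping:3}. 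The decisive feature is that the time factors $\bigl(\frac{c^2T}{1+\sigma c^2T}\bigr)$ and $\bigl(\frac{\sigma 2^{2j}T}{\sigma+2^{2j}T}\bigr)$, raised to any nonnegative power, are bounded uniformly in $T\in(0,\infty]$, which is what converts the local-in-time Strichartz and parabolic estimates into \emph{global} bounds. This controls $\mathcal X_s(T)$ by $c^{1-s}\norm{(E_0,B_0)}_{\dot B^s_{2,n}}$ plus a fixed power of $\sigma^{-1}$ times a suitable Chemin--Lerner norm of $P(u\times B)$, the latter being required only at a regularity level strictly below $1$ in the $L^2$-scale (this is achieved by taking $\tilde r>2$ strictly and invoking Bernstein's lemma, and it is here that the hypothesis $s<2$ is used).

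The remaining and most delicate step is estimating the source. By the normal structure \eqref{structure:2dim}, $u\times B$ has components $\pm u_i b$, and it must be controlled in Chemin--Lerner spaces built on $\dot B^{s'}_{2,n}$ with $s'\in(0,1)$. Bony's paraproduct decomposition splits each product $u_i b$ into a piece controlled, via the product law recalled in the introduction (valid for regularities in $(-1,1)$ when $d=2$), by $\norm{u}_{L^\infty\cap\dot B^1_{2,\infty}}$ times the $\dot B^s_{2,n}$-norm of $B$ — hence, after the $\sim c^{-1}$ gain produced by the damping, a small multiple of $\mathcal X_s(T)$ under \eqref{initial:3} — and pieces controlled by fixed norms of $u$ and $B$ already bounded through \eqref{propagation:damping} (here one uses the $\dot W^{1,p}$-control of $u$, equivalently $u\in\dot B^{1-\frac 2p}_{\infty,p}$ with $1-\frac 2p\in(0,1)$, the bound $\nabla B\in L^2_tL^\infty_x$, and the embedding $L^2\cap\dot H^1\hookrightarrow\dot B^{s'}_{2,n}$ valid for $s'\in(0,1)$); the product law applies precisely because $s'<1$. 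Collecting everything, one arrives at a schematic inequality
\begin{equation*}
	\mathcal X_s(T)\leq C\,c^{1-s}\norm{(E_0,B_0)}_{\dot B^s_{2,n}}+\frac{C}{c}\,\Phi\,\mathcal X_s(T),
\end{equation*}
where $\Phi$ depends only on quantities already bounded, uniformly in time and in the data, by Theorem \ref{main:1}. Under \eqref{initial:3}, with $C_{**}$ large, the coefficient $\frac Cc\Phi$ is less than $\frac 12$, the bootstrap closes on $[0,\infty)$, and $\mathcal X_s(\infty)\lesssim c^{1-s}\norm{(E_0,B_0)}_{\dot B^s_{2,n}}$ follows uniformly. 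Since these estimates are linear in $(E,B)$ at the level $s$ and their nonlinear inputs all come from Theorem \ref{main:1}, they are stable along the mollification scheme used there, and taking limits (with the compactness already established) produces a global weak solution obeying \eqref{propagation:damping:3}.

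I expect the source estimate to be the main obstacle. The paradifferential product law is available only below regularity $1$, so estimating $P(u\times B)$ in the spaces demanded by Corollaries \ref{cor:maxwell} and \ref{cor:parabolic:maxwell} forces one to use simultaneously the derivative gain built into the damped Strichartz estimates (through a strictly sub-endpoint choice of $\tilde r$ and Bernstein's lemma) and the $\dot W^{1,p}$-control of $u$ inherited from Theorem \ref{main:1}, all while ensuring that each power of $c$ and each time weight reproduces \eqref{propagation:damping:3} exactly; it is the interplay of these constraints that pins down the admissible range $s\in(\frac 74,2)$.
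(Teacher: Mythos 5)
Your overall architecture matches the paper's: first run Theorem~\ref{main:1} (noting that, with $C_{**}$ large, \eqref{initial:3} enforces \eqref{initial:1} with a small multiple $Ac$ on the right, needed later to absorb the self-similar term), then bootstrap the regularity of $(E,B)$ from $\frac 74$ to $s$, treating the velocity as a given coefficient. But your source estimate contains a genuine gap. You propose to control $P(u\times B)$ at a Besov level $s'<1$ (where the classical product law applies) in $L^{\tilde r'}_x$ with $\tilde r'<2$, claiming a derivative gain from Bernstein's lemma and a strictly sub-endpoint $\tilde r>2$, and you attribute the hypothesis $s<2$ to that mechanism. Neither claim holds. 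In the high-frequency part of Corollary~\ref{cor:maxwell}, putting $(E,B)$ into $\dot B^{s-\frac{d+1}{2}(\frac 12-\frac 1r)}_{r,n,>}$ requires the source in $\dot B^{s+\frac{d+1}{2}(\frac 12-\frac 1{\tilde r})}_{\tilde r',n}$, whose regularity index is $\geq s$ for every admissible $\tilde r\geq 2$ — the Lebesgue trade-off increases, not decreases, the derivative count on the source. So $P(u\times B)$ must be controlled at Besov level $\approx s>1$, and the classical product law cannot do this: for the paraproduct $T_Bu$ one would need $u$ at regularity $>1$ in some Lebesgue scale, while all you have is $u\in L^\infty_x\cap\dot H^1_x$ and $\nabla u\in L^p_x$ (hence $u\in\dot B^{1-\frac 2p}_{\infty,p}$), each of regularity at most one.

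The paper's resolution — which your proposal does not invoke — is Lemma~\ref{paradifferential:1}: because $u$ is planar, divergence-free, and $B$ is normal, $\nabla\times(u\times B)=u\times(\nabla\times B)$, so the curl lands on the low-frequency factor in $T_Bu$, converting it to $T_{\nabla\times B}u$ and yielding the product law \eqref{para-product:2}, which bounds $P(u\times B)$ at level $t\in(-1,2)$ with $u$ only in $L^\infty\cap\dot B^1_{2,\infty}$. This is exactly what powers estimate \eqref{frequencies:4} of Lemma~\ref{high:freq:estimates}, which the paper applies directly at regularity $s$ and then closes by interpolating $\norm{B}_{\widetilde L^2_t\dot B^s_{2,n}}$ between the low-frequency bounds from Theorem~\ref{main:1} and the high-frequency term being estimated. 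The restriction $s<2$ in the statement is precisely the upper limit in \eqref{para-product:2}, not a Bernstein constraint. As a minor secondary point, your separate invocation of Corollary~\ref{cor:parabolic:maxwell} at level $s$ for low frequencies is unnecessary: since those frequencies obey $2^k<\sigma c$, the low-frequency parts of $c^{1-s}\norm{(E,B)}_{\widetilde L^\infty\dot B^s_{2,n}}$ and $c^{2-s}\norm{E}_{\widetilde L^2\dot B^s_{2,n}}$ are bounded, dyadic block by dyadic block, by $(\sigma c)^{s-1}$ times the corresponding $\dot H^1$-bounds already delivered by Theorem~\ref{main:1}, with no further parabolic argument required.
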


\begin{rem}
	As in the case of Theorem \ref{main:1}, for any fixed initial data $(u_0,E_0,B_0)$ satisfying the requirements of Theorem \ref{main:3}, it is possible to improve the uniform controls \eqref{propagation:damping:3} by showing that the bound
	\begin{equation*}
		(E,B)\in \widetilde L^\infty(\mathbb{R}^+;\dot B^s_{2,n})
	\end{equation*}
	holds uniformly as $c\to\infty$. This is clarified in Section \ref{fixed:data:0}, below.
\end{rem}

The remainder of this section builds up to the proofs of Theorems \ref{main:1}, \ref{main:2} and \ref{main:3}, by implementing the strategy discussed in Section \ref{strategy:0}. The proofs of the theorems \emph{per se} are given in Sections \ref{section:main:1}, \ref{section:main:2} and \ref{section:main:3}, respectively.

\subsection{Dimensional analysis}
\label{section:dimensional}

Prior to discussing specific elements of the proofs of the above theorems, we provide here a dimensional analysis of the Euler--Maxwell system \eqref{EM}, which, we hope, will shed light on the initial conditions \eqref{initial:1} and \eqref{initial:3}.

Specifically, assuming that $(u,E,B)$ is a solution of \eqref{EM}, for some fixed light velocity $c>0$ and initial data $(u_0,E_0,B_0)$, we observe, defining
\begin{equation}\label{parabolic:scaling}
	u^\lambda(t,x)=\lambda u(\lambda^2 t,\lambda x),
	\qquad
	E^\lambda(t,x)=\lambda E(\lambda^2 t,\lambda x),
	\qquad
	B^\lambda(t,x)=\lambda B(\lambda^2 t,\lambda x),
\end{equation}
for any $\lambda>0$, that $(u^\lambda,E^\lambda,B^\lambda)$ also solves \eqref{EM} with a rescaled speed of light $c_\lambda=\lambda c$ (the electrical conductivity $\sigma$ remains unchanged) and for the initial data
\begin{equation*}
	u^\lambda_0(x)=\lambda u(\lambda x),
	\qquad
	E^\lambda_0(x)=\lambda E(\lambda x),
	\qquad
	B^\lambda_0(x)=\lambda B(\lambda x).
\end{equation*}
In particular, we readily compute that
\begin{equation*}
	\begin{aligned}
		&\norm {(u_0^\lambda,E_0^\lambda,B_0^\lambda)}_{L^2}+\norm{u_0^\lambda}_{\dot W^{1,p}}
		+\norm{(u_0^\lambda,E_0^\lambda,B_0^\lambda)}_{\dot H^1}
		+c_\lambda^{-\frac 34}\norm{(E_0^\lambda,B_0^\lambda)}_{\dot B^\frac 74_{2,1}}
		\\
		&\quad=
		\lambda
		\left(\lambda^{-1}\norm {(u_0,E_0,B_0)}_{L^2}
		+\lambda^{1-\frac 2p}\norm{u_0}_{\dot W^{1,p}}
		+\norm{(u_0,E_0,B_0)}_{\dot H^1}
		+c^{-\frac 34}\norm{(E_0,B_0)}_{\dot B^\frac 74_{2,1}}\right),
	\end{aligned}
\end{equation*}
which, by an optimization procedure in $\lambda$, implies that Theorem \ref{main:1} still holds if one replaces assumption \eqref{initial:1} with the weaker inequality
\begin{equation}\label{initial:5}
	\left(\mathcal{E}_0^{\frac{p-2}{2p-2}}\norm{u_0}_{\dot W^{1,p}}^{\frac{p}{2p-2}}
	+\norm{(u_0,E_0,B_0)}_{\dot H^1}
	+c^{-\frac 34}\norm{(E_0,B_0)}_{\dot B^\frac 74_{2,1}}\right)
	C_*e^{C_*\mathcal{E}_0^{4+\varepsilon}}< c,
\end{equation}
where the independent constant $C_*>0$ may take a different value.
Note that this inequality is now invariant with respect to the parabolic scaling \eqref{parabolic:scaling}. Similarly, the same procedure can be used to optimize \eqref{initial:3} and replace it with a scaling invariant assumption.

It turns out that the parabolic scaling \eqref{parabolic:scaling} is the only available invariant dilation which leaves the electrical conductivity $\sigma$ unchanged. However, if one allows $\sigma$ to be redefined according to the dilation, then other scalings become available. For example, introducing the hyperbolic scaling
\begin{equation}\label{hyperbolic:scaling}
	u^\lambda(t,x)= u(\lambda t,\lambda x),
	\qquad
	E^\lambda(t,x)= E(\lambda t,\lambda x),
	\qquad
	B^\lambda(t,x)= B(\lambda t,\lambda x),
\end{equation}
for any $\lambda>0$, we see that $(u^\lambda,E^\lambda,B^\lambda)$ now solves \eqref{EM} with a rescaled electrical conductivity $\sigma_\lambda=\lambda\sigma$ (the speed of light $c$ remains unchanged) and for the initial data
\begin{equation*}
	u^\lambda_0(x)= u(\lambda x),
	\qquad
	E^\lambda_0(x)= E(\lambda x),
	\qquad
	B^\lambda_0(x)= B(\lambda x).
\end{equation*}
In particular, by setting $\lambda=\sigma^{-1}$, it is now possible to deduce how the constants $C_*$ and $C_{**}$, in \eqref{initial:1} and \eqref{initial:3}, respectively, depend on $\sigma$. More precisely, this process allows us to show that Theorem \ref{main:1} holds if one further replaces \eqref{initial:5} by the assumption that
\begin{equation}\label{initial:6}
	\left(\mathcal{E}_0^{\frac{p-2}{2p-2}}\norm{u_0}_{\dot W^{1,p}}^{\frac{p}{2p-2}}
	+\norm{(u_0,E_0,B_0)}_{\dot H^1}
	+(\sigma c)^{-\frac 34}\norm{(E_0,B_0)}_{\dot B^\frac 74_{2,1}}\right)
	C_*e^{C_*\sigma^{4+\varepsilon}\mathcal{E}_0^{4+\varepsilon}}< c,
\end{equation}
for some constant $C_*>0$ which is now independent of the electrical conductivity $\sigma$. Observe that \eqref{initial:6} is now invariant with respect to both the parabolic scaling \eqref{parabolic:scaling} and the hyperbolic scaling \eqref{hyperbolic:scaling}. As previously noted, the same process can be applied to improve \eqref{initial:3}.

\subsection{Approximation procedure and stability}\label{approximation:0}

The proofs of Theorems \ref{main:1}, \ref{main:2} and \ref{main:3} proceed by compactness arguments. More specifically, they follow the standard procedure of first considering smooth solutions to a regularized approximation of the original system \eqref{EM}, where all formal estimates can be conducted with full rigor, and then showing the stability of the approximation as it converges towards the original system.

Such approximation procedures are absolutely classical in the field of fluid dynamics. We are therefore only going to outline an example of approximation which can be used here to conveniently establish our results. Specifically, for any integer $n\geq 1$, we consider the unique solution $(u_n,E_n,B_n)$ to the approximate Navier--Stokes--Maxwell system
\begin{equation}\label{NSM}
		\begin{cases}
			\begin{aligned}
				&\partial_t u_n +(S_nu_n) \cdot\nabla u_n - \frac{1}{n} \Delta u_n = - \nabla p_n + (S_nj_n) \times B_n, &\div u_n =0,&
				\\
				&\frac{1}{c} \partial_t E_n - \nabla \times B_n = - j_n , &\div E_n = 0,&
				\\
				&\frac{1}{c} \partial_t B_n + \nabla \times E_n  = 0 , &\div B_n =0,&
				\\
				&j_n= \sigma \big(cE_n + S_nP(u_n \times B_n)\big), &\div j_n =0,&
			\end{aligned}
		\end{cases}
	\end{equation}
	for the initial data $(u_n,E_n,B_n)_{|t=0}= S_n(u_{0},E_0,B_0)$, with the two-dimensional normal structure \eqref{structure:2dim}, where $S_n$ denotes the Fourier multiplier operator defined in Appendix \ref{besov:1} which restricts frequencies to the domain $\{|\xi|\leq 2^n\}$. The construction of the solution $(u_n,E_n,B_n)$ is a standard procedure. One can, for instance, follow and adapt the steps detailed in \cite[Section 12.2]{l16}.

Note that other approximation schemes can be employed. In particular, the dissipation term $-\frac 1n\Delta u_n$ is not essential. However, as a matter of convenience, the use of this term allows us to comfortably construct the approximate solution $(u_n,E_n,B_n)$ by relying on methods from the analysis of the incompressible Navier--Stokes system.

Observe that the corresponding energy inequality
\begin{equation*}
	\begin{aligned}
		\frac 12\left(\norm {u_n(t)}_{L^2}^2 + \norm {E_n(t)}_{L^2}^2 + \norm {B_n(t)}_{L^2}^2\right)
		\hspace{-50mm}&
		\\
		&+\int_0^t \left(\frac 1n\norm {\nabla u_n(\tau)}_{L^2}^2
		+\frac{1}{\sigma}\norm {j_n(\tau)}_{L^2}^2\right) d\tau
		\leq
		\frac 12 \norm {S_n(u_0,E_0,B_0)}_{L^2}^2\leq
		\frac 12 \norm {(u_0,E_0,B_0)}_{L^2}^2,
	\end{aligned}
\end{equation*}
for all $t\geq 0$, is now fully justified and, since the initial data is smooth, it is possible to show that $(u_n,E_n,B_n)$ remains smooth for all times, albeit not uniformly in $n$.

The above energy inequality only allows us to deduce the uniform bounds
\begin{equation*}
	(u_n,E_n,B_n)\in L^\infty_t L^2_x
	\qquad\text{and}\qquad
	j_n\in L^2_{t,x},
\end{equation*}
which are insufficient to establish the stability of the nonlinear terms
\begin{equation*}
	(S_nu_n) \cdot\nabla u_n,
	\qquad
	(S_nj_n) \times B_n
	\qquad\text{and}\qquad
	S_nP(u_n \times B_n)
\end{equation*}
in the limit $n\to\infty$. The general strategy is therefore to show that the bounds and properties stated in Theorems \ref{main:1}, \ref{main:2} and \ref{main:3} can be fully justified on the smooth system \eqref{NSM} uniformly in $n$. Such uniform bounds are then sufficient to show the strong relative compactness of $\big\{(u_n,E_n,B_n)\big\}_{n=1}^\infty$ in $L^2_{t,x,\mathrm{loc}}$, which then allows us to take the limit $n\to \infty$ (up to extraction of subsequences) and establish the asymptotic stability of \eqref{NSM}, thereby yielding suitable solutions of the original Euler--Maxwell system \eqref{EM}.

In what follows, for the sake of simplicity, keeping in mind that all computations can be fully justified on the approximate system \eqref{NSM}, we shall perform all estimates formally on the original system \eqref{EM}.
In particular, we emphasize that, even though, strictly speaking, the dissipation operator $-\frac 1n\Delta$ cannot be ignored, it is self-adjoint and therefore will not impact the energy estimates which are performed in the proofs, below.

\subsection{Paradifferential calculus and the normal structure}\label{section:paradifferential}

The use of Besov and Chemin--Lerner spaces in the analysis of nonlinear systems often requires a careful use of product estimates. Such results from paradifferential calculus are rather standard but their applicability is limited. In the following paradifferential lemma, we show how the normal structure \eqref{structure:2dim} can be exploited to extend the range of applicability of classical product estimates. This plays a central role in our analysis of \eqref{EM}.

\begin{lem}\label{paradifferential:1}
	Let $F,G:\mathbb{R}_t\times\mathbb{R}_x^2\to\mathbb{R}^3$ be solenoidal vector fields with the normal structure
	\begin{equation}\label{normal:1}
		F(t,x_1,x_2)=
		\begin{pmatrix}
			F_1(t,x_1,x_2)\\F_2(t,x_1,x_2)\\0
		\end{pmatrix}
		\qquad\text{and}\qquad
		G(t,x_1,x_2)=
		\begin{pmatrix}
			0\\0\\G_3(t,x_1,x_2)
		\end{pmatrix}.
	\end{equation}
	Further consider integrability parameters in $[1,\infty]$ such that
	\begin{equation*}
		\frac 1a=\frac 1{a_1}+\frac 1{a_2}
		\qquad\text{and}\qquad
		\frac 1c=\frac 1{c_1}+\frac 1{c_2}.
	\end{equation*}
	Then, recalling that $P=(-\Delta)^{-1}\curl\curl$ denotes Leray's projector onto solenoidal vector fields, one has the product estimate
	\begin{equation}\label{para-product:3}
		\norm{P(F\times G)}_{\widetilde L^a_t \dot B^{s+t-1}_{2,c}(\mathbb{R}^2)}
		\lesssim
		\norm{F}_{\widetilde L^{a_1}_t\dot B^s_{2,c_1}(\mathbb{R}^2)}
		\norm{G}_{\widetilde L^{a_2}_t\dot B^t_{2,c_2}(\mathbb{R}^2)},
	\end{equation}
	for any $s\in(-\infty,1)$ and $t\in (-\infty,2)$ with $s+t>0$. Furthermore, in the endpoint case $s=1$, one has that
	\begin{equation}\label{para-product:2}
		\norm{P(F\times G)}_{\widetilde L^a_t \dot B^{t}_{2,c}(\mathbb{R}^2)}
		\lesssim
		\norm{F}_{L^{a_1}_t L^\infty_x(\mathbb{R}^2) \cap \widetilde L^{a_1}_t\dot B^1_{2,\infty}(\mathbb{R}^2)}
		\norm{G}_{\widetilde L^{a_2}_t\dot B^t_{2,c}(\mathbb{R}^2)},
	\end{equation}
	for any $t\in(-1,2)$.
\end{lem}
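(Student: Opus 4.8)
### Proof proposal for Lemma 3.7 (the paradifferential lemma)

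\textbf{Overall strategy.} The plan is to compute the cross product $F\times G$ explicitly using the normal structure \eqref{normal:1}, observe that only finitely many scalar products of the form $F_i\,G_3$ appear, and then reduce everything to scalar product estimates in homogeneous Besov spaces. The key gain over the naive product law comes precisely from the normal structure: because $F$ is horizontal and divergence-free while $G$ is vertical, the vector $F\times G$ has the form $G_3(F_2,-F_1,0)^{\mathsf T}$, so $P(F\times G)$ is built from scalar products $F_iG_3$ that are \emph{further} acted upon by Leray's projector $P$. The role of $P$ is what lets us push the first index $s$ up to (and including) the value $1$, which is the borderline for the algebra $L^\infty\cap\dot B^1_{2,\infty}$ acting on $\dot H^s$-type spaces in dimension $2$.

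\textbf{Step 1: reduce to scalar products.} First I would write $F\times G = G_3\,(F_2,\,-F_1,\,0)^{\mathsf T}$, so that
\[
P(F\times G)=P\begin{pmatrix} F_2 G_3\\ -F_1 G_3\\ 0\end{pmatrix}.
\]
Since $P$ is bounded on every $\widetilde L^a_t\dot B^\sigma_{2,c}(\mathbb{R}^2)$ (being a zero-order Fourier multiplier, componentwise), the estimate \eqref{para-product:3} follows immediately from the scalar product law
\[
\norm{fg}_{\widetilde L^a_t\dot B^{s+t-1}_{2,c}(\mathbb{R}^2)}\lesssim \norm{f}_{\widetilde L^{a_1}_t\dot B^s_{2,c_1}(\mathbb{R}^2)}\norm{g}_{\widetilde L^{a_2}_t\dot B^t_{2,c_2}(\mathbb{R}^2)}
\]
valid for $s<1$, $t<2$, $s+t>0$ in dimension $d=2$ — this is the standard Bony-decomposition product estimate (with the two dyadic summations factored through Hölder in time, which is the usual way Chemin--Lerner norms interact with products). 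For this first part the normal structure is not really needed beyond reducing to scalars; the content is the classical product law, for which I would invoke \cite[Section 2.6]{bcd11} or reproduce the three-piece Bony decomposition $fg=T_fg+T_gf+R(f,g)$ and bound each piece.

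\textbf{Step 2: the endpoint $s=1$ — where the normal structure earns its keep.} Here the naive product law fails because $\dot B^1_{2,\infty}(\mathbb{R}^2)$ is not an algebra and does not act on $\dot B^t_{2,c}$ for all $t\in(-1,2)$. The remedy is that we do not need $fg$ in a Besov space directly; we need $P(fg)$, and more precisely we exploit that in our application $f=F_i$ is a component of a divergence-free field, so the product $F_iG_3$ inside $P$ can be manipulated. The cleanest route: write $P(F\times G)$ using the identity that relates $P(F\times G)$, for divergence-free $F$, to an expression where one derivative is moved off $F$. Concretely, using $\div F=0$ one has componentwise identities of the form $\partial_k(F_k G)=F_k\partial_k G$ after applying $P$, so that $P(F\times G)$ can be recast with the rough factor $F\in L^\infty\cap \dot B^1_{2,\infty}$ multiplying $G$ and its derivative in a configuration where the paraproduct $T_F\,(\cdot)$ only ever sees $F$ in $L^\infty$ (not in $\dot B^1_{2,\infty}$), while the remainder and the reverse paraproduct $T_{(\cdot)}F$ see one derivative landing favorably. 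I would then run Bony's decomposition on the reorganized product: $T_GF$ and $R(F,G)$ are controlled by $\norm{F}_{\dot B^1_{2,\infty}}\norm{G}_{\dot B^t_{2,c}}$ for $t>-1$ (so $1+t>0$ makes the remainder converge, and $t<2$ keeps the high-frequency-of-$F$ paraproduct summable at the top), and $T_FG$ is controlled by $\norm{F}_{L^\infty}\norm{G}_{\dot B^t_{2,c}}$. Summing the three contributions in $\ell^c$ over dyadic blocks and in the appropriate $L^a_t$ norm via Hölder in time gives \eqref{para-product:2}.

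\textbf{Expected main obstacle.} The delicate point is Step 2: making precise the "reorganization" of $P(F\times G)$ that trades the borderline $\dot B^1_{2,\infty}$-action for an $L^\infty$-action on the worst paraproduct piece, and checking that the commutator/divergence-structure terms this produces are themselves controlled by $\norm{F}_{L^\infty\cap\dot B^1_{2,\infty}}\norm{G}_{\dot B^t_{2,c}}$ — in particular verifying the summability at the \emph{top} of the Littlewood--Paley scale, which is exactly why $t<2$ is required, and the summability at the \emph{bottom}, which forces $t>-1$. A secondary technical nuisance is the bookkeeping of the Chemin--Lerner time integrability: one must split each Hölder pair in time as $\tfrac1a=\tfrac1{a_1}+\tfrac1{a_2}$ \emph{before} taking the $\ell^c$ sum over frequencies, which is harmless but must be done consistently in all three Bony pieces. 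I expect no difficulty for $d=2$ specifically beyond the numerology $\tfrac d2=1$ that puts the endpoint exactly at $s=1$.
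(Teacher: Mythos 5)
Your Step 1 has a genuine gap. You claim that \eqref{para-product:3} reduces, via $F\times G = G_3(F_2,-F_1,0)^{\mathsf T}$ and boundedness of $P$, to ``the standard Bony-decomposition product estimate valid for $s<1$, $t<2$, $s+t>0$ in dimension $d=2$,'' adding that ``the normal structure is not really needed beyond reducing to scalars.'' That is not the standard product law. In $\mathbb{R}^2$, the classical scalar estimate $\|fg\|_{\dot B^{s+t-1}_{2,c}}\lesssim\|f\|_{\dot B^s_{2,c_1}}\|g\|_{\dot B^t_{2,c_2}}$ requires \emph{both} $s<1$ \emph{and} $t<1$ (with $s+t>0$): the paraproduct $T_gf$ forces $t<1$ just as $T_fg$ forces $s<1$. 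Extending the admissible range to $t<2$ is precisely the nontrivial content of the lemma, as the paper's remark following the statement emphasizes; it cannot be obtained by merely applying a zero-order multiplier $P$ to a scalar product. So your reduction discards the only structure that makes the conclusion true.

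The actual mechanism is closer in spirit to what you gesture at in Step 2, but it applies to the whole range $t\in[1,2)$, not only to the endpoint $s=1$. Because $F$ is horizontal, $G$ is vertical, both are $x_3$-independent and solenoidal, one has the exact identity $\nabla\times(F\times G)=F\times(\nabla\times G)$. Writing $P=(-\Delta)^{-1}\nabla\times\nabla\times$, this gives $\|P(F\times G)\|_{\dot B^{s+t-1}_{2,c}}\lesssim\|\nabla\times(F\times G)\|_{\dot B^{s+t-2}_{2,c}}$, and inserting Bony's decomposition produces $\nabla\times T_FG-T_{\nabla\times G}F+\nabla\times R(F,G)$. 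The troublesome piece $T_GF$ (which would impose $t<1$) has been replaced by $T_{\nabla\times G}F$, whose low-frequency factor sits in $\dot B^{t-2}_{\infty,\infty}$ and therefore only requires $t<2$; the other two pieces impose $s<1$ and $s+t>0$ as usual. The endpoint $s=1$ then follows the same skeleton, using the $L^\infty$ endpoint paraproduct bound for $T_FG$. Your guess about using $\div F=0$ to move a derivative off $F$ is in the right family of ideas but misidentifies where the derivative must go: it is a derivative moved onto $G$ via $\nabla\times G$, enabled jointly by the curl form of $P$ and the normal structure, that buys the extra unit in $t$, and this is needed already for the non-endpoint case \eqref{para-product:3}, not only for \eqref{para-product:2}.
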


\begin{rem}
	The significance of the preceding lemma lies in the fact that it allows us to consider parameters in the range $t\in[1,2)$. Without the normal structure \eqref{normal:1}, we would be restricted to values $t<1$.
\end{rem}

\begin{rem}
	A straightforward simplification of the proof below yields a corresponding result in Besov spaces for vector fields independent of the time variable $t$.
\end{rem}

\begin{proof}
	We are going to use the paradifferential decomposition
	\begin{equation*}
		F\times G=T_FG-T_GF+R(F,G),
	\end{equation*}
	where the paraproducts are defined by
	\begin{equation*}
		T_FG=\sum_{j\in\mathbb{Z}}S_{j-2}F\times \Delta_j G,
		\qquad
		T_GF=\sum_{j\in\mathbb{Z}}S_{j-2} G\times \Delta_j F=- \sum_{j\in\mathbb{Z}}\Delta_j F \times S_{j-2} G ,
	\end{equation*}
	and the remainder is given by
	\begin{equation*}
		R(F,G)=\sum_{\substack{j,k\in\mathbb{Z}\\|j-k|\leq 2}}\Delta_jF\times\Delta_kG.
	\end{equation*}
	In particular, by virtue of the solenoidal and normal structures of $F$ and $G$, one has that
	\begin{equation*}
		\nabla\times(F\times G)=F\times(\nabla\times G),
	\end{equation*}
	which leads to the identity
	\begin{equation*}
		\nabla\times(F\times G)=\nabla\times T_FG-T_{\nabla\times G}F+\nabla\times R(F,G).
	\end{equation*}
	We therefore conclude, by standard embeddings of Besov spaces, that
	\begin{equation*}
		\begin{aligned}
			\norm{P(F\times G)}_{\widetilde L^a_t \dot B^{s+t-1}_{2,c}}
			&\lesssim
			\norm{\nabla\times(F\times G)}_{\widetilde L^a_t \dot B^{s+t-2}_{2,c}}
			\\
			&\lesssim
			\norm{T_FG}_{\widetilde L^a_t \dot B^{s+t-1}_{2,c}}
			+\norm{T_{\nabla\times G}F}_{\widetilde L^a_t \dot B^{s+t-2}_{2,c}}
			+\norm{R(F,G)}_{\widetilde L^a_t \dot B^{s+t}_{1,c}},
		\end{aligned}
	\end{equation*}
	for every $s,t\in\mathbb{R}$.
	
	Next, employing the classical paradifferential estimates \eqref{para-product:4} and \eqref{para-product:5} presented in the appendix, and further exploiting standard embeddings of Besov spaces, we find that
	\begin{equation*}
		\begin{aligned}
			\norm{P(F\times G)}_{\widetilde L^a_t \dot B^{s+t-1}_{2,c}}
			&\lesssim
			\norm{F}_{\widetilde L^{a_1}_t\dot B^{s-1}_{\infty,c_1}}
			\norm{G}_{\widetilde L^{a_2}_t\dot B^t_{2,c_2}}
			+\norm{F}_{\widetilde L^{a_1}_t\dot B^s_{2,c_1}}
			\norm{\nabla\times G}_{\widetilde L^{a_2}_t\dot B^{t-2}_{\infty,c_2}}
			\\
			&\quad+\norm{F}_{\widetilde L^{a_1}_t\dot B^s_{2,c_1}}
			\norm{G}_{\widetilde L^{a_2}_t\dot B^t_{2,c_2}}
			\\
			&\lesssim
			\norm{F}_{\widetilde L^{a_1}_t\dot B^s_{2,c_1}}
			\norm{G}_{\widetilde L^{a_2}_t\dot B^t_{2,c_2}},
		\end{aligned}
	\end{equation*}
	for any $s<1$ and $t<2$, such that $s+t>0$. This establishes \eqref{para-product:3}.
	
	As for the endpoint case $s=1$, if, in addition to \eqref{para-product:4}, one also uses \eqref{para-product:6}, then, similar bounds lead to
	\begin{equation*}
		\begin{aligned}
			\norm{P(F\times G)}_{\widetilde L^a_t \dot B^{t}_{2,c}}
			&\lesssim
			\norm{F}_{L^{a_1}_t L^\infty_x}
			\norm{G}_{\widetilde L^{a_2}_t\dot B^t_{2,c}}
			+\norm{F}_{\widetilde L^{a_1}_t\dot B^1_{2,\infty}}
			\norm{\nabla\times G}_{\widetilde L^{a_2}_t\dot B^{t-2}_{\infty,c}}
			\\
			&\quad+\norm{F}_{\widetilde L^{a_1}_t\dot B^1_{2,\infty}}
			\norm{G}_{\widetilde L^{a_2}_t\dot B^t_{2,c}}
			\\
			&\lesssim
			\norm{F}_{L^{a_1}_t L^\infty_x \cap \widetilde L^{a_1}_t\dot B^1_{2,\infty}}
			\norm{G}_{\widetilde L^{a_2}_t\dot B^t_{2,c}},
		\end{aligned}
	\end{equation*}
	for any $-1<t<2$, thereby establishing \eqref{para-product:2} and concluding the proof of the lemma.
\end{proof}

The following \emph{ad hoc} variant of paraproduct estimate will be useful when handling vorticities which are bounded pointwise.

\begin{lem}\label{paradifferential:2}
	Let $F,G:\mathbb{R}_x^2\to\mathbb{R}^3$ be solenoidal vector fields with the normal structure \eqref{normal:1}.
	Then, one has the product estimate
	\begin{equation}\label{para-product:7}
		\norm{P(F\times G)}_{\dot B^{1}_{2,1}(\mathbb{R}^2)}
		\lesssim
		\norm{F}_{L^2(\mathbb{R}^2)}
		\norm{G}_{\dot B^1_{\infty,1}(\mathbb{R}^2)}
		+\norm{F}_{\dot H^1(\mathbb{R}^2)}
		\norm{G}_{\dot H^1(\mathbb{R}^2)}.
	\end{equation}
\end{lem}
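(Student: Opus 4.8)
The plan is to mimic the paradifferential decomposition used in Lemma \ref{paradifferential:1}, but with the two key structural facts that (i) the normal structure \eqref{normal:1} lets us replace one derivative of $F\times G$ by a derivative of $G$ only (via $\nabla\times(F\times G)=F\times(\nabla\times G)$), and (ii) in the homogeneous Besov scale $\dot B^1_{2,1}(\mathbb{R}^2)$, estimating $P(F\times G)$ is equivalent, up to harmless embeddings, to estimating $\nabla\times(F\times G)$ in $\dot B^{0}_{2,1}(\mathbb{R}^2)$. So first I would write
\begin{equation*}
	\nabla\times(F\times G)=\nabla\times T_FG-T_{\nabla\times G}F+\nabla\times R(F,G),
\end{equation*}
exactly as in the proof of Lemma \ref{paradifferential:1}, and reduce the claim to bounding the three pieces $T_FG$, $T_{\nabla\times G}F$ and $R(F,G)$ in the appropriate Besov spaces, namely $\dot B^1_{2,1}$, $\dot B^0_{2,1}$ and $\dot B^2_{1,1}$ respectively (the last because the remainder gains one space-integrability index, $\dot B^2_{1,1}(\mathbb{R}^2)\hookrightarrow\dot B^1_{2,1}(\mathbb{R}^2)$ in dimension two).

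Next I would estimate each piece at low regularity so that the pointwise/energy norms on the right-hand side suffice. For the paraproduct $T_FG=\sum_j S_{j-2}F\times\Delta_j G$, I put $F$ in $L^\infty_x$ and $G$ in $\dot B^1_{2,1}$, using $\|S_{j-2}F\|_{L^\infty}\lesssim\|F\|_{L^\infty}$; since $\dot H^1(\mathbb{R}^2)$ does not embed in $L^\infty$, I cannot directly write $\|F\|_{L^\infty}\lesssim\|F\|_{\dot H^1}$, and this is where I must be a bit more careful---I would instead split $F=S_0F+(\mathrm{Id}-S_0)F$ (low and high frequencies relative to a fixed cutoff), bounding the low-frequency part of $F$ by $\|F\|_{L^2}$ after a Bernstein inequality and the high-frequency part by $\|F\|_{\dot H^1}$, and then pair these with $\|G\|_{\dot B^1_{\infty,1}}$ and $\|G\|_{\dot H^1}$ respectively, reproducing precisely the two terms on the right of \eqref{para-product:7}. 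For the paraproduct $T_{\nabla\times G}F=\sum_j S_{j-2}(\nabla\times G)\times\Delta_j F$, I note that $\nabla\times G$ has the normal structure of a horizontal field and $\|S_{j-2}\nabla\times G\|_{L^\infty}$ is summable against $\|\Delta_j F\|_{L^2}2^{j}$ only if I gain; here the clean way is to recognise $\nabla\times G$ as one derivative of $G$, keep $G$ in $\dot B^1_{\infty,1}\cap\dot H^1$ and $F$ in $L^2\cap\dot H^1$, and run the same low/high frequency split on $G$. The remainder $R(F,G)=\sum_{|j-k|\le2}\Delta_jF\times\Delta_kG$ is the easiest: using $\|\Delta_jF\|_{L^2}\|\Delta_kG\|_{L^\infty}$ or $\|\Delta_jF\|_{L^2}\|\Delta_kG\|_{L^2}$ (again via the two-piece split on $G$) and summing over the near-diagonal band gives the claimed bound directly in $\dot B^2_{1,1}$.

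The subtle point---and the one I expect to be the main obstacle---is the borderline nature of $\dot B^1_{2,1}(\mathbb{R}^2)$, which is exactly critical for the embedding into $L^\infty$: one cannot afford to lose any logarithm, so every paraproduct and remainder estimate must be done with the sharp summability index $1$ and not $\infty$, and the decomposition of $F$ into its $L^2$ and $\dot H^1$ parts must be arranged so that no frequency is double-counted and the two resulting constants genuinely combine into the stated right-hand side. Concretely, the cleanest route is to fix once and for all a frequency threshold (say frequencies $\le 1$ versus $>1$), estimate the $\le1$ part of every $F$-factor by $\|F\|_{L^2}$ (Bernstein turning $L^2$ into $L^\infty$ on bounded frequencies) and the $>1$ part by $\|F\|_{\dot H^1}$ (Bernstein plus the fact that $\dot B^1_{2,2}=\dot H^1\hookrightarrow\dot B^0_{\infty,1}$ on high frequencies in two dimensions, using $\ell^2\subset\ell^1$ after summing a geometric series), and then note that all the $G$-factors are handled by keeping $G$ in $\dot B^1_{\infty,1}$ when paired with $\|F\|_{L^2}$ and in $\dot H^1$ when paired with $\|F\|_{\dot H^1}$. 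Once this bookkeeping is in place, assembling the three estimates, using the embeddings $\dot B^2_{1,1}(\mathbb{R}^2)\hookrightarrow\dot B^1_{2,1}(\mathbb{R}^2)$ and $\dot B^{1}_{2,1}\hookrightarrow\dot B^0_{2,1}$, and the elliptic bound $\|P(F\times G)\|_{\dot B^1_{2,1}}\lesssim\|\nabla\times(F\times G)\|_{\dot B^0_{2,1}}$, concludes the proof of \eqref{para-product:7}.
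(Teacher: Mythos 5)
Your decomposition $\nabla\times(F\times G)=\nabla\times T_FG-T_{\nabla\times G}F+\nabla\times R(F,G)$ and the reduction to bounding the three paradifferential pieces match the paper exactly, so the skeleton is sound. The gap is in how you propose to close the paraproduct $T_FG$. You want to place $F$ in $L^\infty_x$ and repair the failure of $\dot H^1(\mathbb{R}^2)\hookrightarrow L^\infty$ by a low/high frequency split, bounding the high-frequency part via ``Bernstein plus $\dot H^1\hookrightarrow\dot B^0_{\infty,1}$ on high frequencies, using $\ell^2\subset\ell^1$.'' This does not work: $\ell^2\subset\ell^1$ is false (the inclusion runs the other way), and the embedding $\dot H^1(\mathbb{R}^2)\hookrightarrow L^\infty$ or $\dot B^0_{\infty,1}$ fails even after cutting off low frequencies. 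In $d=2$, Bernstein gives $\|\Delta_jF\|_{L^\infty}\lesssim 2^j\|\Delta_jF\|_{L^2}$, so summability would need $(2^j\|\Delta_jF\|_{L^2})_j\in\ell^1$, whereas $\dot H^1$ only places it in $\ell^2$ with nothing left over to sum a geometric series against; taking $\|\Delta_jF\|_{L^2}=2^{-j}/j$ for $j\geq 2$ produces $F\in\dot H^1$ with $\sum_j2^j\|\Delta_jF\|_{L^2}=\infty$. Your split therefore does not close the estimate for $T_FG$, and in fact no variant of it can, because you are trying to force $F$ through the one critical embedding that the lemma is designed to avoid.

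The paper's route is simpler and sidesteps $L^\infty$ on $F$ entirely: each of the three pieces is bounded by exactly \emph{one} of the two terms on the right of \eqref{para-product:7}, with no frequency splitting. For $T_FG$ one applies the endpoint estimate \eqref{para-product:6} with the H\"older pairing $L^2\times L^\infty$, i.e.\ $\norm{T_FG}_{\dot B^1_{2,1}}\lesssim\norm{F}_{L^2}\norm{G}_{\dot B^1_{\infty,1}}$---this is the first right-hand term directly. The remainder goes the same way, $\norm{R(F,G)}_{\dot B^1_{2,1}}\lesssim\norm{F}_{\dot B^0_{2,\infty}}\norm{G}_{\dot B^1_{\infty,1}}$ via \eqref{para-product:5}. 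Only $T_{\nabla\times G}F$ produces the $\dot H^1\times\dot H^1$ term: \eqref{para-product:4} with $\alpha=-1,\beta=1$ gives $\norm{T_{\nabla\times G}F}_{\dot B^0_{2,1}}\lesssim\norm{\nabla\times G}_{\dot B^{-1}_{\infty,2}}\norm{F}_{\dot B^1_{2,2}}$, and the two-dimensional Sobolev embedding $\dot B^1_{2,2}\subset\dot B^0_{\infty,2}$ (an $\ell^2\to\ell^2$ embedding, genuinely available) is applied to $G$, not $F$, to land on $\norm{F}_{\dot H^1}\norm{G}_{\dot H^1}$. The guiding idea you should internalize: when a term is borderline, put the $L^\infty$-integrability on the factor that is already measured with $\ell^1$-summability ($G\in\dot B^1_{\infty,1}$), and keep $F$ in plain $L^2$.
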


\begin{rem}
	The above statement is phrased in terms of mere Besov spaces. As usual, a straightforward extension of the same result to Chemin--Lerner spaces also exists.
\end{rem}

\begin{proof}
	We employ the method of proof of Lemma \ref{paradifferential:1}. In particular, we obtain that
	\begin{equation*}
		\norm{P(F\times G)}_{\dot B^1_{2,1}}\lesssim
		\norm{T_FG}_{\dot B^1_{2,1}}+\norm{T_{\nabla\times G}F}_{\dot B^0_{2,1}}+\norm{R(F,G)}_{\dot B^1_{2,1}}.
	\end{equation*}
	Then, by virtue of the classical paradifferential estimates \eqref{para-product:4}, \eqref{para-product:5} and \eqref{para-product:6} , we arrive at
	\begin{equation*}
		\begin{aligned}
			\norm{P(F\times G)}_{\dot B^1_{2,1}}
			&\lesssim
			\norm{F}_{L^2}\norm{G}_{\dot B^1_{\infty,1}}+\norm{\nabla\times G}_{\dot B^{-1}_{\infty,2}}\norm{F}_{\dot B^1_{2,2}}
			+\norm{F}_{\dot B^0_{2,\infty}}\norm{G}_{\dot B^1_{\infty,1}}
			\\
			&\lesssim
			\norm{F}_{L^2}\norm{G}_{\dot B^1_{\infty,1}}+\norm{G}_{\dot B^{0}_{\infty,2}}\norm{F}_{\dot B^1_{2,2}}.
		\end{aligned}
	\end{equation*}
	Finally, an application of the two-dimensional embedding $\dot B^{1}_{2,2}\subset \dot B^{0}_{\infty,2}$ concludes the proof.
\end{proof}

\subsection{Controlling the vorticity}\label{section:vorticity}

In order to carry out our strategy, previously laid out in Section \ref{strategy:0}, we need to control the vorticity $\omega$ in $L^p_x$, with $p\geq 2$, by exploiting Yudovich's approach of the two-dimensional incompressible Euler equations \eqref{Euler:1}. The following basic lemma provides us with a simple tool to do so.

\begin{lem}\label{vorticity:0}
	Let $(u,E,B)\in C^1\big([0,T)\times\mathbb{R}^2\big)\cap L^\infty\big([0,T);H^1(\mathbb{R}^2)\big)$ be a smooth solution to \eqref{EM}, for some $T>0$, with the two-dimensional normal structure \eqref{structure:2dim}. Then, for all $t\in (0,T)$, it holds that
	\begin{equation*}
		\begin{aligned}
			\norm{\omega(t)}_{L^2_x}
			&\leq \norm{\omega(0)}_{L^2_x}
			+\norm{j}_{L^2([0,t);L^2_x)}\norm{\nabla B}_{L^2([0,t);L^\infty_x)},
			\\
			\norm{\omega(t)}_{L^p_x}
			&\lesssim \norm{\omega(0)}_{L^p_x}
			+\norm{j}_{L^2([0,t);\dot B^0_{2,\infty})}^\frac{2}{p}
			\norm{j}_{L^2([0,t);\dot B^1_{2,\infty})}^{1-\frac 2p}
			\norm{\nabla B}_{L^2([0,t);L^\infty_x)},
			\\
			\norm{\omega(t)}_{L^\infty_x}
			&\lesssim \norm{\omega(0)}_{L^\infty_x}
			+\norm{j}_{L^2([0,t);L^\infty_x)}
			\norm{\nabla B}_{L^2([0,t);L^\infty_x)}
			\\
			&\lesssim \norm{\omega(0)}_{L^\infty_x}
			+\norm{j}_{L^2([0,t);\dot B^1_{2,1})}
			\norm{\nabla B}_{L^2([0,t);L^\infty_x)},
		\end{aligned}
	\end{equation*}
	for any $2< p<\infty$.
\end{lem}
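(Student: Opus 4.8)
The starting point is the vorticity equation $\partial_t\omega+u\cdot\nabla\omega=-j\cdot\nabla B$ from \eqref{EM-omega:form}, together with the incompressibility condition $\div u=0$. The plan is to first derive, by the classical $L^p$ energy method for transport equations, the single inequality
\begin{equation*}
	\norm{\omega(t)}_{L^p_x}\leq\norm{\omega(0)}_{L^p_x}+\int_0^t\norm{(j\cdot\nabla B)(\tau)}_{L^p_x}d\tau,
\end{equation*}
valid for every $p\in[2,\infty]$. For $p<\infty$ this follows upon multiplying the equation by $|\omega|^{p-2}\omega$, integrating over $\mathbb{R}^2$, and using $\int u\cdot\nabla(|\omega|^p)\,dx=0$; for $p=\infty$ one instead integrates along the (smooth, volume-preserving) flow of $u$. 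Then all three displayed estimates reduce to bounding $\int_0^t\norm{(j\cdot\nabla B)(\tau)}_{L^p_x}d\tau$.

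Next I would estimate the source term pointwise in time by Hölder's inequality, $\norm{j\cdot\nabla B}_{L^p_x}\leq\norm{j}_{L^p_x}\norm{\nabla B}_{L^\infty_x}$, and then convert the norm of $j$. For $p=2$ nothing further is needed. For $2<p<\infty$ I would use the two-dimensional Besov embedding $\dot B^{1-\frac 2p}_{2,1}(\mathbb{R}^2)\hookrightarrow L^p(\mathbb{R}^2)$ followed by the interpolation inequality $\norm{j}_{\dot B^{1-\frac 2p}_{2,1}}\lesssim\norm{j}_{\dot B^0_{2,\infty}}^{\frac 2p}\norm{j}_{\dot B^1_{2,\infty}}^{1-\frac 2p}$, which gains the $\ell^1$-summability precisely because the intermediate regularity index lies strictly between $0$ and $1$. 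For $p=\infty$ I would use the critical embeddings $\dot B^1_{2,1}(\mathbb{R}^2)\hookrightarrow\dot B^0_{\infty,1}(\mathbb{R}^2)\hookrightarrow L^\infty(\mathbb{R}^2)$ to replace $\norm{j}_{L^\infty_x}$ by $\norm{j}_{\dot B^1_{2,1}}$, giving both forms of the last estimate.

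Finally I would integrate in time with Hölder's inequality. In the cases $p=2$ and $p=\infty$ a single Cauchy--Schwarz in $\tau$ pairs the $L^2_t$-norm of $j$ (in the relevant spatial space) against $\norm{\nabla B}_{L^2_tL^\infty_x}$. In the case $2<p<\infty$ I would apply the three-term Hölder inequality in $\tau$, with exponents $p$, $\frac{2p}{p-2}$ and $2$ (which indeed have reciprocals summing to one), to the product $\norm{j}_{\dot B^0_{2,\infty}}^{\frac 2p}\norm{j}_{\dot B^1_{2,\infty}}^{1-\frac 2p}\norm{\nabla B}_{L^\infty_x}$, producing exactly $\norm{j}_{L^2_t\dot B^0_{2,\infty}}^{\frac 2p}\norm{j}_{L^2_t\dot B^1_{2,\infty}}^{1-\frac 2p}\norm{\nabla B}_{L^2_tL^\infty_x}$.

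The argument is a sequence of standard estimates and there is no serious obstacle. The only points requiring mild care are the $\ell^1$-gain in the Besov interpolation step (without it one would only obtain $\dot B^{1-\frac 2p}_{2,2}$, which does not embed into $L^p$), the bookkeeping of the exponents in the triple Hölder inequality in time, and the observation that the $p=\infty$ endpoint of the transport estimate must be obtained via the flow map rather than via the $L^p$ identity.
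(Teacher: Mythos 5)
Your proof is correct and follows essentially the same route as the paper: establish the transport-equation bound $\norm{\omega(t)}_{L^p_x}\leq\norm{\omega(0)}_{L^p_x}+\int_0^t\norm{j\cdot\nabla B}_{L^p_x}d\tau$, peel off $\norm{\nabla B}_{L^\infty_x}$ by H\"older, and convert $\norm{j}_{L^p_x}$ via a Besov convexity inequality followed by a Sobolev-type embedding; Cauchy--Schwarz (or the three-term H\"older you describe, which amounts to the same computation) in time then produces the stated right-hand sides. The paper performs the same interpolation but phrases it as $\dot H^{1-2/p}=\dot B^{1-2/p}_{2,2}\hookrightarrow L^p$, and it is also more careful about the integration-by-parts step: it multiplies by a Gaussian cutoff $\varphi(\varepsilon x)$ before integrating, to ensure that $\int u\cdot\nabla(|\omega|^p)\,dx$ actually makes sense, and only then sends $\varepsilon\to 0$. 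Your formal identity $\int u\cdot\nabla(|\omega|^p)\,dx=0$ is fine in spirit, but since the stated hypotheses only guarantee $\omega\in L^2_x$, a justification along these lines is needed.

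One small factual slip: your parenthetical claim that $\dot B^{1-2/p}_{2,2}$ does not embed into $L^p(\mathbb{R}^2)$ is wrong for $2\le p<\infty$. The classical fractional Sobolev embedding gives $\dot H^{s}(\mathbb{R}^2)=\dot B^{s}_{2,2}(\mathbb{R}^2)\hookrightarrow L^{2/(1-s)}(\mathbb{R}^2)$ for $0\le s<1$, which with $s=1-\tfrac 2p$ yields precisely $L^p$; this is in fact what the paper uses. The $\ell^1$-summability is genuinely necessary only at the endpoint $p=\infty$, where $\dot H^1(\mathbb{R}^2)\not\hookrightarrow L^\infty(\mathbb{R}^2)$ and one must pass through $\dot B^1_{2,1}\subset\dot B^0_{\infty,1}\subset L^\infty$. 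For $p<\infty$ the convexity step simply hands you the stronger $\ell^1$-index for free, but the argument would go through equally well with $\ell^2$.
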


\begin{proof}
	By slight abuse of language, we assume here that the vorticity $\omega$ is defined as the scalar function $\omega=\partial_1u_2-\partial_2u_1$ and that
	\begin{equation*}
		u=
		\begin{pmatrix}
			u_1\\u_2
		\end{pmatrix}
		\qquad\text{and}\qquad
		j=
		\begin{pmatrix}
			j_1\\j_2
		\end{pmatrix}.
	\end{equation*}
	In particular, a straightforward computation shows that the transport equation \eqref{transport:1} can then be recast as
	\begin{equation}\label{transport:2}
		\partial_t \omega +u \cdot\nabla \omega = -j \cdot\nabla b,
	\end{equation}
	where $b(t,x)$ is the third component of the magnetic field $B(t,x)$ as denoted in \eqref{structure:2dim}.
	
	Next, supposing, for simplicity, that $p$ is finite and introducing the test function $\varphi(x)=e^{-|x|^2}$, we multiply the above vorticity transport equation by $p\omega|\omega|^{p-2}\varphi(\varepsilon x)$, where $0<\varepsilon<1$, and then integrate in space. This yields, since $u$ is divergence-free, that
	\begin{equation*}
		\begin{aligned}
			\frac{d}{dt}\int_{\mathbb{R}^2}|\omega|^p\varphi(\varepsilon x)dx
			&\leq p\int_{\mathbb{R}^2}|j||\nabla b||\omega|^{p-1}\varphi(\varepsilon x)dx
			+\varepsilon\int_{\mathbb{R}^2}|\omega|^pu\cdot\nabla\varphi(\varepsilon x)dx
			\\
			&\leq p\norm{j}_{L^p_x}\norm{\nabla b}_{L^\infty_x}
			\left(\int_{\mathbb{R}^2}|\omega|^p\varphi(\varepsilon x)dx\right)^{\frac{p-1}p}
			+\varepsilon\norm{\omega}_{L^\infty_x}^{p-1}\norm{\omega}_{L^2_x}\norm{u}_{L^2_x}\norm{\nabla\varphi}_{L^\infty_x},
		\end{aligned}
	\end{equation*}
	whereby, if $\omega$ is nontrivial,
	\begin{equation*}
		\frac{d}{dt}\left(\int_{\mathbb{R}^2}|\omega|^p\varphi(\varepsilon x)dx\right)^\frac 1p
		\leq \norm{j}_{L^p_x}\norm{\nabla b}_{L^\infty_x}
		+\frac{\varepsilon\norm{\omega}_{L^\infty_x}^{p-1}\norm{\omega}_{L^2_x}\norm{u}_{L^2_x}\norm{\nabla\varphi}_{L^\infty_x}}
		{p\left(\int_{\mathbb{R}^2}|\omega|^p\varphi(\varepsilon x)dx\right)^{\frac{p-1}p}}.
	\end{equation*}
	Finally, further integrating in time and letting $\varepsilon$ tend to zero, we conclude that
	\begin{equation*}
		\norm{\omega(t)}_{L^p_x}
		\leq \norm{\omega(0)}_{L^p_x}
		+\norm{j}_{L^2([0,t);L^p_x)}\norm{\nabla b}_{L^2([0,t);L^\infty_x)},
	\end{equation*}
	for any $t\in (0,T)$. A classical modification of this argument gives the same result for an infinite value of the parameter $p$.
	
	Now, if $p=2$, the proof is finished. If $2<p<\infty$, we further employ the following convexity inequality (see \cite{bl76} for details on interpolation theory)
	\begin{equation*}
		\norm{j}_{L^2([0,t);\dot H^{1-\frac 2p})}
		\lesssim
		\norm{j}_{L^2([0,t);\dot B^0_{2,\infty})}^\frac{2}{p}
		\norm{j}_{L^2([0,t);\dot B^1_{2,\infty})}^{1-\frac 2p},
	\end{equation*}
	in combination with the classical two-dimensional Sobolev embedding $\dot H^{1-\frac 2p}\subset L^p$, to conclude that
	\begin{equation*}
		\norm{\omega(t)}_{L^p_x}
		\lesssim \norm{\omega(0)}_{L^p_x}
		+\norm{j}_{L^2([0,t);\dot B^0_{2,\infty})}^\frac{2}{p}
		\norm{j}_{L^2([0,t);\dot B^1_{2,\infty})}^{1-\frac 2p}
		\norm{\nabla b}_{L^2([0,t);L^\infty_x)}.
	\end{equation*}
	Finally, the case $p=\infty$ is settled with an application of the continuous embeddings $\dot B^1_{2,1}\subset \dot B^0_{\infty,1}\subset L^\infty$, valid in two dimensions, thereby completing the proof of the lemma.
\end{proof}

\begin{rem}
	Recall that the approximation procedure presented in the Section \ref{approximation:0} relies on a viscous approximation of the Euler system. It is therefore important to emphasize here that the method of proof of the preceding lemma also applies to viscous approximations of the transport equation \eqref{transport:2}. Indeed, observe that multiplying the transport-diffusion equation
	\begin{equation*}
		\partial_t \omega +u \cdot\nabla \omega-\Delta\omega = -j \cdot\nabla b
	\end{equation*}
	by $\beta'(\omega)$, for some convex nonnegative renormalization $\beta\in C^2(\mathbb{R})$ with $\beta(0)=0$, and then integrating in space leads, at least formally, to the estimate
	\begin{equation*}
		\frac{d}{dt}\int_{\mathbb{R}^2}\beta(\omega)dx\leq
		\frac{d}{dt}\int_{\mathbb{R}^2}\beta(\omega)dx+\int_{\mathbb{R}^2}|\nabla\omega|^2\beta''(\omega)dx
		=\int_{\mathbb{R}^2}(j\cdot\nabla b)\beta'(\omega)dx.
	\end{equation*}
	This observation can be exploited to derive the estimates stated in Lemma \ref{vorticity:0} in spite of the diffusion term.
	Thus, we conclude that the approximated system \eqref{NSM} is well-suited for an application of the estimates from Lemma \ref{vorticity:0}.
\end{rem}

Recall now that the space $\dot H^1(\mathbb{R}^2)$ barely fails to embed into $L^\infty(\mathbb{R}^2)$. Thus, the bounds produced in the preceding lemma will be very useful to recover, by interpolation, an $L^\infty$-bound on the velocity field $u$, as explained in the following simple result.

\begin{lem}\label{vorticity:1}
	Let $(u,E,B)$ be a smooth solution to \eqref{EM}, with the two-dimensional normal structure \eqref{structure:2dim}. Then, for all $T>0$, it holds that
	\begin{equation*}
		\norm{u(T)}_{L^\infty_x}
		\lesssim
		\norm{u(T)}_{L^2_x}^{\frac{p-2}{2(p-1)}}
		\left(\norm{u(0)}_{\dot W^{1,p}_x}
		+\norm{j}_{L^2_t\dot B^0_{2,\infty}}^\frac{2}{p}
		\norm{j}_{L^2_t\dot B^1_{2,\infty}}^{1-\frac 2p}
		\norm{\nabla B}_{L^2_tL^\infty_x}\right)^{\frac p{2(p-1)}},
	\end{equation*}
	for any $2< p<\infty$, where all time-norms are taken over the interval $[0,T)$.
\end{lem}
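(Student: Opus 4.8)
The plan is to combine a two‑dimensional Gagliardo--Nirenberg interpolation inequality with the Biot--Savart law \eqref{biot} and the $L^p$‑vorticity estimate of Lemma \ref{vorticity:0}. The argument is essentially soft; no genuine obstacle is expected, the only point requiring care being the bookkeeping of the interpolation exponents and checking that all homogeneous norms are finite (which is ensured by the assumed smoothness together with $u(T)\in H^1$, so that the only additive constant allowed in $\dot W^{1,p}(\mathbb{R}^2)$ is $0$ and $u(T)$ is genuinely in $L^\infty$).

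First, I would establish the interpolation bound
\begin{equation*}
	\norm{f}_{L^\infty(\mathbb{R}^2)}\lesssim
	\norm{f}_{L^2(\mathbb{R}^2)}^{\frac{p-2}{2(p-1)}}\norm{\nabla f}_{L^p(\mathbb{R}^2)}^{\frac{p}{2(p-1)}},
	\qquad 2<p<\infty,
\end{equation*}
which I will apply with $f=u(T)$. This can be proved by a Littlewood--Paley splitting at a threshold frequency $2^N$: Bernstein's inequalities give $\norm{\Delta_kf}_{L^\infty}\lesssim 2^k\norm{\Delta_kf}_{L^2}$ and $\norm{\Delta_kf}_{L^\infty}\lesssim 2^{2k/p}\norm{\Delta_kf}_{L^p}$, and, using $p>2$ to sum the resulting geometric series together with $\sup_k\norm{\Delta_kf}_{L^2}\lesssim\norm{f}_{L^2}$ and $\sup_k 2^k\norm{\Delta_kf}_{L^p}\lesssim\norm{\nabla f}_{L^p}$, one finds $\norm{f}_{L^\infty}\lesssim 2^N\norm{f}_{L^2}+2^{(2/p-1)N}\norm{\nabla f}_{L^p}$; optimizing in $N$ yields exactly the exponents $\frac{p}{2(p-1)}$ and $\frac{p-2}{2(p-1)}$. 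Alternatively one may simply quote this as a standard Gagliardo--Nirenberg inequality.

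Next, since $u=-\Delta^{-1}\nabla\times\omega$ by \eqref{biot}, the map $\omega\mapsto\nabla u$ is a matrix of Riesz‑type Calderón--Zygmund operators, hence bounded on $L^p(\mathbb{R}^2)$ for $1<p<\infty$; in particular $\norm{\nabla u(T)}_{L^p_x}\lesssim\norm{\omega(T)}_{L^p_x}$. Applying the $L^p$‑estimate of Lemma \ref{vorticity:0} on $[0,T)$ and using the trivial bound $\norm{\omega(0)}_{L^p_x}\leq\norm{\nabla u(0)}_{L^p_x}=\norm{u(0)}_{\dot W^{1,p}_x}$, I obtain
\begin{equation*}
	\norm{\nabla u(T)}_{L^p_x}\lesssim
	\norm{u(0)}_{\dot W^{1,p}_x}
	+\norm{j}_{L^2_t\dot B^0_{2,\infty}}^{\frac 2p}\norm{j}_{L^2_t\dot B^1_{2,\infty}}^{1-\frac 2p}\norm{\nabla B}_{L^2_tL^\infty_x}.
\end{equation*}
Inserting this into the interpolation inequality applied to $f=u(T)$, and noting that $u(T)\in L^2_x$ with $\norm{u(T)}_{L^2_x}$ finite, yields precisely the stated estimate, which concludes the proof.
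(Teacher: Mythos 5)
Your proposal is correct and follows essentially the same route as the paper: a Littlewood--Paley proof of the two-dimensional Gagliardo--Nirenberg inequality $\norm{f}_{L^\infty}\lesssim\norm{f}_{L^2}^{\frac{p-2}{2(p-1)}}\norm{\nabla f}_{L^p}^{\frac{p}{2(p-1)}}$, followed by the Calder\'on--Zygmund/Biot--Savart equivalence $\norm{\nabla u}_{L^p}\sim\norm{\omega}_{L^p}$ and the $L^p$-vorticity bound of Lemma \ref{vorticity:0}. The paper states the equivalence $\norm{\nabla u}_{L^p}\sim\norm{\omega}_{L^p}$ directly where you spell out the Biot--Savart/CZ argument, but the content is identical.
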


\begin{proof}
	This result follows from the classical Gagliardo--Nirenberg interpolation inequality, in two space-dimensions. For the sake of convenience, we provide a brief justification of the precise inequality which is employed here.
	
	Specifically, for any $k\in\mathbb{Z}$ and $2^k\leq R<2^{k+1}$, we estimate that
	\begin{equation*}
		\begin{aligned}
			\norm{u}_{L^\infty}&\leq \norm{S_ku}_{L^\infty}+\sum_{i=k}^\infty\norm{\Delta_i u}_{L^\infty}
			\lesssim 2^k\norm{u}_{L^2}+\sum_{i=k}^\infty 2^{i(\frac 2p-1)}\norm{\nabla u}_{L^p}
			\\
			&\lesssim R\norm{u}_{L^2}+R^{-(1-\frac 2p)}\norm{\nabla u}_{L^p},
		\end{aligned}
	\end{equation*}
	which yields, upon optimization of the interpolation parameter value $R>0$, the Gagliardo--Nirenberg inequality
	\begin{equation}\label{convexity:1}
		\norm{u}_{L^\infty(\mathbb{R}^2)}\lesssim
		\norm{u}_{L^2(\mathbb{R}^2)}^{\frac{p-2}{2(p-1)}}\norm{\nabla u}_{L^p(\mathbb{R}^2)}^{\frac{p}{2(p-1)}},
	\end{equation}
	for any $2<p\leq \infty$.
	
	Then, combining this convexity inequality with the estimates from Lemma \ref{vorticity:0} and recalling the equivalence $\norm{\nabla u}_{L^p}\sim \norm{\omega}_{L^p}$, because $u$ is divergence-free, concludes the proof of the lemma.
\end{proof}

\begin{rem}
	Note that the proof of \eqref{convexity:1} can be adapted to establish, for any divergence-free field $u$, that
	\begin{equation}\label{convexity:2}
		\norm{u}_{L^\infty(\mathbb{R}^2)}\lesssim
		\norm{u}_{L^2(\mathbb{R}^2)}^{\frac{p-2}{2(p-1)}}\norm{\nabla \times u}_{L^p(\mathbb{R}^2)}^{\frac{p}{2(p-1)}},
	\end{equation}
	for all values $2<p\leq \infty$. Indeed, this follows from the observation that
	\begin{equation*}
		\norm{\Delta_i u}_{L^\infty}\lesssim 2^{-i}\norm{\Delta_i \nabla\times u}_{L^\infty}\lesssim 2^{i(\frac 2p-1)}\norm{\nabla\times u}_{L^p},
	\end{equation*}
	provided $\div u=0$.
\end{rem}

\subsection{Control of high-frequency damped electromagnetic waves}\label{section:high:freq}

The following result follows from a simple but careful combination of the damped Strichartz estimates for high electromagnetic frequencies, established in Section \ref{section:damped:strichartz:1}, with the paradifferential product estimates from Lemma \ref{paradifferential:1}.

\begin{lem}\label{high:freq:estimates}
	Let $d=2$. Assume that $(E,B)$ is a smooth solution to \eqref{Maxwell:system:*}, for some initial data $(E_0,B_0)$ and some divergence-free vector field $u$, with the normal structure \eqref{structure:2dim}.
	
	Then, for any exponents $1\leq p\leq q\leq\infty$, $2\leq r\leq\infty$ and $1\leq n\leq\infty$ which are admissible in the sense that
	\begin{equation}\label{admissible:0}
		\frac 2q+\frac {1} r\geq \frac {1} 2,
	\end{equation}
	one has the high-frequency estimate, over any time interval $[0,T)$, for any $0<\alpha<1$ and $s<2$, with $\alpha+s>0$,
	\begin{equation*}%\label{frequencies:5}
		\begin{aligned}
			\norm{(E,B)}_{\widetilde L^q_t\dot B^{s+\alpha-\frac 74+\frac 3{2r}}_{r,n,>}}
			&\lesssim
			c^{\frac {1}2\left(\frac 12-\frac 1r\right)-\frac 2q}\norm{(E_0,B_0)}_{\dot B^{s+\alpha-1}_{2,n,>}}
			\\
			&\quad +c^{\frac {1}2\left(\frac 12-\frac 1r\right)+2\left(\frac 1p-\frac 1q\right)-1}
			\norm{u}_{L_t^{\infty}\dot B^0_{2,\infty}}^{1-\alpha}
			\norm{u}_{L_t^{\infty}\dot B^1_{2,\infty}}^\alpha
			\norm{B}_{\widetilde L_t^{p}\dot B^s_{2,\infty}}.
		\end{aligned}
	\end{equation*}
	In the endpoint cases $\alpha=1$ and $\alpha=0$, one also has the respective estimates
	\begin{equation}\label{frequencies:4}
		\begin{aligned}
			\norm{(E,B)}_{\widetilde L^q_t\dot B^{s-\frac {3}2\left(\frac 12-\frac 1r\right)}_{r,n,>}}
			&\lesssim
			c^{\frac {1}2\left(\frac 12-\frac 1r\right)-\frac 2q}
			\norm{(E_0,B_0)}_{\dot B^s_{2,n,>}}
			\\
			&\quad+c^{\frac {1}2\left(\frac 12-\frac 1r\right)+2\left(\frac 1p-\frac 1q\right)-1}
			\norm{u}_{L^\infty_{t,x} \cap L^\infty_t\dot B^1_{2,\infty}}\norm{B}_{\widetilde L_t^{p}\dot B^s_{2,n}},
		\end{aligned}
	\end{equation}
	for any $-1<s<2$, and
	\begin{equation*}%\label{frequencies:6}
		\begin{aligned}
			\norm{(E,B)}_{\widetilde L^q_t\dot B^{s-\frac 74+\frac 3{2r}}_{r,n,>}}
			&\lesssim
			c^{\frac {1}2\left(\frac 12-\frac 1r\right)-\frac 2q}\norm{(E_0,B_0)}_{\dot B^{s-1}_{2,n,>}}
			\\
			&\quad +c^{\frac {1}2\left(\frac 12-\frac 1r\right)+2\left(\frac 1p-\frac 1q\right)-1}
			\norm{u}_{L_t^{\infty}\dot B^0_{2,\infty}}\norm{B}_{\widetilde L_t^{p}\dot B^s_{2,n}},
		\end{aligned}
	\end{equation*}
	for any $0<s<2$.
\end{lem}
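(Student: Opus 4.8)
The plan is to view the damped Maxwell system \eqref{Maxwell:system:*} as a system with prescribed source $G=-\sigma P(u\times B)$ and to combine the damped Maxwell estimates of Corollary~\ref{cor:maxwell} with the paradifferential product estimate of Lemma~\ref{paradifferential:1}. Since $\div E=\div B=0$, one has $PE=E$, $PB=B$, $PG=G$ and $PE_0=E_0$, so Corollary~\ref{cor:maxwell} applies verbatim to $(E,B)$. Note also that, once the admissibility \eqref{admissible:0} (equivalently $\frac1q\ge\frac12(\frac12-\frac1r)$) is used, the exponents of the factors $\bigl(\tfrac{c^2T}{1+\sigma c^2T}\bigr)$ in the high-frequency part of Corollary~\ref{cor:maxwell} are nonnegative; hence those factors are bounded by a constant depending only on $\sigma$, and they (together with the fixed constant $\sigma$) are absorbed into the implicit constants.

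The first concrete step is to specialise the auxiliary exponent pair in Corollary~\ref{cor:maxwell} by taking $\tilde r=2$ and $\tilde q'=p$. This pair is admissible in dimension two (the remaining conditions are automatic), and the constraint $\frac1q+\frac1{\tilde q}\le1$ becomes exactly $p\le q$. With this choice the high-frequency $c$-powers of Corollary~\ref{cor:maxwell} collapse to $c^{\frac12(\frac12-\frac1r)-\frac2q}$ on the initial data and to $c^{\frac12(\frac12-\frac1r)+2(\frac1p-\frac1q)-1}$ on the source, exactly the factors in the statement, while the frequency weight $2^{j\frac{d+1}2(\frac12-\frac1{\tilde r})}$ multiplying $\Delta_jPG$ becomes trivial. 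I would then multiply the resulting dyadic bound by $2^{j\tau}$, with $\tau=s+\alpha-\frac74+\frac3{2r}$ in the generic case (respectively $\tau=s-\frac32(\frac12-\frac1r)$ when $\alpha=1$, and $\tau=s-\frac74+\frac3{2r}$ when $\alpha=0$), and take the $\ell^n$-norm over $\{j:2^j\ge\sigma c\}$; this is legitimate because $n\ge1$, and since $\tau+\frac32(\frac12-\frac1r)=s+\alpha-1$ it reconstructs the left-hand side $\|(E,B)\|_{\widetilde L^q_t\dot B^\tau_{r,n,>}}$, produces the initial-data term $c^{\frac12(\frac12-\frac1r)-\frac2q}\|(E_0,B_0)\|_{\dot B^{s+\alpha-1}_{2,n,>}}$, and reduces the whole lemma to the single estimate $\|P(u\times B)\|_{\widetilde L^p_t\dot B^{s+\alpha-1}_{2,n}}\lesssim\|u\|_{(\cdots)}\|B\|_{(\cdots)}$.

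This last estimate is where Lemma~\ref{paradifferential:1} enters, applied with $F=u$ and $G=B$ (which have the required normal structures), time exponents $(a_1,a_2)=(\infty,p)$ and Besov regularities $(s_1,t_1)=(\alpha,s)$; the admissible ranges $\alpha<1$, $s<2$, $\alpha+s>0$ are precisely the standing hypotheses. The one delicate point is the choice of summability exponents. In the generic range $0<\alpha<1$ I would use \eqref{para-product:3} with $c_2=\infty$ and $c_1=1$, so that the product lands in $\dot B^{s+\alpha-1}_{2,1}\hookrightarrow\dot B^{s+\alpha-1}_{2,n}$ and $B$ is measured only in $\widetilde L^p_t\dot B^s_{2,\infty}$, while the factor $\|u\|_{\widetilde L^\infty_t\dot B^\alpha_{2,1}}=\|u\|_{L^\infty_t\dot B^\alpha_{2,1}}$ is controlled by $\|u\|_{L^\infty_t\dot B^0_{2,\infty}}^{1-\alpha}\|u\|_{L^\infty_t\dot B^1_{2,\infty}}^\alpha$ through the standard convexity inequality for Besov norms --- and here it is essential that $\alpha\in(0,1)$ strictly, since this is exactly what upgrades the output summability index from $\infty$ to $1$. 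The endpoint $\alpha=1$ is handled with the endpoint estimate \eqref{para-product:2}, taken with $t=s\in(-1,2)$ and summability $c=n$, which directly yields the factor $\|u\|_{L^\infty_{t,x}\cap L^\infty_t\dot B^1_{2,\infty}}$; the endpoint $\alpha=0$ is handled with \eqref{para-product:3} taken with $(s_1,t_1)=(0,s)$, $s>0$, $c_1=\infty$, $c_2=n$. The main obstacle is not conceptual but clerical: correctly propagating the $c$-powers and Besov indices through the scaling built into Corollary~\ref{cor:maxwell} and through the subsequent $\ell^n$-summation, and arranging the Chemin--Lerner summability indices in Lemma~\ref{paradifferential:1} so that the loss $1-\alpha>0$ is spent on improving summability rather than on regularity.
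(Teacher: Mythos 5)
Your proposal is correct and follows essentially the same path as the paper's proof: apply Corollary~\ref{cor:maxwell} with $\tilde r=2$ and $\tilde q=p'$, observe that admissibility forces the $T$-dependent factors to be bounded, sum in $\ell^n$ over $\{2^j\geq\sigma c\}$, and close the source term via Lemma~\ref{paradifferential:1} with \eqref{para-product:3} (combined with the Besov convexity inequality $\|u\|_{\dot B^\alpha_{2,1}}\lesssim\|u\|_{\dot B^0_{2,\infty}}^{1-\alpha}\|u\|_{\dot B^1_{2,\infty}}^\alpha$) in the generic case and with \eqref{para-product:2} at $\alpha=1$. Your bookkeeping of the $c$-powers, the $\tilde r=2$ weight collapsing, and the choice of summability exponents $(c_1,c_2)$ in each of the three cases all match the paper.
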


\begin{proof}
	Considering Maxwell's system \eqref{Maxwell:system:*} and applying Corollary \ref{cor:maxwell}, with $\tilde r=2$ and $\tilde q=p'$, yields the high-frequency estimate
	\begin{equation*}%\label{high:1}
		2^{-j\frac {3}2\left(\frac 12-\frac 1r\right)}\norm{\Delta_j (E,B)}_{L^q_tL^r_x}
		\lesssim
		c^{\frac {1}2\left(\frac 12-\frac 1r\right)-\frac 2q}
		\left(
		\norm{\Delta_j (E_0,B_0)}_{L^2_x}
		+c^{\frac 2p-1}
		\norm{\Delta_j P(u\times B)}_{L_t^{p}L^{2}_x}
		\right),
	\end{equation*}
	for all $j\in\mathbb{Z}$ with $2^j\geq \sigma c$, where $1\leq p\leq q\leq \infty$ and $2\leq r\leq\infty$ must satisfy \eqref{admissible:0}.
	It is to be emphasized that, thanks to the damping phenomenon in \eqref{Maxwell:system:*}, all estimates here hold uniformly over any time interval $[0,T)$, where $T=\infty$ is allowed.
	
	Next, summing the preceding estimate in $j$ and utilizing the paradifferential product law \eqref{para-product:2}, we deduce that
	\begin{equation*}%\label{frequencies:1}
		\begin{aligned}
			\norm{\mathds{1}_{\{2^j\geq\sigma c\}}2^{j\left(s-\frac {3}2\left(\frac 12-\frac 1r\right)\right)}\norm{\Delta_j (E,B)}_{L^q_tL^r_x}}_{\ell^n}
			\hspace{-45mm}&
			\\
			&\lesssim
			c^{\frac {1}2\left(\frac 12-\frac 1r\right)-\frac 2q}
			\norm{(E_0,B_0)}_{\dot B^s_{2,n,>}}
			+c^{\frac {1}2\left(\frac 12-\frac 1r\right)+2\left(\frac 1p-\frac 1q\right)-1}
			\norm{P(u\times B)}_{\widetilde L_t^{p}\dot B^s_{2,n}}
			\\
			&\lesssim
			c^{\frac {1}2\left(\frac 12-\frac 1r\right)-\frac 2q}
			\norm{(E_0,B_0)}_{\dot B^s_{2,n,>}}
			+c^{\frac {1}2\left(\frac 12-\frac 1r\right)+2\left(\frac 1p-\frac 1q\right)-1}
			\norm{u}_{L^\infty_{t,x} \cap \widetilde L^\infty_t\dot B^1_{2,\infty}}\norm{B}_{\widetilde L_t^{p}\dot B^s_{2,n}},
		\end{aligned}
	\end{equation*}
	for any $-1<s<2$. If, instead of using \eqref{para-product:2}, one employs the paradifferential product law \eqref{para-product:3}, then one arrives at the estimate
	\begin{equation*}%\label{frequencies:2}
		\begin{aligned}
			\norm{
			\mathds{1}_{\{2^j\geq\sigma c\}}2^{j\left(s+\alpha-\frac 74+\frac 3{2r}\right)}\norm{\Delta_j (E,B)}_{L^q_tL^r_x}
			}_{\ell^n}
			\hspace{-60mm}&
			\\
			&\lesssim
			c^{\frac {1}2\left(\frac 12-\frac 1r\right)-\frac 2q}\norm{(E_0,B_0)}_{\dot B^{s+\alpha-1}_{2,n,>}}
			+c^{\frac {1}2\left(\frac 12-\frac 1r\right)+2\left(\frac 1p-\frac 1q\right)-1}
			\norm{P(u\times B)}_{\widetilde L_t^{p}\dot B^{s+\alpha-1}_{2,n}}
			\\
			&\lesssim
			c^{\frac {1}2\left(\frac 12-\frac 1r\right)-\frac 2q}\norm{(E_0,B_0)}_{\dot B^{s+\alpha-1}_{2,n,>}}
			+c^{\frac {1}2\left(\frac 12-\frac 1r\right)+2\left(\frac 1p-\frac 1q\right)-1}
			\norm{u}_{\widetilde L_t^{\infty}\dot B^\alpha_{2,1}}\norm{B}_{\widetilde L_t^{p}\dot B^s_{2,\infty}}
			\\
			&\lesssim
			c^{\frac {1}2\left(\frac 12-\frac 1r\right)-\frac 2q}\norm{(E_0,B_0)}_{\dot B^{s+\alpha-1}_{2,n,>}}
			+c^{\frac {1}2\left(\frac 12-\frac 1r\right)+2\left(\frac 1p-\frac 1q\right)-1}
			\norm{u}_{\widetilde L_t^{\infty}\dot B^0_{2,\infty}}^{1-\alpha}
			\norm{u}_{\widetilde L_t^{\infty}\dot B^1_{2,\infty}}^\alpha
			\norm{B}_{\widetilde L_t^{p}\dot B^s_{2,\infty}},
		\end{aligned}
	\end{equation*}
	which is valid for any $0<\alpha<1$ and $s<2$, with $\alpha+s>0$, where we exploited that $\widetilde L_t^{\infty}\dot B^\alpha_{2,1}$ is an interpolation space between $\widetilde L_t^{\infty}\dot B^0_{2,\infty}$ and $\widetilde L_t^{\infty}\dot B^1_{2,\infty}$ (see \cite{bl76} for details on interpolation theory).
	
	Similarly, the case $\alpha=0$ yields
	\begin{equation*}%\label{frequencies:3}
		\begin{aligned}
			\norm{
			\mathds{1}_{\{2^j\geq\sigma c\}}2^{j\left(s-\frac 74+\frac 3{2r}\right)}\norm{\Delta_j (E,B)}_{L^q_tL^r_x}
			}_{\ell^n}
			\hspace{-45mm}&
			\\
			&\lesssim
			c^{\frac {1}2\left(\frac 12-\frac 1r\right)-\frac 2q}\norm{(E_0,B_0)}_{\dot B^{s-1}_{2,n,>}}
			+c^{\frac {1}2\left(\frac 12-\frac 1r\right)+2\left(\frac 1p-\frac 1q\right)-1}
			\norm{P(u\times B)}_{\widetilde L_t^{p}\dot B^{s-1}_{2,n}}
			\\
			&\lesssim
			c^{\frac {1}2\left(\frac 12-\frac 1r\right)-\frac 2q}\norm{(E_0,B_0)}_{\dot B^{s-1}_{2,n,>}}
			+c^{\frac {1}2\left(\frac 12-\frac 1r\right)+2\left(\frac 1p-\frac 1q\right)-1}
			\norm{u}_{\widetilde L_t^{\infty}\dot B^0_{2,\infty}}\norm{B}_{\widetilde L_t^{p}\dot B^s_{2,n}},
		\end{aligned}
	\end{equation*}
	for any $0<s<2$. This completes the justification of the high-frequency estimates.
\end{proof}

\subsection{Control of low-frequency damped electromagnetic waves}\label{section:low:freq}

The statement of Corollary \ref{cor:maxwell} has clearly emphasized how solutions to the damped Maxwell system enjoy intrinsically different properties on the distinct ranges of low and high frequencies.

In particular, the combination of Corollary \ref{cor:maxwell} with the paradifferential Lemma \ref{paradifferential:1} resulted in the nonlinear high-frequency estimates of Lemma \ref{high:freq:estimates}. A similar strategy based on the same corollary could now be employed to deduce suitable nonlinear low-frequency estimates.

However, in the next lemma, we are instead going to exploit the refined estimates established in Corollary \ref{cor:parabolic:maxwell}, which are a consequence of the maximal parabolic regularity studied in Section \ref{section:damped:parabolic:1}, to obtain a sharper control of low frequencies. This will lead to stronger statements of our main theorems.

\begin{lem}\label{low:freq:estimates}
	Let $d=2$. Assume that $(E,B)$ is a smooth solution to \eqref{Maxwell:system:*}, for some initial data $(E_0,B_0)$ and some divergence-free vector field $u$, with the normal structure \eqref{structure:2dim}.
	
	Then, for any exponents $1< p\leq q<\infty$ and $1\leq n\leq\infty$, one has the following low-frequency estimates, over any time interval $[0,T)$. For any $0<\alpha<1$ and $s<2$, with $\alpha+s>0$, it holds that
	\begin{equation}\label{frequencies:7}
		\begin{aligned}
			\norm{E}_{L^q_t\dot B^{s+\alpha-1}_{2,n,<}}
			&\lesssim
			c^{-\frac 2q}\norm{E_0}_{\dot B^{s+\alpha-1}_{2,n,<}}
			+c^{-1}\norm{B_0}_{\dot B^{s+\alpha-\frac 2q}_{2,q,<}}
			\\
			&\quad+c^{2\left(\frac 1p-\frac 1q\right)-1}
			\norm{u}_{L_t^{\infty}\dot B^0_{2,\infty}}^{1-\alpha}
			\norm{u}_{L_t^{\infty}\dot B^1_{2,\infty}}^\alpha
			\norm{B}_{L_t^{p}\dot B^s_{2,\infty}},
			\\
			\norm{B}_{L^q_t\dot B^{s+\alpha+\frac 2q-\frac 2p}_{2,1,<}}
			&\lesssim
			c^{-1}\norm{E_0}_{\dot B^{s+\alpha+1-\frac 2p}_{2,q,<}}
			+\norm{B_0}_{\dot B^{s+\alpha-\frac 2p}_{2,q,<}}
			+\norm{u}_{L_t^{\infty}\dot B^0_{2,\infty}}^{1-\alpha}
			\norm{u}_{L_t^{\infty}\dot B^1_{2,\infty}}^\alpha
			\norm{B}_{L_t^{p}\dot B^s_{2,\infty}}.
		\end{aligned}
	\end{equation}
	
	In the endpoint case $\alpha=1$, with $-1<s<2$, one also has the estimates
	\begin{equation}\label{frequencies:10}
		\norm{E}_{L^q_t\dot B^{s}_{2,n,<}}
		\lesssim
		c^{-\frac 2q}\norm{E_0}_{\dot B^{s}_{2,n,<}}
		+c^{-1}\norm{B_0}_{\dot B^{s+1-\frac 2q}_{2,q,<}}
		+c^{2\left(\frac 1p-\frac 1q\right)-1}
		\norm{u}_{L^\infty_{t,x} \cap L^\infty_t\dot B^1_{2,\infty}}\norm{B}_{ L_t^{p}\dot B^s_{2,n}},
	\end{equation}
	as well as, if $p<q$,
	\begin{equation*}
			\norm{B}_{L^q_t\dot B^{s+1+\frac 2q-\frac 2p}_{2,1,<}}
			\lesssim
			c^{-1}\norm{E_0}_{\dot B^{s+2-\frac 2p}_{2,q,<}}
			+\norm{B_0}_{\dot B^{s+1-\frac 2p}_{2,q,<}}
			+\norm{u}_{L^\infty_{t,x} \cap L^\infty_t\dot B^1_{2,\infty}}\norm{B}_{ L_t^{p}\dot B^s_{2,\infty}},
	\end{equation*}
	and, if $p=q$,
	\begin{equation}\label{frequencies:8}
		\norm{B}_{L^q_t\dot B^{s+1}_{2,n,<}}
		\lesssim
		c^{-1}\norm{E_0}_{\dot B^{s+2-\frac 2q}_{2,q,<}}
		+\norm{B_0}_{\dot B^{s+1-\frac 2q}_{2,q,<}}
		+\norm{u}_{L^\infty_{t,x} \cap L^\infty_t\dot B^1_{2,\infty}}\norm{B}_{ L_t^{q}\dot B^s_{2,n}}.
	\end{equation}
	
	Finally, in the remaining endpoint case $\alpha=0$, with $0<s<2$, it holds that
	\begin{equation*}
		\norm{E}_{L^q_t\dot B^{s-1}_{2,n,<}}
		\lesssim
		c^{-\frac 2q}\norm{E_0}_{\dot B^{s-1}_{2,n,<}}
		+c^{-1}\norm{B_0}_{\dot B^{s-\frac 2q}_{2,q,<}}
		+c^{2\left(\frac 1p-\frac 1q\right)-1}
		\norm{u}_{L_t^{\infty}\dot B^0_{2,\infty}}
		\norm{B}_{L_t^{p}\dot B^s_{2,n}},
	\end{equation*}
	as well as, if $p<q$,
	\begin{equation*}
			\norm{B}_{L^q_t\dot B^{s+\frac 2q-\frac 2p}_{2,1,<}}
			\lesssim
			c^{-1}\norm{E_0}_{\dot B^{s+1-\frac 2p}_{2,q,<}}
			+\norm{B_0}_{\dot B^{s-\frac 2p}_{2,q,<}}
			+\norm{u}_{L_t^{\infty}\dot B^0_{2,\infty}}
			\norm{B}_{L_t^{p}\dot B^s_{2,\infty}},
	\end{equation*}
	and, if $p=q$,
	\begin{equation}\label{frequencies:9}
		\norm{B}_{L^q_t\dot B^{s}_{2,n,<}}
		\lesssim
		c^{-1}\norm{E_0}_{\dot B^{s+1-\frac 2q}_{2,q,<}}
		+\norm{B_0}_{\dot B^{s-\frac 2q}_{2,q,<}}
		+\norm{u}_{L_t^{\infty}\dot B^0_{2,\infty}}
		\norm{B}_{L_t^{q}\dot B^s_{2,n}}.
	\end{equation}
\end{lem}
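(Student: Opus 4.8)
The plan is to transpose to low frequencies the argument used for Lemma~\ref{high:freq:estimates}, now with Corollary~\ref{cor:parabolic:maxwell} replacing Corollary~\ref{cor:maxwell} and with the paradifferential product bounds of Lemma~\ref{paradifferential:1} handling the nonlinear source term. First I would observe that \eqref{Maxwell:system:*} is precisely of the form treated in Corollary~\ref{cor:parabolic:maxwell}, with source $G=-\sigma P(u\times B)$ and conductivity $\sigma>0$, and that the constraint $\div E=0$ appearing in \eqref{Maxwell:system:*} forces $PE=E$. Since the conductivity $\sigma$ is a fixed positive constant, I would fix once and for all a cutoff $\chi\in C^\infty_c(\mathbb{R}^2)$ with $\chi\equiv 1$ on $\{|\xi|\le 2\sigma\}$, so that $\chi(c^{-1}D)\Delta_j=\Delta_j$ for every $j\in\mathbb{Z}$ with $2^j<\sigma c$. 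Consequently the low-frequency semi-norms $\norm{\cdot}_{\dot B^{s}_{2,n,<}}$ and $\norm{\cdot}_{L^q_t\dot B^{s}_{2,n,<}}$ are dominated by the corresponding norms of $\chi(c^{-1}D)(\cdot)$, and it is enough to estimate these.

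I would then apply Corollary~\ref{cor:parabolic:maxwell} with its time exponent $m$ equal to the lemma's $q$, its source-time exponent equal to the lemma's $p$, and its free Besov index shifted so that, after the parabolic gain of $\frac 2q$ derivatives, the left-hand side lands exactly on the target space of \eqref{frequencies:7}--\eqref{frequencies:9}. For $E$ this uses the first estimate of Corollary~\ref{cor:parabolic:maxwell} (valid precisely for $1<p\le q<\infty$), which keeps the summability index $n$; for $B$ it uses the second estimate of Corollary~\ref{cor:parabolic:maxwell} when $p<q$ and its borderline third estimate when $p=q$, with the free summability index chosen as dictated by the left-hand side of each claimed bound. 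After absorbing the harmless fixed constant $\sigma$ into $\lesssim$, this produces estimates such as $\norm{E}_{L^q_t\dot B^{s+\alpha-1}_{2,n,<}}\lesssim c^{-\frac 2q}\norm{E_0}_{\dot B^{s+\alpha-1}_{2,n,<}}+c^{-1}\norm{B_0}_{\dot B^{s+\alpha-\frac 2q}_{2,q,<}}+c^{-1+\frac 2p-\frac 2q}\norm{P(u\times B)}_{L^p_t\dot B^{s+\alpha-1}_{2,n}}$, together with the stated $B$-variants; all powers of $c$ are read off directly by substituting $m=q$ and source exponent $p$ into the corollary, whose constants are independent of $T$ (and of $c$), so that the resulting bounds hold on every interval $[0,T)$, including $T=\infty$.

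It remains to bound $P(u\times B)$, which is where Lemma~\ref{paradifferential:1} is invoked, exactly as in the proof of Lemma~\ref{high:freq:estimates}. For $0<\alpha<1$ I would apply the product law \eqref{para-product:3} with $F=u$, $G=B$, time exponents $(a_1,a_2)=(\infty,p)$ and regularities $(\alpha,s)$ --- admissible because $\alpha<1$, $s<2$ and $\alpha+s>0$ --- and then use that $L^\infty_t\dot B^{\alpha}_{2,1}$ is an interpolation space between $L^\infty_t\dot B^{0}_{2,\infty}$ and $L^\infty_t\dot B^{1}_{2,\infty}$ to split the $u$-factor into $\norm{u}_{L^\infty_t\dot B^0_{2,\infty}}^{1-\alpha}\norm{u}_{L^\infty_t\dot B^1_{2,\infty}}^{\alpha}$. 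The endpoint $\alpha=1$ (with $-1<s<2$) is treated by \eqref{para-product:2}, which yields the factor $\norm{u}_{L^\infty_{t,x}\cap L^\infty_t\dot B^1_{2,\infty}}$, and the endpoint $\alpha=0$ (with $0<s<2$, so that $0+s>0$) by \eqref{para-product:3} with vanishing regularity on $u$, yielding the factor $\norm{u}_{L^\infty_t\dot B^0_{2,\infty}}$; in every case the regularity output of the product law coincides with the Besov index demanded on the left. I expect the one genuinely delicate point to be the bookkeeping of the second paragraph: one must choose the shifted index in Corollary~\ref{cor:parabolic:maxwell} so that the gain of $\frac 2q$ derivatives deposits the solution precisely on $\dot B^{s+\alpha-1}_{2,n,<}$ (respectively $\dot B^{s+\alpha+\frac 2q-\frac 2p}_{2,1,<}$ for $B$), while at the same time the source term is measured in the space where \eqref{para-product:3} or \eqref{para-product:2} applies and the data terms emerge with the regularities and powers of $c$ stated --- and keep the $p<q$ versus $p=q$ dichotomy aligned with the corresponding dichotomy in Corollary~\ref{cor:parabolic:maxwell}.
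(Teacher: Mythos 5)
Your proposal is correct and follows essentially the same route as the paper, with one genuine simplification worth noting. The paper's own proof splits the low-frequency range $\{2^j<\sigma c\}$ into $\{2^j<\tfrac{\sigma c}{2}\}$, where it applies Corollary~\ref{cor:parabolic:maxwell}, and the remaining finite band $\{\tfrac{\sigma c}{2}\le 2^j<\sigma c\}$, which it covers with the dyadic estimates of Corollary~\ref{cor:maxwell} (with $r=\tilde r=2$, $\tilde q=p'$); it then merges the two before invoking Lemma~\ref{paradifferential:1}. You instead observe that Corollary~\ref{cor:parabolic:maxwell} allows an arbitrary $\chi\in C^\infty_c$, so choosing $\chi\equiv 1$ on $\{|\xi|\le 2\sigma\}$ makes $\chi(c^{-1}D)\Delta_j=\Delta_j$ for every $j$ with $2^j<\sigma c$, eliminating the need for Corollary~\ref{cor:maxwell} altogether; this is legitimate because the proof of Proposition~\ref{cor:parabolic:wave} takes $R$ large and $A$ small to accommodate any compactly supported cutoff, at no cost in the constants. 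One small point you (and the paper) pass over silently: Corollary~\ref{cor:parabolic:maxwell} bounds $\chi(c^{-1}D)(E,B)$ by \emph{full} Besov norms of $(E_0,B_0)$, while the statement of the lemma has the low-frequency semi-norms $\dot B^{\cdot}_{2,\cdot,<}$ on the data; this is repaired by the standard remark that the constant-coefficient linear flow commutes with $\Delta_j$, so the low frequencies of the homogeneous solution depend only on the low frequencies of $(E_0,B_0)$, and one may replace $(E_0,B_0)$ by a suitable frequency truncation before invoking the corollary. The remainder of your argument --- the bookkeeping $m=q$, $r=p$, the choice of shifted Besov index $\sigma=s+\alpha-1-\tfrac 2q$ (and its $B$-analogues), the $p<q$ versus $p=q$ split matching the second and third estimates of Corollary~\ref{cor:parabolic:maxwell}, and the use of \eqref{para-product:3}, \eqref{para-product:2} together with interpolation of $L^\infty_t\dot B^\alpha_{2,1}$ between the $\dot B^0_{2,\infty}$ and $\dot B^1_{2,\infty}$ endpoints --- matches the paper's argument and checks out.
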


\begin{rem}
	The devil is in the details. In the statement of Lemma \ref{low:freq:estimates} and its corresponding proof, below, we have paid painstaking attention to the summability index of Besov spaces, occasionally referred to as the third index. Thus, in the above statement, sometimes the third index is $q$, other times it is $n$, $1$ or $\infty$. Either way, we believe that these defining values of Besov spaces are optimal, which is a crucial step to reach sharp statements of our main results.
\end{rem}

\begin{proof}
	Let us consider some fixed regularity parameters $s\in\mathbb{R}$ and $0\leq\alpha\leq 1$.
	
	On the one hand, an application of Corollary \ref{cor:maxwell} with $r=\tilde r=2$ (note that this is the best possible choice for $r$ and $\tilde r$, since all other estimates for values $2\leq r,\tilde r\leq\infty$ follow from the case $r=\tilde r=2$ by Sobolev embeddings) and $\tilde q=p'$ gives that
	\begin{equation*}
		2^{j(s+\alpha-1)}\norm{\Delta_j E}_{L^q_tL^2_x}
		\lesssim
		c^{-\frac 2q}\norm{E_0}_{\dot B^{s+\alpha-1}_{2,\infty,<}}
		+c^{-1}\norm{B_0}_{\dot B^{s+\alpha-\frac 2q}_{2,\infty,<}}
		+c^{2\left(\frac 1p-\frac 1q\right)-1} \norm{P(u\times B)}_{L_t^{p}\dot B^{s+\alpha-1}_{2,\infty}}
	\end{equation*}
	and
	\begin{equation*}
		2^{j\left(s+\alpha+\frac 2q-\frac 2p\right)}\norm{\Delta_j B}_{L^q_tL^2_x}
		\lesssim
		c^{-1}\norm{E_0}_{\dot B^{s+\alpha+1-\frac 2p}_{2,\infty,<}}
		+\norm{B_0}_{\dot B^{s+\alpha-\frac 2p}_{2,\infty,<}}
		+\norm{P(u\times B)}_{L_t^{p}\dot B^{s+\alpha-1}_{2,\infty}},
	\end{equation*}
	for all $j\in\mathbb{Z}$ with $2^j< \sigma c$, where $1\leq p\leq q\leq \infty$.
	
	On the other hand, employing Corollary \ref{cor:parabolic:maxwell} yields that
	\begin{equation*}
		\begin{aligned}
			\norm{\mathds{1}_{\{2^j<\frac{\sigma c}2\}}2^{j(s+\alpha-1)}\norm{\Delta_j E}_{L^2_x}}_{L^q_t\ell^n_j}
			&\lesssim
			c^{-\frac 2q}\norm{E_0}_{\dot B^{s+\alpha-1}_{2,n,<}}
			+c^{-1}\norm{B_0}_{\dot B^{s+\alpha-\frac 2q}_{2,q,<}}
			\\
			&\quad +c^{2\left(\frac 1p-\frac 1q\right)-1} \norm{P(u\times B)}_{L_t^{p}\dot B^{s+\alpha-1}_{2,n}},
		\end{aligned}
	\end{equation*}
	for any $1< p\leq q< \infty$ and $1\leq n\leq\infty$, as well as
	\begin{equation*}
		\begin{aligned}
			\norm{\mathds{1}_{\{2^j<\frac{\sigma c}2\}}2^{j\left(s+\alpha+\frac 2q-\frac 2p\right)}
			\norm{\Delta_j B}_{L^2_x}}_{L^q_t\ell^1_j}
			&\lesssim
			c^{-1}\norm{E_0}_{\dot B^{s+\alpha+1-\frac 2p}_{2,q,<}}
			+\norm{B_0}_{\dot B^{s+\alpha-\frac 2p}_{2,q,<}}
			\\
			&\quad+\norm{P(u\times B)}_{L_t^{p}\dot B^{s+\alpha-1}_{2,\infty}},
		\end{aligned}
	\end{equation*}
	for any $1< p< q< \infty$, and
	\begin{equation*}
		\norm{\mathds{1}_{\{2^j<\frac{\sigma c}2\}}2^{j\left(s+\alpha\right)}
		\norm{\Delta_j B}_{L^2_x}}_{L^q_t\ell^n_j}
		\lesssim
		c^{-1}\norm{E_0}_{\dot B^{s+\alpha+1-\frac 2q}_{2,q,<}}
		+\norm{B_0}_{\dot B^{s+\alpha-\frac 2q}_{2,q,<}}
		+\norm{P(u\times B)}_{L_t^{q}\dot B^{s+\alpha-1}_{2,n}},
	\end{equation*}
	for any $1< q< \infty$ and $1\leq n\leq\infty$.
	
	On the whole, combining the above estimates, we arrive at the conclusion that
	\begin{equation*}
		\norm{E}_{L^q_t\dot B^{s+\alpha-1}_{2,n,<}}
		\lesssim
		c^{-\frac 2q}\norm{E_0}_{\dot B^{s+\alpha-1}_{2,n,<}}
		+c^{-1}\norm{B_0}_{\dot B^{s+\alpha-\frac 2q}_{2,q,<}}
		+c^{2\left(\frac 1p-\frac 1q\right)-1} \norm{P(u\times B)}_{L_t^{p}\dot B^{s+\alpha-1}_{2,n}},
	\end{equation*}
	for any $1< p\leq q< \infty$ and $1\leq n\leq\infty$, as well as
	\begin{equation*}
			\norm{B}_{L^q_t\dot B^{s+\alpha+\frac 2q-\frac 2p}_{2,1,<}}
			\lesssim
			c^{-1}\norm{E_0}_{\dot B^{s+\alpha+1-\frac 2p}_{2,q,<}}
			+\norm{B_0}_{\dot B^{s+\alpha-\frac 2p}_{2,q,<}}
			+\norm{P(u\times B)}_{L_t^{p}\dot B^{s+\alpha-1}_{2,\infty}},
	\end{equation*}
	for any $1< p< q< \infty$, and
	\begin{equation*}
		\norm{B}_{L^q_t\dot B^{s+\alpha}_{2,n,<}}
		\lesssim
		c^{-1}\norm{E_0}_{\dot B^{s+\alpha+1-\frac 2q}_{2,q,<}}
		+\norm{B_0}_{\dot B^{s+\alpha-\frac 2q}_{2,q,<}}
		+\norm{P(u\times B)}_{L_t^{q}\dot B^{s+\alpha-1}_{2,n}},
	\end{equation*}
	for any $1< q< \infty$ and $1\leq n\leq\infty$.
	
	We are now going to apply the paradifferential product inequalities from Lemma \ref{paradifferential:1} to the three preceding controls. More precisely, setting $\alpha=1$, restricting the range of $s$ to $(-1,2)$ and utilizing the product law \eqref{para-product:2} to handle the nonlinear term $u\times B$, we obtain that
	\begin{equation*}
		\norm{E}_{L^q_t\dot B^{s}_{2,n,<}}
		\lesssim
		c^{-\frac 2q}\norm{E_0}_{\dot B^{s}_{2,n,<}}
		+c^{-1}\norm{B_0}_{\dot B^{s+1-\frac 2q}_{2,q,<}}
		+c^{2\left(\frac 1p-\frac 1q\right)-1}
		\norm{u}_{L^\infty_{t,x} \cap L^\infty_t\dot B^1_{2,\infty}}\norm{B}_{ L_t^{p}\dot B^s_{2,n}},
	\end{equation*}
	for any $1< p\leq q< \infty$ and $1\leq n\leq\infty$, as well as
	\begin{equation*}
			\norm{B}_{L^q_t\dot B^{s+1+\frac 2q-\frac 2p}_{2,1,<}}
			\lesssim
			c^{-1}\norm{E_0}_{\dot B^{s+2-\frac 2p}_{2,q,<}}
			+\norm{B_0}_{\dot B^{s+1-\frac 2p}_{2,q,<}}
			+\norm{u}_{L^\infty_{t,x} \cap L^\infty_t\dot B^1_{2,\infty}}\norm{B}_{ L_t^{p}\dot B^s_{2,\infty}},
	\end{equation*}
	for any $1< p< q< \infty$, and
	\begin{equation*}
		\norm{B}_{L^q_t\dot B^{s+1}_{2,n,<}}
		\lesssim
		c^{-1}\norm{E_0}_{\dot B^{s+2-\frac 2q}_{2,q,<}}
		+\norm{B_0}_{\dot B^{s+1-\frac 2q}_{2,q,<}}
		+\norm{u}_{L^\infty_{t,x} \cap L^\infty_t\dot B^1_{2,\infty}}\norm{B}_{ L_t^{q}\dot B^s_{2,n}},
	\end{equation*}
	for any $1< q< \infty$ and $1\leq n\leq\infty$.
	
	Similarly, choosing parameters $0<\alpha<1$ and $s<2$, with $\alpha+s>0$, utilizing \eqref{para-product:3} instead of \eqref{para-product:2}, and exploiting again that $L_t^{\infty}\dot B^\alpha_{2,1}$ is an interpolation space between $L_t^{\infty}\dot B^0_{2,\infty}$ and $L_t^{\infty}\dot B^1_{2,\infty}$, we find that
	\begin{equation*}
		\begin{aligned}
			\norm{E}_{L^q_t\dot B^{s+\alpha-1}_{2,n,<}}
			&\lesssim
			c^{-\frac 2q}\norm{E_0}_{\dot B^{s+\alpha-1}_{2,n,<}}
			+c^{-1}\norm{B_0}_{\dot B^{s+\alpha-\frac 2q}_{2,q,<}}
			\\
			&\quad+c^{2\left(\frac 1p-\frac 1q\right)-1}
			\norm{u}_{L_t^{\infty}\dot B^0_{2,\infty}}^{1-\alpha}
			\norm{u}_{L_t^{\infty}\dot B^1_{2,\infty}}^\alpha
			\norm{B}_{L_t^{p}\dot B^s_{2,\infty}},
		\end{aligned}
	\end{equation*}
	for any $1< p\leq q< \infty$ and $1\leq n\leq\infty$, as well as
	\begin{equation*}
			\norm{B}_{L^q_t\dot B^{s+\alpha+\frac 2q-\frac 2p}_{2,1,<}}
			\lesssim
			c^{-1}\norm{E_0}_{\dot B^{s+\alpha+1-\frac 2p}_{2,q,<}}
			+\norm{B_0}_{\dot B^{s+\alpha-\frac 2p}_{2,q,<}}
			+\norm{u}_{L_t^{\infty}\dot B^0_{2,\infty}}^{1-\alpha}
			\norm{u}_{L_t^{\infty}\dot B^1_{2,\infty}}^\alpha
			\norm{B}_{L_t^{p}\dot B^s_{2,\infty}},
	\end{equation*}
	for any $1< p\leq q< \infty$.
	
	Finally, the case $\alpha=0$, with $0<s<2$, is handled with the same product estimate \eqref{para-product:3} and results in
	\begin{equation*}
		\norm{E}_{L^q_t\dot B^{s-1}_{2,n,<}}
		\lesssim
		c^{-\frac 2q}\norm{E_0}_{\dot B^{s-1}_{2,n,<}}
		+c^{-1}\norm{B_0}_{\dot B^{s-\frac 2q}_{2,q,<}}
		+c^{2\left(\frac 1p-\frac 1q\right)-1}
		\norm{u}_{L_t^{\infty}\dot B^0_{2,\infty}}
		\norm{B}_{L_t^{p}\dot B^s_{2,n}},
	\end{equation*}
	for any $1< p\leq q< \infty$ and $1\leq n\leq\infty$, as well as
	\begin{equation*}
			\norm{B}_{L^q_t\dot B^{s+\frac 2q-\frac 2p}_{2,1,<}}
			\lesssim
			c^{-1}\norm{E_0}_{\dot B^{s+1-\frac 2p}_{2,q,<}}
			+\norm{B_0}_{\dot B^{s-\frac 2p}_{2,q,<}}
			+\norm{u}_{L_t^{\infty}\dot B^0_{2,\infty}}
			\norm{B}_{L_t^{p}\dot B^s_{2,\infty}},
	\end{equation*}
	for any $1< p< q< \infty$, and
	\begin{equation*}
		\norm{B}_{L^q_t\dot B^{s}_{2,n,<}}
		\lesssim
		c^{-1}\norm{E_0}_{\dot B^{s+1-\frac 2q}_{2,q,<}}
		+\norm{B_0}_{\dot B^{s-\frac 2q}_{2,q,<}}
		+\norm{u}_{L_t^{\infty}\dot B^0_{2,\infty}}
		\norm{B}_{L_t^{q}\dot B^s_{2,n}},
	\end{equation*}
	for any $1< q< \infty$ and $1\leq n\leq\infty$, thereby completing the justifications of all low-frequency estimates.
\end{proof}

Note that the estimates from Lemma \ref{low:freq:estimates} do not include the value $q=\infty$. Rather than providing a technical extension of the preceding proof to incorporate the value $q=\infty$, we show in the next result a simple energy estimate on Maxwell's system \eqref{Maxwell:system:*}, which corresponds to the case $q=\infty$ in Lemma \ref{low:freq:estimates}. This energy estimate allows us to propagate the $\dot H^1_x$-norm of electromagnetic fields, which will come in handy in the proof of Theorem \ref{main:1}, below.

\begin{lem}\label{prop:classical:energy}
	Let $d=2$. Assume that $(E,B)$ is a smooth solution to \eqref{Maxwell:system:*}, for some initial data $(E_0,B_0)$ and some divergence-free vector field $u$, with the normal structure \eqref{structure:2dim}.
	
	Then, one has the estimates
	\begin{equation}\label{energy:EB}
		\begin{aligned}
			\norm{(E,B)}_{L^\infty_t\dot H^1_x}+c\norm{E}_{L^2_t\dot H^1_x}
			&\lesssim \norm{(E_0,B_0)}_{\dot H^1_x}
			+\|u\|_{L^\infty_tL^2_x}\norm{\nabla B}_{L^2_tL^\infty_x},
			\\
			\norm{(E,B)}_{L^\infty_t\dot H^1_x}+c\norm{E}_{L^2_t\dot H^1_x}
			&\lesssim \norm{(E_0,B_0)}_{\dot H^1_x}
			+\norm{u}_{L_t^{\infty}\dot B^0_{2,\infty}}^{1-\alpha}
			\norm{u}_{L_t^{\infty}\dot B^1_{2,\infty}}^\alpha
			\norm{B}_{L_t^{2}\dot B^{2-\alpha}_{2,\infty}},
			\\
			\norm{(E,B)}_{L^\infty_t\dot H^1_x}+c\norm{E}_{L^2_t\dot H^1_x}
			&\lesssim \norm{(E_0,B_0)}_{\dot H^1_x}
			+\norm{u}_{L^\infty_{t,x} \cap L^\infty_t\dot B^1_{2,\infty}}\norm{B}_{ L^2_t\dot H^1_x},
		\end{aligned}
	\end{equation}
	over any time interval $[0,T)$, for any $0<\alpha<1$.
\end{lem}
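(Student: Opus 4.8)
The plan is to run a direct $\dot H^1_x$ energy estimate on the first-order damped Maxwell system \eqref{Maxwell:system:*}, exploiting the skew-symmetry of the curl operator to eliminate the $E$--$B$ coupling and the damping term $\sigma c E$ to produce the factor $c\|E\|_{L^2_t\dot H^1_x}$ on the left-hand side. Concretely, I would multiply Amp\`ere's equation by $-c\Delta E$ and Faraday's equation by $-c\Delta B$, integrate over $\mathbb{R}^2$, and add the two. Since $\div E=\div B=0$, the time-derivative terms give $\tfrac12\tfrac{d}{dt}\|\nabla E\|_{L^2_x}^2$ and $\tfrac12\tfrac{d}{dt}\|\nabla B\|_{L^2_x}^2$, the damping term gives $\sigma c^2\|\nabla E\|_{L^2_x}^2\geq 0$, and the curl contributions $c\langle\nabla\times B,\Delta E\rangle$ and $-c\langle\nabla\times E,\Delta B\rangle$ cancel, because $\langle\nabla\times B,\Delta E\rangle=\langle\Delta(\nabla\times B),E\rangle=\langle\nabla\times E,\Delta B\rangle$. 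This yields the identity
\begin{equation*}
	\tfrac12\tfrac{d}{dt}\big(\|\nabla E\|_{L^2_x}^2+\|\nabla B\|_{L^2_x}^2\big)+\sigma c^2\|\nabla E\|_{L^2_x}^2=\sigma c\,\langle P(u\times B),\Delta E\rangle,
\end{equation*}
on which all three estimates will hinge: one bounds the right-hand side by (source)$\times\|\nabla E\|_{L^2_x}$ in three different ways, absorbs $\tfrac{\sigma c^2}{2}\|\nabla E\|_{L^2_x}^2$ into the damping term by Young's inequality, and integrates in time over $[0,T)$.

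For the first estimate I would rewrite the source using the normal structure. Since $\div E=0$, one has $\Delta E=-\nabla\times\nabla\times E$, whence $\langle P(u\times B),\Delta E\rangle=-\langle\nabla\times(u\times B),\nabla\times E\rangle$ (the gradient part of $P$ is killed by the curl). Then the algebraic identity $\nabla\times(u\times B)=-(u\cdot\nabla)B$—valid here precisely because $u$ is in-plane and solenoidal and $B$ has the normal form \eqref{structure:2dim}, exactly as in the identity $\nabla\times(j\times B)=-j\cdot\nabla B$ used in \eqref{EM-omega:form}—gives $\langle P(u\times B),\Delta E\rangle=\langle u\cdot\nabla B,\nabla\times E\rangle$. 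H\"older's inequality yields $|\langle P(u\times B),\Delta E\rangle|\leq\|u\|_{L^2_x}\|\nabla B\|_{L^\infty_x}\|\nabla E\|_{L^2_x}$, and Young plus integration produce the first line of \eqref{energy:EB} (with the fixed constant $\sigma$ absorbed into the implicit constants).

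For the remaining two estimates I would instead keep the divergence form: integration by parts gives $|\langle P(u\times B),\Delta E\rangle|\leq\|P(u\times B)\|_{\dot H^1_x}\|\nabla E\|_{L^2_x}$, and I then estimate $\|P(u\times B)\|_{\dot H^1_x}$ via the paradifferential product laws of Lemma \ref{paradifferential:1}, applied at each fixed time in their time-independent Besov form (cf. the remark following that lemma). For the third estimate, \eqref{para-product:2} with regularity index equal to $1$ and summability index $2$ gives $\|P(u\times B)\|_{\dot H^1_x}=\|P(u\times B)\|_{\dot B^1_{2,2}}\lesssim\|u\|_{L^\infty_x\cap\dot B^1_{2,\infty}}\|B\|_{\dot H^1_x}$; taking the $L^2_t$ norm (with $u$ in $L^\infty_t$) and applying Young yields the third line. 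For the second estimate, \eqref{para-product:3} with $s=\alpha\in(0,1)$, $t=2-\alpha\in(1,2)$ and summability exponents $(1,\infty)$ gives $\|P(u\times B)\|_{\dot H^1_x}\leq\|P(u\times B)\|_{\dot B^1_{2,1}}\lesssim\|u\|_{\dot B^\alpha_{2,1}}\|B\|_{\dot B^{2-\alpha}_{2,\infty}}$, after which the interpolation bound $\|u\|_{\dot B^\alpha_{2,1}}\lesssim\|u\|_{\dot B^0_{2,\infty}}^{1-\alpha}\|u\|_{\dot B^1_{2,\infty}}^{\alpha}$ and H\"older in time give the second line.

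The computations are otherwise routine. The one genuinely delicate point is the first estimate, where one must use the algebraic identity $\nabla\times(u\times B)=-(u\cdot\nabla)B$ supplied by the two-dimensional normal structure in order to avoid any appearance of $\nabla u$ (which is not controlled in the Yudovich framework): without it, the Leibniz expansion of $\nabla(u\times B)$ would force a term involving $\|\nabla u\|_{L^\infty_x}$ or $\|B\|_{L^\infty_x}\|\nabla u\|_{L^2_x}$ that the statement does not permit. A secondary, minor point is the bookkeeping of the third Besov index in the second estimate, which dictates the precise choice of summability exponents in \eqref{para-product:3} and in the interpolation step. As in Section \ref{approximation:0}, all of this would be carried out formally on \eqref{Maxwell:system:*}, understanding that it is fully justified on the regularized system \eqref{NSM}, for which the dissipation operator $-\tfrac1n\Delta u_n$ is self-adjoint and hence harmless here.
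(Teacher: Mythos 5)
Your proof is correct and follows essentially the same route as the paper's: the paper first takes the curl of \eqref{Maxwell:system:*} and performs an $L^2$-energy estimate on $(\widetilde E,\widetilde B)=(\nabla\times E,\nabla\times B)$, while you instead test the original system against $-c\Delta E$ and $-c\Delta B$; since $\|\nabla\times E\|_{L^2}=\|\nabla E\|_{L^2}$ for solenoidal $E$ and $\nabla\times P(u\times B)=u\times\widetilde B=-(u\cdot\nabla)B$ under the normal structure, the two identities coincide. The subsequent treatment of the source term (H\"older for the first estimate, Lemma~\ref{paradifferential:1} via \eqref{para-product:3} and \eqref{para-product:2} for the second and third) matches the paper exactly, including the crucial use of the two-dimensional normal structure to eliminate the $(B\cdot\nabla)u$ contribution.
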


\begin{proof}
	 We perform a classical energy estimate on \eqref{Maxwell:system:*}. More precisely, denoting
	\begin{equation*}
		(\widetilde{E},\widetilde{B})\bydef(\nabla \times E,\nabla \times B),
	\end{equation*}
	we observe from \eqref{Maxwell:system:*} that $(\widetilde{E},\widetilde{B})$ solves the system
	\begin{equation*}
		\begin{cases}
			\begin{aligned}
				\frac{1}{c} \partial_t \widetilde E - \nabla \times \widetilde B + \sigma c \widetilde E & =- \sigma \nabla\times ( u \times B) = \sigma (u\cdot \nabla)B-\sigma (B\cdot \nabla)u,
				\\
				\frac{1}{c} \partial_t \widetilde B + \nabla \times \widetilde E & = 0,
			\end{aligned}
		\end{cases}
	\end{equation*}
	where we employed that $u$ and $B$ are divergence-free fields. In fact, the preceding step holds in any dimension $d=2$ or $d=3$.
	
	However, restricting ourselves to the two-dimensional setting and assuming that the field $(u,E,B)$ satisfies the structure \eqref{structure:2dim} allows us to discard the term $(B\cdot\nabla)u$. To be precise, we now have that $(\widetilde{E},\widetilde{B})$ solves
	\begin{equation*}
		\begin{cases}
			\begin{aligned}
				\frac{1}{c} \partial_t \widetilde{E} - \nabla \times \widetilde{B} + \sigma c \widetilde{E} & =-\sigma u\times\widetilde{B},
				\\
				\frac{1}{c} \partial_t \widetilde{B} + \nabla \times \widetilde{E} & = 0.
			\end{aligned}
		\end{cases}
	\end{equation*}
	Thus, multiplying the first equation by $\widetilde E$, the second by $\widetilde B$ and integrating in time and space, we deduce that
	\begin{equation*}
		\begin{aligned}
			\frac 1{2c}\big\|\big(\widetilde E,\widetilde B\big)(T)\big\|_{L^2_x}^2
			+\sigma c\big\|\widetilde E\big\|_{L^2_t([0,T);L^2_x)}^2
			\hspace{-30mm}&
			\\
			&\leq
			\frac 1{2c}\big\|\big(\widetilde E,\widetilde B\big)(0)\big\|_{L^2_x}^2
			+\sigma \|u\times\widetilde B\|_{L^2_t([0,T);L^2_x)}
			\big\|\widetilde E\big\|_{L^2_t([0,T);L^2_x)}
			\\
			&\leq
			\frac 1{2c}\big\|\big(\widetilde E,\widetilde B\big)(0)\big\|_{L^2_x}^2
			+\frac{\sigma}{2c} \|u\times\widetilde B\|_{L^2_t([0,T);L^2_x)}^2
			+\frac{\sigma c}2\big\|\widetilde E\big\|_{L^2_t([0,T);L^2_x)}^2.
		\end{aligned}
	\end{equation*}
	Further employing the fact that
	\begin{equation*}
		\norm {(E,B)}_{\dot H^1_x}
		=\norm { (\nabla  E,\nabla B)}_{L^2_x}
		=\big\|(\widetilde E,\widetilde B)\big\|_{L^2_x},
	\end{equation*}
	we arrive at the estimate
	\begin{equation*}
		\norm{(E,B)(T)}_{\dot H^1_x}^2
		+\sigma c^2\norm{E}_{L^2_t([0,T);\dot H^1_x)}^2
		\leq \norm{(E_0,B_0)}_{\dot H^1_x}^2
		+\sigma\|u\times\widetilde B\|_{L^2_t([0,T);L^2_x)}^2.
	\end{equation*}
	Therefore, the proof will be completed upon controlling the nonlinear term $\|u\times\widetilde B\|_{L^2_t([0,T);L^2_x)}^2$.
	
	To this end, an elementary application of H\"older's inequality first leads to
	\begin{equation*}
		\|u\times\widetilde B\|_{L^2_{t,x}}
		\leq \|u\|_{L^\infty_tL^2_x}
		\norm{\nabla B}_{L^2_tL^\infty_x},
	\end{equation*}
	thereby completing the justification of the first estimate of the lemma.
	
	Alternatively, following the proof of Lemma \ref{low:freq:estimates}, one can use the paraproduct estimates from Lemma \ref{paradifferential:1}, again. More specifically, utilizing the product law \eqref{para-product:2}, we obtain that
	\begin{equation*}
		\|u\times\widetilde B\|_{L^2_{t,x}}=\|P(u\times B)\|_{L^2_t\dot H^1_x}
		\lesssim
		\norm{u}_{L^\infty_{t,x} \cap L^\infty_t\dot B^1_{2,\infty}}\norm{B}_{ L^2_t\dot H^1_x}.
	\end{equation*}
	Similarly, choosing a parameter $0<\alpha<1$, utilizing \eqref{para-product:3} instead of \eqref{para-product:2}, and exploiting that $L_t^{\infty}\dot B^\alpha_{2,2}$ is an interpolation space between $L_t^{\infty}\dot B^0_{2,\infty}$ and $L_t^{\infty}\dot B^1_{2,\infty}$, we finally infer that
	\begin{equation*}
		\begin{aligned}
			\|u\times\widetilde B\|_{L^2_{t,x}}=\|P(u\times B)\|_{L^2_t\dot H^1_x}
			&\lesssim
			\norm{u}_{L_t^{\infty}\dot B^\alpha_{2,2}}
			\norm{B}_{L_t^{2}\dot B^{2-\alpha}_{2,\infty}}
			\\
			&\lesssim
			\norm{u}_{L_t^{\infty}\dot B^0_{2,\infty}}^{1-\alpha}
			\norm{u}_{L_t^{\infty}\dot B^1_{2,\infty}}^\alpha
			\norm{B}_{L_t^{2}\dot B^{2-\alpha}_{2,\infty}},
		\end{aligned}
	\end{equation*}
	which concludes the proof.
\end{proof}

\subsection{Almost-parabolic estimates on the magnetic field}\label{section:almost}

In the singular limit $c\to\infty$, Maxwell's equations \eqref{Maxwell:system:*} formally converge towards the parabolic system
\begin{equation*}
	\partial_t B+(u\cdot\nabla)B-\frac 1\sigma\Delta B=(B\cdot\nabla)u,
\end{equation*}
where we employed that $u$ and $B$ are divergence-free. This holds in both dimensions $d=2$ and $d=3$. Further assuming the two-dimensional normal structure \eqref{structure:2dim}, the preceding system reduces to the simple transport-diffusion equation
\begin{equation*}
	\partial_t b+u\cdot\nabla b-\frac 1\sigma\Delta b=0,
\end{equation*}
which satisfies the energy inequality
\begin{equation}\label{heat:energy}
	\frac 12 \norm{b(T)}_{L^2_x}^2+\frac 1\sigma\norm{\nabla b}_{L^2_x}^2\leq \frac 12 \norm{b(0)}_{L^2_x}^2,
\end{equation}
for all $T>0$, at least formally.

The estimates provided by Lemma \ref{low:freq:estimates} fail to fully capture this asymptotic parabolic behavior of Maxwell's equations, because they always contain a control of the nonlinear term $u\cdot\nabla b$, whereas this term does not contribute to the energy dissipation inequality \eqref{heat:energy}.

The next result establishes a singular almost-parabolic energy estimate for Maxwell's system \eqref{Maxwell:system:*} which recovers the classical a priori estimate \eqref{heat:energy} (up to multiplicative constants) for the heat equation in the limit $c\to\infty$. This is crucial to our work, as it will allow us to construct solutions to the Euler--Maxwell system \eqref{EM} for arbitrarily large initial data as the speed of light $c$ tends to infinity.

\begin{lem}\label{prop:heatenergy}
	Let $d=2$. Assume that $(E,B)$ is a smooth solution to \eqref{Maxwell:system:*}, for some initial data $(E_0,B_0)$ and some divergence-free vector field $u$, with the normal structure \eqref{structure:2dim}.
	
	Then, one has the estimate
	\begin{equation}\label{c-L2-energy}
		\begin{aligned}
			\norm{B}_{L^\infty_tL_x^2}+\norm{\nabla B}_{L^2_tL_x^2}
			&\lesssim \norm{B_0}_{L^2_x}+c^{-1}\norm{(E_0,B_0)}_{\dot H^1_x}
			+c^{-1}\|u\|_{L^\infty_tL^2_x}\norm{\nabla B}_{L^2_tL^\infty_x},
		\end{aligned}
	\end{equation}
	over any time interval $[0,T)$.
\end{lem}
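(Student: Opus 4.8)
The plan is to work directly with the damped wave equation \eqref{maxwell:2:0} satisfied by $B$, taking advantage of the fact that, thanks to the two-dimensional normal structure, its source term $-\sigma\nabla\times(u\times B)$ reduces to the pure transport term $\sigma(u\cdot\nabla)B$ (exactly as in the proof of Lemma \ref{prop:classical:energy}), which therefore contributes nothing to the $L^2$ energy balance since $\div u=0$. Testing
\[
\tfrac 1{c^2}\partial_t^2 B+\sigma \partial_t B-\Delta B =\sigma(u\cdot\nabla)B
\]
against $B$, integrating over $[0,T)\times\mathbb{R}^2$, and integrating the inertial term by parts in time, one is led to the identity
\[
\tfrac\sigma2\|B(T)\|_{L^2_x}^2+\|\nabla B\|_{L^2([0,T);L^2_x)}^2
=\tfrac\sigma2\|B_0\|_{L^2_x}^2+\tfrac1{c^2}\|\partial_t B\|_{L^2([0,T);L^2_x)}^2
+\tfrac1{c^2}\!\int_{\mathbb{R}^2}\!\big(\partial_t B(0)\cdot B_0-\partial_t B(T)\cdot B(T)\big)dx.
\]

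The key point is then that Faraday's equation yields $\partial_t B=-c\,\nabla\times E$, so that
\[
\tfrac1{c^2}\|\partial_t B\|_{L^2([0,T);L^2_x)}^2=\|\nabla\times E\|_{L^2([0,T);L^2_x)}^2\le\|E\|_{L^2([0,T);\dot H^1_x)}^2
\qquad\text{and}\qquad
\|\partial_t B(t)\|_{L^2_x}\le c\,\|E(t)\|_{\dot H^1_x}.
\]
Inserting these into the right-hand side of the identity above and invoking the first estimate of Lemma \ref{prop:classical:energy}, namely $\|(E,B)\|_{L^\infty_t\dot H^1_x}+c\|E\|_{L^2_t\dot H^1_x}\lesssim\|(E_0,B_0)\|_{\dot H^1_x}+\|u\|_{L^\infty_tL^2_x}\|\nabla B\|_{L^2_tL^\infty_x}$, I would bound $\|E\|_{L^2_t\dot H^1_x}$ by $c^{-1}\big(\|(E_0,B_0)\|_{\dot H^1_x}+\|u\|_{L^\infty_tL^2_x}\|\nabla B\|_{L^2_tL^\infty_x}\big)$ and $\|E(T)\|_{\dot H^1_x}$ by $\|(E_0,B_0)\|_{\dot H^1_x}+\|u\|_{L^\infty_tL^2_x}\|\nabla B\|_{L^2_tL^\infty_x}$; for the boundary term at $t=0$ I would simply use $\tfrac1c\|\nabla\times E_0\|_{L^2_x}\|B_0\|_{L^2_x}\lesssim c^{-2}\|E_0\|_{\dot H^1_x}^2+\|B_0\|_{L^2_x}^2$, while for the boundary term at $t=T$ Young's inequality splits off a small multiple of $\|B(T)\|_{L^2_x}^2$ (absorbed into the left-hand side) plus $c^{-2}\|E(T)\|_{\dot H^1_x}^2$. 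Altogether this yields
\[
\|B(T)\|_{L^2_x}^2+\|\nabla B\|_{L^2([0,T);L^2_x)}^2
\lesssim\|B_0\|_{L^2_x}^2+c^{-2}\|(E_0,B_0)\|_{\dot H^1_x}^2+c^{-2}\|u\|_{L^\infty_tL^2_x}^2\|\nabla B\|_{L^2_tL^\infty_x}^2,
\]
and \eqref{c-L2-energy} follows upon taking the supremum over $T$ and a square root (using $\sqrt{a+b}\le\sqrt a+\sqrt b$).

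The main obstacle, and precisely the reason the estimate is only \emph{almost} parabolic, is the treatment of the inertial term $\tfrac1{c^2}\partial_t^2B$: integration by parts converts it into the kinetic-type quantity $\tfrac1{c^2}\|\partial_t B\|_{L^2_{t,x}}^2$ together with two endpoint terms, and the whole argument rests on trading $\partial_t B$ for $c\,\nabla\times E$ via Faraday's law and then absorbing the outcome through the previously established $\dot H^1$ energy bound of Lemma \ref{prop:classical:energy}, which is exactly what supplies the favorable powers $c^{-1}$ on the right-hand side. One should take care that the endpoint contribution at $t=T$, after Young's inequality, is indeed absorbed into the left-hand side; this merely requires choosing the Young parameter small relative to $\sigma$. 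As elsewhere in Section \ref{section:perfect fluid}, these manipulations are rigorously carried out on the regularized system \eqref{NSM}, on which the self-adjoint dissipation $-\tfrac1n\Delta$ only adds a nonnegative term to the energy balance, and then passed to the limit.
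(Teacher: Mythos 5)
Your proof is correct and is essentially the same computation as the paper's: after the integration by parts in time, your identity
\[
\tfrac\sigma2\|B(T)\|_{L^2}^2+\|\nabla B\|_{L^2_{t,x}}^2
=\tfrac\sigma2\|B_0\|_{L^2}^2+\tfrac1{c^2}\|\partial_t B\|_{L^2_{t,x}}^2
+\tfrac1{c^2}\!\int\big(\partial_t B(0)\cdot B_0-\partial_t B(T)\cdot B(T)\big)dx
\]
is, upon substituting $\partial_t B=-c\,\nabla\times E$ from Faraday's law, exactly the identity \eqref{crucial:calculation} of the paper integrated in time (the paper obtains it from the first-order reformulation $\partial_t B+(u\cdot\nabla)B-\tfrac1\sigma\Delta B=(B\cdot\nabla)u+\tfrac1{\sigma c}\partial_t(\nabla\times E)$, whereas you start from its second-order form \eqref{maxwell:2:0}, but these are the same PDE). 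Both proofs then close in the same way, by trading $\partial_t B$ for $c\,\nabla\times E$ and absorbing the resulting kinetic and endpoint terms through Young's inequality and the $\dot H^1$ energy bound of Lemma \ref{prop:classical:energy}.
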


\begin{proof}
	First, we observe that taking the curl of Amp\`ere's equation in \eqref{Maxwell:system:*} and then employing Faraday's equation to substitute $\nabla\times E=-\frac 1c\partial_tB$, when necessary, leads to the system
	\begin{equation*}
		\partial_t B +(u\cdot \nabla)B -\frac 1\sigma \Delta B = (B\cdot\nabla)u+\frac 1{\sigma c}\partial_t(\nabla\times E),
	\end{equation*}
	where we have also used that $u$ and $B$ are divergence-free. This holds in any dimension $d=2$ or $d=3$.
	Then, further assuming that $(u,E,B)$ has the two-dimensional normal structure \eqref{structure:2dim} yields that
	\begin{equation*}
		\partial_t b + u\cdot \nabla b -\frac 1\sigma \Delta b = \frac 1{\sigma c}\partial_t (\curl E).
	\end{equation*}
	
	Now, the elementary observation that
	\begin{equation*}
		\int_{\mathbb{R}^2}(u\cdot\nabla b)b dx=\frac 12 \int_{\mathbb{R}^2}u\cdot\nabla(b^2)dx
		=-\frac 12 \int_{\mathbb{R}^2}(\div u)b^2dx=0,
	\end{equation*}
	which is a consequence of the incompressibility of $u$, allows us to deduce the parabolic energy estimate
	\begin{equation}\label{crucial:calculation}
		\frac 12\frac{d}{dt}\int_{\mathbb{R}^2}b^2 dx +\frac 1\sigma \int_{\mathbb{R}^2}|\nabla b|^2 dx
		=\frac 1{\sigma c}\frac{d}{dt}\int_{\mathbb{R}^2}(\curl E) bdx
		+ \frac 1{\sigma}\int_{\mathbb{R}^2}(\curl E)^2dx,
	\end{equation}
	where we used Faraday's equation, again, to substitute $\partial_t b=-c\curl E$.
	
	Then, integrating in time, we infer that
	\begin{equation*}
		\begin{aligned}
			\frac 12\norm{b(T)}_{L^2_x}^2
			+\frac 1\sigma \norm{\nabla b}_{L^2_t([0,T);L_x^2)}^2
			& \leq \frac 12\norm{b(0)}_{L^2_x}^2
			+\frac 1{\sigma c}\int_{\mathbb{R}^2}\curl E(T) b(T)dx
			\\
			&\quad -\frac 1{\sigma c}\int_{\mathbb{R}^2}\curl E(0) b(0)dx
			+\frac 1\sigma \norm{\curl E}_{L^2_t([0,T);L_x^2)}^2
			\\
			&\leq \frac 12\norm{b(0)}_{L^2_x}^2+\frac 14\left(\norm{b(0)}_{L^2_x}^2
			+\norm{b(T)}_{L^2_x}^2\right)
			\\
			&\quad
			+\frac 1{\sigma^2 c^2}\left(\norm{E(0)}_{\dot H^1_x}^2
			+\norm{E(T)}_{\dot H^1_x}^2\right)
			+\frac 1\sigma \norm{E}_{L^2_t([0,T);\dot H^1_x)}^2,
		\end{aligned}
	\end{equation*}
	which leads to
	\begin{equation}\label{almost:1}
		\begin{aligned}
			\frac 14\norm{B(T)}_{L^2_x}^2
			+\frac 1\sigma \norm{\nabla B}_{L^2_t([0,T);L_x^2)}^2
			&\leq \frac 34\norm{B_0}_{L^2_x}^2
			+\frac 1{\sigma^2 c^2}\norm{E_0}_{\dot H^1_x}^2
			\\
			&\quad +\frac 1{\sigma^2 c^2}\left(\norm{E(T)}_{\dot H^1_x}^2
			+\sigma \norm{cE}_{L^2_t([0,T);\dot H^1_x)}^2\right).
		\end{aligned}
	\end{equation}
	Finally, combining \eqref{almost:1} with the energy estimates \eqref{energy:EB} shows that \eqref{c-L2-energy} holds, thereby reaching the conclusion of the proof.
\end{proof}

\begin{rem}
	The identity \eqref{crucial:calculation} contains the crucial calculation which enables us to extract a uniform bound on $B$ in $L^2_t\dot H^1_x$, with remainder terms of order $c^{-1}$. This estimate will play an important role in the control of the low frequencies of nonlinear source terms in the Euler--Maxwell system.
\end{rem}

\subsection{Proof of Theorem \ref{main:1}}\label{section:main:1}

We proceed to the proof of our first main result---Theorem \ref{main:1}. Recall that we are taking the liberty of assuming, for simplicity, to be dealing with a smooth solution $(u,E,B)$ of \eqref{EM}, for some smooth initial data $(u_0,E_0,B_0)$, and that the justification of the theorem is only fully completed by relying on the approximation procedure laid out in Section \ref{approximation:0}.

Our proof hinges upon the preliminary lemmas established in Sections \ref{section:vorticity}, \ref{section:high:freq}, \ref{section:low:freq} and \ref{section:almost}. Accordingly, we begin by carefully gathering all the relevant estimates, on an arbitrary time interval $[0,T)$. We will then move on to construct the energy functional which will produce the uniform bounds we are seeking.

\paragraph{\bf Control of velocity field.}
The control of $u$ is obtained from Lemmas \ref{vorticity:0} and \ref{vorticity:1}. Recalling the equivalence $\norm{\nabla u}_{L^a_x}\sim \norm{\omega}_{L^a_x}$, for any value $1<a<\infty$, these lemmas provide the estimates
\begin{equation}\label{proof:1}
	\begin{aligned}
		\norm{u}_{L^\infty_t\dot H^1_x}
		&\lesssim \norm{u_0}_{\dot H^1_x}
		+\norm{j}_{L^2_{t,x}}\norm{\nabla B}_{L^2_tL^\infty_x},
		\\
		\norm{u}_{L^\infty_t\dot W^{1,p}_x}
		&\lesssim \norm{u_0}_{\dot W^{1,p}_x}
		+\norm{j}_{L^2_{t,x}}^\frac{2}{p}
		\norm{j}_{L^2_t\dot H^1_x}^{1-\frac 2p}
		\norm{\nabla B}_{L^2_tL^\infty_x},
		\\
		\norm{u}_{L^\infty_{t,x}}
		&\lesssim
		\norm{u}_{L^\infty_tL^2_x}^{\frac{p-2}{2(p-1)}}
		\norm{u}_{L^\infty_t\dot W^{1,p}_x}^{\frac{p}{2(p-1)}}
		\\
		&\lesssim
		\norm{u}_{L^\infty_tL^2_x}^{\frac{p-2}{2(p-1)}}
		\left(\norm{u_0}_{\dot W^{1,p}_x}
		+\norm{j}_{L^2_{t,x}}^\frac{2}{p}
		\norm{j}_{L^2_t\dot H^1_x}^{1-\frac 2p}
		\norm{\nabla B}_{L^2_tL^\infty_x}\right)^{\frac p{2(p-1)}},
	\end{aligned}
\end{equation}
where $2< p<\infty$ is a fixed value.

\paragraph{\bf Control of high electromagnetic frequencies.}
The control of high frequencies of $(E,B)$ is obtained from Lemma \ref{high:freq:estimates}. Specifically, setting $s=\frac 74$ in \eqref{frequencies:4} allows us to deduce that
\begin{equation*}
	\begin{aligned}
		c^{-\frac 34}\norm{(E,B)}_{\widetilde L^\infty_t\dot B^{\frac 74}_{2,1,>}}
		+c^{\frac 14}\norm{(E,B)}_{\widetilde L^2_t\dot B^{\frac 74}_{2,1,>}}
		+\norm{(E,B)}_{\widetilde L^2_t\dot B^{1}_{\infty,1,>}}
		\hspace{-50mm}&
		\\
		&\lesssim
		c^{-\frac 34}\norm{(E_0,B_0)}_{\dot B^\frac{7}{4}_{2,1,>}}
		+c^{-\frac 34}
		\norm{u}_{L^\infty_{t,x} \cap L^\infty_t\dot H^1_x}\norm{B}_{\widetilde L_t^{2}\dot B^\frac{7}{4}_{2,1}}.
	\end{aligned}
\end{equation*}
Then, further decomposing high and low frequencies in the last term above, we obtain that
\begin{equation*}
	\begin{aligned}
		\norm{B}_{\widetilde L_t^{2}\dot B^\frac{7}{4}_{2,1}}
		&\lesssim
		\norm{B}_{\widetilde L_t^{2}\dot B^\frac{7}{4}_{2,1,<}}
		+\norm{B}_{\widetilde L_t^{2}\dot B^\frac{7}{4}_{2,1,>}}
		\\
		&\lesssim
		\norm{B}_{\widetilde L_t^{2}\dot B^1_{2,\infty,<}}^\frac 14
		\norm{B}_{\widetilde L_t^{2}\dot B^2_{2,\infty,<}}^\frac 34
		+\norm{B}_{\widetilde L_t^{2}\dot B^\frac{7}{4}_{2,1,>}}
		\\
		&\lesssim
		\norm{B}_{L_t^{2}\dot H^1_x}^\frac 14
		\norm{B}_{L_t^{2}\dot B^2_{2,1,<}}^\frac 34
		+\norm{B}_{\widetilde L_t^{2}\dot B^\frac{7}{4}_{2,1,>}},
	\end{aligned}
\end{equation*}
which yields the estimate
\begin{equation}\label{proof:2}
	\begin{aligned}
		c^{-\frac 34}\norm{(E,B)}_{\widetilde L^\infty_t\dot B^{\frac 74}_{2,1,>}}
		+c^{\frac 14}\norm{(E,B)}_{\widetilde L^2_t\dot B^{\frac 74}_{2,1,>}}
		+\norm{(E,B)}_{\widetilde L^2_t\dot B^{1}_{\infty,1,>}}
		\hspace{-95mm}&
		\\
		&\lesssim
		c^{-\frac 34}\norm{(E_0,B_0)}_{\dot B^\frac{7}{4}_{2,1,>}}
		+c^{-\frac 34}
		\norm{u}_{L^\infty_{t,x} \cap L^\infty_t\dot H^1_x}
		\left(\norm{B}_{L_t^{2}\dot H^1_x}^\frac 14
		\norm{B}_{L_t^{2}\dot B^2_{2,1,<}}^\frac 34
		+\norm{B}_{\widetilde L_t^{2}\dot B^\frac{7}{4}_{2,1,>}}\right).
	\end{aligned}
\end{equation}
Observe that the choice of regularity parameter $s=\frac 74$ is dictated by the need to control $\nabla B$ in $L^2_tL^\infty_x$, as anticipated in the strategy presented in Section \ref{strategy:0}. Further notice that it is therefore crucial to be able to set a regularity parameter $s$ with a value greater than one in \eqref{frequencies:4}. This flexibility comes from the use of the normal structure \eqref{structure:2dim}, which we exploited in the product estimates established in Section \ref{section:paradifferential}.

\paragraph{\bf Control of low electromagnetic frequencies.}
The control of low frequencies of $(E,B)$ is deduced from Lemmas \ref{low:freq:estimates} and \ref{prop:classical:energy}. Here as well, the choice of parameters is dictated by the need to control $\nabla B$ in $L^2_tL^\infty_x$. Thus, since $\dot B^2_{2,1}(\mathbb{R}^2)$ is contained in $L^\infty(\mathbb{R}^2)$, one could, for instance, set $s=2$ in \eqref{frequencies:9}, which gives that
\begin{equation*}
	\norm{B}_{L^2_t\dot B^{2}_{2,1,<}}
	\lesssim
	c^{-1}\norm{E_0}_{\dot B^{2}_{2,2,<}}
	+\norm{B_0}_{\dot B^{1}_{2,2,<}}
	+\norm{u}_{L_t^{\infty}L^2_x}
	\norm{B}_{L_t^{2}\dot B^2_{2,1}}.
\end{equation*}
But, due to the coefficient $\norm{u}_{L_t^{\infty}L^2_x}$, such an estimate would eventually lead to a smallness condition, uniform in $c$, on the initial energy $\mathcal{E}_0$, which is not desirable. Therefore, instead, we employ \eqref{frequencies:7} and \eqref{frequencies:8}. More precisely, by setting $s=1$ in \eqref{frequencies:8} and $s=2-\alpha$ in \eqref{frequencies:7}, for any choice $0<\alpha<1$, we obtain the respective estimates
\begin{equation*}
	\begin{aligned}
		\norm{B}_{L^2_t\dot B^{2}_{2,1,<}}
		& \lesssim
		c^{-1}\norm{E_0}_{\dot B^{2}_{2,2,<}}
		+\norm{B_0}_{\dot B^{1}_{2,2,<}}
		+\norm{u}_{L^\infty_{t,x} \cap L^\infty_t\dot H^1_x}\norm{B}_{ L_t^{2}\dot B^1_{2,1}},
		\\
		\norm{B}_{L^2_t\dot B^{2}_{2,1,<}}
		& \lesssim
		c^{-1}\norm{E_0}_{\dot B^{2}_{2,2,<}}
		+\norm{B_0}_{\dot B^{1}_{2,2,<}}
		+\norm{u}_{L_t^{\infty}L^2_x}^{1-\alpha}
		\norm{u}_{L_t^{\infty}\dot H^1_x}^\alpha
		\norm{B}_{L_t^{2}\dot B^{1}_{2,\infty}}^\alpha\norm{B}_{L_t^{2}\dot B^{2}_{2,\infty}}^{1-\alpha}.
	\end{aligned}
\end{equation*}
In fact, it is possible to straightforwardly adapt the proofs of the above estimates to derive the more useful combined control, where we split the high and low frequencies of $B$ in the right-hand side,
\begin{equation}\label{proof:11}
	\begin{aligned}
		\norm{B}_{L^2_t\dot B^{2}_{2,1,<}}
		& \lesssim
		c^{-1}\norm{E_0}_{\dot B^{2}_{2,2,<}}
		+\norm{B_0}_{\dot B^{1}_{2,2,<}}
		+\norm{u}_{L^\infty_{t,x} \cap L^\infty_t\dot H^1_x}\norm{\mathds{1}_{\{|D|\geq \frac{\sigma c}2\}}B}_{ L_t^{2}\dot B^1_{2,1}}
		\\
		&\quad +\norm{u}_{L_t^{\infty}L^2_x}^{1-\alpha}
		\norm{u}_{L_t^{\infty}\dot H^1_x}^\alpha
		\norm{\mathds{1}_{\{|D|< \frac{\sigma c}2\}}B}_{L_t^{2}\dot B^{1}_{2,\infty}}^\alpha\norm{\mathds{1}_{\{|D|< \frac{\sigma c}2\}}B}_{L_t^{2}\dot B^{2}_{2,\infty}}^{1-\alpha}
		\\
		& \lesssim
		\norm{(E_0,B_0)}_{\dot H^1_x}
		+\norm{u}_{L^\infty_{t,x} \cap L^\infty_t\dot H^1_x}
		\left(c^{-1}\norm{B}_{ L_t^{2}\dot B^2_{2,1,<} }+
		c^{-\frac 34}\norm{B}_{ \widetilde L_t^{2}\dot B^\frac{7}{4}_{2,1,>} }
		\right)
		\\
		&\quad +\norm{u}_{L_t^{\infty}L^2_x}^{1-\alpha}
		\norm{u}_{L_t^{\infty}\dot H^1_x}^\alpha
		\norm{B}_{L_t^{2}\dot H^1_x}^\alpha
		\norm{B}_{L_t^{2}\dot B^{2}_{2,1,<}}^{1-\alpha}.
	\end{aligned}
\end{equation}
Further combining the preceding estimate with the energy estimate \eqref{energy:EB} from Lemma \ref{prop:classical:energy}, we obtain that
\begin{equation*}
	\begin{aligned}
		\norm{(E,B)}_{L^\infty_t\dot H^1_x}+c\norm{E}_{L^2_t\dot H^1_x}
		+\norm{B}_{L^2_t\dot B^{2}_{2,1,<}}
		\hspace{-40mm}&
		\\
		& \lesssim
		\norm{(E_0,B_0)}_{\dot H^1_x}
		+\norm{u}_{L^\infty_{t,x} \cap L^\infty_t\dot H^1_x}
		\left(c^{-1}\norm{B}_{ L_t^{2}\dot B^2_{2,1,<} }+
		c^{-\frac 34}\norm{B}_{ \widetilde L_t^{2}\dot B^\frac{7}{4}_{2,1,>} }
		\right)
		\\
		&\quad +\norm{u}_{L_t^{\infty}L^2_x}^{1-\alpha}
		\norm{u}_{L_t^{\infty}\dot H^1_x}^\alpha
		\norm{B}_{L_t^{2}\dot H^1_x}^\alpha
		\norm{B}_{L_t^{2}\dot B^{2}_{2,1,<}}^{1-\alpha}.
	\end{aligned}
\end{equation*}
Finally, by a classical use of Young's inequality $ab\leq \alpha a^\frac 1\alpha+(1-\alpha)b^\frac 1{1-\alpha}$, with $a,b\geq 0$, aimed at absorbing the term $\norm{B}_{L_t^{2}\dot B^{2}_{2,1,<}}^{1-\alpha}$ with the above left-hand side, we conclude that
\begin{equation}\label{proof:3}
	\begin{aligned}
		\norm{(E,B)}_{L^\infty_t\dot H^1_x}+c\norm{E}_{L^2_t\dot H^1_x}
		+\norm{B}_{L^2_t\dot B^{2}_{2,1,<}}
		\hspace{-40mm}&
		\\
		& \lesssim
		\norm{(E_0,B_0)}_{\dot H^1_x}
		+\norm{u}_{L^\infty_{t,x} \cap L^\infty_t\dot H^1_x}
		\left(c^{-1}\norm{B}_{ L_t^{2}\dot B^2_{2,1,<} }+
		c^{-\frac 34}\norm{B}_{ \widetilde L_t^{2}\dot B^\frac{7}{4}_{2,1,>} }
		\right)
		\\
		&\quad +\norm{u}_{L_t^{\infty}L^2_x}^{\frac 1\alpha-1}
		\norm{u}_{L_t^{\infty}\dot H^1_x}
		\norm{B}_{L_t^{2}\dot H^1_x}.
	\end{aligned}
\end{equation}

\paragraph{\bf Parabolic stability of magnetic field.}
The parabolic stability of the magnetic field comes as a result of the almost-parabolic estimates established in Lemma \ref{prop:heatenergy}, which we conveniently reproduce here:
\begin{equation}\label{proof:4}
	\norm{B}_{L^2_t\dot H^1_x}
	\lesssim \norm{B_0}_{L^2_x}+c^{-1}\norm{(E_0,B_0)}_{\dot H^1_x}
	+c^{-1}\|u\|_{L^\infty_tL^2_x}\norm{\nabla B}_{L^2_tL^\infty_x},
\end{equation}
This estimate will serve to control the term $\norm{B}_{L^2_t\dot H^1_x}$ in \eqref{proof:3}.

\paragraph{\bf Ohm's law estimate.}
Finally, we need to employ Ohm's law from \eqref{EM} to control the electric current $j$. More precisely, by Ohm's law and the normal structure \eqref{structure:2dim}, we have that
\begin{equation}\label{proof:5}
	\begin{aligned}
		\norm{j}_{L^2_t\dot H^1_x}&\lesssim
		c\norm{E}_{L^2_t\dot H^1_x}+\norm{P(u\times B)}_{L^2_t\dot H^1_x}
		\\
		&\lesssim
		c\norm{E}_{L^2_t\dot H^1_x}+\norm{u}_{L^\infty_t L^2_x}\norm{\nabla B}_{L^2_tL^\infty_x},
	\end{aligned}
\end{equation}
which will be used to control $\norm{j}_{L^2_t\dot H^1_x}$ in \eqref{proof:1}.

\paragraph{\bf Nonlinear energy estimate.}
We are now in a position to derive the global nonlinear energy estimate which will yield a uniform bound on solutions to \eqref{EM}. Thus, inspired by the above set of estimates, we introduce the energy $\mathcal{H}(t_1,t_2)$, with $0\leq t_1\leq t_2$, by setting
\begin{equation*}%\label{def:H}
	\begin{aligned}
		\mathcal{H}(t_1,t_2)&\bydef
		\norm{u}_{L^\infty_t\dot H^1_x}
		+\mathcal{E}_0^{\frac{p-2}{2(p-1)}}\norm{u}_{L^\infty_t\dot W^{1,p}_x}^{\frac p{2(p-1)}}
		\\
		&\quad+c^{-\frac 34}\norm{(E,B)}_{\widetilde L^\infty_t\dot B^{\frac 74}_{2,1,>}}
		+c^{\frac 14}\norm{(E,B)}_{\widetilde L^2_t\dot B^{\frac 74}_{2,1,>}}
		+\norm{(E,B)}_{\widetilde L^2_t\dot B^{1}_{\infty,1,>}}
		\\
		&\quad+\norm{(E,B)}_{L^\infty_t\dot H^1_x}+c\norm{E}_{L^2_t\dot H^1_x}+\norm{B}_{L^2_t\dot B^{2}_{2,1,<}},
	\end{aligned}
\end{equation*}
where all time-norms are taken over the interval $[t_1,t_2)$. Since $(u,E,B)$ is assumed, without any loss of generality, to be a smooth solution of \eqref{EM}, observe that $\mathcal{H}(t_1,t_2)$ is continuous on $\{0\leq t_1\leq t_2\}$. In particular, we can further define the continuous function
\begin{equation*}
	\mathcal{H}(t)\bydef \mathcal{H}(t,t)=
	\norm{u(t)}_{\dot H^1_x}+\mathcal{E}_0^{\frac{p-2}{2(p-1)}}\norm{u(t)}_{\dot W^{1,p}_x}^{\frac p{2(p-1)}}
	+c^{-\frac 34}\norm{(E,B)(t)}_{\dot B^{\frac 74}_{2,1,>}}
	+\norm{(E,B)(t)}_{\dot H^1_x},
\end{equation*}
for every $t\geq 0$. Note that
\begin{equation*}
	\mathcal{H}(t)\leq \mathcal{H}(t_1,t_2),
\end{equation*}
for all $t\in[t_1,t_2]$.

It will also be useful to assign the notation
\begin{equation*}
	\mathcal{J}(t_1,t_2)\bydef \norm{j}_{L^2_{t,x}}
\end{equation*}
to the dissipation produced by the electric current in \eqref{energy-inequa}, where the $L^2$-norm is taken over the time-interval $[t_1,t_2)$, as well. In particular, one has the uniform bound
\begin{equation}\label{dissipation:1}
	\mathcal{J}(t_1,t_2)\leq\left(\frac\sigma 2\right)^\frac 12\mathcal{E}_0
\end{equation}
by virtue of the energy inequality \eqref{energy-inequa}.

Observe now that all the above estimates, which were stated on the time-interval $[0,T)$, could equally well be written over any other finite time-interval $[t_1,t_2)$, provided one replaces the initial data $(u_0,E_0,B_0)$ by the data at time $t_1$. Thus, employing the energy inequality \eqref{energy-inequa}, the energies $\mathcal{H}(t_1,t_2)$ and $\mathcal{H}(t)$, and the embedding
\begin{equation*}
	\norm{\nabla B}_{L^2_tL^\infty_x}\leq\norm{B}_{L^2_t\dot B^1_{\infty,1}}
	\lesssim
	\norm{B}_{L^2_t\dot B^{2}_{2,1,<}}
	+\norm{B}_{\widetilde L^2_t\dot B^{1}_{\infty,1,>}}\leq\mathcal{H}(t_1,t_2),
\end{equation*}
one can write the parabolic stability estimate \eqref{proof:4} as
\begin{equation}\label{proof:9}
	\norm{B}_{L^2_t\dot H^1_x}
	\lesssim \mathcal{E}_0+c^{-1}\mathcal{H}(t_1)
	+c^{-1}\mathcal{E}_0\mathcal{H}(t_1,t_2),
\end{equation}
and the Ohm's law estimate \eqref{proof:5} as
\begin{equation}\label{proof:10}
	\norm{j}_{L^2_t\dot H^1_x}\lesssim (1+\mathcal{E}_0)\mathcal{H}(t_1,t_2),
\end{equation}
which are linear in $\mathcal{H}(t_1,t_2)$.

Then, incorporating the linear estimate \eqref{proof:10} into the velocity control \eqref{proof:1}, we obtain that
\begin{equation}\label{proof:6}
	\begin{aligned}
		\norm{u}_{L^\infty_t\dot H^1_x}
		&\lesssim \mathcal{H}(t_1)
		+\mathcal{J}(t_1,t_2)\mathcal{H}(t_1,t_2),
		\\
		\norm{u}_{L^\infty_{t,x}}\lesssim
		\mathcal{E}_0^{\frac{p-2}{2(p-1)}}\norm{u}_{L^\infty_t\dot W^{1,p}_x}^{\frac p{2(p-1)}}
		&\lesssim
		\mathcal{H}(t_1)
		+\mathcal{E}_0^{\frac{p-2}{2(p-1)}}(1+\mathcal{E}_0)^{\frac {p-2}{2(p-1)}}
		\mathcal{J}(t_1,t_2)^{\frac 1{p-1}}
		\mathcal{H}(t_1,t_2).
	\end{aligned}
\end{equation}
Furthermore, the use of \eqref{proof:9} and \eqref{proof:6} in the high-frequency control \eqref{proof:2} leads to
\begin{equation}\label{proof:7}
	\begin{aligned}
		&c^{-\frac 34}\norm{(E,B)}_{\widetilde L^\infty_t\dot B^{\frac 74}_{2,1,>}}
		+c^{\frac 14}\norm{(E,B)}_{\widetilde L^2_t\dot B^{\frac 74}_{2,1,>}}
		+\norm{(E,B)}_{\widetilde L^2_t\dot B^{1}_{\infty,1,>}}
		\\
		&\lesssim
		\mathcal{H}(t_1)
		+c^{-\frac 34}\mathcal{H}(t_1,t_2)
		\left(\mathcal{E}_0+c^{-1}\mathcal{H}(t_1)
		+c^{-1}\mathcal{E}_0\mathcal{H}(t_1,t_2)\right)^\frac 14
		\mathcal{H}(t_1,t_2)^\frac 34
		+c^{-1}\mathcal{H}(t_1,t_2)^2
		\\
		&\lesssim
		\mathcal{H}(t_1)
		+c^{-\frac 34}\mathcal{E}_0^\frac 14
		\mathcal{H}(t_1,t_2)^\frac 74
		+c^{-1}\left(1+\mathcal{E}_0^{\frac 14}\right)\mathcal{H}(t_1,t_2)^2
		\\
		&\lesssim
		\mathcal{H}(t_1)
		+\lambda \mathcal{H}(t_1,t_2)
		+\lambda^{-\frac 13}c^{-1}\mathcal{E}_0^\frac 13
		\mathcal{H}(t_1,t_2)^2
		+c^{-1}\left(1+\mathcal{E}_0^{\frac 14}\right)\mathcal{H}(t_1,t_2)^2,
	\end{aligned}
\end{equation}
for any $\lambda>0$,
whereas a similar procedure applied to the low-frequency estimate \eqref{proof:3} yields that
\begin{equation}\label{proof:8}
	\begin{aligned}
		\norm{(E,B)}_{L^\infty_t\dot H^1_x}+c\norm{E}_{L^2_t\dot H^1_x}
		+\norm{B}_{L^2_t\dot B^{2}_{2,1,<}}
		\hspace{-40mm}&
		\\
		& \lesssim
		\mathcal{H}(t_1)
		+c^{-1}\mathcal{H}(t_1,t_2)^2
		\\
		&\quad+\mathcal{E}_0^{\frac 1\alpha -1}
		\left(\mathcal{H}(t_1)
		+\mathcal{J}(t_1,t_2)\mathcal{H}(t_1,t_2)\right)
		\left(\mathcal{E}_0+c^{-1}\mathcal{H}(t_1)+c^{-1}\mathcal{E}_0\mathcal{H}(t_1,t_2)\right)
		\\
		& \lesssim
		\left(1+\mathcal{E}_0^{\frac 1\alpha}\right)\mathcal{H}(t_1)
		+\mathcal{E}_0^{\frac 1\alpha}\mathcal{J}(t_1,t_2)\mathcal{H}(t_1,t_2)
		+\left(1+\mathcal{E}_0^{\frac 1\alpha+1}\right)c^{-1}\mathcal{H}(t_1,t_2)^2.
	\end{aligned}
\end{equation}
All in all, summing estimates \eqref{proof:6}, \eqref{proof:7} and \eqref{proof:8} together, and setting $\lambda$ in \eqref{proof:7} so small that the term $\lambda\mathcal{H}(t_1,t_2)$ can be absorbed by the resulting left-hand side, we finally arrive at the crucial nonlinear energy estimate
\begin{equation*}
	\begin{aligned}
		\mathcal{H}(t_1,t_2)&\lesssim
		\left(1+\mathcal{E}_0^{\frac 1\alpha}\right)\mathcal{H}(t_1)
		+\left(1+\mathcal{E}_0^{\frac 1\alpha}\right)\mathcal{J}(t_1,t_2)\mathcal{H}(t_1,t_2)
		\\
		&\quad+\mathcal{E}_0^{\frac{p-2}{2(p-1)}}\left(1+\mathcal{E}_0^{\frac {p-2}{2(p-1)}}\right)
		\mathcal{J}(t_1,t_2)^{\frac 1{p-1}}
		\mathcal{H}(t_1,t_2)
		+\left(1+\mathcal{E}_0^{\frac 1\alpha+1}\right)c^{-1}\mathcal{H}(t_1,t_2)^2,
	\end{aligned}
\end{equation*}
for any $0\leq t_1\leq t_2$, which, using \eqref{dissipation:1}, can be slightly simplified into
\begin{equation}\label{nonlinear:energy}
	\begin{aligned}
		\mathcal{H}(t_1,t_2)&\leq
		C_* \left(1+\mathcal{E}_0^{\frac 1\alpha}\right)
		\mathcal{H}(t_1)
		+C_* \left(1+\mathcal{E}_0^{\frac 1\alpha+\frac{p-2}{p-1}}\right)
		\mathcal{J}(t_1,t_2)^\frac 1{p-1}\mathcal{H}(t_1,t_2)
		\\
		&\quad +C_* \left(1+\mathcal{E}_0^{\frac 1\alpha+1}\right)c^{-1}\mathcal{H}(t_1,t_2)^2,
	\end{aligned}
\end{equation}
where $C_*>0$ only depends on fixed parameters (in particular, it is independent of time, the speed of light $c$ and the initial data).
We are now going to show that \eqref{nonlinear:energy} entails a global bound on $\mathcal{H}(t_1,t_2)$.

\paragraph{\bf Conclusion of proof.} Let us consider a partition of time
\begin{equation*}
	0=t_0<t_1<\ldots<t_n<t_{n+1}=\infty
\end{equation*}
such that, for every $i=0,1,\ldots,n-1$,
\begin{equation*}
	C_* \left(1+\mathcal{E}_0^{\frac 1\alpha+\frac{p-2}{p-1}}\right)\mathcal{J}(t_i,t_{i+1})^\frac 1{p-1}=\frac 12
	\quad\text{and}\quad
	C_* \left(1+\mathcal{E}_0^{\frac 1\alpha+\frac{p-2}{p-1}}\right)\mathcal{J}(t_n,t_{n+1})^\frac 1{p-1}\leq\frac 12.
\end{equation*}
In particular, by virtue of \eqref{dissipation:1}, one has, for any $t\in[t_i,t_{i+1}]$, with $i=0,1,\ldots,n$, that
\begin{equation}\label{dissipation:2}
	\frac i{\left(2C_* \left(1+\mathcal{E}_0^{\frac 1\alpha+\frac{p-2}{p-1}}\right)\right)^{2(p-1)}}=\sum_{k=0}^{i-1}\mathcal{J}(t_k,t_{k+1})^2\leq \mathcal{J}(t_0,t)^2\leq \frac\sigma 2 \mathcal{E}_0^2.
\end{equation}
It then follows from \eqref{nonlinear:energy} that
\begin{equation}\label{nonlinear:energy:2}
	\begin{aligned}
		\mathcal{H}(t_i,t)&\leq 2C_* \left(1+\mathcal{E}_0^{\frac 1\alpha}\right)\mathcal{H}(t_i)
		+2C_* \left(1+\mathcal{E}_0^{\frac 1\alpha+1}\right)c^{-1}\mathcal{H}(t_i,t)^2,
	\end{aligned}
\end{equation}
for all $i=0,\ldots,n$ and $t\in[t_i,t_{i+1}]$.

Next, for fixed $i$, we introduce the quadratic polynomial
\begin{equation*}
	p(X)=2C_* \left(1+\mathcal{E}_0^{\frac 1\alpha+1}\right)c^{-1}X^2-X+2C_* \left(1+\mathcal{E}_0^{\frac 1\alpha}\right)\mathcal{H}(t_i),
\end{equation*}
whose roots
\begin{equation*}
	\lambda_\pm=\frac{1\pm\sqrt{1-16C_*^2 \left(1+\mathcal{E}_0^{\frac 1\alpha+1}\right)\left(1+\mathcal{E}_0^{\frac 1\alpha}\right)c^{-1}\mathcal{H}(t_i)}}{4C_* \left(1+\mathcal{E}_0^{\frac 1\alpha+1}\right)c^{-1}}
\end{equation*}
are real and distinct provided
\begin{equation}\label{discriminant:1}
	\mathcal{H}(t_i)<\frac{c}{16C_*^2 \left(1+\mathcal{E}_0^{\frac 1\alpha+1}\right)\left(1+\mathcal{E}_0^{\frac 1\alpha}\right)}.
\end{equation}
Observe that \eqref{nonlinear:energy:2} can be rewritten as
\begin{equation*}
	p(\mathcal{H}(t_i,t))\geq 0,
\end{equation*}
for all $t\in[t_i,t_{i+1}]$. Therefore, by continuity of $\mathcal{H}(t_i,t)$ and assuming that \eqref{discriminant:1} is satisfied, we deduce that $\mathcal{H}(t_i,t)\leq \lambda_-$, for all $t\in[t_i,t_{i+1}]$, if it is true for $t=t_i$, i.e., $\mathcal{H}(t_i)\leq \lambda_-$.

Now, it is readily seen that \eqref{discriminant:1} implies that $\mathcal{H}(t_i)\leq \lambda_-$, if we assume, without any loss of generality, that $2C_* \left(1+\mathcal{E}_0^{\frac 1\alpha}\right)\geq 1$. Thus, by continuity of $t\mapsto \mathcal{H}(t_i,t)$, we conclude that \eqref{discriminant:1} is sufficient to deduce the bound
\begin{equation*}
	\mathcal{H}(t)\leq \mathcal{H}(t_i,t)\leq\lambda_-
	\leq 4C_* \left(1+\mathcal{E}_0^{\frac 1\alpha}\right)\mathcal{H}(t_i),
\end{equation*}
for all $t\in[t_i,t_{i+1}]$, where we used the elementary inequality $1-\sqrt{1-z}\leq z$, for all $z\in[0,1]$, in the last step.
Then, a straightforward iterative process leads us to the estimate
\begin{equation*}
	\mathcal{H}(t_i,t)\leq 4C_* \left(1+\mathcal{E}_0^{\frac 1\alpha}\right) \mathcal{H}(t_i)
	\leq \left[4C_* \left(1+\mathcal{E}_0^{\frac 1\alpha}\right)\right]^{i+1}\mathcal{H}(t_0),
\end{equation*}
for each $i=0,1,\ldots,n$ and all $t\in[t_i,t_{i+1}]$, if the initial data satisfies that
\begin{equation*}
	\mathcal{H}(t_0)<\frac{c}{4C_* \left(1+\mathcal{E}_0^{\frac 1\alpha+1}\right)
	\left[4C_* \left(1+\mathcal{E}_0^{\frac 1\alpha}\right)\right]^{i+1}}.
\end{equation*}
Further noticing, for any $t\in[t_i,t_{i+1}]$, that
\begin{equation*}
	\mathcal{H}(t_0,t)\leq\mathcal{H}(t_i,t)+\sum_{k=0}^{i-1}\mathcal{H}(t_k,t_{k+1}),
\end{equation*}
we conclude, in view of \eqref{dissipation:2}, that one has the global bound
\begin{equation*}
	\begin{aligned}
		\mathcal{H}(0,t)
		&\leq \sum_{k=0}^{i}\left[4C_* \left(1+\mathcal{E}_0^{\frac 1\alpha}\right)\right]^{k+1}\mathcal{H}(0)
		=4C_* \left(1+\mathcal{E}_0^{\frac 1\alpha}\right)
		\frac{\left[4C_* \left(1+\mathcal{E}_0^{\frac 1\alpha}\right)\right]^{i+1}-1}{4C_* \left(1+\mathcal{E}_0^{\frac 1\alpha}\right)-1}
		\mathcal{H}(0)
		\\
		&\leq 2\left[4C_* \left(1+\mathcal{E}_0^{\frac 1\alpha}\right)\right]^{i+1} \mathcal{H}(0)
		\leq 2\left[4C_* \left(1+\mathcal{E}_0^{\frac 1\alpha}\right)\right]^{1+\left[2C_* \left(1+\mathcal{E}_0^{\frac 1\alpha+\frac{p-2}{p-1}}\right)\right]^{2(p-1)}\mathcal{J}(0,t)^2} \mathcal{H}(0),
	\end{aligned}
\end{equation*}
provided
\begin{equation*}
	\begin{aligned}
		\mathcal{H}(0)&<
		\frac{c}{4C_* \left(1+\mathcal{E}_0^{\frac 1\alpha+1}\right)
		\left[4C_* \left(1+\mathcal{E}_0^{\frac 1\alpha}\right)\right]^{1+\left[2C_* \left(1+\mathcal{E}_0^{\frac 1\alpha+\frac{p-2}{p-1}}\right)\right]^{2(p-1)}\frac\sigma 2\mathcal{E}_0^2}}
		\\
		&\leq\frac{c}{4C_* \left(1+\mathcal{E}_0^{\frac 1\alpha+1}\right)\left[4C_* \left(1+\mathcal{E}_0^{\frac 1\alpha}\right)\right]^{1+n}}
	\end{aligned}
\end{equation*}
holds initially.

Summarizing the preceding developments, we have now established the existence of an independent constant $C_*>0$ such that, if the smooth solution $(u,E,B)$ has an initial data satisfying
\begin{equation}\label{initial:2}
	4C_* \left(1+\mathcal{E}_0^{\frac 1\alpha+1}\right)
	\left[4C_* \left(1+\mathcal{E}_0^{\frac 1\alpha}\right)\right]^{1+\left[2C_* \left(1+\mathcal{E}_0^{\frac 1\alpha+\frac{p-2}{p-1}}\right)\right]^{2(p-1)}\frac\sigma 2\mathcal{E}_0^2}
	\mathcal{H}(0)<c,
\end{equation}
then the bound
\begin{equation}\label{global:bound:1}
	\mathcal{H}(0,t)\leq
	2\left[4C_* \left(1+\mathcal{E}_0^{\frac 1\alpha}\right)\right]^{1+\left[2C_* \left(1+\mathcal{E}_0^{\frac 1\alpha+\frac{p-2}{p-1}}\right)\right]^{2(p-1)}\mathcal{J}(0,t)^2} \mathcal{H}(0),
\end{equation}
holds globally, for any $t\in [0,\infty)$. In view of the approximation procedure laid out in Section \ref{approximation:0}, this uniform control allows us to complete the construction of solutions claimed in the statement of Theorem \ref{main:1}.

Moreover, observe that all global bounds on $(u,E,B)$ stated in \eqref{propagation:damping} are a direct consequence of \eqref{proof:9} and \eqref{global:bound:1}.

Finally, in order to deduce the simpler initial condition \eqref{initial:1} from \eqref{initial:2}, there only remains to notice, for any given $\varepsilon>0$, by taking $0<\alpha<1$ and $2<p<\infty$ sufficiently close to the values $1$ and $2$, respectively, that
\begin{equation*}
	4C_* \left(1+\mathcal{E}_0^{\frac 1\alpha+1}\right)
	\left[4C_* \left(1+\mathcal{E}_0^{\frac 1\alpha}\right)\right]^{1+\left[2C_* \left(1+\mathcal{E}_0^{\frac 1\alpha+\frac{p-2}{p-1}}\right)\right]^{2(p-1)}\frac\sigma 2\mathcal{E}_0^2}
	\leq
	C_{**}e^{C_{**}\mathcal{E}_0^{4+\varepsilon}},
\end{equation*}
for some large independent constant $C_{**}>0$. Then, since the global bound \eqref{global:bound:1} holds for that particular choice of $p$ close to $2$, one can use the ensuing uniform controls on the solution $(u,E,B)$ in combination with \eqref{proof:1} to propagate the $L^p_x$-norm of the vorticity $\omega$ for higher values $2<p<\infty$, which completes the proof of Theorem \ref{main:1}.\qed

\subsection{Proof of Theorem \ref{main:2}}\label{section:main:2}

The proof of Theorem \ref{main:2} is a continuation of that of Theorem \ref{main:1}. Thus, assuming that the solution $(u,E,B)$ produced by Theorem \ref{main:1} is already constructed, we observe that Lemma \ref{vorticity:0} provides us with the additional bound
\begin{equation*}
	\begin{aligned}
		\norm{\omega(t)}_{L^\infty_x}
		&\lesssim \norm{\omega(0)}_{L^\infty_x}
		+\norm{j}_{L^2([0,t);L^\infty_x)}
		\norm{\nabla B}_{L^2([0,t);L^\infty_x)}
		\\
		&\lesssim \norm{\omega(0)}_{L^\infty_x}
		+\left(\norm{j}_{L^2([0,t);\dot B^0_{\infty,1,<})}+\norm{j}_{L^2([0,t);\dot B^0_{\infty,1,>})}\right)
		\norm{\nabla B}_{L^2([0,t);L^\infty_x)},
	\end{aligned}
\end{equation*}
for any $t\geq 0$, which, when combined with Ohm's law, the two-dimensional embedding $\dot B^1_{2,1}\subset \dot B^0_{\infty,1}$ and the paradifferential product law \eqref{para-product:7}, further yields that
\begin{equation*}
	\begin{aligned}
		\norm{\omega(t)}_{L^\infty_x}
		&\lesssim \norm{\omega(0)}_{L^\infty_x}
		+\left(\norm{j}_{L^2_t\dot B^1_{2,1,<}}+c\norm{E}_{L^2_t\dot B^0_{\infty,1,>}}
		+\norm{P(u\times B)}_{L^2_t\dot B^1_{2,1}}
		\right)\norm{\nabla B}_{L^2_tL^\infty_x}
		\\
		&\lesssim \norm{\omega(0)}_{L^\infty_x}
		+\Big(c\norm{j}_{L^2_t\dot B^0_{2,\infty,<}}+\norm{E}_{L^2_t\dot B^1_{\infty,1,>}}
		\\
		&\quad+\norm{u}_{L^\infty_tL^2_x}\norm{B}_{L^2_t\dot B^{1}_{\infty,1}}
		+\norm{u}_{L^\infty_t\dot H^1_x}\norm{B}_{L^2_t\dot H^1_x}
		\Big)\norm{\nabla B}_{L^2_tL^\infty_x}.
	\end{aligned}
\end{equation*}

Now, recall from \eqref{infinity:bound:1} that $\nabla B$ belongs to $L^2(\mathbb{R}^+;\dot B^0_{\infty,1})\subset L^2(\mathbb{R}^+;L^\infty_x)$. Therefore, by virtue of the energy inequality \eqref{energy-inequa}, the global bounds \eqref{propagation:damping} and the assumption that the initial vorticity belongs to $L^\infty_x$, we conclude the pointwise boundedness of $\omega(t)$ for all times.

Observe, though, that the ensuing bound $\omega\in L^\infty_{t,x}$ is global in time, but it is not uniform in $c$. Nevertheless, it is possible to derive another global bound on the vorticity in $L^\infty_{t,x}$, uniformly in $c$, by employing Ohm's law to control $j$ in $L^2_t\dot B^1_{2,1,<}$ and requiring the additional initial assumption that $E_0\in \dot B^{1}_{2,1}$.

Specifically, the use of Ohm's law to expand the low frequencies of $j$ leads to the necessity of controlling $cE$ uniformly in the space $L^2_t\dot B^1_{2,1,<}$, which can be achieved by relying on the low-frequency estimates from Lemma \ref{low:freq:estimates}. More precisely, by combining \eqref{frequencies:10}, with $s=1$, and \eqref{frequencies:7}, with $s=2-\alpha$ and $0<\alpha<1$, it is possible to establish, by repeating the steps leading to \eqref{proof:11}, that
\begin{equation*}
	\begin{aligned}
		c\norm{E}_{L^2_t\dot B^{1}_{2,1,<}}
		& \lesssim
		\norm{E_0}_{\dot B^{1}_{2,1}}+\norm{B_0}_{\dot H^1_x}
		+\norm{u}_{L^\infty_{t,x} \cap L^\infty_t\dot H^1_x}
		\left(c^{-1}\norm{B}_{ L_t^{2}\dot B^2_{2,1,<} }+
		c^{-\frac 34}\norm{B}_{ \widetilde L_t^{2}\dot B^\frac{7}{4}_{2,1,>} }
		\right)
		\\
		&\quad +\norm{u}_{L_t^{\infty}L^2_x}^{1-\alpha}
		\norm{u}_{L_t^{\infty}\dot H^1_x}^\alpha
		\norm{B}_{L_t^{2}\dot H^1_x}^\alpha
		\norm{B}_{L_t^{2}\dot B^{2}_{2,1,<}}^{1-\alpha}.
	\end{aligned}
\end{equation*}
All terms in the right-hand side of this estimate are now uniformly controlled (in $c$) by the bounds \eqref{propagation:damping}, provided one further assumes that the inital data $E_0$ belongs to $\dot B^{1}_{2,1}$. This concludes the justification of the bound $\omega\in L^\infty_{t,x}$, uniformly in $c$.

We turn now to the uniqueness of solutions to \eqref{EM}, which rests upon Yudovich's fundamental ideas. To that end, suppose that
\begin{equation*}
	(u_i,E_i,B_i)\in L^\infty([0,T);L^2_x),
\end{equation*}
with $i=1,2$, are two weak solutions to the two-dimensional incompressible Euler--Maxwell system \eqref{EM}, for the same initial data and for some existence time $T>0$. We are going to establish a weak-strong uniqueness principle by requiring a control on the solution $(u_2,E_2,B_2)$ which is stronger than the one on $(u_1,E_1,B_1)$.

In a natural way, we denote the vorticities and electric currents associated to each solution by $\omega_i$ and $j_i$, respectively.
Furthermore, we assume that each solution satisfies its correponding energy inequality \eqref{energy-inequa} and that
\begin{equation*}
	u_1\in L^2([0,T);L^\infty_x),
	\qquad
	\omega_2\in \bigcap_{2\leq q\leq \infty}L^{q'}([0,T);L^q_x),
	\qquad
	j_2\in L^1([0,T);L^\infty_x).
\end{equation*}
By virtue of the Gagliardo--Nirenberg inequality \eqref{convexity:2}, recall that the above bounds are sufficient to imply that
\begin{equation*}
	u_2\in L^2([0,T);L^\infty_x),
\end{equation*}
as well. Note that we are not requiring here that the solutions have the normal structure \eqref{structure:2dim}.

Next, a straightforward duality argument on \eqref{EM}, similar to the computation which gives the energy inequality \eqref{energy-inequa}, leads to
\begin{equation*}
	\begin{aligned}
		\frac{d}{dt}\int_{\mathbb{R}^2}\left(u_1\cdot u_2+E_1\cdot E_2+B_1\cdot B_2\right)dx
		+\frac 2\sigma\int_{\mathbb{R}^2}j_1\cdot j_2dx
		\hspace{-55mm}&
		\\
		&=-\int_{\mathbb{R}^2}\left(u_2\otimes (u_1-u_2)\right):\nabla (u_1-u_2)dx
		\\
		&\quad +\int_{\mathbb{R}^2}\left(
		((j_1-j_2)\times (B_1-B_2))\cdot u_2-(j_2\times (B_1-B_2))\cdot (u_1-u_2)
		\right)dx.
	\end{aligned}
\end{equation*}
It is to be emphasized that the solutions considered here have sufficient regularity and integrability to justify the above computation rigorously.

We introduce now the modulated energy
\begin{equation*}
	F_\varepsilon(t)\bydef
	\frac12\left(\norm {\tilde u(t)}_{L^2}^2 + \|\tilde E(t)\|_{L^2}^2 + \|\tilde B(t)\|_{L^2}^2\right)
	+\varepsilon,
\end{equation*}
where
\begin{equation*}
	\tilde u\bydef u_1-u_2,\quad \tilde E\bydef E_1-E_2,\quad \tilde B\bydef B_1-B_2,
\end{equation*}
and $\varepsilon>0$ merely ensures the positivity of $F_\varepsilon$.

Then, integrating the preceding identity in time and combining the result with the energy inequality \eqref{energy-inequa} for each solution, we deduce the estimate
\begin{equation*}
	\begin{aligned}
		F_\varepsilon(t)
		+\frac{1}{\sigma}\int_0^t \norm {\tilde j(\tau)}_{L^2_x}^2 d\tau
		&\leq \varepsilon-
		\int_0^t\int_{\mathbb{R}^2}\left[\left(\left(\tilde u\cdot\nabla\right) u_2\right)\cdot \tilde u
		+(\tilde j\times \tilde B)\cdot u_2
		-(j_2\times \tilde B)\cdot \tilde u\right](\tau)
		dxd\tau
		\\
		&\leq \varepsilon+
		\int_0^t\norm{\nabla u_2}_{L^q_x}\norm{\tilde u}_{L^{\infty}_x}^{\frac{2}{q}}
		\norm{\tilde u}_{L^{2}_x}^\frac{2}{q'}d\tau
		\\
		&\quad +\int_0^t\norm{\tilde j}_{L^2_x}\left[\big\|\tilde B\big\|_{L^2_x}\norm{u_2}_{L^\infty_x}
		+\norm{j_2}_{L^\infty_x}\big\|\tilde B\big\|_{L^2_x}\norm{\tilde u}_{L^2_x}
		\right]d\tau
		\\
		&\leq \varepsilon+
		\int_0^t\norm{\nabla u_2}_{L^q_x}\norm{\tilde u}_{L^{\infty}_x}^{\frac{2}{q}}
		\norm{\tilde u}_{L^{2}_x}^\frac{2}{q'}d\tau
		+\frac{1}{2\sigma}\int_0^t \norm {\tilde j(\tau)}_{L^2_x}^2 d\tau
		\\
		&\quad +\int_0^t\left[
		\frac\sigma 2\big\|\tilde B\big\|_{L^2_x}^2\norm{u_2}_{L^\infty_x}^2
		+\norm{j_2}_{L^\infty_x}\big\|\tilde B\big\|_{L^2_x}\norm{\tilde u}_{L^2_x}
		\right]d\tau,
	\end{aligned}
\end{equation*}
for all $t\in [0,T)$ and any $2\leq q<\infty$, where we have denoted $\tilde j\bydef j_1-j_2$.

The next step relies on a classical sharp estimate on the Biot--Savart law \eqref{biot}. More precisely, by exploiting that the map $\omega\mapsto \nabla(-\Delta^{-1}\nabla\times\omega)=\nabla u$ produces a Calder\'on--Zygmund singular integral operator, it is possible to show that
\begin{equation*}
	\norm{\nabla u}_{L^a(\mathbb{R}^2)}\leq C_{\rm BS}\frac{a^2}{a-1}\norm{\nabla\times u}_{L^a(\mathbb{R}^2)},
\end{equation*}
for all $1<a<\infty$ and any divergence-free vector field $u$, where $C_{\rm BS}>0$ is independent of $a$. We refer to \cite[Section 7.1.1]{bcd11} for more details concerning the Biot--Savart law and to \cite[Section 6.2.3]{g14} for a Fourier multiplier theorem which can be used to obtain the correct dependence of the above Biot--Savart estimate in the parameter $a$.

Thus, we deduce from the previous bound that
\begin{equation*}
	F_\varepsilon(t)
	\leq \varepsilon+
	\int_0^t2C_{\rm BS}q\norm{\omega_2}_{L^q_x}\norm{\tilde u}_{L^{\infty}_x}^{\frac{2}{q}}
	F_\varepsilon(\tau)^\frac{1}{q'}+\left[
	\sigma \norm{u_2}_{L^\infty_x}^2
	+\norm{j_2}_{L^\infty_x}
	\right]F_\varepsilon(\tau)d\tau.
\end{equation*}
Moreover, denoting the right-hand side of the above inequality by $G_\varepsilon(t)$ and observing that it is absolutely continuous on $[0,T)$, we obtain that
\begin{equation*}
	\frac d{dt}G_\varepsilon(t)
	\leq
	2C_{\rm BS}q\norm{\omega_2}_{L^q_x}\norm{\tilde u}_{L^{\infty}_x}^{\frac{2}{q}}
	G_\varepsilon(t)^\frac{1}{q'}+\left[
	\sigma \norm{u_2}_{L^\infty_x}^2
	+\norm{j_2}_{L^\infty_x}
	\right]G_\varepsilon(t),
\end{equation*}
for almost every $t\in [0,T)$. Therefore, further introducing the absolutely continuous functional
\begin{equation*}
	\Theta_\varepsilon(t)\bydef
	G_\varepsilon(t)e^{-\int_0^t \left[
	\sigma \norm{u_2}_{L^\infty_x}^2
	+\norm{j_2}_{L^\infty_x}
	\right](\tau) d\tau},
\end{equation*}
we see that
\begin{equation*}
	\frac d{dt}\Theta_\varepsilon(t)
	\leq
	2C_{\rm BS}q\norm{\omega_2}_{L^q_x}\norm{\tilde u}_{L^{\infty}_x}^{\frac{2}{q}}
	\Theta_\varepsilon(t)^\frac{1}{q'}
	e^{-\frac 1q\int_0^t \left[
	\sigma \norm{u_2}_{L^\infty_x}^2
	+\norm{j_2}_{L^\infty_x}
	\right](\tau) d\tau},
\end{equation*}
whence
\begin{equation*}
	\frac d{dt}\Theta_\varepsilon(t)^\frac 1q
	\leq
	2C_{\rm BS}\norm{\omega_2}_{L^q_x}\norm{\tilde u}_{L^{\infty}_x}^{\frac{2}{q}}
	e^{-\frac 1q\int_0^t \left[
	\sigma \norm{u_2}_{L^\infty_x}^2
	+\norm{j_2}_{L^\infty_x}
	\right](\tau) d\tau}.
\end{equation*}
Observe that all technical difficulties incurred by the division by $\Theta_\varepsilon(t)^\frac{1}{q'}$, in the last step, are removed by the use of $\varepsilon>0$.

Now, integrating in time leads to
\begin{equation*}
	\begin{aligned}
		F_\varepsilon(t)^\frac 1q\leq G_\varepsilon(t)^\frac 1q
		&\leq \varepsilon e^{\frac 1q \int_0^t \left[
		\sigma \norm{u_2}_{L^\infty_x}^2
		+\norm{j_2}_{L^\infty_x}
		\right](\tau) d\tau}
		\\
		&\quad+
		2C_{\rm BS}\int_0^t\norm{\omega_2(\tau)}_{L^q_x}\norm{\tilde u(\tau)}_{L^{\infty}_x}^{\frac{2}{q}}
		e^{\frac 1q\int_\tau^t \left[
		\sigma \norm{u_2}_{L^\infty_x}^2
		+\norm{j_2}_{L^\infty_x}
		\right](s) ds}d\tau,
	\end{aligned}
\end{equation*}
whereby, letting $\varepsilon\to 0$, we end up with
\begin{equation*}
	\begin{aligned}
		\frac 12\left(\norm {\tilde u(t)}_{L^2}^2 + \|\tilde E(t)\|_{L^2}^2 + \|\tilde B(t)\|_{L^2}^2\right)
		\hspace{-50mm}&
		\\
		&\leq
		\left(
		2C_{\rm BS}
		\int_0^t\norm{\omega_2(\tau)}_{L^q_x}\norm{\tilde u(\tau)}_{L^{\infty}_x}^{\frac{2}{q}}
		d\tau
		\right)^q
		e^{\int_0^t \left[
		\sigma \norm{u_2}_{L^\infty_x}^2
		+\norm{j_2}_{L^\infty_x}
		\right](s) ds}
		\\
		&\leq
		\left(
		2C_{\rm BS}\norm{\omega_2}_{L^{q'}([0,t);L^q_x)}\right)^q
		\norm{\tilde u}_{L^2([0,t);L^\infty_x)}^2
		e^{
		\sigma\norm{u_2}_{L^2([0,t);L^\infty_x)}^2
		+\norm{j_2}_{L^1([0,t);L^\infty_x)}}
		\\
		&\leq
		\left(
		2C_{\rm BS}
		\norm{\omega_2}_{L^{1}([0,t);L^\infty_x)}\right)^{q-2}
		\left(2C_{\rm BS}
		\norm{\omega_2}_{L^{2}([0,t);L^2_x)}\right)^2
		\\
		&\quad\times\norm{\tilde u}_{L^2([0,t);L^\infty_x)}^2
		e^{
		\sigma\norm{u_2}_{L^2([0,t);L^\infty_x)}^2
		+\norm{j_2}_{L^1([0,t);L^\infty_x)}}.
	\end{aligned}
\end{equation*}
Finally, considering values $t\in [0,T)$ such that
\begin{equation*}
	\norm{\omega_2}_{L^{1}([0,t);L^\infty_x)} < \frac 1{2C_{\rm BS}}
\end{equation*}
and then letting $q$ tend to infinity, we conclude that $\big(\tilde u,\tilde E,\tilde B\big)(t)=0$, thereby establishing the uniqueness of solutions on a time interval $[0,t)$, for some $t\in(0,T)$.

Now, observe that this argument can be reproduced on any time interval $[t_0,T)$, with $t_0>0$ such that $\big(\tilde u,\tilde E,\tilde B\big)(t_0)=0$, to prove the uniqueness of solutions on $[t_0,t)$, for some $t\in(t_0,T)$. In other words, we have shown that the set
\begin{equation*}
	S=\left\{t\in [0,T) : \int_0^t\big\|\big(\tilde u,\tilde E,\tilde B\big)(s)\big\|_{L^2_x}ds=0\right\}
\end{equation*}
is open. Since the function $t\mapsto \int_0^t\big\|\big(\tilde u,\tilde E,\tilde B\big)(s)\big\|_{L^2_x}ds$ is continuous, the set $S$ is actually both open and closed. Furthermore, it is non-empty and $[0,T)$ is connected. We conclude that $S=[0,T)$ and, therefore, that both solutions $(u_1,E_1,B_1)$ and $(u_2,E_2,B_2)$ match on the whole interval of existence $[0,T)$. This completes the proof of the weak-strong uniqueness principle, thereby concluding the proof of the theorem. \qed

\begin{rem}
	In view of the estimates on the electric current $j$ established in the preceding proof, it seems also possible to propagate the boundedness of the $\dot B^s_{p,1}$-norms of the vorticity, with $p\in (1,\infty)$ and $s=\frac 2p$, by making use of the methods developed in \cite{v} to prove the global well-posedness of the two-dimensional incompressible Euler system in critical spaces. We also refer to \cite{HHK} and \cite{Z} for well-posedness results of similar models in critical spaces.
\end{rem}

\begin{rem}
	It is possible to propagate the boundedness of $L^p$-norms of the vorticity, for values $1\leq p<2$. Indeed, a variation of the proof of Lemma \ref{vorticity:0} gives that
	\begin{equation*}
		\begin{aligned}
			\norm{\omega(t)}_{L^p_x}
			&\leq \norm{\omega(0)}_{L^p_x}
			+\norm{j}_{L^2([0,t);L^2_x)}\norm{\nabla B}_{L^2([0,t);L^\frac{2p}{2-p}_x)}
			\\
			&\leq \norm{\omega(0)}_{L^p_x}
			+\norm{j}_{L^2([0,t);L^2_x)}
			\norm{\nabla B}_{L^2([0,t);L^2_x)}^{\frac{2}{p}-1}
			\norm{\nabla B}_{L^2([0,t);L^\infty_x)}^{2-\frac 2p},
		\end{aligned}
	\end{equation*}
	for any $t>0$. It is then readily seen that the terms involving $\nabla B$ are controlled by the bounds \eqref{propagation:damping} and \eqref{infinity:bound:1}, whereas the electric current $j$ remains bounded by virtue of the energy inequality \eqref{energy-inequa}.
\end{rem}

\subsection{Proof of Theorem \ref{main:3}}\label{section:main:3}

The proof of Theorem \ref{main:3} builds upon the estimates established in Theorem \ref{main:1}. Thus, following the proof of that theorem, we assume that we have a smooth solution $(u,E,B)$ of \eqref{EM}, for some smooth initial data $(u_0,E_0,B_0)$, and we only derive the bounds relevant to our argument through formal estimates on $(u,E,B)$, keeping in mind that the full justification of the result is then completed by carrying out the approximation strategy laid out in Section \ref{approximation:0}.

Now, the proof of Theorem \ref{main:1} establishes that the bound \eqref{initial:2} on the initial data $(u_0,E_0,B_0)$ implies the global uniform bound \eqref{global:bound:1} on the solution $(u,E,B)$. In particular, combining the two inequalities \eqref{initial:2} and \eqref{global:bound:1}, we see that, for any $0<A<1$, if the initial data satisfies \eqref{initial:2} with its right-hand side replaced by $Ac$, then the solution $(u,E,B)$ satisfies the estimate
\begin{equation}\label{handy:A}
	\mathcal{H}(0,T)< \frac{Ac}{2C_*\left(1+\mathcal{E}_0^{\frac 1\alpha+1}\right)},
\end{equation}
for all $T\in [0,\infty)$. This global bound will come in handy, below, with some small but fixed value for the constant $A$.

Next, in order to derive a higher-regularity estimate on the field $(E,B)$, we extend the control of high electromagnetic frequencies \eqref{proof:2} to higher smoothness parameters. Specifically, a direct application of estimate \eqref{frequencies:4} from Lemma \ref{high:freq:estimates} yields that
\begin{equation*}
	\begin{aligned}
		c^{1-s}\norm{(E,B)}_{\widetilde L^\infty_t\dot B^{s}_{2,n,>}}
		+c^{2-s}\norm{(E,B)}_{\widetilde L^2_t\dot B^{s}_{2,n,>}}
		+c^{\frac 74-s}\norm{(E,B)}_{\widetilde L^2_t\dot B^{s-\frac {3}4}_{\infty,n,>}}
		\hspace{-50mm}&
		\\
		&\lesssim
		c^{1-s}\norm{(E_0,B_0)}_{\dot B^s_{2,n,>}}
		+c^{1-s}\norm{u}_{L^\infty_{t,x} \cap L^\infty_t\dot H^1_x}\norm{B}_{\widetilde L_t^{2}\dot B^s_{2,n}},
	\end{aligned}
\end{equation*}
on any time interval $[0,T)$, for any $\frac 74<s<2$ and $1\leq n\leq\infty$. Then, recalling that the energy $\mathcal{H}(0,T)$ controls the velocity $u$ in $L^\infty_{t,x} \cap L^\infty_t\dot H^1_x$ (thanks to the Gagliardo--Nirenberg interpolation inequality \eqref{convexity:1}) and the magnetic field $B$ in $L_t^{2}\dot B^2_{2,1,<}$, we infer that the last term above can be bounded by
\begin{equation*}
	\begin{aligned}
		c^{1-s}\mathcal{H}(0,T)&
		\left(\norm{B}_{\widetilde L_t^{2}\dot B^1_{2,\infty,<}}^{2-s}\norm{B}_{\widetilde L_t^{2}\dot B^2_{2,\infty,<}}^{s-1}
		+\norm{B}_{\widetilde L_t^{2}\dot B^s_{2,n,>}}\right)
		\\
		&\lesssim
		c^{1-s}\mathcal{H}(0,T)
		\left(\norm{B}_{L_t^{2}\dot H^1_x}^{2-s}\norm{B}_{L_t^{2}\dot B^2_{2,1,<}}^{s-1}
		+\norm{B}_{\widetilde L_t^{2}\dot B^s_{2,n,>}}\right)
		\\
		&\lesssim
		c^{1-s}\mathcal{H}(0,T)
		\left(\left(\mathcal{E}_0+c^{-1}\mathcal{H}(0)
		+c^{-1}\mathcal{E}_0\mathcal{H}(0,T)\right)^{2-s}
		\mathcal{H}(0,T)^{s-1}
		+\norm{B}_{\widetilde L_t^{2}\dot B^s_{2,n,>}}\right),
	\end{aligned}
\end{equation*}
where we also employed \eqref{proof:9} in the last step to control $B$ in $L_t^{2}\dot H^1_x$.

All in all, combining the preceding inequalities and making use of \eqref{handy:A}, we arrive at
\begin{equation*}
	\begin{aligned}
		&c^{1-s}\norm{(E,B)}_{\widetilde L^\infty_t\dot B^{s}_{2,n,>}}
		+c^{2-s}\norm{(E,B)}_{\widetilde L^2_t\dot B^{s}_{2,n,>}}
		+c^{\frac 74-s}\norm{(E,B)}_{\widetilde L^2_t\dot B^{s-\frac {3}4}_{\infty,n,>}}
		\\
		&\lesssim
		c^{1-s}\norm{(E_0,B_0)}_{\dot B^s_{2,n,>}}
		+c^{1-s}\mathcal{E}_0^{2-s}\mathcal{H}(0,T)^s
		+c^{-1}\left(1+\mathcal{E}_0^{2-s}\right)\mathcal{H}(0,T)^2
		+Ac^{2-s}
		\norm{B}_{\widetilde L_t^{2}\dot B^s_{2,n,>}}.
	\end{aligned}
\end{equation*}
Thus, setting the value of the constant $A$ so small (with respect to fixed parameters only) that the last term above can be absorbed by the left-hand side, the previous estimate can be recast as
\begin{equation*}
	\begin{aligned}
		c^{1-s}\norm{(E,B)}_{\widetilde L^\infty_t\dot B^{s}_{2,n,>}}
		&+c^{2-s}\norm{(E,B)}_{\widetilde L^2_t\dot B^{s}_{2,n,>}}
		+c^{\frac 74-s}\norm{(E,B)}_{\widetilde L^2_t\dot B^{s-\frac {3}4}_{\infty,n,>}}
		\\
		&\lesssim
		c^{1-s}\norm{(E_0,B_0)}_{\dot B^s_{2,n,>}}
		+c^{1-s}\mathcal{E}_0^{2-s}\mathcal{H}(0,T)^s
		+c^{-1}\left(1+\mathcal{E}_0^{2-s}\right)\mathcal{H}(0,T)^2,
	\end{aligned}
\end{equation*}
which provides the pursued uniform control of the solution $(u,E,B)$ in higher regularity spaces.

Finally, there only remains to notice that 
\begin{equation*}
	\begin{aligned}
		c^{-\frac 34}\norm{(E_0,B_0)}_{\dot B^\frac 74_{2,1}}
		&\leq c^{-\frac 34}\norm{(E_0,B_0)}_{\dot B^\frac 74_{2,1,<}} + c^{-\frac 34}\norm{(E_0,B_0)}_{\dot B^\frac 74_{2,1,>}}
		\\
		&\lesssim
		\norm{(E_0,B_0)}_{\dot H^1}
		+
		c^{1-s}\norm{(E_0,B_0)}_{\dot B^s_{2,n}},
	\end{aligned}
\end{equation*}
which allows us to deduce that a suitable choice of independent constant $C_{**}>0$ in the initial assumption \eqref{initial:3} implies the corresponding initial condition \eqref{initial:1} in Theorem \ref{main:1} with its right-hand side replaced by $Ac$ (this is necessary to guarantee the validity of \eqref{handy:A}), thereby completing the proof of the theorem. \qed

\subsection{Uniform bounds for fixed initial data}\label{fixed:data:0}

As previously mentioned, the controls \eqref{propagation:damping} and \eqref{propagation:damping:3}, from Theorems \ref{main:1} and \ref{main:3}, hold for any families of initial data such that the left-hand sides of \eqref{initial:1} and \eqref{initial:3} remain respectively bounded. In particular, within such families, the corresponding collection of global solutions only satisfies the respective uniform bounds
\begin{equation*}
	\begin{aligned}
		c^{-\frac 34}(E,B)&\in \widetilde L^\infty(\mathbb{R}^+;\dot B^\frac 74_{2,1}),
		\\
		c^{1-s}(E,B)&\in \widetilde L^\infty(\mathbb{R}^+;\dot B^s_{2,n}).
	\end{aligned}
\end{equation*}
Thus, there is, in general, no bound on the size of the electromagnetic field $(E,B)$ in $\widetilde L^\infty(\mathbb{R}^+;\dot B^\frac 74_{2,1})$ and $\widetilde L^\infty(\mathbb{R}^+;\dot B^s_{2,n})$, uniformly in $c$, if the corresponding family of initial data $(E_0,B_0)$ only satisfies a uniform control
\begin{equation*}
	\begin{aligned}
		c^{-\frac 34}(E_0,B_0)&\in \dot B^\frac 74_{2,1},
		\\
		c^{1-s}(E_0,B_0)&\in \dot B^s_{2,n},
	\end{aligned}
\end{equation*}
respectively.

For example, such sets of initial electromagnetic fields occur naturally when considering mollifications
\begin{equation*}
	(u_0^c,E_0^c,B_0^c)=\varphi_c*(u_0,E_0,B_0),
\end{equation*}
where $\varphi_c(x)=c^2\varphi(cx)$ is a classical approximate identity and $(u_0,E_0,B_0)$ is a given initial data satisfying
\begin{equation}\label{initial:4}
	\left(\mathcal{E}_0+\norm{u_0}_{\dot H^1\cap\dot W^{1,p}}
	+\norm{(E_0,B_0)}_{\dot H^1}\right)
	C_{***}e^{C_{***}\mathcal{E}_0^{4+\varepsilon}}< c,
\end{equation}
with $C_{***}>0$. Indeed, it is readily seen that $(u_0^c,E_0^c,B_0^c)$ satisfies the bounds \eqref{initial:1} and \eqref{initial:3} provided \eqref{initial:4} holds true, for some suitable constant $C_{***}$.

We are now going to show that the solutions constructed in Theorems \ref{main:1} and \ref{main:3} have, in fact, an electromagnetic field which remains uniformly bounded in
\begin{equation*}
	\widetilde L^\infty(\mathbb{R}^+;\dot B^\frac 74_{2,1})
	\quad\text{and}\quad
	\widetilde L^\infty(\mathbb{R}^+;\dot B^s_{2,n}),
\end{equation*}
provided that their corresponding initial values are selected within a bounded family of $\dot B^\frac 74_{2,1}$ and $\dot B^s_{2,n}$, respectively. In particular, such uniform bounds hold whenever one considers a fixed initial data independent of $c$. This is of special significance in the study of the limiting regime $c\to\infty$, in order to derive sharp asymptotic bounds.

Specifically, the next result provides a suitable energy estimate on the damped Maxwell system \eqref{Maxwell:system:*} and the ensuing corollary clarifies the uniform boundedness properties of the solutions built in Theorems \ref{main:1} and \ref{main:3}.

\begin{lem}\label{technical:energy}
	Let $d=2$. Assume that $(E,B)$ is a smooth solution to \eqref{Maxwell:system:*}, for some initial data $(E_0,B_0)$ and some divergence-free vector field $u$, with the normal structure \eqref{structure:2dim}.
	
	Then, one has the estimate
	\begin{equation}\label{energy:EB:2}
		\begin{aligned}
			\norm{(E,B)}_{\widetilde L^\infty_t\dot B^s_{2,n}}
			+c\norm{E}_{\widetilde L^2_t\dot B^s_{2,n}}
			\hspace{-30mm}&
			\\
			&\lesssim \norm{(E_0,B_0)}_{\dot B^s_{2,n}}
			+\norm{u}_{L^\infty_{t,x} \cap L^\infty_t\dot H^1_x}
			\left(
			\norm{B}_{ L^2_t\dot H^1_x}
			+\norm{B}_{ L^2_t\dot B^2_{2,1,<}}
			+\norm{B}_{ \widetilde L^2_t\dot B^s_{2,n,>}}\right),
		\end{aligned}
	\end{equation}
	over any time interval $[0,T)$, for any $1<s<2$ and $1\leq n\leq \infty$.
\end{lem}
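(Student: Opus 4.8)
The plan is to mimic the proof of Lemma \ref{prop:classical:energy}, but performed frequency block by frequency block so as to land in the Chemin--Lerner space $\widetilde L^\infty_t\dot B^s_{2,n}$ rather than in $L^\infty_t\dot H^1_x$. First I would localize Maxwell's system \eqref{Maxwell:system:*} in frequency by applying the dyadic block $\Delta_k$: since $\Delta_k$ commutes with $\partial_t$, $\nabla\times$ and Leray's projector, the pair $(\Delta_k E,\Delta_k B)$ solves the same damped Maxwell system with source $-\sigma \Delta_k P(u\times B)$. Taking the $L^2_x$-inner product of the first equation with $\Delta_k E$, the second with $\Delta_k B$, summing, and using the antisymmetry of $\nabla\times$ (so the cross terms cancel after integration by parts), I get the differential inequality
\begin{equation*}
	\frac 1{2c}\frac{d}{dt}\norm{\Delta_k(E,B)}_{L^2_x}^2+\sigma c\norm{\Delta_k E}_{L^2_x}^2
	\leq \sigma\norm{\Delta_k P(u\times B)}_{L^2_x}\norm{\Delta_k E}_{L^2_x},
\end{equation*}
whence, after absorbing $\norm{\Delta_k E}_{L^2_x}$ by Young's inequality and integrating in time,
\begin{equation*}
	\norm{\Delta_k(E,B)}_{L^\infty_t L^2_x}+c\norm{\Delta_k E}_{L^2_t L^2_x}
	\lesssim \norm{\Delta_k(E_0,B_0)}_{L^2_x}+\norm{\Delta_k P(u\times B)}_{L^2_t L^2_x}.
\end{equation*}
Then I multiply by $2^{ks}$, take the $\ell^n_k$-norm, and arrive at
\begin{equation*}
	\norm{(E,B)}_{\widetilde L^\infty_t\dot B^s_{2,n}}+c\norm{E}_{\widetilde L^2_t\dot B^s_{2,n}}
	\lesssim \norm{(E_0,B_0)}_{\dot B^s_{2,n}}+\norm{P(u\times B)}_{\widetilde L^2_t\dot B^s_{2,n}}.
\end{equation*}

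The remaining work is to bound the nonlinear term $\norm{P(u\times B)}_{\widetilde L^2_t\dot B^s_{2,n}}$ by the right-hand side of \eqref{energy:EB:2}. Here I would invoke the paradifferential product estimate \eqref{para-product:2} from Lemma \ref{paradifferential:1} (valid precisely because $s<2$ and the normal structure \eqref{structure:2dim} holds), which gives
\begin{equation*}
	\norm{P(u\times B)}_{\widetilde L^2_t\dot B^s_{2,n}}
	\lesssim \norm{u}_{L^\infty_{t,x}\cap \widetilde L^\infty_t\dot B^1_{2,\infty}}\norm{B}_{\widetilde L^2_t\dot B^s_{2,n}},
\end{equation*}
and $\widetilde L^\infty_t\dot B^1_{2,\infty}\supset L^\infty_t\dot H^1_x$ by a trivial embedding, so the velocity coefficient is $\norm{u}_{L^\infty_{t,x}\cap L^\infty_t\dot H^1_x}$ as desired. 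To produce the three separate magnetic-field norms in \eqref{energy:EB:2} I would split $B$ into its low- and high-frequency parts relative to $\sigma c$: the high-frequency piece $\mathds 1_{\{|D|\geq\sigma c\}}B$ is directly controlled by $\norm{B}_{\widetilde L^2_t\dot B^s_{2,n,>}}$, while on the low-frequency piece $\mathds 1_{\{|D|<\sigma c\}}B$ one has, since $1<s<2$, the interpolation inequality $\norm{\mathds 1_{\{|D|<\sigma c\}}B}_{\widetilde L^2_t\dot B^s_{2,n}}\lesssim \norm{B}_{\widetilde L^2_t\dot B^1_{2,\infty,<}}^{2-s}\norm{B}_{\widetilde L^2_t\dot B^2_{2,\infty,<}}^{s-1}$, and then $\widetilde L^2_t\dot B^1_{2,\infty,<}\supset L^2_t\dot H^1_x$ and $\widetilde L^2_t\dot B^2_{2,\infty,<}\supset L^2_t\dot B^2_{2,1,<}$ by embeddings; applying Young's inequality $ab\lesssim a^{1/(2-s)}+b^{1/(s-1)}$ (or simply $ab\leq a+b$ after noting both factors are bounded by a common norm) converts the product into the sum $\norm{B}_{L^2_t\dot H^1_x}+\norm{B}_{L^2_t\dot B^2_{2,1,<}}$ appearing in the statement.

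I expect the only genuinely delicate point to be bookkeeping of the third (summability) Besov index through the Chemin--Lerner block-by-block estimate and the interpolation step, making sure the low-frequency interpolation does not secretly cost an $n$-to-$1$ upgrade that would break the claimed norms; this is the same ``devil in the details'' flagged in the remark after Lemma \ref{low:freq:estimates}, and it is handled by keeping the weakest possible third index ($\infty$) on each interpolation factor and only summing to $n$ at the very end. A minor subtlety is that the energy computation is performed on the approximate system \eqref{NSM}, where the additional self-adjoint dissipation $-\frac1n\Delta$ only helps, so it passes to the limit without change; and, as usual, all estimates hold uniformly on any time interval $[0,T)$ including $T=\infty$, thanks to the damping term $\sigma cE$ which is what makes the Young-inequality absorption and the time integration globally controlled.
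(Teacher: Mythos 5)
Your proposal follows the paper's proof essentially step for step: a block-by-block energy estimate on $(\Delta_k E,\Delta_k B)$, absorption of the electric-current term by Young's inequality, summation in $\ell^n$ to land in the Chemin--Lerner norms, the paradifferential bound \eqref{para-product:2} on $P(u\times B)$, and the low/high frequency split of $B$ at $\sigma c$ with the interpolation $\norm{B}_{\widetilde L^2_t\dot B^s_{2,n,<}}\lesssim\norm{B}_{\widetilde L^2_t\dot B^1_{2,\infty,<}}^{2-s}\norm{B}_{\widetilde L^2_t\dot B^2_{2,\infty,<}}^{s-1}$ followed by embeddings. This is the argument the paper gives; the only cosmetic point is that the paper leaves the factor $a^{2-s}b^{s-1}$ in its estimate and invokes "straightforward embeddings" rather than spelling out $a^{2-s}b^{s-1}\leq a+b$, but that is the same step you perform.
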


\begin{proof}
	This result follows from a direct energy estimate on the damped Maxwell system \eqref{Maxwell:system:*} and is an extension of Lemma \ref{prop:classical:energy}.
	
	In order to show \eqref{energy:EB:2}, we first localize \eqref{Maxwell:system:*} in frequencies by applying $\Delta_j$, for $j\in \mathbb{Z}$, and then perform a classical energy estimate on each dyadic frequency component $(\Delta_jE,\Delta_jB)$. This procedure yields the control
	\begin{equation*}
		\begin{aligned}
			\frac 1{2c}\norm{\left(\Delta_j E,\Delta_j B\right)(T)}_{L^2_x}^2
			+\sigma c\norm{\Delta_j E}_{L^2_t([0,T);L^2_x)}^2
			\hspace{-55mm}&
			\\
			&\leq
			\frac 1{2c}\norm{\left(\Delta_j E,\Delta_j B\right)(0)}_{L^2_x}^2
			+\sigma \norm{\Delta_j P(u\times B)}_{L^2_t([0,T);L^2_x)}
			\norm{\Delta_j E}_{L^2_t([0,T);L^2_x)}
			\\
			&\leq
			\frac 1{2c}\norm{\left(\Delta_j E,\Delta_j B\right)(0)}_{L^2_x}^2
			+\frac \sigma{2c}\norm{\Delta_j P(u\times B)}_{L^2_t([0,T);L^2_x)}^2
			+\frac{\sigma c}2\norm{\Delta_j E}_{L^2_t([0,T);L^2_x)}^2.
		\end{aligned}
	\end{equation*}
	Then, summing over $j$ in $\ell^n$, with $1\leq n\leq\infty$, we deduce that
	\begin{equation*}
		\norm{(E,B)}_{\widetilde L^\infty_t([0,T);\dot B^s_{2,n})}
		+c\norm{E}_{\widetilde L^2_t([0,T);\dot B^s_{2,n})}
		\lesssim \norm{(E_0,B_0)}_{\dot B^s_{2,n}}
		+\|P(u\times B)\|_{\widetilde L^2_t([0,T);\dot B^s_{2,n})},
	\end{equation*}
	for any $s\in\mathbb{R}$.
	
	Next, by the paradifferential product law \eqref{para-product:2}, we see that
	\begin{equation*}
		\|P(u\times B)\|_{\widetilde L^2_t\dot B^s_{2,n}}
		\lesssim
		\norm{u}_{L^\infty_{t,x} \cap L^\infty_t\dot B^1_{2,\infty}}\norm{B}_{ \widetilde L^2_t\dot B^s_{2,n}},
	\end{equation*}
	for any $-1<s<2$ and $1\leq n\leq\infty$. Therefore, combining the preceding inequalities, we conclude that
	\begin{equation*}
		\begin{aligned}
			\norm{(E,B)}_{\widetilde L^\infty_t\dot B^s_{2,n}}
			+c\norm{E}_{\widetilde L^2_t\dot B^s_{2,n}}
			\hspace{-25mm}&
			\\
			&\lesssim \norm{(E_0,B_0)}_{\dot B^s_{2,n}}
			+\norm{u}_{L^\infty_{t,x} \cap L^\infty_t\dot B^1_{2,\infty}}
			\left(\norm{B}_{ \widetilde L^2_t\dot B^s_{2,n,<}}+\norm{B}_{ \widetilde L^2_t\dot B^s_{2,n,>}}\right)
			\\
			&\lesssim \norm{(E_0,B_0)}_{\dot B^s_{2,n}}
			+\norm{u}_{L^\infty_{t,x} \cap L^\infty_t\dot B^1_{2,\infty}}
			\left(
			\norm{B}_{ \widetilde L^2_t\dot B^1_{2,\infty,<}}^{2-s}
			\norm{B}_{ \widetilde L^2_t\dot B^2_{2,\infty,<}}^{s-1}
			+\norm{B}_{ \widetilde L^2_t\dot B^s_{2,n,>}}\right),
		\end{aligned}
	\end{equation*}
	for all $1<s<2$ and $1\leq n\leq\infty$.
	
	Finally, combining the previous estimate with straightforward embeddings of Besov spaces proves \eqref{energy:EB:2}, thereby concluding the proof of the lemma.
\end{proof}

\begin{cor}\label{persistence:s-reg}
	Consider parameters $p\in (2,\infty)$, $\frac 74\leq s<2$ and $1\leq n\leq\infty$, such that $n=1$ if $s=\frac 74$. For any fixed initial data
	\begin{equation*}
		(u_0,E_0,B_0)\in
		\left((H^1\cap\dot W^{1,p})\times (H^1\cap\dot B^s_{2,n}) \times (H^1\cap\dot B^s_{2,n}) \right)(\mathbb{R}^2)
	\end{equation*}
	satisfying the assumptions of Theorem \ref{main:1} (if $s=\frac 74$ and $n=1$) or Theorem \ref{main:3} (if $s>\frac 74$), the corresponding global solutions $(u^c,E^c,B^c)$ constructed therein satisfy the bounds
	\begin{equation*}
		\begin{gathered}
			u^c\in L^\infty(\mathbb{R}^+;H^1\cap \dot W^{1,p}),
			\quad
			(E^c,B^c)\in L^\infty(\mathbb{R}^+;H^1),
			\quad
			(E^c,B^c)\in \widetilde L^\infty(\mathbb{R}^+;\dot B^s_{2,n}),
			\\
			(cE^c,B^c)\in L^2(\mathbb{R}^+;\dot H^1),
			\quad
			B^c\in L^2(\mathbb{R}^+;\dot B^2_{2,1,<}),
			\\
			c^{\frac 74 -s}(E^c,B^c)\in \widetilde L^2(\mathbb{R}^+;\dot B^{s-\frac 34}_{\infty,n,>}),
			\quad
			c E^c\in \widetilde L^2(\mathbb{R}^+;\dot B^s_{2,n}),
			\quad
			c^\frac 14 B^c\in \widetilde L^2(\mathbb{R}^+;\dot B^s_{2,n,>}),
		\end{gathered}
	\end{equation*}
	uniformly in $c$.
\end{cor}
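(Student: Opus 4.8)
The plan is to combine the global energy bound $\mathcal{H}(0,T) < Ac/(2C_*(1+\mathcal{E}_0^{1/\alpha+1}))$, established in the proofs of Theorems \ref{main:1} and \ref{main:3} (see \eqref{handy:A}), with the energy estimate \eqref{energy:EB:2} of Lemma \ref{technical:energy}. The key observation is that all the terms on the right-hand side of \eqref{energy:EB:2}, namely $\norm{u}_{L^\infty_{t,x}\cap L^\infty_t\dot H^1_x}$, $\norm{B}_{L^2_t\dot H^1_x}$, $\norm{B}_{L^2_t\dot B^2_{2,1,<}}$, and $\norm{B}_{\widetilde L^2_t\dot B^s_{2,n,>}}$, are \emph{already} controlled uniformly in $c$ by the bounds from Theorems \ref{main:1} and \ref{main:3}. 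More precisely, the first three are dominated by $\mathcal{H}(0,T)$ (via the Gagliardo--Nirenberg inequality \eqref{convexity:1} for the velocity, and directly for the magnetic-field norms), and the last one is controlled by \eqref{propagation:damping:3} (or, when $s=\frac 74$ and $n=1$, by \eqref{propagation:damping}) together with $c^{7/4-s}\geq c^{-3/4}$ on the relevant frequency range, so that $\norm{B}_{\widetilde L^2_t\dot B^s_{2,n,>}}\lesssim c^{s-7/4}\cdot c^{7/4-s}\norm{B}_{\widetilde L^2_t\dot B^s_{2,n,>}}$ is bounded. The role of Lemma \ref{technical:energy} is exactly to produce, at the level of $\dot B^s_{2,n}$-regularity, an estimate whose right-hand side does \emph{not} carry any negative power of $c$ in front of the initial data, unlike the high-frequency estimate \eqref{frequencies:4} used to prove Theorem \ref{main:3}. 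This is what allows a fixed initial datum in $\dot B^s_{2,n}$ (rather than one of size $\sim c^{s-1}$) to propagate uniformly.

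The steps, in order, would be: First, invoke Theorems \ref{main:1} or \ref{main:3} on the mollified data $(u_0^c,E_0^c,B_0^c)=\varphi_c*(u_0,E_0,B_0)$, noting that \eqref{initial:4} guarantees \eqref{initial:1} and \eqref{initial:3} for a suitable $C_{***}$, so the global solutions exist with all the bounds \eqref{propagation:damping} and \eqref{propagation:damping:3}; in particular $\mathcal{H}(0,T)$ is bounded uniformly in $c$ and $T$. Second, apply \eqref{energy:EB:2} to each $(E^c,B^c)$ on $[0,T)$. Third, bound each term of the right-hand side of the resulting inequality uniformly in $c$, as outlined above: $\norm{u^c}_{L^\infty_{t,x}\cap L^\infty_t\dot H^1_x}\lesssim \mathcal{H}(0,T)$, $\norm{B^c}_{L^2_t\dot H^1_x}\lesssim \mathcal{E}_0+c^{-1}\mathcal{H}(0)+c^{-1}\mathcal{E}_0\mathcal{H}(0,T)$ via \eqref{proof:9}, $\norm{B^c}_{L^2_t\dot B^2_{2,1,<}}\lesssim\mathcal{H}(0,T)$ by definition of $\mathcal{H}$, and $\norm{B^c}_{\widetilde L^2_t\dot B^s_{2,n,>}}$ bounded as described. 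Fourth, use that the $\dot B^s_{2,n}$-norm of the mollified initial datum $\varphi_c*(E_0,B_0)$ is bounded by that of $(E_0,B_0)$ (mollification does not increase Besov norms), so $\norm{(E_0^c,B_0^c)}_{\dot B^s_{2,n}}\lesssim\norm{(E_0,B_0)}_{\dot B^s_{2,n}}$ uniformly. Fifth, let $T\to\infty$ to obtain the global-in-time uniform bounds for $(E^c,B^c)$ in $\widetilde L^\infty(\mathbb{R}^+;\dot B^s_{2,n})$ and for $cE^c$ in $\widetilde L^2(\mathbb{R}^+;\dot B^s_{2,n})$; the remaining bounds in the statement ($u^c\in L^\infty H^1\cap\dot W^{1,p}$, $(E^c,B^c)\in L^\infty H^1$, $(cE^c,B^c)\in L^2\dot H^1$, $B^c\in L^2\dot B^2_{2,1,<}$, the $\dot B^{s-3/4}_{\infty,n,>}$ and $c^{1/4}B^c\in\widetilde L^2\dot B^s_{2,n,>}$ controls) are either already in \eqref{propagation:damping}/\eqref{propagation:damping:3} uniformly, or follow by combining the newly obtained uniform $\dot B^s_{2,n}$-bound with \eqref{frequencies:4} once more (now with a genuinely uniform right-hand side). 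Finally, pass, if desired, to a general initial datum via a density/stability argument, or simply state the result for the mollified family, which is all that is needed for the singular limit $c\to\infty$.

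The main obstacle I anticipate is verifying that the estimate \eqref{energy:EB:2} can indeed be \emph{closed}: its right-hand side contains $\norm{B^c}_{\widetilde L^2_t\dot B^s_{2,n,>}}$, which is a high-frequency piece of the very quantity one is trying to bound, so one must be careful that this term is controlled by \emph{independent} information (here, by \eqref{propagation:damping:3}), not circularly. Since Theorems \ref{main:1} and \ref{main:3} already furnish $c^{7/4-s}(E^c,B^c)\in\widetilde L^2\dot B^{s-3/4}_{\infty,n,>}$ and the analogous $L^2_t\dot B^s_{2,n,>}$ control with a positive power of $c$, the high-frequency term is genuinely under control and the argument is not circular. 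A secondary point requiring attention is the bookkeeping of the third Besov index $n$: Lemma \ref{technical:energy} is stated for $1\le n\le\infty$ and $1<s<2$, which matches the hypotheses of the corollary (with $n=1$ forced when $s=\frac 74$, consistently with Theorem \ref{main:1}), so no endpoint difficulty arises, but one should check that the low-frequency contributions entering through $\norm{B^c}_{L^2_t\dot B^2_{2,1,<}}$ dominate $\norm{B^c}_{\widetilde L^2_t\dot B^s_{2,n,<}}$ by the embedding $\dot B^2_{2,1}\hookrightarrow\dot B^s_{2,n}$ on frequencies below $\sigma c$ (here one uses $s<2$), which is routine.
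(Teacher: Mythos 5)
Your proposal is correct and follows the paper's strategy: apply Lemma \ref{technical:energy} and observe that the uniform bounds \eqref{propagation:damping} and \eqref{propagation:damping:3} from Theorems \ref{main:1} and \ref{main:3} already control every term on the right-hand side of \eqref{energy:EB:2}. The mollification discussion in your first step is superfluous (the corollary concerns a fixed datum already assumed to satisfy \eqref{initial:1} or \eqref{initial:3}), and the estimate on $\norm{B^c}_{\widetilde L^2_t\dot B^s_{2,n,>}}$ should simply read $c^{2-s}B^c\in\widetilde L^2_t\dot B^s_{2,n,>}$ with $2-s>0$, so $B^c$ is bounded there for $c\geq 1$ --- but these are cosmetic slips that do not affect the validity of the argument.
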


\begin{proof}
	This result follows from a straightforward combination of Theorems \ref{main:1} and \ref{main:3} with Lemma \ref{technical:energy}. Indeed, it is readily seen that the uniform bounds \eqref{propagation:damping} and \eqref{propagation:damping:3} are sufficient to control the right-hand side of \eqref{energy:EB:2}, for appropriate values of $(s,n)$, thereby showing the improved uniform boundedness of the solutions $(u^c,E^c,B^c)$.
\end{proof}

% ============
% = Appendix =
% ============

\appendix

\section{Littlewood--Paley decompositions and Besov spaces}\label{besov:1}

We recall here the fundamentals of Littlewood--Paley decompositions and introduce a precise definition of Besov spaces.

\subsection{Littlewood--Paley decompositions}

We are going to use the Fourier transform
\begin{equation*}
	\mathcal{F}f\left(\xi\right)=\hat f(\xi)\bydef\int_{\mathbb{R}^d} e^{- i \xi \cdot x} f(x) dx
\end{equation*}
and its inverse
\begin{equation*}
	\mathcal{F}^{-1} g\left(x\right)=\tilde g(x)\bydef\frac{1}{\left(2\pi\right)^d}\int_{\mathbb{R}^d} e^{i x \cdot \xi} g(\xi) d\xi,
\end{equation*}
in any dimension $d\geq 1$.

Now, consider smooth cutoff functions $\psi(\xi),\varphi(\xi)\in C_c^\infty\left(\mathbb{R}^d\right)$ satisfying that
\begin{equation*}
	\psi,\varphi\geq 0\text{ are radial},
	\quad\supp\psi\subset\left\{|\xi|\leq 1\right\},
	\quad\supp\varphi\subset\left\{\frac{1}{2}\leq |\xi|\leq 2\right\}
\end{equation*}
and
\begin{equation*}
	1= \psi(\xi)+\sum_{k=0}^\infty \varphi\left(2^{-k}\xi\right),
	\quad\text{for all }\xi\in\mathbb{R}^d.
\end{equation*}
Defining the scaled cutoffs
\begin{equation*}
	\psi_{k}(\xi)\bydef\psi\left(2^{-k}\xi\right),
	\quad
	\displaystyle\varphi_{k}(\xi)\bydef\varphi\left(2^{-k}\xi\right),
\end{equation*}
for each $k\in\mathbb{Z}$, so that
\begin{equation*}
	\supp\psi_{k}\subset\left\{ |\xi|\leq 2^k\right\},
	\quad \supp\varphi_{k}\subset\left\{ 2^{k-1}\leq |\xi|\leq 2^{k+1}\right\},
\end{equation*}
it is readily seen that
\begin{equation*}
	1\equiv \psi+\sum_{k=0}^\infty \varphi_{k}
\end{equation*}
provides us with a dyadic partition of unity of $\mathbb{R}^d$.
Notice also that
\begin{equation*}
	1\equiv \psi_j+\sum_{k=j}^\infty \varphi_{k},
\end{equation*}
for any $j\in\mathbb{Z}$, and
\begin{equation*}
	1\equiv \sum_{k=-\infty}^\infty \varphi_{k}
\end{equation*}
away from the origin $\xi=0$.

Next, denoting the space of tempered distributions by $\mathcal{S}'$, we introduce the Fourier multiplier operators
\begin{equation*}
	S_k,\Delta_k:
	\mathcal{S}'\left(\mathbb{R}^d\right)\rightarrow\mathcal{S}'\left(\mathbb{R}^d\right),
\end{equation*}
with $k\in\mathbb{Z}$, defined by
\begin{equation}\label{dyadic:def}
	S_k f\bydef\mathcal{F}^{-1}\psi_k\mathcal{F}f
	= \left(\mathcal{F}^{-1}\psi_k\right)*f
	\quad\text{and}\quad
	\Delta_k f\bydef\mathcal{F}^{-1}\varphi_k\mathcal{F}f
	= \left(\mathcal{F}^{-1}\varphi_k\right)*f.
\end{equation}
The Littlewood--Paley decomposition of $f\in\mathcal{S}'$ is then given by
\begin{equation*}
	S_0 f+\sum_{k=0}^\infty\Delta_{ k}f=f,
\end{equation*}
where the series is convergent in $\mathcal{S}'$.

Similarly, one can verify that the homogeneous Littlewood--Paley decomposition
\begin{equation*}
	\sum_{k=-\infty}^\infty\Delta_{ k}f=f
\end{equation*}
holds in $\mathcal{S}'$, if $f\in\mathcal{S}'$ satisfies that
\begin{equation}\label{origin:1}
	\lim_{k\to -\infty}\|S_kf\|_{L^\infty}=0.
\end{equation}
Observe that \eqref{origin:1} holds if $\hat f$ is locally integrable around the origin, or whenever $S_0f$ belongs to $ L^p(\mathbb{R}^d)$, for some $1\leq p<\infty$. In particular, note that \eqref{origin:1} excludes all nonzero polynomials.

\subsection{Besov spaces}

For any $s \in \mathbb{R}$ and $1\leq p,q\leq \infty$, we define now the homogeneous Besov space $\dot B^{s}_{p,q}\left(\mathbb{R}^d\right)$ as the subspace of tempered distributions satisfying \eqref{origin:1} endowed with the norm
\begin{equation*}
	\left\|f\right\|_{\dot B^{s}_{p,q}\left(\mathbb{R}^d\right)}=
	\left(
	\sum_{k\in\mathbb{Z}} 2^{ksq}
	\left\|\Delta_{k}f\right\|_{L^p\left(\mathbb{R}^d\right)}^q\right)^\frac{1}{q},
\end{equation*}
if $q<\infty$, and
\begin{equation*}
	\left\|f\right\|_{\dot B^{s}_{p,q}\left(\mathbb{R}^d\right)}=
	\sup_{k\in\mathbb{Z}}\left(2^{ks}
	\left\|\Delta_{k}f\right\|_{L^p\left(\mathbb{R}^d\right)}\right),
\end{equation*}
if $q=\infty$. One can show that $\dot B^s_{p,q}$ is a Banach space if $s<\frac dp$, or if $s=\frac dp$ and $q=1$ (see \cite[Theorem 2.25]{bcd11}).

We also consider here the homogeneous Sobolev space $\dot H^s\left(\mathbb{R}^d\right)$, for any real value $s\in\mathbb{R}$, which is defined as the subspace of tempered distributions whose Fourier transform is locally integrable equipped with the norm
\begin{equation*}
	\left\|f\right\|_{\dot H^s}=\left(\int_{\mathbb{R}^d}|\xi|^{2s}|\hat f(\xi)|^2 d\xi\right)^\frac 12.
\end{equation*}
One verifies that $\dot H^s$ is a Hilbert space if and only if $s<\frac d2$ (see \cite[Proposition 1.34]{bcd11}). Moreover, it is possible to show that $\dot H^s = \dot B^s_{2,2}$ whenever $s<\frac d2$.

\subsection{Chemin--Lerner spaces}

For any time $T>0$ and any choice of parameters $s \in \mathbb{R}$ and $1\leq p,q,r\leq \infty$, with $s<\frac dp$ (or $s=\frac dp$ and $q=1$), the spaces
\begin{equation*}
	L^r\left( [0,T) ; \dot B^{s}_{p,q}\left(\mathbb{R}^d\right) \right)
\end{equation*}
are naturally defined as $L^r$-spaces with values in the Banach spaces $\dot B^{s}_{p,q}$. In addition to these vector-valued Lebesgue spaces, we define the spaces
\begin{equation*}
	\widetilde L^r\left( [0,T) ; \dot B^{s}_{p,q}\left(\mathbb{R}^d\right) \right)
\end{equation*}
as the subspaces of tempered distributions such that
\begin{equation*}
	\lim_{k\to -\infty}\|S_kf\|_{L^r\left([0,T);L^p\left(\mathbb{R}^d\right)\right)}=0 \, ,
\end{equation*}
endowed with the norm
\begin{equation*}
	\left\|f\right\|_{ \widetilde L^r \left( [0,T) ; B^{s}_{p,q}\left(\mathbb{R}^d\right) \right)}
	=
	\left( \sum_{k=-\infty}^\infty 2^{ksq}
	\left\|\Delta_{k}f\right\|_{L^r\left([0,T);L^p\left(\mathbb{R}^d\right)\right) }^q\right)^\frac{1}{q}\, ,
\end{equation*}
if $q<\infty$, and with the obvious modifications in case $q=\infty$.

One can easily check that, if $q \geq r $, then
\begin{equation*}
	 L^r\left( [0,T) ; \dot B^{s}_{p,q}\left(\mathbb{R}^d\right) \right)
	\subset \widetilde L^r\left( [0,T) ; \dot B^{s}_{p,q}\left(\mathbb{R}^d\right) \right)\, ,
\end{equation*}
and that, if $q \leq r $, then
\begin{equation*}
	\widetilde L^r\left( [0,T) ; \dot B^{s}_{p,q}\left(\mathbb{R}^d\right)  \right)
	\subset L^r\left( [0,T); \dot B^{s}_{p,q}\left(\mathbb{R}^d\right) \right)\, .
\end{equation*}
We refer the reader to \cite[Section 2.6.3]{bcd11} for more details on Chemin--Lerner spaces.

\subsection{Embeddings}

We present now a few embeddings and inequalities in Besov spaces which are routinely used throughout this work.

First, a direct application of Young's convolution inequality to \eqref{dyadic:def} yields that
\begin{equation}\label{embedding:1}
	\norm{\Delta_kf}_{L^r(\mathbb{R}^d)}
	\lesssim 2^{kd\left(\frac 1p-\frac 1r\right)}\norm{\Delta_kf}_{L^p(\mathbb{R}^d)},
\end{equation}
for any $1\leq p\leq r\leq\infty$. A suitable use of \eqref{embedding:1} then leads to the embedding
\begin{equation}\label{embedding:2}
	\norm{f}_{\dot B^s_{r,q}(\mathbb{R}^d)}\lesssim \norm{f}_{\dot B^{s+d\left(\frac 1p-\frac 1r\right)}_{p,q}(\mathbb{R}^d)},
\end{equation}
for any $1\leq p\leq r\leq \infty$, $1\leq q\leq\infty$ and $s\in\mathbb{R}$, which can be interpreted as a Sobolev embedding in the framework of Besov spaces.

Moreover, recalling that $\ell^q\subset\ell^r$, for all $1\leq q\leq r\leq\infty$, one has that
\begin{equation*}
	\dot B^s_{p,q}(\mathbb{R}^d)\subset \dot B^s_{p,r}(\mathbb{R}^d),
\end{equation*}
for all $s\in\mathbb{R}$, $1\leq p\leq\infty$ and $1\leq q\leq r\leq\infty$.

Next, observe that
\begin{equation}\label{embedding:3}
	\norm{f}_{L^p(\mathbb{R}^d)}=\norm{\sum_{k\in\mathbb{Z}}\Delta_k f}_{L^p(\mathbb{R}^d)}
	\leq \sum_{k\in\mathbb{Z}}\norm{\Delta_k f}_{L^p(\mathbb{R}^d)}
	=\norm{\Delta_k f}_{\dot B^0_{p,1}(\mathbb{R}^d)},
\end{equation}
for every $1\leq p\leq \infty$. Therefore, by combining \eqref{embedding:2} and \eqref{embedding:3}, we obtain that
\begin{equation*}
	\norm{f}_{L^\infty(\mathbb{R}^d)}\lesssim
	\norm{f}_{\dot B^{\frac d2}_{2,1}(\mathbb{R}^d)}.
\end{equation*}
This estimate is particularly useful in view of the failure of the embedding of the Sobolev space $\dot H^{\frac d2}(\mathbb{R}^d)$ into $L^\infty(\mathbb{R}^d)$.

Further considering any cutoff function $\chi\in C^\infty_c(\mathbb{R}^d)$ such that $\mathds{1}_{\{|\xi|\leq 1\}}\leq \chi(\xi) \leq \mathds{1}_{\{|\xi|\leq 2\}}$, one can show, for any $c>0$, $\alpha>0$ and $1\leq p\leq\infty$, that
\begin{equation*}
	\norm{\chi\left(\frac Dc\right)f}_{\dot B^{s+\alpha}_{p,1}(\mathbb{R}^d)}\lesssim
	c^\alpha \norm{f}_{\dot B^s_{p,\infty}(\mathbb{R}^d)}
\end{equation*}
and
\begin{equation*}
	c^\alpha\norm{(1-\chi)\left(\frac Dc\right)f}_{\dot B^{s}_{p,1}(\mathbb{R}^d)}\lesssim
	\norm{f}_{\dot B^{s+\alpha}_{p,\infty}(\mathbb{R}^d)},
\end{equation*}
where the operator $m\left(D\right)$ denotes the Fourier multiplier associated with the symbol $m\left(\xi\right)$, for any $m\in C^\infty_c(\mathbb{R}^d)$.

Finally, we mention another essential inequality in Besov spaces which is related to their interpolation properties. Specifically, one has the interpolation, or convexity, inequality
\begin{equation*}
	\norm f_{\dot B^{s}_{p,1}}
	\lesssim
	\norm f_{\dot B^{s_0}_{p,\infty}}^{1-\theta}
	\norm f_{\dot B^{s_1}_{p,\infty}}^{\theta},
\end{equation*}
for any $p\in[1,\infty]$, $s,s_0,s_1\in\mathbb{R}$ and $\theta\in(0,1)$ such that $s=(1-\theta)s_0+\theta s_1$ and $s_0\neq s_1$.

Note that the preceding estimates and embeddings can be adapted to the setting of Chemin--Lerner spaces in a straightforward way.

\subsection{Paradifferential product estimates}

Here, we recall the basic principles of paraproduct decompositions and some essential paradifferential product estimates that follow from it.

For any two suitable tempered distributions $f$ and $g$, introducing the paraproduct
\begin{equation*}
	T_fg=\sum_{j\in\mathbb{Z}}S_{j-2}f\Delta_j g
\end{equation*}
readily leads to the decomposition
\begin{equation*}
	fg=T_fg+T_gf+R(f,g),
\end{equation*}
where
\begin{equation*}
	R(f,g)=\sum_{\substack{j,k\in\mathbb{Z}\\|j-k|\leq 2}}\Delta_jf\Delta_kg
\end{equation*}
is the remainder. For any choice of integrability parameters in $[1,\infty]$ such that
\begin{equation*}
	\frac 1a=\frac 1{a_1}+\frac 1{a_2},
	\qquad
	\frac 1b=\frac 1{b_1}+\frac 1{b_2},
	\qquad
	\frac 1c=\frac 1{c_1}+\frac 1{c_2},
\end{equation*}
it can be shown, in the context of Chemin--Lerner spaces, that
\begin{equation}\label{para-product:4}
	\norm{T_fg}_{\widetilde L^a_t\dot B^{\alpha+\beta}_{b,c}}
	\lesssim
	\norm{f}_{\widetilde L^{a_1}_t\dot B^\alpha_{b_1,c_1}}
	\norm{g}_{\widetilde L^{a_2}_t\dot B^\beta_{b_2,c_2}},
\end{equation}
for any $\alpha<0$ and $\beta\in\mathbb{R}$, and that
\begin{equation}\label{para-product:5}
	\norm{R(f,g)}_{\widetilde L^a_t\dot B^{\alpha+\beta}_{b,c}}
	\lesssim
	\norm{f}_{\widetilde L^{a_1}_t\dot B^\alpha_{b_1,c_1}}
	\norm{g}_{\widetilde L^{a_2}_t\dot B^\beta_{b_2,c_2}},
\end{equation}
for any $\alpha,\beta\in\mathbb{R}$ with $\alpha+\beta>0$. Moreover, one has the endpoint estimates
\begin{equation}\label{para-product:6}
	\norm{T_fg}_{\widetilde L^a_t\dot B^{\beta}_{b,c}}
	\lesssim
	\norm{f}_{L^{a_1}_tL^{b_1}_x}
	\norm{g}_{\widetilde L^{a_2}_t\dot B^\beta_{b_2,c}},
\end{equation}
for all $\beta\in\mathbb{R}$,
and
\begin{equation*}
	\norm{R(f,g)}_{L^a_tL^b_x}
	\lesssim
	\norm{f}_{\widetilde L^{a_1}_t\dot B^\alpha_{b_1,c_1}}
	\norm{g}_{\widetilde L^{a_2}_t\dot B^{-\alpha}_{b_2,c_2}},
\end{equation*}
for all $\alpha\in\mathbb{R}$, provided $\frac 1{c_1}+\frac 1{c_2}=1$.
Similar bounds hold for Besov spaces and we refer to \cite[Section 2.6]{bcd11} for more details on such paradifferential estimates.

We finally recall two important product rules of paradifferential calculus in the context of Besov spaces, which are direct consequences of the preceding bounds.
First, exploiting \eqref{para-product:4} and \eqref{para-product:5} (for Besov spaces), we have that
\begin{equation*}
	\|fg\|_{\dot B^{s+t-\frac d2}_{2,1}} \lesssim \|f\|_{\dot H^s}\|g\|_{\dot H^t},
\end{equation*}
for any $-\frac d2<s,t<\frac d2$ with $s+t>0$.

Second, we find that
\begin{equation*}
	\|fg\|_{\dot H^s} \lesssim \|f\|_{L^\infty\cap \dot B^\frac d2_{2,\infty}}\|g\|_{\dot H^s},
\end{equation*}
for all $-\frac d2<s<\frac d2$, which follows from a suitable combination of \eqref{para-product:4}, \eqref{para-product:5} and \eqref{para-product:6} (for Besov spaces, as well).

\section{Oscillatory integrals and dispersion}\label{dispersion:2}

We give here a justification of the dispersive estimate \eqref{dispersion:1} employing the Stationary Phase Method. This method is classical and we will rely on \cite{bcd11}, when needed, to refer the reader to complete details on the technical results pertaining to the method.

Generally speaking, we are considering here oscillatory integrals of the form
\begin{equation*}
	I_\psi(t) = \int_{\mathbb{R}^d} e^{it \phi(\xi)}\psi(\xi) d\xi,
\end{equation*}
for smooth test functions $\psi\in C^\infty_c(\mathbb{R}^d)$ and $t\in\mathbb{R}$, where the smooth phase $\phi(\xi)$ only needs to be defined on the support of $\psi$. It is readily seen that
\begin{equation*}
	|I_\psi(t)| \leq \norm{\psi}_{L^1(\mathbb{R}^d)}.
\end{equation*}
We seek out now to understand the asymptotic behavior of $I_\psi(t)$ when $|t|$ is large, which requires us to exploit the cancellations in the integral $I_\psi(t)$ due to the oscillatory term $e^{it \phi(\xi)}$. There are two cases to consider: the stationary phase and the nonstationary phase.

\paragraph{\bf The stationary phase.}

This case analyzes the asymptotic behavior of $I_\psi(t)$ near critical points of the phase, i.e., near points in the integration domain where $\nabla\phi(\xi)=0$. More precisely, we suppose here that
\begin{equation*}
	|\nabla \phi(\xi)| \leq \varepsilon_0,
\end{equation*}
for some $\varepsilon_0 \in (0,1]$ and for all $\xi\in\supp \psi$. Under such assumptions, Theorem 8.9 from \cite{bcd11} establishes that, for any positive integer $N$, there is a constant $C_{N,\phi,\psi}>0$ such that
\begin{equation}\label{non-stationary:es:1}
	|I_\psi(t)| \leq C_{N,\phi,\psi} \int_{\supp\psi} \frac{ d\xi}{\left( 1 + \varepsilon_0 t |\nabla \phi(\xi)|^2 \right)^{N}},
\end{equation}
for all $t>0$.

\paragraph{\bf The nonstationary phase.}

The decay of $I_\psi(t)$ is better when $\nabla \phi$ does not vanish on the support of $\psi$. More precisely, assuming now that
\begin{equation*}
	|\nabla \phi(\xi)| \geq \varepsilon_0,
\end{equation*}
for some $\varepsilon_0 \in (0,1]$ and for all $\xi\in\supp \psi$, Theorem 8.8 from \cite{bcd11} shows that, for any positive integer $N$, there is a constant $C_{N,\phi,\psi}>0$ such that
\begin{equation}\label{stationary:es:1}
	|I_\psi(t)| \leq \frac{C_{N,\phi,\psi}}{(\varepsilon_0t)^N},
\end{equation}
for all $t>0$.

The asymptotic estimate \eqref{stationary:es:1} always offers a faster decay than \eqref{non-stationary:es:1} and, therefore, the oscillatory integral $I_\psi(t)$ can be treated as a remainder term wherever the phase $\nabla\phi$ does not vanish. In conclusion, the overall asymptotic behavior of $I_\psi(t)$ is, in general, determined by the critical points of the phase.

\medskip

All in all, as explained in Theorem 8.12 from \cite{bcd11}, it is possible to combine the preceding estimates to show, for all $\psi\in C_c^\infty(\mathbb{R}^d)$, $\varepsilon_0\in (0,1]$ and any positive numbers $N$ and $N'$, that there are positive constants $C_N$ and $C_{N'}$ such that
\begin{equation}\label{oscillatory:es}
	|I_\psi(t)| \leq
	\frac{C_{N}}{(\varepsilon_0 t)^N}
	+C_{N'} \int_{A_\phi} \frac{ d\xi}{\left( 1 + \varepsilon_0 t |\nabla \phi(\xi)|^2 \right)^{N'}},
\end{equation}
for all $t>0$, where the set $A_\phi$ is defined as
\begin{equation*}
	A_\phi \bydef \left\{\xi\in\supp\psi : |\nabla \phi(\xi)| \leq \varepsilon_0\right\}.
\end{equation*}

We are now in a position to prove \eqref{dispersion:1}. To be precise, we are going to establish the equivalent estimate (up to a scaling of the variable $x$)
\begin{equation}\label{dispersion:3}
	\left|\int_{\mathbb{R}^d}e^{it\phi(x,\xi)}\psi(\xi)d\xi\right|
	\leq \frac{C_\psi}{t^{\frac{d-1}2}},
\end{equation}
for all $t>0$ and $x\in\mathbb{R}^d$, where the phase is defined by
\begin{equation*}
	\phi(x,\xi)=x\cdot\xi\pm \delta(\xi),
	\quad\text{with }\delta(\xi)=\sqrt{|\xi|^2-\frac{\alpha^2}4} \text{ and }0\leq\alpha\leq\frac 12,
\end{equation*}
and the test function $\psi\in C^\infty_c(\mathbb{R}^d)$ satisfies $\supp\psi\subset\{\frac 14<|\xi|<R\}$, for some $R>\frac 14$, while the constant $C_\psi>0$ is independent of $t$, $x$ and $\alpha$.

To that end, noting that $\phi(x,\xi)$ is smooth on the support of $\psi$ and setting $\varepsilon_0=\frac 12$, $N=\frac{d-1}2$ and $N'=d$ in \eqref{oscillatory:es}, we find that
\begin{equation*}
	\left|\int_{\mathbb{R}^d}e^{it\phi(x,\xi)}\psi(\xi)d\xi\right| \lesssim
	\frac{1}{t^\frac{d-1}2}
	+ \int_{A} \frac{ d\xi}{\left( 1 + t \left|x\pm\frac{\xi}{\delta(\xi)}\right|^2 \right)^d},
\end{equation*}
where
\begin{equation*}
	A\bydef \left\{\frac 14<|\xi|<R,\ \left|x\pm\frac{\xi}{\delta(\xi)}\right|\leq \frac 12\right\}.
\end{equation*}
Now, notice that $x\neq 0$, if $A$ is non-empty. In particular, for any $\xi\in A$, we can decompose
\begin{equation*}
	\xi = \zeta_1 + \zeta', \quad \text{with } \zeta_1 \bydef \left(\frac{\xi\cdot x}{|x|^2}\right)x
	\text{ and }\zeta' \bydef \xi - \zeta_1,
\end{equation*}
whence, since $\zeta'\cdot x=0$,
\begin{equation*}
	\left|x\pm \frac{\xi}{\delta(\xi)} \right|^2
	=\left|x\pm \frac{\zeta_1}{\delta(\xi)} \right|^2+\left|\frac{\zeta'}{\delta(\xi)} \right|^2
	\geq \frac{\left|\zeta'\right|^2}{\delta(\xi)^2}\geq \frac{\left|\zeta'\right|^2}{R^2}.
\end{equation*}
We therefore conclude that
\begin{equation*}
	\left|\int_{\mathbb{R}^d}e^{it\phi(x,\xi)}\psi(\xi)d\xi\right| \lesssim
	\frac{1}{t^\frac{d-1}2}
	+ \int_{\{|\zeta'|<R\}\subset\mathbb{R}^{d-1}} \frac{ d\zeta'}{\left( 1 + t \left|\zeta'\right|^2 \right)^d}
	\lesssim
	\frac{1}{t^\frac{d-1}2},
\end{equation*}
which completes the justification of \eqref{dispersion:3}. \qed

% ==============
% = References =
% ==============

% \nocite*
\bibliographystyle{plain} 
\bibliography{plasma}

\begin{thebibliography}{10}

\bibitem{HHK}
Hammadi Abidi, Taoufik Hmidi, and Sahbi Keraani.
\newblock On the global well-posedness for the axisymmetric {E}uler equations.
\newblock {\em Math. Ann.}, 347(1):15--41, 2010.

\bibitem{a19}
Diogo Ars\'enio.
\newblock Recent progress on the mathematical theory of plasmas.
\newblock {\em Bol. Soc. Port. Mat.}, 77:27--38, 2019.

\bibitem{ag20}
Diogo Ars\'{e}nio and Isabelle Gallagher.
\newblock Solutions of {N}avier-{S}tokes-{M}axwell systems in large energy
  spaces.
\newblock {\em Trans. Amer. Math. Soc.}, 373(6):3853--3884, 2020.

\bibitem{aim15}
Diogo Ars\'{e}nio, Slim Ibrahim, and Nader Masmoudi.
\newblock A derivation of the magnetohydrodynamic system from
  {N}avier-{S}tokes-{M}axwell systems.
\newblock {\em Arch. Ration. Mech. Anal.}, 216(3):767--812, 2015.

\bibitem{as}
Diogo Ars\'{e}nio and Laure Saint-Raymond.
\newblock {\em From the {V}lasov-{M}axwell-{B}oltzmann system to incompressible
  viscous electro-magneto-hydrodynamics. {V}ol. 1}.
\newblock EMS Monographs in Mathematics. European Mathematical Society (EMS),
  Z\"{u}rich, 2019.

\bibitem{bcd11}
Hajer Bahouri, Jean-Yves Chemin, and Rapha\"{e}l Danchin.
\newblock {\em Fourier analysis and nonlinear partial differential equations},
  volume 343 of {\em Grundlehren der Mathematischen Wissenschaften [Fundamental
  Principles of Mathematical Sciences]}.
\newblock Springer, Heidelberg, 2011.

\bibitem{bl76}
J\"{o}ran Bergh and J\"{o}rgen L\"{o}fstr\"{o}m.
\newblock {\em Interpolation spaces. {A}n introduction}.
\newblock Grundlehren der Mathematischen Wissenschaften, No. 223.
  Springer-Verlag, Berlin-New York, 1976.

\bibitem{bis-book}
Dieter Biskamp.
\newblock {\em Nonlinear magnetohydrodynamics}, volume~1 of {\em Cambridge
  Monographs on Plasma Physics}.
\newblock Cambridge University Press, Cambridge, 1993.

\bibitem{D-book}
Peter~Alan Davidson.
\newblock {\em An introduction to magnetohydrodynamics}.
\newblock Cambridge Texts in Applied Mathematics. Cambridge University Press,
  Cambridge, 2001.

\bibitem{gim}
Pierre Germain, Slim Ibrahim, and Nader Masmoudi.
\newblock Well-posedness of the {N}avier-{S}tokes-{M}axwell equations.
\newblock {\em Proc. Roy. Soc. Edinburgh Sect. A}, 144(1):71--86, 2014.

\bibitem{gm14}
Pierre Germain and Nader Masmoudi.
\newblock Global existence for the {E}uler-{M}axwell system.
\newblock {\em Ann. Sci. \'{E}c. Norm. Sup\'{e}r. (4)}, 47(3):469--503, 2014.

\bibitem{g14}
Loukas Grafakos.
\newblock {\em Classical {F}ourier analysis}, volume 249 of {\em Graduate Texts
  in Mathematics}.
\newblock Springer, New York, third edition, 2014.

\bibitem{gip16}
Yan Guo, Alexandru~D. Ionescu, and Benoit Pausader.
\newblock Global solutions of the {E}uler-{M}axwell two-fluid system in 3{D}.
\newblock {\em Ann. of Math. (2)}, 183(2):377--498, 2016.

\bibitem{Z}
Zineb Hassainia.
\newblock On the global well-posedness of the 3{D} axisymmetric resistive {MHD}
  equations.
\newblock {\em Ann. Henri Poincar\'e}, 2022.
\newblock Online first.

\bibitem{i19}
Takahisa Inui.
\newblock The {S}trichartz estimates for the damped wave equation and the
  behavior of solutions for the energy critical nonlinear equation.
\newblock {\em NoDEA Nonlinear Differential Equations Appl.}, 26(6):Paper No.
  50, 30, 2019.

\bibitem{iw19}
Takahisa Inui and Yuta Wakasugi.
\newblock Unconditional well-posedness for the energy-critical nonlinear damped
  wave equation.
\newblock {\em J. Evol. Equ.}, 21(4):5171--5201, 2021.

\bibitem{kt98}
Markus Keel and Terence Tao.
\newblock Endpoint {S}trichartz estimates.
\newblock {\em Amer. J. Math.}, 120(5):955--980, 1998.

\bibitem{kw04}
Peer~Christian Kunstmann and Lutz Weis.
\newblock Maximal {$L_p$}-regularity for parabolic equations, {F}ourier
  multiplier theorems and {$H^\infty$}-functional calculus.
\newblock In {\em Functional analytic methods for evolution equations}, volume
  1855 of {\em Lecture Notes in Math.}, pages 65--311. Springer, Berlin, 2004.

\bibitem{l16}
Pierre~Gilles Lemari\'{e}-Rieusset.
\newblock {\em The {N}avier-{S}tokes problem in the 21st century}.
\newblock CRC Press, Boca Raton, FL, 2016.

\bibitem{mb02}
Andrew~J. Majda and Andrea~L. Bertozzi.
\newblock {\em Vorticity and incompressible flow}, volume~27 of {\em Cambridge
  Texts in Applied Mathematics}.
\newblock Cambridge University Press, Cambridge, 2002.

\bibitem{MN}
Nader Masmoudi.
\newblock Global well posedness for the {M}axwell-{N}avier-{S}tokes system in
  2{D}.
\newblock {\em J. Math. Pures Appl. (9)}, 93(6):559--571, 2010.

\bibitem{sr03}
Laure Saint-Raymond.
\newblock Convergence of solutions to the {B}oltzmann equation in the
  incompressible {E}uler limit.
\newblock {\em Arch. Ration. Mech. Anal.}, 166(1):47--80, 2003.

\bibitem{srbook09}
Laure Saint-Raymond.
\newblock {\em Hydrodynamic limits of the {B}oltzmann equation}, volume 1971 of
  {\em Lecture Notes in Mathematics}.
\newblock Springer-Verlag, Berlin, 2009.

\bibitem{sr09}
Laure Saint-Raymond.
\newblock Hydrodynamic limits: some improvements of the relative entropy
  method.
\newblock {\em Ann. Inst. H. Poincar\'{e} C Anal. Non Lin\'{e}aire},
  26(3):705--744, 2009.

\bibitem{s87}
Jacques Simon.
\newblock Compact sets in the space {$L^p(0,T;B)$}.
\newblock {\em Ann. Mat. Pura Appl. (4)}, 146:65--96, 1987.

\bibitem{v}
Misha Vishik.
\newblock Hydrodynamics in {B}esov spaces.
\newblock {\em Arch. Ration. Mech. Anal.}, 145(3):197--214, 1998.

\end{thebibliography}

\end{document}